\newcommand{\ndN}{\mathbb{N}}
\newcommand{\ndZ}{\mathbb{Z}}
\newcommand{\ndR}{\mathbb{R}}
\newcommand{\ndK}{\mathbb{K}}
\renewcommand{\Pr}[1]{\mathbb{P}(#1)}
\newcommand{\Ex}[1]{\mathbb{E}[#1]}
\newcommand{\Va}[1]{\mathbb{V}[#1]}
\newcommand{\one}{\mathbbm{1}}
\newcommand{\om}{\mathbf{w}}
\newcommand{\cC}{\mathcal{C}}
\newcommand{\cL}{\mathcal{L}}
\newcommand{\cM}{\mathcal{M}}
\newcommand{\cF}{\mathcal{F}}
\newcommand{\cB}{\mathcal{B}}
\newcommand{\cN}{\mathcal{N}}
\newcommand{\cS}{\mathcal{S}}
\newcommand{\cG}{\mathcal{G}}
\newcommand{\cT}{\mathcal{T}}
\newcommand{\cR}{\mathcal{R}}
\newcommand{\cA}{\mathcal{A}}
\newcommand{\cH}{\mathcal{H}}
\newcommand{\cK}{\mathcal{K}}
\newcommand{\cX}{\mathcal{X}}
\newcommand{\cY}{\mathcal{Y}}
\newcommand{\mA}{\mathsf{A}}
\newcommand{\mC}{\mathsf{C}}
\newcommand{\mF}{\mathsf{F}}
\newcommand{\mG}{\mathsf{G}}
\newcommand{\mH}{\mathsf{H}}
\newcommand{\mT}{\mathsf{T}}
\newcommand{\mS}{\mathsf{S}}
\newcommand{\mR}{\mathsf{R}}
\newcommand{\mK}{\mathsf{K}}
\newcommand{\mU}{\mathsf{U}}
\newcommand{\mX}{\mathsf{X}}
\newcommand{\mY}{\mathsf{Y}}
\newcommand{\mZ}{\mathsf{Z}}
\newcommand{\me}{\mathsf{e}}
\newcommand{\mf}{\mathsf{f}}
\newcommand{\CRT}{\mathcal{T}_\me}
\newcommand{\Sym}{\text{Sym}}
\newcommand{\cE}{\mathcal{E}}
\newcommand{\hei}{\text{h}}
\newcommand{\He}{\textnormal{H}}
\newcommand{\Di}{\textnormal{D}}
\newcommand{\emb}{\textnormal{emb}}
\newcommand{\he}{\text{h}}
\newcommand{\spa}{\mathsf{span}}
\newcommand{\eqdist}{\,{\buildrel (d) \over =}\,}
\newcommand{\convdis}{\,{\buildrel d \over \longrightarrow}\,}
\newcommand{\convp}{\,{\buildrel p \over \longrightarrow}\,}
\newcommand{\Set}{\textsc{SET}}
\newcommand{\Seq}{\textsc{SEQ}}
\newtheorem{theorem}{Theorem}[section]
\newtheorem{corollary}[theorem]{Corollary}
\newtheorem{proposition}[theorem]{Proposition}
\newtheorem{lemma}[theorem]{Lemma}
\numberwithin{equation}{section}
\title{Random enriched trees with applications to random graphs}
\author{Benedikt Stufler}
\thanks{The author is supported by the German Research Foundation DFG, STU 679/1-1}
\address[Benedikt Stufler]{Unit\'e de Math\'ematiques Pures et Appliqu\'ees,
	\'Ecole Normale Sup\'erieure de Lyon }
\email{benedikt.stufler@ens-lyon.fr}
\begin{document}

\begin{abstract}
	We establish limit theorems that describe the asymptotic local and global geometric behaviour of random  enriched trees considered up to symmetry. We apply these general results to random unlabelled weighted rooted graphs and uniform random unlabelled $k$-trees that are rooted at a $k$-clique of distinguishable vertices. For both models we establish a Gromov--Hausdorff scaling limit, a Benjamini--Schramm limit, and a local weak limit that describes the asymptotic shape near the fixed root. 
\end{abstract}

\maketitle


\section{Introduction}
\label{sec:intro}

The study of  large random discrete structures lies at the intersection of probability theory and combinatorics.  A combinatorial approach often involves using the framework of combinatorial classes to express the quantities under consideration in terms of coefficients of power series, and applying analytic tools such as singularity analysis or saddle-point methods to obtain very precise limits and concentration results \cite{MR2484382, MR2483235, MR1871555, MR2735332, MR3068033, 2016arXiv160504206C}. From a probabilistic viewpoint, the focus is on  establishing graph limits describing the asymptotic shape, either locally in so called local weak limits  \cite{MR2013797, 2014arXiv1412.6911S, MR3083919, MR2243873, MR1873300,   MR3256879,MR3183575}, or globally in Gromov--Hausdorff scaling limits \cite{MR3342658, MR3414449, 2015arXiv150306738A,  MR2336042, MR2778796, MR3112934, MR3070569}, and more recently, also on an intermediate scale in local Gromov--Hausdorff scaling limits \cite{2016arXiv160801129B, 2015arXiv151101028W}.

In this context, many of the objects under consideration  such as graphs and planar maps are endowed with an operation of the symmetric group, and it is natural  to study the corresponding unlabelled objects, that is, the orbits under this group action as representatives of objects considered up to symmetry.  For some types of planar maps this is not a particularly interesting endeavour, as their study may often be reduced to half-edge rooted maps which admit only trivial symmetries. On the other hand, a  variety of  discrete structures such as graph classes with constraints exhibit a highly complex and interesting symmetric structure. For example, the precise asymptotic number of random labelled planar graphs has been obtained roughly a decade ago in the celebrated work by Gim\'enez and Noy~\cite{MR2476775}, but the asymptotic number of unlabelled planar graphs is  still unknown and obtaining it is surely one of the central contemporary problems in enumerative combinatorics. 

The study of  objects considered up to symmetry poses a particular challenge.  Probabilistic approaches in the past treat  models of random unordered trees~\cite{2015arXiv150207180P, 2014arXiv1412.6333S, MR2829313, MR3050512, 2016arXiv160408287W}. Combinatorial results on more complex structures  were obtained for example by Bodirsky, Fusy and Kang \cite{MR2350456}  and Kraus~\cite{MR2735357} for models of unlabelled outerplanar graph, by Drmota, Fusy, Kang, Kraus and Ru{\'e}~\cite{MR2873207} for so called families of subcritical  classes of unlabelled graphs, and Drmota and Jin~\cite{MR3509385} and Gainer-Dewar, Gessel and Ira \cite{MR3213312} for unlabelled $k$-dimensional trees. 

In the present work, we obtain probabilistic limits for a large family of random combinatorial objects considered up to symmetry, including random unlabelled rooted graphs that are sampled according to weights on their $2$-connected components, and random front-rooted unlabelled $k$-dimensional trees. Rather than treating each model individually, we take a unified approach and establish a set of limit theorems that apply to the abstract family of random unlabelled $\cR$-enriched trees, with the class $\cR$ ranging over all combinatorial classes. The concept of enriched trees goes back to Labelle \cite{MR642392}. Roughly speaking, given a class $\cR$ of combinatorial objects, an $\cR$-enriched tree is a rooted tree together with a function that assigns to each vertex an $\cR$-structure {\em on} its offspring. The model we consider is an unlabelled $\cR$-enriched tree with $n$ vertices considered up to symmetry, that we sample with probability proportional to a weight formed by taking the product of weights assigned to its local $\cR$-structures. The limits are formed as $n$ becomes large, possibly along a shifted  sublattice of the integers. Of course it also makes sense to study random labelled pendants of enriched trees, and  this endeavour is undertaken in~\cite{enr}.

Recall that a symmetry may be defined as a combinatorial structure together with an automorphism. Our approach uses an encoding of symmetries of an $\cR$-enriched tree by a $\Sym(\cR)$-enriched plane tree, that is, a plane tree where each vertex is endowed with a local $\cR$-symmetry. We construct two infinite limit objects in terms of random trees enriched with local symmetries, and establish  weak limits that describe the asymptotic behaviour of the $o(\sqrt{n})$-vicinity of the  root vertex of the random enriched tree, and the $o(\sqrt{n})$-vicinity near a uniformly at random selected node. For the latter, some inspiration was taken by Aldous' approach  \cite{MR1102319} on asymptotic fringe distributions. In order to study global geometric properties, we define metric spaces based on random unlabelled enriched trees that are patched together from a random cover by small  spaces. Using a size-biased $\Sym(\cR)$-enriched tree that is strongly related to the local weak limit, we study the asymptotic global metric structure as the number of points of this model of random metric spaces becomes large, resulting in a Gromov--Hausdorff scaling limit.

In order to illustrate the scope of our results, we provide applications to specific models of random unlabelled graphs. The first model considered is that of random unlabelled rooted connected graphs sampled with probability proportional to a product of weights assigned to their $2$-connected components. For this model, we obtain a local weak limit that describes the asymptotic vicinity near the fixed root, a Benjamini--Schramm limit that describes the asymptotic shape near a random vertex, and a Gromov--Hausdorff scaling limit. Moreover, we obtain sharp tail bounds for the diameter. In the two local limits, we even obtain total variational convergence of arbitrary $o(\sqrt{n})$-sized neighbourhoods of the fixed root and random root, which is best-possible as the convergence fails for neighbourhoods whose radius is comparable to $\sqrt{n}$. The setting we consider explicitly includes uniform random unlabelled rooted graphs from so called subcritical graph classes introduced in \cite{MR2873207}, such as series-parallel graphs, outerplanar graphs, and cacti graphs. As for extremal graph parameters, our results also establish the correct order of the diameter. The maximum degree and largest $2$-connected component are shown to have typically order $O(\log n)$. In \cite{MR2873207} additive parameters of these graphs such as the degree distribution were studied using analytic methods. The two local limits add a probabilistic interpretation to the limit degree distributions obtained in \cite{MR2873207} for the degree of a random vertex and of the fixed root. Furthermore, general results by Kurauskas \cite[Thm. 2.1]{2015arXiv150408103K}) and Lyons \cite[Thm. 3.2]{MR2160416} for Benjamini--Schramm convergent sequences of random graphs may be applied to deduce laws of large numbers for subgraph count asymptotics and spanning tree count asymptotics in terms of the Benjamini--Schramm limit.

Our general results also apply to random unlabelled $k$-trees that are rooted at a front of distinguishable vertices.  A $k$-tree consists either of a complete graph with $k$ vertices, or is obtained from a smaller $k$-tree by adding a vertex and connecting it with $k$ distinct vertices of the smaller $k$-tree.  Such objects are interesting from a combinatorial point of view, as their enumeration problem has a long history, see \cite{MR0237382,MR3007180,MR1888841,MR0299535,MR2577935,MR0234868,MR0357214}. They are also interesting from an algorithmic point of view, as many NP-hard problems on graphs have polynomial algorithms when restricted to $k$-trees \cite{MR985145,MR2484635}. Employing recent results for limits of random unlabelled Gibbs partitions~\cite{2016arXiv161001401S}, we obtain a local weak limit for unlabelled front-rooted $k$-trees that describes the total variational asymptotic behaviour of arbitrary $o(\sqrt{n})$-neighbourhoods of the root-front. We also obtain a Benjamini--Schramm limit describing the asymptotic shape of $o(\sqrt{n})$-neighbourhoods of a uniformly at random selected vertex. Furthermore, we obtain a Gromov--Hausdorff scaling limit. For all three limits, a concentration result is required that relates the distances of certain points with respect to the $k$-tree metric and to a tree-metric in the underlying representation by trees endowed with local symmetries. We obtain this intermediate result by locating a hidden Markov chain and applying a  large deviation inequality by  Lezaud~\cite{MR1627795} for functions on  non-reversible Markov processes.

As a third application, our results also yield local weak limits, Benjamini--Schramm limits and scaling limits for a family of random unordered trees drawn according to weights assigned to the vertex degrees.

\subsection*{Plan of the paper}
Section~\ref{sec:intro} gives an informal description of the topic and main applications. Section~\ref{sec:prel} fixes notation related to graphs, trees and $k$-trees, and recalls necessary background on local weak convergence, Gromov--Hausdorff convergence and further properties of large critical Galton--Watson trees. Section~\ref{sec:combspec} is a concise introduction to the combinatorial framework of species of structures and Boltzmann distributions, with a focus on the decomposition of symmetries. Section~\ref{sec:gibbs} discusses a limit theorem for unlabelled Gibbs partitions, that we are going to use in our applications. Section~\ref{sec:tools} explicitly states some probabilistic and combinatorial tools related to random walks and Markov chains. Section~\ref{sec:limits} presents the contributions of the present paper in detail. Specifically, Subsection~\ref{sec:intro1} introduce the model of unlabelled $\cR$-enriched trees under consideration, and discusses combinatorial bijections that show how this generalizes various models of random graphs considered up to symmetry, in particular unlabelled block-weighted rooted graphs and unlabelled front-rooted $k$-trees. Subsection~\ref{sec:locunl} builds the framework regarding the local weak limits of unlabelled enriched trees with respect to the fixed root vertex and with respect to a randomly selected point.  Subsection~\ref{sec:partA} introduces a general model of random metric spaces based on random unlabelled enriched trees, and establishes a scaling limit and sharp diameter tail-bounds. Subsection~\ref{sec:unlablocal} presents our applications to random weighed unlabelled connected rooted graphs, in particular a scaling limit with respect to the first-passage percolation metric, sharp diameter tail-bounds, a local weak limit and a Benjamini--Schramm limit. Subsection~\ref{sec:apktr} discusses applications to random unlabelled front-rooted $k$-dimensional trees, for which a scaling limit, a local weak limit and a Benjamini--Schramm limit are established. Subsection~\ref{sec:polya} presents further applications to a family of simply generated unlabelled unrooted trees. In Section~\ref{sec:proo} we collect all proofs.

\section{Preliminaries}
\label{sec:prel}

\subsection{Notation}
\label{sec:not}
Throughout, we set
\[
\ndN=\{1,2,\ldots\}, \qquad \ndN_0 = \{0\} \cup \ndN, \qquad [n]=\{1,2,\ldots, n\}, \qquad n \in \ndN_0.
\]
The set of non-negative real numbers is denoted by $\ndR_{\ge 0}$. We usually assume that all considered random variables are defined on a common probability space whose measure we denote by $\mathbb{P}$, and let $\mathbb{L}_p$ denote the corresponding space of $p$-integrable real-valued functions.  All unspecified limits are taken as $n$ becomes large, possibly taking only values in a subset of the natural numbers.  We write $\convdis$ and $\convp$ for convergence in distribution and probability, and $\eqdist$ for equality in distribution. An event holds with high probability, if its probability tends to $1$ as $n$ tends to infinity..
We let $O_p(1)$ denote an unspecified random variable $X_n$ of a stochastically bounded sequence $(X_n)_n$, and write $o_p(1)$ for a random variable $X_n$ with $X_n \convp 0$. We write $\cL(X)$ to denote the law of a random variable $X$. The total variation distance of measures and random variables is denoted by $d_{\textsc{TV}}$. Given a power series $f(z)$, we let $[z^n]f(z)$ denote the coefficient of $z^n$ in $f(z)$.

\subsection{Graphs, trees and k-trees}
\label{sec:graph}

We are going to consider \emph{simple} graphs, that have no loops or parallel edges. The vertices that are adjacent to a vertex $v$ in a graph $G$ are its \emph{neighbourhood}. The cardinality of its neighbourhood is called the \emph{degree} of the vertex $v$, and denoted by $d_G(v)$. A graph is termed \emph{connected}, if any two vertices may be joined by a \emph{path}. More generally, for $k\ge 1$ we say a graph $G$ is $k$-connected, if it has at least $k+1$ vertices and deleting any $k-1$ of them does not disconnect the graph. A {\em cutvertex} is a vertex whose removal disconnects the graph. Hence $2$-connected graphs are graphs without cutvertices and size at least three. 

A {\em graph isomorphism} between graphs $G$ and $H$ is a bijection between their vertex sets such any two vertices in $G$ are joined by an edge if and only if their images in $H$ are. In this case we say the two graphs are \emph{isomorphic}. We say a graph is \emph{rooted}, if one of its vertices is distinguished. Graph isomorphisms between rooted graphs are required to map the roots to each other. More generally, we may consider graphs with an ordered number of distinguishable root-vertices, that must be respected by graph isomorphisms. A graph considered up to isomorphism is an {\em unlabelled graph}. That is, any two unlabelled graphs are distinct if they are not isomorphic. Formally, unlabelled graphs are defined as isomorphism classes of graphs. Unlabelled rooted graphs are defined analogously.

A \emph{tree} is a graph that is connected and does not contain circles. In a rooted tree, we say the vertices lying on the path between a vertex $v$ and the root are the \emph{ancestors} of $v$. The vertices that are joined to $v$ by an edge but are not ancestors are its \emph{offspring} or \emph{sons}. The collection of the sons of a vertex is its \emph{offspring set}. The cardinality of the offspring set of a vertex $v$ in a rooted tree $A$ is its \emph{outdegree} and denoted by $d_A^+(v)$. Unlabelled rooted trees are also termed~\emph{Pólya}-trees, in honour of George Pólya.

The complete graph with $n$ vertices is denoted by $K_n$. That is, in $K_n$ any two distinct vertices are connected. A subgraph of a graph is termed an $n$-\emph{clique}, if its isomorphic to $K_n$. A \emph{k-tree} is a graph that may be constructed by starting with a $k$-clique, and adding in each step a new vertex that gets connected with $k$ arbitrarily chosen distinct vertices of the previously constructed graph. The $k$-cliques of a $k$-tree are also called its \emph{fronts}, and the $k+1$-cliques its \emph{hedra}. In the present work, we are only considering $k$-trees that are rooted at a front of distinguishable vertices.

A \emph{block} $B$ of a graph $G$ is a subgraph that is inclusion maximal with the property of being either an isolated vertex, a $2$-clique, or $2$-connected. Connected graphs have a tree-like block-structure, whose details are explicitly given for example in Diestel's book \cite[Ch. 3.1]{MR2744811}. We mention a few properties, that we are going to use. Any two blocks of $G$ overlap in at most one vertex. The cutvertices of $G$ are precisely the vertices that belong to more than one block. 

Any connected graph $C$ is naturally equipped with the \emph{graph-metric} on its vertex set, that assigns to any two vertices the minimum number of edges required to join them. We usually denote by $d_C(\cdot, \cdot)$. \label{mark:dc} Given a vertex $v \in C$ and an integer $k \ge 0$, the \emph{$k$-neighbourhood} $V_k(C,v)$ is the subgraph induced by all vertices with distance at most $k$ from $v$. \label{mark:nv} We regard $V_k(C,v)$ as rooted at the vertex $v$. The \emph{diameter} $\Di(C)$ is the supremum of all distances between pairs of vertices. For a rooted graph $C^\bullet$, we may also consider its \emph{height} $\He(C^\bullet)$, which is the supremum of all distances of vertices from the root of $C^\bullet$. Given a vertex $v$, we let $\he_{C^\bullet}(v)$ denote its distance from the root.

Another metric on $C$ is the \emph{block-metric} $d_{\textsc{block}}$. \label{mark:db} The block-distance between any two vertices of $C$ is given by the minimum number of blocks required to cover any  joining path. By standard properties of the block-structure of connected graphs, the choice of the joining path does not matter. For any vertex $v \in C$ and any integer $k \ge 0$ we let $U_k(C,v)$ denote the \emph{$k$-block-neighbourhood}, that is, the subgraph induced by all vertices with block-distance at most $k$. \label{mark:nu} We regard $U_k(C,v)$ as rooted at the vertex $v$.

\subsection{Local weak convergence}
\label{sec:lowe}
Let $G^\bullet = (G, v_G)$ and $
H^\bullet = (H, v_H)$ be two connected, rooted, and locally finite graphs. We may consider the distance
\[
d(G^\bullet, H^\bullet) =  2^{-\sup \{k \in \ndN_0 \,\mid\, V_k(G^\bullet) \simeq V_k(H^\bullet) \}}\]
with  $V_k(G^\bullet) \simeq V_k(H^\bullet)$ denoting isomorphism of rooted graphs. This defines a premetric on the collection of all rooted locally finite connected graphs.  Two such graphs have distance zero, if and only if they are isomorphic. Hence this defines a metric on the collection $\mathbb{B}$ of all unlabelled, connected, rooted, locally finite graphs. The space $(\mathbb{B}, d_{\textsc{BS}})$ is known to be complete and separable, that is, a Polish space. 

A random rooted graph $\mG^\bullet \in \mathbb{B}$ is the the {\em local weak limit} of a sequence $\mG_n^\bullet = (\mG_n, v_n)$, $n\in \ndN$  of random elements of $\mathbb{B}$, if it is the weak limit with respect to this metric. That is, if \[\lim_{n \to \infty} \Ex{f(G_n^\bullet)} = \Ex{f(G^\bullet)}\] for any bounded continuous function $f: \mathbb{B} \to \ndR$. This is equivalent to stating
\[
\lim_{n \to \infty} \Pr{V_k(\mG_n^\bullet) \simeq G^\bullet} = \Pr{V_k(\mG^\bullet) \simeq G^\bullet}.
\]
for any rooted graph $G^\bullet$. If the conditional distribution of $v_n$ given the graph $\mG_n$ is uniform on the vertex set $V(\mG_n)$, then the limit $G^\bullet$ is often also called the \emph{Benjamini--Schramm limit} of the sequence $(\mG_n)_n$. This kind of convergence is often yields laws of large numbers for additive graph parameters.

\subsection{Gromov--Hausdorff convergence}
\label{sec:ghc}

Let $X^\bullet =  (X,d_X, x_0)$ and $Y^\bullet = (Y,d_Y,y_0)$ denote pointed compact metric spaces. A {\em correspondence} between $X^\bullet$ and $Y^\bullet$ is a subset $R \subset X \times Y$ containing $(x_0,y_0)$ such that for any $x \in X$ there is a point $y \in Y$ with $(x,y) \in R$, and conversely for any $y \in Y$ there is a point $x \in X$ with $(x,y) \in R$. The {\em distortion} of the correspondence is defined as the supremum \[\text{dis}(R) = \sup \{|d_X(x_1, x_2) - d_Y(y_1, y_2) \mid (x_1, y_1), (x_2, y_2) \in R \}.\] The {\em (pointed) Gromov--Hausdorff distance} between the pointed spaces $X^\bullet$ and $Y^\bullet$ may be defined by \[d_{\textsc{GH}}(X,Y) = \frac{1}{2} \inf_R \text{dis}(R)\] with the index $R$ ranging over all correspondences between $X^\bullet$ and $Y^\bullet$. The factor $1/2$ is only required in order to stay consistent with an alternative definition of the Gromov--Hausdorff distance via the Hausdorff distance of embeddings of $X^\bullet$ and $Y^\bullet$ into common metric spaces, see \cite[Prop.\ 3.6]{MR3025391} and \cite[Thm.\ 7.3.25]{MR1835418}. This distance satisfies the axioms of a premetric on the collection of all compact rooted metric spaces. Two such spaces have distance zero from each other, if and only if they are isometric. That is, if there is a distance preserving bijection between the two that also preserves the root vertices. Hence this yields a metric on the collection $\ndK^\bullet$ of isometry classes of pointed compact metric spaces. The space $(\ndK^\bullet, d_{\text{GH}})$ is known to be Polish (complete and separable), see \cite[Thm.\ 3.5]{MR3025391} and \cite[Thm. 7.3.30 and 7.4.15]{MR1835418}.

\subsection{Large critical Galton--Watson trees}
\label{sec:gwt}

In this section we let $\cT_n$ denote a Galton--Watson tree conditioned on having $n$ vertices, such that offspring distribution $\xi$ has average value $\Ex{\xi} =1$ and finite non-zero variance $\sigma^2$.

\subsubsection{Convergence toward the CRT}
The (Brownian) continuum random tree (CRT) is a random metric space constructed by Aldous in his pioneering papers \cite{MR1085326, MR1166406, MR1207226}. Its construction is as follows. To any continuous function $f: [0,1] \to [0, \infty[$  satisfying $f(0) = f(1) = 0$ we may associate a premetric $d$ on the unit interval $[0,1]$ given by
\[
d(u,v) = f(u) + f(v) - 2 \inf_{u \le s \le v} f(s)
\]
for $u \le v$. The corresponding quotient space $(\cT_f, d_{\cT_f}) = ([0,1]/ \mathord{\sim}, \bar{d})$, in which points with distance zero from each other are identified, is considered as rooted at the coset $\bar{0}$ of the point zero. This pointed metric space is an $\ndR$-tree, see  \cite{MR2147221, MR3025391} for the definition of $\ndR$-trees and further details. The CRT may be defined as the random pointed metric space $(\CRT, d_{\CRT}, \bar{0})$ corresponding to Brownian excursion $\me = (\me_t)_{0 \le t \le 1}$ of duration one.

The famous invariance principle,
\begin{align}
	(\cT_n, \frac{\sigma}{2} n^{-1/2} d_{\cT_n}, \emptyset) \convdis (\CRT, d_{\CRT}, \bar{0})
\end{align}
in the Gromov--Hausdorff sense, is due to Aldous \cite{MR1207226} and there exist various extensions, see for example Duquesne \cite{MR1964956}, Duquesne and Le Gall \cite{MR2147221}, Haas and Miermont \cite{MR3050512}. 

\subsubsection{Tail-bounds for the height and level width}
Addario-Berry, Devroye and Janson {\cite[Thm. 1.2]{MR3077536}} showed that there are constants $C,c>0$ such that for all $n$ and $h \ge 0$
\begin{align}
	\label{eq:gwttail}
	\Pr{\He(\cT_n) \ge h)} \le C \exp(-c h^2/n).
\end{align}	
A corresponding left-tail upper bound of the form
\begin{align}
	\label{eq:gwtlefttail}
\Pr{\He(\cT_n) \le h} \le C \exp(-c(n-2)/h^2)
\end{align}
for all $n$ and $h \ge 0$ is given in \cite[p. 6]{MR3077536}. The first moment of the number $L_k(\cT_n)$ of all vertices $v$ with height $h_{\cT_n}(v)= k$ admits a bound of the form
\begin{align}
	\label{eq:ineqstr1}
	\Ex{L_k(\cT_n)} \le C k \exp(-ck^2/n).
\end{align}
for all $n$ and $k \ge 1$. See \cite[Thm. 1.5]{MR3077536}. 

\section{Combinatorial species and weighted Boltzmann distributions}

\label{sec:combspec}
In order to study combinatorial objects up to symmetry, it is convenient to use the  language of combinatorial species developed by Joyal \cite{MR633783}.  It provides a clean and powerful framework in which complex combinatorial bijection may be stated using simple algebraic terms. In order to make the present work accessible to a large audience, we recall the notions and results required to state and prove our main results. The theory admits an elegant and concise description using the language of category theory, but we will avoid this terminology and assume no knowledge by the reader in this regard. The exposition of the combinatorial and algebraic aspects in the following subsections follows mainly Joyal \cite{MR633783} and Bergeron, Labelle and Leroux \cite{MR1629341}. The probabilistic aspects in the Boltzmann sampling framework is based on a  recent work by Bodirsky, Fusy, Kang and Vigerske \cite[Prop. 38]{MR2810913}.


\subsection{Weighted combinatorial species}
\label{sec:preop}
A {\em combinatorial species} $\cF$ is a functor or "rule" that produces for each finite set $U$ a finite set $\cF[U]$ of $\cF$-objects and for each bijection $\sigma: U \to V$ a bijective map $\cF[\sigma]: \cF[U] \to \cF[V]$ such that the following properties hold.
\begin{enumerate}[1)]
	\item $\cF$ preserves identity maps, that is for any finite set $U$ it holds that \[\cF[\text{id}_U] = \text{id}_{\cF[U]}.\]
	\item $\cF$ preserves composition of maps, that is,  for any bijections of finite sets $\sigma:U \to V$ and $\sigma': V \to W$ we require that \[\cF[\sigma' \sigma] = \cF[\sigma']\cF[\sigma].\]
\end{enumerate}
The idea behind this is that finite combinatorial objects are composed out of atoms, and relabelling this atoms yields structurally equivalent objects.


We say a combinatorial species $\cF$ maps any finite set $U$ of {\em labels} to the finite set $\cF[U]$ of {\em $\cF$-objects}  and any bijection $\sigma: U \to V$ to the {\em transport function} $\cF[\sigma]$. For any two $\cF$-objects $F_U \in \cF[U]$ and $F_V \in \cF[V]$ that satisfy $\cF[\sigma](F_U) = F_V$, we say $F_U$ and $F_V$ are {\em isomorphic} and $\sigma$ is an {\em isomorphism} between them. The object $F_U$ has {\em size} $|F_U| = |U|$ and $U$ is its \emph{underlying set}. An {\em unlabelled $\cF$-object} or {\em isomorphism type} is an isomorphism class of $\cF$-objects. That is, a maximal collection of pairwise isomorphic objects.  By abuse of notation, we treat unlabelled objects as if they were regular objects. 



An  {\em $\ndR_{\ge 0}$-weighted species} $\cF^\omega$ consists of a species $\cF$ and a {\em weighting} $\omega$ that produces for any finite set $U$ a map \[\omega_U: \cF[U] \to \ndR_{\ge 0}\] such that $\omega_U = \omega_V \circ \cF[\sigma]$ for any bijection $\sigma: U \to V$. Any object $F \in \cF[U]$ has {\em weight} $\omega_U(F)$. By abuse of notation we will often drop the index and write $\omega(F)$ instead of $\omega_U(F)$. Isomorphic structures have the same weight, hence we may define the weight of an unlabelled $\cF$-object  to be the weight of any representative. The inventory $|\tilde{\cF}[n]|_\omega$ is defined as the sum of weights of all unlabelled $\cF$-objects of size $n$. Any species may be considered as a weighted species by assigning weight $1$ to each structure, and in this case the inventory counts the number of $\cF$-objects. If we do not specify any weighting for a species, then we assume that it is equipped with this canonical weighting.

To any weighted species $\cF^\omega$ we may associate its  {\em ordinary generating series}  \[ \tilde{\cF}^\omega(z) = \sum_{n\ge 0} |\tilde{\cF}[n]|_\omega z^n = \sum_{F \text{ unlabelled $\cF$-object}} \omega(F) z^{|F|}.\] We may form the species $\Sym(\cF)$ of {\em $\cF$-symmetries} by letting $\Sym(\cF)[U]$ be the set of all pairs $(F, \sigma)$ with $F \in \cF[U]$ an $\cF$-structure and $\sigma$ an {\em automorphism} of $F$, that is, a bijection $\sigma: U \to U$ with $\cF[\sigma](F) = F$. For any bijection $\gamma: U \to V$ the corresponding transport function $\Sym(\cF)[\gamma]$ maps to a symmetry $(F, \sigma) \in \Sym(\cF)[U]$ to the symmetry $(\cF[\gamma](F), \gamma \sigma \gamma^{-1})$ in $\Sym(\cF)[V]$.

There is a canonical weighting on $\Sym(\cF)$ with weights in the power series ring $\ndR[[s_1, s_2, \ldots]]$. By abuse of notation, we also denote this weighting by $\omega$. It is given by \[\omega(F, \sigma) = \omega(F) s_1^{\sigma_1} s_2^{\sigma_2}\cdots\] with $\sigma_i$ denoting the number of cycles of length $i$ of the permutation $\sigma$. Here we count fixpoints as $1$-cycles. The {\em cycle index sum} $Z_{\cF^\omega}$ of $\cF^\omega$ may be defined is defined by
\[
Z_{\cF^\omega} = \sum_{n \ge 0} \sum_{(F,\sigma) \in \Sym(\cF)[n]} \omega(F,\sigma) / n! \in \ndR[[s_1, s_2, \ldots]].
\]
The generating series are related by  
\begin{align}
	\label{eq:relcyc}
 \tilde{\cF}^\omega(z) = Z_{\cF^\omega}(z, z^2, z^3, \ldots).
\end{align}
 See for example Chapter 2.3 in the book by Bergeron, Labelle and Leroux \cite{MR1629341}. 
The main point of considering symmetries is the following.
\begin{lemma}[Number of symmetries]
	\label{le:relcyc}
For each unlabelled $\cF$-object $s$ with size $n$ there are precisely $n!$ symmetries $(F, \sigma) \in \Sym(\cF)[n]$ such that the isomorphism type $t(F)$ of the $\cF$-object $F$ is equal to $s$.
\end{lemma}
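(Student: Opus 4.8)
The plan is to realize the desired count as a direct consequence of the orbit--stabilizer theorem applied to the action of the symmetric group by transport functions. Write $\fmS_n$ for the group of bijections $[n]\to[n]$. The two functoriality axioms in the definition of a species say exactly that $\sigma\mapsto\cF[\sigma]$ is a group homomorphism from $\fmS_n$ into the group of permutations of the finite set $\cF[n]$; hence $\fmS_n$ acts on $\cF[n]$ via $\sigma\cdot F=\cF[\sigma](F)$. Two $\cF$-objects on the label set $[n]$ are isomorphic if and only if they lie in the same orbit of this action, so the set
\[
\cF_s[n]:=\{F\in\cF[n]\mid t(F)=s\}
\]
is precisely one orbit; it is nonempty because $s$ has size $n$ (transport any representative to the label set $[n]$). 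Moreover, by definition the stabilizer of an object $F\in\cF[n]$ is the automorphism group $\Aut(F)=\{\sigma\in\fmS_n\mid\cF[\sigma](F)=F\}$, and $\sigma\in\Aut(F)$ holds exactly when $(F,\sigma)\in\Sym(\cF)[n]$.

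With this in place the proof is a one-line computation. Stabilizers of points in a common orbit are conjugate, hence of the same cardinality, so $|\Aut(F)|=|\Aut(F_0)|$ for every $F\in\cF_s[n]$, where $F_0\in\cF_s[n]$ is a fixed representative. Consequently the number of symmetries $(F,\sigma)\in\Sym(\cF)[n]$ with $t(F)=s$ equals
\[
\sum_{F\in\cF_s[n]}|\Aut(F)|\;=\;|\cF_s[n]|\cdot|\Aut(F_0)|\;=\;\frac{|\fmS_n|}{|\Aut(F_0)|}\cdot|\Aut(F_0)|\;=\;n!,
\]
where the orbit--stabilizer theorem was used for the identity $|\cF_s[n]|=|\fmS_n|/|\Aut(F_0)|$.

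There is no genuine obstacle here: the statement is just a repackaging of orbit--stabilizer once the definitions are unwound, and the only points meriting a line of justification are that $\sigma\mapsto\cF[\sigma]$ is a group action (which is literally the species axioms), that an orbit of $\fmS_n$ on $\cF[n]$ coincides with the set of objects of a fixed isomorphism type, and that stabilizers along an orbit have equal size. A reader preferring an explicit bijection to a counting argument can fix $F_0$ and check that $(\gamma,\sigma_0)\mapsto(\cF[\gamma](F_0),\,\gamma\sigma_0\gamma^{-1})$ descends to a bijection from the quotient of $\fmS_n\times\Aut(F_0)$ by the free action $\alpha\cdot(\gamma,\sigma_0)=(\gamma\alpha^{-1},\,\alpha\sigma_0\alpha^{-1})$ onto the set of symmetries of type $s$; the quotient has $n!\,|\Aut(F_0)|/|\Aut(F_0)|=n!$ elements.
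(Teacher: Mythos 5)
Your proof is correct and is precisely the ``basic properties of group operations'' argument that the paper gestures at without spelling out (it merely cites Joyal for this): identify isomorphism classes with $\fmS_n$-orbits under transport, identify $\Aut(F)$ with the stabilizer, and apply orbit--stabilizer. Nothing to add.
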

 This follows from basic properties of  group operations, see for example Joyal \cite[Sec. 3]{MR633783}. In particular, if we draw a symmetry $(\mF, \sigma)$ from the set $\Sym(\cF)[n]$ at random with probability proportional to its weight, then 
\begin{align}
\Pr{t(\mF) = t} = \omega(t) / \sum_{s \in \tilde{\cF}[n]} \omega(s)
\end{align}
for any unlabelled $\cF$-object $t$ of size $n$.

We say that two species $\cF$ and $\cG$ are {\em isomorphic}, denoted by $\cF \simeq \cG$, if there is a family $(\alpha_U)_U$ of bijections $\alpha_U: \cF[U] \to \cG[U]$, with the index $U$ ranging over all finite sets, such that the following diagram commutes for any bijection $\sigma: U \to V$ of finite sets.
\[
\xymatrix{ \cF[U] \ar[d]^{\alpha_U} \ar[r]^{\cF[\sigma]} &\cF[V]\ar[d]^{\alpha_V}\\
	\cG[U] \ar[r]^{\cG[\sigma]} 		    &\cG[V]}
\]
The family $(\alpha_U)_U$ is then termed a {\em species isomorphism} from $\cF$ to $\cG$.

Two weighted species $\cF^{\omega}$ and $\cG^\nu$ are called isomorphic, if there exists a species isomorphism $(\alpha_U)_U$ from $\cF$ to $\cG$ that preserves the weights, that is, with $\nu(\alpha_U(F)) = \omega(F)$ for each finite set $U$ and $\cF$-object $F \in \cF[U]$. In this case the cycle index sums and hence also the other two generating series of $\cF^\omega$ and $\cG^\nu$ coincide.

There are some natural examples of species that we are going to encounter frequently. The species $\Set$ with $\Set[U] = \{U\}$ has only one structure of each size and its cycle index sum is given by
\[
	Z_\Set(s_1, s_2, \ldots) = \exp(\sum_{i \ge 1} s_i / i).
\]
The species $\Seq$ of linear orders assigns to each finite set $U$ the set $\Seq[U]$ of tuples $(u_1, \ldots, u_t)$ of distinct elements with  $U=\{u_1, \ldots, u_t\}$. Its cycle index sum is given by
\[
	Z_\Seq(s_1, s_2, \ldots) = 1/(1 -s_1).
\]
The species $\cX$ is given by $\cX[U] = \emptyset$ if $|U| \ne 1$ and $\cX[U] = \{U\}$ if $U$ is a singleton. The species $0$ is given by $0[U] = \emptyset$ for all $U$, and the species $1$ by $1[\emptyset] = \{\emptyset\}$ and $1[U] = \emptyset$ for all finite non-empty sets $U$.

\subsection{Operations on species}
\label{sec:opspe}
Species may be combined in several ways to form new species. 


\subsubsection{Products}
The {\em product} $\cF \cdot \cG$ of two species $\cF$ and $\cG$ is the species given by \[(\cF \cdot \cG)[U] = \bigsqcup_{(U_1, U_2)} \cF[U_1] \times \cG[U_2]\] with the index ranging over all ordered 2-partitions of $U$, that is, ordered pairs of (possibly empty) disjoint sets whose union equals $U$. The transport of the product along a bijection is defined componentwise. Given weightings $\omega$ on $\cF$ and $\nu$ on $\cG$, there is a canonical weighting on the product given by \[\mu(F,G) = \omega(F)\nu(G).\] This defines the product of weighted species
\[(\cF \cdot \cG)^\mu = \cF^\omega \cdot \cG^\nu.\]
The corresponding cycle index sum  satisfies \[Z_{(\cF \cdot \cG)^\mu} = Z_{\cF^\omega} Z_{\cG^\nu}.\] 
We also define the powers of a species by
\[
	(\cF^\omega)^i = \cF^\omega \cdot \ldots \cdot \cF^\omega
\]
with $i$ factors in total, and define $(\cF^\omega)^0 = 0$ to be the empty species having no objects at all.

\subsubsection{Sums} Let $(\cF_i)_{i \in I}$ be a family of species such that for any finite set $U$ only finitely many indices $i$ with $\cF_i[U] \ne \emptyset$ exist. Then the {\em sum} $\sum_{i \in I} \cF_i$ is a species defined by \[(\sum_{i \in I} \cF_i)[U] = \bigsqcup_{i \in I} \cF_i[U].\] Given weightings $\omega_i$ on $\cF_i$, there is a canonical weighting $\mu$ on the sum given by \[\mu(F) = \omega_i(F)\] for any $i$ and $F \in \cF_i[U]$. This defines the sum of the weighted species
\[
(\sum_{i \in I} \cF_i)^\mu = \sum_{i \in I} \cF_i^{\omega_i}.
\]
The corresponding cycle index sum is given by \[Z_{\sum_i \cF_i^{\omega_i}} = \sum_i Z_{\cF_i^{\omega_i}}.\]

\subsubsection{Derived species}
Given a species $\cF$, the corresponding {\em derived} species $\cF'$ is given by
\[
\cF'[U] = \cF[U \cup \{*_U\}]
\]
with $*_U$ referring to an arbitrary fixed element not contained in the set~$U$. (For example, we could set $*_U = \{U\}$.) Any weighting $\omega$ on $\cF$ may also be viewed as a weighting on $\cF'$, by letting the weight of a derived object $F \in \cF'[U]$ be given by $\omega_{U \cup \{*_U\}}(F)$. The transport along a bijection $\sigma: U \to V$ is done by applying the transport $\cF[\sigma']$ of the bijection $\sigma': U \cup \{*_U\} \to V \cup \{*_V\}$ with $\sigma'|U = \sigma$. The cycle-index sum of the weighted derived species $(\cF^\omega)'$ is satisfies \[Z_{(\cF^\omega)'} = \frac{\partial}{\partial s_1} Z_{\cF^\omega}.\] 
By abuse of notation, we are often going to drop the index and just refer to the $*$-atom.

\subsubsection{Pointing} For any species $\cF$ we may form the {\em pointed} species $\cF^\bullet$. It is given by the product of species \[\cF^\bullet = \cX \cdot \cF'\] with $\cX$ denoting the species consisting of single object of size $1$. In other words, an $\cF^\bullet$-object is pair $(m, v)$ of an $\cF$-object $m$ and a distinguished label $v$ which we call the root of the object. Any weighting $\omega$ on $\cF$ may also be considered as a weighting on $\cF^\bullet$, by letting the weight of $(m,v)$ be given by $\omega(m)$. This choice of weighting is consistent with the natural weighting given by the product and derivation operation $\cX \cdot \cF'$, if we assign weight $1$ to the unique object of $\cX$. The corresponding cycle index sum is consequently given by
\[
Z_{(\cF^\bullet)^\omega} = s_1 \frac{\partial}{\partial s_1} Z_{\cF^\omega}.
\]
\subsubsection{Substitution} Given species $\cF$ and $\cG$ with  $\cG[\emptyset] = \emptyset$, we may form the composition $\cF \circ \cG$ as the species with object sets \[(\cF \circ \cG)[U] = \bigcup_\pi \left ( \{\pi \} \times \cF[\pi] \times \prod_{Q \in \pi} \cG[Q] \right ),\] with the index $\pi$ ranging over all unordered partitions of the set $U$. Here the transport $(\cF \circ \cG)[\sigma]$ along a bijection $\sigma: U \to V$ is done as follows. For any object $(\pi, F, (G_Q)_{Q \in \pi})$ in $(\cF \circ \cG)[U]$ define the partition
\[
\hat{\pi} = \{\sigma(Q) \mid Q \in \pi \},
\]
and let
\[
\hat{\sigma}: \pi \to \hat{\pi}
\]
denote the induced bijection between the partitions. Then set
\[
(\cF \circ \cG)[\sigma](\pi, F, (G_Q)_{Q \in \pi}) = (\hat{\pi}, \cF[\hat{\sigma}](F), (\cG[\sigma|_Q](g_Q))_{ \sigma(Q) \in \hat{\pi}}).
\]
That is, the transport along the induced bijection of partitions gets applied to the $\cF$-object and the transports along the restrictions $\sigma|_Q$, $Q \in \pi$ get applied to the $\cG$-objects. Often, we are going to write $\cF(\cG)$ instead of $\cF \circ \cG$. Given a weighting $\omega$ on $\cF$ and a weighting $\nu$ on $\cG$, there is a canonical weighting $\mu$ on the composition given by \[\mu(\pi, F, (G_Q)_{Q \in \pi}) = \omega(F) \prod_{Q \in \pi} \nu(Q).\] This defines the composition of weighted species
\[
(\cF \circ \cG)^\mu = \cF^\omega \circ \cG^\nu.
\]
The corresponding cycle index sum is given by
\begin{align}
\label{eq:cyccomp}
Z_{(\cF \circ \cG)^\mu}(s_1, s_2, \ldots) = Z_{\cF^\omega}(Z_{\cG^\nu}(s_1, s_2, \ldots), Z_{\cG^{\nu^2}}(s_2, s_4, \ldots), Z_{\cG^{\nu^3}}(s_3, s_6, \ldots), \ldots).
\end{align}
Here $\nu^i$ denotes the weighting with $(\nu^i)(G) = \nu(G)^i$ for all $\cG$-structures $G$.
\subsubsection{Restriction}
For any subset $\Omega \subset \ndN_0$ we may restrict a weighted species $\cF^\omega$ to objects whose size lies in $\Omega$ and denote the result by $\cF^\omega_{\Omega}$. 

\subsubsection{Relations between the different operations}
The interplay of the operations discussed in this section is described by a variety of natural isomorphisms. The two most important are the \emph{product rule} and the \emph{chain rule}, which we are going to use often.
\begin{proposition}[\cite{MR633783}]
	\label{pro:chainprod}
	Let $\cF^\omega$ and $\cG^\nu$ be weighted species.
	\begin{enumerate}
		\item There is a canonical choice for an isomorphism
	\[
	(\cF^\omega \cdot \cG^\nu)' \simeq (\cF^\omega)'\cdot \cG^\nu + \cF^\omega \cdot(\cG^\nu)'.
	\]
		\item Suppose that $\cG[\emptyset]=\emptyset$. Then there is also a canonical isomorphism
	\[
		(\cF^\omega \circ \cG^\nu)' \simeq ((\cF^\omega)' \circ \cG^\nu) \cdot (\cG^\nu)'.
	\]
	\end{enumerate}
\end{proposition}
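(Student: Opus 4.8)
The plan is to prove both isomorphisms by exhibiting, for every finite set $U$, an explicit bijection between the relevant object sets, and then verifying that the family of these bijections is natural (commutes with all transport functions) and weight-preserving. Both constructions rest on the same observation: a derived structure over $U$ is by definition a structure over $U \cup \{*_U\}$, and one simply decomposes according to the ``location'' of the distinguished atom $*_U$.

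For the product rule, by definition $(\cF^\omega \cdot \cG^\nu)'[U] = (\cF \cdot \cG)[U \cup \{*_U\}] = \bigsqcup_{(V_1,V_2)} \cF[V_1] \times \cG[V_2]$, the disjoint union over ordered $2$-partitions of $U \cup \{*_U\}$. Every such partition has $*_U$ in exactly one part. If $*_U \in V_1$, write $U_1 = V_1 \setminus \{*_U\}$ and $U_2 = U \setminus U_1 = V_2$; transporting the $\cF$-component along the canonical bijection $U_1 \cup \{*_U\} \to U_1 \cup \{*_{U_1}\}$ fixing $U_1$ identifies $\cF[V_1] \times \cG[V_2]$ with $(\cF)'[U_1] \times \cG[U_2]$, and summing over ordered $2$-partitions $(U_1,U_2)$ of $U$ recovers $((\cF)'\cdot\cG)[U]$. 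The case $*_U \in V_2$ is symmetric and yields $(\cF\cdot(\cG)')[U]$, so altogether we land on $\big((\cF)'\cdot\cG + \cF\cdot(\cG)'\big)[U]$.

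For the chain rule, assuming $\cG[\emptyset]=\emptyset$ so that $\cF\circ\cG$ is defined, one argues in the same spirit: $(\cF^\omega\circ\cG^\nu)'[U] = (\cF\circ\cG)[U\cup\{*_U\}]$ consists of triples $(\pi,F,(G_Q)_{Q\in\pi})$ with $\pi$ a partition of $U\cup\{*_U\}$. Let $Q_*$ be the unique block containing $*_U$, set $R = Q_*\setminus\{*_U\}\subseteq U$, and let $\pi' = \pi\setminus\{Q_*\}$, a partition of $U\setminus R$. Identifying the block $Q_*$ with the formal atom $*_{\pi'}$ turns $F\in\cF[\pi]$ into an $(\cF)'$-structure on $\pi'$, which together with $(G_Q)_{Q\in\pi'}$ is precisely a $((\cF)'\circ\cG)$-structure on $U\setminus R$; meanwhile $G_{Q_*}\in\cG[R\cup\{*_U\}]$ becomes, after transport along $*_U\mapsto *_R$, a $(\cG)'$-structure on $R$. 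Letting $R$ range over all subsets of $U$ and recording the ordered pair $(U\setminus R, R)$ recovers $\big(((\cF)'\circ\cG)\cdot(\cG)'\big)[U]$. Weight-preservation in both parts is immediate: the canonical weighting $\mu$ on a product (resp.\ composition) is the product of the component weightings, and the decompositions above merely regroup the same scalar factors $\omega(F)$ and $\prod_Q \nu(G_Q)$.

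The routine-but-genuinely-necessary work, and the main obstacle, is checking naturality: that for every bijection $\sigma: U\to V$ the square relating the two transport functions commutes. For the product rule this is a direct unwinding of the componentwise transport on products together with the ``$\sigma'|_U=\sigma$, $\sigma'(*_U)=*_V$'' rule for derivatives. For the chain rule it is more delicate, since transport in $\cF\circ\cG$ acts through the induced bijection $\hat\sigma$ on partitions and through the restrictions $\sigma|_Q$: one must check that $\hat\sigma$ carries the starred block $Q_*$ to the starred block of the image partition and restricts correctly to $\pi'$, so that the splitting $(\pi,F,(G_Q))\mapsto\big((\pi',F,(G_Q)_{Q\in\pi'}),\,G_{Q_*}\big)$ is equivariant. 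A secondary bookkeeping point throughout is that the formal atoms $*_U$, $*_{U_1}$, $*_R$, $*_{\pi'}$ are a priori distinct elements, so each identification above must be realized by transport along the evident canonical bijection, and one should check these choices are mutually compatible. Once naturality is in hand, both statements follow; this is the content attributed to Joyal, and the weighted refinement costs nothing since the bijections are the unweighted ones, which manifestly respect the product and composition weightings.
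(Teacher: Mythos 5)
Your proof is correct and follows the same approach the paper sketches: decompose by the location of the distinguished $*$-atom, yielding the two summands in the product rule and the extracted starred block together with its $\cG'$-structure in the chain rule. The paper gives only the informal description of this bijection (citing Joyal for the full argument), while you additionally spell out the canonical-relabelling, naturality, and weight-preservation checks — all of which are accurate.
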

We may easily verify the product rule, as the $*$-label in $(\cF^\omega \cdot \cG^\nu)'$ may either belong the $\cF$-structure, accounting for the summand $(\cF^\omega)' \cdot \cG^\nu$, or to the $\cG$-structure, accounting for the second summand. The chain rule also has an intuitive explanation. The idea is that the partition class or $\cG$-structure containing the $*$-label in an $(\cF^\omega \circ \cG^\nu)'$-structure distinguishes an atom of the $\cF$-structure. Hence the $(\cF^\omega \circ \cG^\nu)'$-structure consists of a $(\cF^\omega)' \circ \cG^\nu)$ composite structure, where all atoms of the $\cF$-structure receive a regular $\cG$-structure, except for a marked $*$-atom, to which we assign a derived $\cG$-structure, which accounts for the extra factor $(\cG^\nu)'$.


\subsection{Symmetries of the substitution operation}
\label{sec:symmetry}
We are going to need detailed information on the structure of the symmetries of the composition $\cF \circ \cG$. The exposition of this section follows Joyal \cite[Sec. 3.2]{MR633783} and Bergeron, Labelle and Leroux \cite[Sec. 4.3]{MR1629341}. We are going to discuss the following result.

\begin{lemma}[Parametrization of the symmetries of the substitution]
	\label{le:symconstr}
	Up to isomorphism of symmetries, any $\cF \circ \cG$-symmetry may be constructed as described below from an $\cF$-symmetry $(F, \sigma)$ together with a family of $\cG$-symmetries $(G_\tau, \sigma_\tau)_\tau$ with the index $\tau$ ranging over all cycles of the permutation $\sigma$.
\end{lemma}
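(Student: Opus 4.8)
The plan is to make explicit the combinatorial construction underlying the statement and then argue that every $\cF \circ \cG$-symmetry arises from it, up to isomorphism of symmetries. Recall that an $\cF \circ \cG$-object on a finite set $U$ is a triple $(\pi, F, (G_Q)_{Q \in \pi})$, where $\pi$ is a partition of $U$, $F \in \cF[\pi]$, and $G_Q \in \cG[Q]$ for each block $Q$. An automorphism $\theta$ of this object is a bijection $\theta \colon U \to U$ that fixes the whole structure; such a $\theta$ necessarily permutes the blocks of $\pi$, inducing a permutation $\bar{\theta}$ of $\pi$, and the condition $(\cF \circ \cG)[\theta](\pi, F, (G_Q)) = (\pi, F, (G_Q))$ unpacks into two requirements: first, $\bar{\theta}$ is an automorphism of $F$, i.e.\ $(F, \bar{\theta}) \in \Sym(\cF)[\pi]$; second, for each block $Q$, the restriction $\theta|_Q \colon Q \to \bar{\theta}(Q)$ is an isomorphism from $G_Q$ to $G_{\bar{\theta}(Q)}$.

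\emph{First} I would relabel $\pi$ by its underlying set using the canonical weighting conventions, so that $(F, \sigma) := (F, \bar{\theta})$ is an honest $\cF$-symmetry on $[n]$, say. \emph{Next}, I would decompose $\sigma$ into its cycles. Fix one cycle $\tau = (q_0\, q_1 \, \cdots \, q_{\ell-1})$ of $\sigma$, so that $\sigma(q_i) = q_{i+1}$ with indices mod $\ell$. Along this cycle the blocks $Q_{q_0}, \dots, Q_{q_{\ell-1}}$ are cyclically identified by the isomorphisms $\theta|_{Q_{q_i}} \colon G_{q_i} \to G_{q_{i+1}}$. Composing all the way around the cycle, the map $\sigma_\tau := \theta^\ell|_{Q_{q_0}} \colon Q_{q_0} \to Q_{q_0}$ is an automorphism of $G_{q_0}$, so $(G_\tau, \sigma_\tau) := (G_{q_0}, \sigma_\tau)$ is a well-defined $\cG$-symmetry attached to the cycle $\tau$ (its isomorphism type, and indeed the whole thing up to isomorphism of symmetries, is independent of which element $q_i$ we designate as the basepoint of the cycle, since choosing $q_j$ conjugates $\sigma_\tau$ by the cyclic identification). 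This produces the family $(G_\tau, \sigma_\tau)_\tau$ indexed by the cycles of $\sigma$. Conversely, I would describe the \emph{synthesis}: given $(F,\sigma)$ and a family $(G_\tau, \sigma_\tau)_\tau$, build an $\cF \circ \cG$-symmetry by taking, for each cycle $\tau$ of length $\ell$ with basepoint $q_0$, a block over $q_0$ equal to the underlying set of $G_\tau$, fresh isomorphic copies of it over $q_1, \dots, q_{\ell-1}$, transporting $G_\tau$ along, and defining $\theta$ on these blocks to cyclically shift the copies and to act as $\sigma_\tau$ when it returns to $q_0$; then $(F, \sigma)$ tells us how to glue the $\cF$-layer on top. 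One checks directly that $\bar\theta = \sigma$ and that $\theta^\ell|_{Q_{q_0}} = \sigma_\tau$, so the two constructions are mutually inverse up to isomorphism.

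\emph{Finally}, I would record that the construction is well-defined and surjective up to isomorphism of symmetries: two synthesized symmetries built from isomorphic input data $(F,\sigma) \cong (F', \sigma')$ and $(G_\tau, \sigma_\tau) \cong (G'_{\tau'}, \sigma'_{\tau'})$ (under the cycle correspondence induced by the $\cF$-symmetry isomorphism) are themselves isomorphic as $\cF \circ \cG$-symmetries, because one can assemble a bijection of the ground sets block by block from the $\cF$-isomorphism and the per-cycle $\cG$-isomorphisms; and the analysis of the preceding paragraph shows every $\cF \circ \cG$-symmetry is obtained this way. The main obstacle — really the only subtle point — is the \textbf{basepoint independence} within a cycle: one must verify that changing the distinguished element $q_0$ of a cycle $\tau$ replaces $(G_\tau, \sigma_\tau)$ by an isomorphic $\cG$-symmetry (via conjugation by the transport map $\theta^{j}|_{Q_{q_0}}$), so that the datum attached to $\tau$ is genuinely canonical up to isomorphism of symmetries, and dually that the synthesis does not depend on the basepoint choice. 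Everything else is bookkeeping with the definitions of the transport functions for the product, sum, and substitution operations already set up above, together with Lemma~\ref{le:relcyc} in spirit. I would also remark that this parametrization is exactly what yields the cycle-index identity \eqref{eq:cyccomp}, which serves as a consistency check.
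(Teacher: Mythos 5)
Your proposal follows essentially the same route as the paper's discussion in Section~3.3: unpack the definition of an $\cF \circ \cG$-symmetry, observe that the induced permutation on blocks gives an $\cF$-symmetry, extract a $\cG$-symmetry from each cycle via the return map $\sigma^\ell|_{Q}$, give the inverse synthesis, and identify basepoint-dependence within a cycle (handled by conjugation through an automorphism of the composite structure) as the one genuinely subtle point. The only difference is one of explicitness — the paper writes out the composed cycle formula for the synthesized automorphism and separately notes that the choice of atom \emph{within} a cycle of $\sigma_\tau$ (the paper's ``$a$'') has no effect at all, whereas you fold both levels of choice into a single ``bookkeeping'' remark — but the structure of the argument is the same.
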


There is much more to this result, as it lies at the heart of the proof of Equation~\eqref{eq:cyccomp}. We refer the inclined reader to the mentioned literature for details. For the purpose of the present paper, it is sufficient to understand how the symmetry and in particular its cycles get assembled. We are going to use this later in order to define random unlabelled structures based on a tree-like decomposition of symmetries.

The method of construction referred to in Lemma~\ref{le:symconstr} is a bit involved, hence let us first recall what an $\cF \circ \cG$-symmetry is by definition. Let $U$ be a finite set. Any element of $\Sym(\cF \circ \cG)[U]$ consists of the following objects: a partition $\pi$ of the set $U$, an $\cF$-structure $F \in \cF[\pi]$, a family of $\cG$-structures $(G_Q)_{Q \in \pi}$ with $G_Q \in \cG[Q]$ and a permutation $\sigma: U \to U$. The permutation $\sigma$ is required to permute the partition classes and induce an automorphism \[\bar{\sigma}: \pi \to \pi, \,\, Q \mapsto \sigma(Q)\] of the $\cF$-object $F$. Moreover, for any partition class $Q \in \pi$  the restriction $\sigma|_Q: Q \to \sigma(Q)$ is required to be an isomorphism from $G_Q$ to $G_{\sigma(Q)}$.

Note that for any cycle $\tau = (Q_1, \ldots, Q_\ell)$ of $\bar{\sigma}$, it follows that  $\sigma^{\ell}|_{Q_1}: Q_1 \to Q_1$ is an automorphism of $G_{Q_1}$. Hence $(G_\tau, \sigma_\tau) := (G_{Q_1}, \sigma^{\ell}|_{Q_1})$ is a $\cG$-symmetry. The symmetry $S$  together with the bijections $\gamma_i: = \sigma|_{Q_i}$  for $1 \le i \le \ell-1$ already contain all information about the $\cG$-objects $G_{Q_1}, \ldots, G_{Q_\ell}$ and the restriction $\sigma|_{Q_1 \cup \ldots \cup Q_\ell}$. Indeed, it holds that $G_{Q_{i+1}} = \cG[\gamma_i \cdots \gamma_1](G_\tau)$ for all $1 \le i \le \ell -1$, hence we may reconstruct the $\cG$-objects. The bijections $\gamma_i$ contain all information about $\sigma_{Q_1 \cup \ldots \cup Q_{\ell -1}}$, and it holds that $\sigma|_{Q_\ell} = \gamma_{\ell -1}^{-1} \cdots \gamma_{1}^{-1} \sigma_\tau$. In particular, any $k$-cycle $(a_1, \ldots, a_k)$ of the permutation $\sigma_\tau$ corresponds to the $k\ell$-cycle \[(a_1, \sigma(a_1), \ldots, \sigma^{\ell-1}(a_1), a_2, \sigma(a_2), \ldots, \sigma^{\ell-1}(a_2), \ldots, a_k, \sigma(a_k), \ldots, \sigma^{\ell-1}(a_k))\] of the permutation $\sigma|_{Q_1 \cup \ldots \cup Q_\ell}$. 

\begin{figure}[t]
	\centering
	\begin{minipage}{1.0\textwidth}
		\centering
		\includegraphics[width=0.7\textwidth]{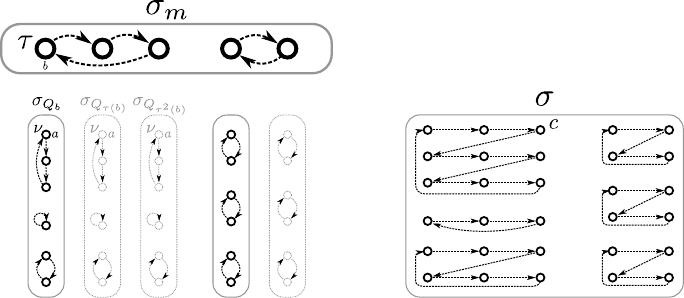}
		\caption{Composition of cycles.}
		\label{fi:symmetries}
	\end{minipage}
\end{figure}

This implies that any $\cF \circ \cG$-symmetry is isomorphic to a symmetry $( (\pi, F, (G_Q)_{Q \in \pi}), \sigma)$ constructed in the following way, which is illustrated in Figure~\ref{fi:symmetries}. Start with choosing an $\cF$-symmetry $(m, \sigma_m)$. For any cycle $\tau$ of the permutation $\sigma_m$ choose a $\cG$-symmetry $(G_\tau, \sigma_\tau)$ and let $Q_\tau$ denote its set of labels. For every atom $e$ of the cycle $\tau$ set $Q_e := Q_\tau \times \{e\}$ and $(G_{Q_e}, \sigma_{Q_e}) := \Sym(\cG)[f_e](G_\tau, \sigma_\tau)$ with $f_e: Q_\tau \to Q_e$ the canonical bijection. For any label $e$ of the $\cF$-structure $m$ set $f(e) := Q_e$ and let $\pi$ denote the set of all sets $Q_e$. Thus $F := \cF[f](m)$ is an $\cF$-structure with label set $\pi$ and $C := (\pi, F, (G_{Q})_{Q \in \pi})$ is an $\cF \circ \cG$-structure. Let $\tau$ be a cycle of $\sigma_m$ and $\nu$ a cycle of $\sigma_\tau$. Choose an atom $b$ of $\tau$ and an atom $a$ of $\nu$. Let $\ell$ denote the length of $\tau$ and $k$ the length of $\nu$. Form the composed cycle $c$ by
\[
((a,b), \ldots, (a, \tau^{\ell-1}(b)), (\nu(a),b), \ldots, (\nu(a), \tau^{\ell-1}(b)), \ldots, (\nu^{k-1}(a), b), \ldots, (\nu^{k-1}(a), \tau^{\ell-1}(b))).
\]
Then the product $\sigma$ of all composed cycles is an automorphism of the $\cF \circ \cG$-structure $C$. The composed cycles are pairwise disjoint, hence it does not matter in which order we take the product. Note that $\sigma$ does not depend on the choice of the $a$'s but  different choices of the $b$'s result in a different automorphism $\sigma$. More precisely, if for a given cycle $\tau$ of $\sigma_m$ we choose $\tau(b)$ instead of $b$, then the resulting automorphism is given by the conjugation $(\text{id}, \tau) \sigma (\text{id}, \tau)^{-1}$ instead of $\sigma$. But $(\text{id}, \tau)$ is an automorphism of the $\cF \circ \cG$-structure $C$, hence the resulting symmetry $(C, (\text{id}, \tau) \sigma (\text{id}, \tau)^{-1})$ is isomorphic to $(C, \sigma)$. This implies that the isomorphism type of $(C, \sigma)$ does not depend on the choices of the $a$'s and $b$'s. Fixing any canonical way of making these choices yields the construction of Lemma~\ref{le:symconstr}.

\subsection{Weighted Boltzmann distributions and samplers}
\label{sec:webo}
Boltzmann distributions crop up in the study of local limit of random discrete structures and in the limit of convergent unlabelled Gibbs partitions. A {\em Boltzmann sampler} is a possibly recursive procedure involving random choices that generates a structure according a Boltzmann distribution. For example, a subcritical or critical Galton--Watson tree may be considered as a Boltzmann distributed plane tree. A recursive sampler in this setting is a procedure that draws the offspring of the root and then calls itself for each offspring vertex. 

\subsubsection{Boltzmann distributions}
\label{sec:bodistr}
Let $\cF^\omega$ be a weighted species. For any $y\ge 0$ satisfying $0 < \tilde{\cF}^\omega(y) < \infty$, the corresponding {\em Boltzmann distribution for unlabelled $\cF$-objects} is given by
\begin{align}
\mathbb{P}_{\tilde{\cF}^\omega, y}(\tilde{F}) = \tilde{\cF}^\omega(y)^{-1} \omega(\tilde{F}) y^{|\tilde{F}|}, \quad \tilde{F} \text{ an unlabelled $\cF$-object}.
\end{align}
Given a sequence  $\mathbf{y} = (y_j)_{j \in \ndN}$ of non-negative parameters $y_j$ satisfying $0 < Z_{\cF^\omega}(\mathbf{y}) < \infty$, the corresponding {\em P\'olya-Boltzmann distribution} is given by
\begin{align}
\label{eq:defwebopo}
\mathbb{P}_{\Sym(\cF)^\omega, \mathbf{y}}(F, \sigma) = Z_{\cF^\omega}(\mathbf{y})^{-1} \omega(F) \frac{y_1^{\sigma_1} y_2^{\sigma_2} \cdots }{|(F, \sigma)|!}, \quad (F, \sigma) \in  \sum_{m \ge 0} \Sym(\cF)[m],
\end{align}
with $\sigma_i$ denoting the number of $i$-cycles of a permutation $\sigma$.
By Lemma~\ref{le:relcyc} and Equation~\eqref{eq:relcyc}, the Boltzmann distribution for unlabelled objects may be considered as the marginal distribution of the $\cF$-object in special cases of the P\'olya-Boltzmann distribution. That is, the $\cF$-object of a  $\mathbb{P}_{\Sym(\cF)^\omega, (y, y^2, y^3, \ldots)}$-distributed $\cF$-symmetry follows a $\mathbb{P}_{\tilde{\cF}^\omega, y}$-distribution.

\subsubsection{Boltzmann samplers}
\label{sec:WeBoSa}
The following lemma allows us to construct Boltzmann distributed random variables in the unlabelled setting for the sum, product and composition of species. The results of this section have been established by Bodirsky, Fusy, Kang and Vigerske \cite[Prop. 38]{MR2810913} for species without weights, and the generalization to the weighted setting is straight-forward.

\begin{lemma}[Weighted P\'olya-Boltzmann samplers]
	\label{le:pobole}
	\hspace{1pt}
		\begin{enumerate}
			\item Let $\cF^\omega$ and $\cG^\nu$ be weighted species, and let $X$ and $Y$ be independent random variables with distributions $\mathcal{L}(X) = \mathbb{P}_{\Sym(\cF)^\omega, \mathbf{y}}$ and $\mathcal{L}(Y) = \mathbb{P}_{\Sym(\cG)^\omega, \mathbf{y}}$. Then $(X,Y)$ may be interpreted as an $(\cF \cdot \cG)$-symmetry over the set $[|X|] \sqcup [|Y|]$. If $\alpha$ denotes a uniformly at random drawn bijection from this set to $[|X| + |Y|]$, then
			\[
			\cL \left ( (\cF \cdot \cG)[\alpha](X,Y) \right ) = \mathbb{P}_{\Sym(\cF \cdot \cG)^\omega, \mathbf{y}}.
			\]
			\item Let $(\cF_i^{\omega_i})_{i \in I}$ be a family of weighted species, and $(X_i)_{i \in I}$ a family of independent random variables with distributions $\mathbb{P}_{\Sym(\cF_i)^{\omega_i}, \mathbf{y}}$ such that  $\sum_i Z_{\cF_i^{\omega_i}}(\mathbf{y}) < \infty$. If $K \in I$ gets drawn at random with probability proportional to $Z_{\cF_K^{\omega_K}}(\mathbf{y})$, that is \[\Pr{K=k} = Z_{\cF_k^{\omega_k}}(\mathbf{y}) / \sum_i Z_{\cF_i^{\omega_i}}(\mathbf{y}),\] then 
			\[
			\cL(X_K) = \mathbb{P}_{\Sym(\sum_i \cF_i^{\omega_i}), \mathbf{y}}.
			\] 
			\item Let $\cF^\omega$ and $\cG^\nu$ be species such that $\cG^\nu[\emptyset] = \emptyset$ and let $\mathbf{y} = (y_j)_{j \in \ndN}$ a family of non-negative parameters with $0< Z_{\cG^\nu}(\mathbf{y}) < \infty$ and $0 < Z_{\cF^\omega \circ \cG^\nu}(\mathbf{y}) < \infty$. For each $k$ set $\mathbf{y}^k = (y_k, y_{2k}, y_{3k}, \ldots)$. Let $X=(F,\sigma)$ be a $\mathbb{P}_{\Sym(\cF^\omega), (Z_{\cG^{\nu^1}}(\mathbf{y}), Z_{\cG^{\nu^2}}(\mathbf{y^2}), \ldots)}$-distributed random $\cF$-symmetry and let $(Y_{i,k})_{i,k \in \ndN}$ an independent family (that is also independent of $X$) of random $\cG$-symmetries such that $Y_{i,k}$ follows a $\mathbb{P}_{\Sym(\cG^\nu), \mathbf{y}^k}$-distribution for all $k,i$. We may canonically assign to each cycle $\tau$ of the random permutation $\sigma$ a unique element $Y_\tau$ of the list $(Y_{i, |\tau|})_{i \ge 1}$. For example, we could do this by ordering for each $\ell$ the cycles of $\sigma$ having length $\ell$ according to their smallest atom, and assign $Y_{i,\ell}$ to the $i$th cycle in the ordering. Then $(X, (Y_\tau)_\tau)$ corresponds according to Lemma~\ref{le:symconstr} to an $\cF \circ \cG$-symmetry over some set $M$. Draw a bijection $\alpha: M \to [|M|]$ uniformly at random. Then
			\[
			\cL( (\cF \circ \cG)[\alpha](X, (Y_\tau)_\tau)) = \mathbb{P}_{\Sym(\cF^\omega \circ \cG^\nu), \mathbf{y}}.
			\]
		\end{enumerate}
\end{lemma}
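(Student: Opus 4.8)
The plan is to check, in each of the three cases, that the random symmetry produced by the described procedure has exactly the probability mass function of the claimed (Pólya-)Boltzmann distribution. Two elementary observations are used throughout. First, the weight $\omega(F)\,y_1^{\sigma_1}y_2^{\sigma_2}\cdots$ of a symmetry $(F,\sigma)$ is invariant under the transport maps $\Sym(\cF)[\gamma]$, since $\omega$ is a weighting and conjugation preserves the cycle type of $\sigma$. Second, a sum over all relabellings that realise a prescribed object-and-permutation collapses to a single term by a change-of-summation argument, since each such relabelling determines its source uniquely. With these in hand the sum and product cases are short, and the composition case is the substantial one.

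For the \emph{sum}, a $\Sym(\sum_{i\in I}\cF_i)$-symmetry over $[m]$ is literally the datum of an index $k$ and a $\Sym(\cF_k)$-symmetry over $[m]$, whose weight under the composite weighting $\mu$ equals its $\omega_k$-weight. Hence for a fixed $(F,\sigma)$ with $F\in\cF_k[m]$, independence gives $\Pr{X_K=(F,\sigma)}=\Pr{K=k}\,\Pr{X_k=(F,\sigma)}$; the two occurrences of $Z_{\cF_k^{\omega_k}}(\mathbf{y})$ cancel, and $Z_{(\sum_i\cF_i)^\mu}=\sum_iZ_{\cF_i^{\omega_i}}$ identifies the result with $\mathbb{P}_{\Sym(\sum_i\cF_i^{\omega_i}),\mathbf{y}}(F,\sigma)$. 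For the \emph{product}, fix a target $(H,\rho)\in\Sym(\cF\cdot\cG)[m]$; it decomposes along an ordered $2$-partition $(V_1,V_2)$ of $[m]$ as $H=(H_1,H_2)$, $\rho=\rho_1\sqcup\rho_2$ with each $\rho_j$ an automorphism of $H_j$ (an automorphism of a product object preserves the ordered $2$-partition). Conditioning on $X=(F_X,\sigma_X)$ and $Y=(F_Y,\sigma_Y)$ and using that $\alpha$ is uniform over the $m!$ bijections $[|X|]\sqcup[|Y|]\to[m]$, the event $(\cF\cdot\cG)[\alpha](X,Y)=(H,\rho)$ forces $\alpha=\alpha_1\sqcup\alpha_2$, where $\alpha_1$ is an isomorphism of symmetries from $(F_X,\sigma_X)$ onto $(H_1,\rho_1)$ and $\alpha_2$ one from $(F_Y,\sigma_Y)$ onto $(H_2,\rho_2)$. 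Writing $N_1(x)$ for the number of bijections $\alpha_1$ with $\cF[\alpha_1](F_x)=H_1$ and $\alpha_1\sigma_x\alpha_1^{-1}=\rho_1$ and exchanging the sum over $x$ with that over $\alpha_1$ (which for each $\alpha_1$ forces a unique $x$, of weight $\omega(H_1)\prod_iy_i^{(\rho_1)_i}$ by the first observation), one gets $\sum_x\Pr{X=x}\,N_1(x)=Z_{\cF^\omega}(\mathbf{y})^{-1}\omega(H_1)\prod_iy_i^{(\rho_1)_i}$, and symmetrically for $\cG$. Multiplying the two, dividing by $m!$, and invoking $(\rho)_i=(\rho_1)_i+(\rho_2)_i$, $Z_{(\cF\cdot\cG)^\mu}=Z_{\cF^\omega}Z_{\cG^\nu}$ and $\mu(H)=\omega(H_1)\nu(H_2)$ produces exactly $\mathbb{P}_{\Sym(\cF\cdot\cG)^\mu,\mathbf{y}}(H,\rho)$.

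For the \emph{composition}, fix a target $S=(o,\rho)\in\Sym(\cF\circ\cG)[m]$. Conditioning on $X$ and on the assigned family $(Y_\tau)_\tau$, let $C_0$ (over some label set $M$) be the $\cF\circ\cG$-symmetry they determine via Lemma~\ref{le:symconstr}; the number of bijections $\alpha$ with $(\cF\circ\cG)[\alpha](C_0)=S$ is $|\Aut(S)|$ when $C_0$ is isomorphic to $S$ and zero otherwise (isomorphic symmetries have automorphism groups of equal order). Hence $\Pr{(\cF\circ\cG)[\alpha](X,(Y_\tau)_\tau)=S}=\frac{|\Aut(S)|}{m!}\,\Pr{C_0\ \text{is isomorphic to}\ S}$. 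I would then read off from the construction preceding Lemma~\ref{le:symconstr} that $C_0$ is isomorphic to $S$ precisely when the $\cF$-symmetry $X=(m_0,\sigma_0)$ is isomorphic to the $\cF$-symmetry underlying $S$ and, for each cycle $\tau$ of $\sigma_0$ of length $\ell$, the attached $\cG$-symmetry $Y_\tau$ is isomorphic to the $\cG$-symmetry carried by the corresponding cycle of $S$. The cycle arithmetic recorded just before Lemma~\ref{le:symconstr} then converts the cycle type of $Y_\tau$, weighted with the parameters $\mathbf{y}^\ell$, into its contribution $\prod_j y_{j\ell}^{(\sigma_\tau)_j}$ to $\prod_i y_i^{\rho_i}$; the $\ell$ copies of the $\cG$-structure $G_\tau$ inside $o$ contribute the factor $\nu(G_\tau)^\ell$ to $\mu(o)$; and summing the conditional probability over the admissible $Y_\tau$ reproduces, for a cycle of length $\ell$, the coordinate $Z_{\cG^{\nu^\ell}}(\mathbf{y}^\ell)$ of the parameter sequence of $X$. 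Assembling the factors and using the cycle-index identity~\eqref{eq:cyccomp} to identify $Z_{\cF^\omega}$ evaluated at that sequence with $Z_{(\cF\circ\cG)^\mu}(\mathbf{y})$ yields the claimed mass $Z_{(\cF\circ\cG)^\mu}(\mathbf{y})^{-1}\mu(o)\prod_iy_i^{\rho_i}/m!$.

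The composition case is where the real difficulty lies: one must keep exact track of the canonical choices entering the parametrization of Lemma~\ref{le:symconstr} (the ordering of cycles of equal length and the chosen representative atoms $a$ and $b$), of the relation between $\Aut(S)$ and the automorphism groups of the constituent $\cF$- and $\cG$-symmetries, and of the mechanism by which the $\cG$-weight attached to a length-$\ell$ cycle enters raised to the $\ell$-th power with its parameters rescaled to $\mathbf{y}^\ell$. In effect this case is a weighted, probabilistic rereading of the proof of the cycle-index identity~\eqref{eq:cyccomp}, so essentially all of the delicate combinatorics is already available; the sum and product cases are routine mass-function verifications, and the passage to the weighted setting from the argument of Bodirsky, Fusy, Kang and Vigerske is then, as remarked, straightforward.
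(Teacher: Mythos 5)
The paper does not actually prove Lemma~\ref{le:pobole}: it cites the unweighted version from Bodirsky, Fusy, Kang and Vigerske \cite[Prop.~38]{MR2810913} and asserts that the extension to the weighted setting is straightforward, so your proposal is supplying a computation the paper deliberately outsources. Your treatment of the sum case is complete and correct, and the product case is handled correctly as well: the change-of-summation trick (replacing the sum over sources $x$ weighted by the number $N_1(x)$ of good relabellings by a sum over bijections $\alpha_1$, each forcing a unique preimage of $(H_1,\rho_1)$) collapses the count cleanly, the factorial in the Boltzmann weight of $X$ cancels against the $|V_1|!$ bijections, and $Z_{(\cF\cdot\cG)^\mu}=Z_{\cF^\omega}Z_{\cG^\nu}$ together with $\mu(H)=\omega(H_1)\nu(H_2)$ and $\rho_i=(\rho_1)_i+(\rho_2)_i$ gives exactly $\mathbb{P}_{\Sym(\cF\cdot\cG)^\mu,\mathbf{y}}(H,\rho)$.

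The composition case, however, is not yet a proof. The reduction $\Pr{(\cF\circ\cG)[\alpha](X,(Y_\tau)_\tau)=S}=\tfrac{|\Aut(S)|}{m!}\Pr{C_0\simeq S}$ is correct, but your characterization "$C_0\simeq S$ precisely when the $\cF$-symmetries are isomorphic and each $Y_\tau$ is isomorphic to the $\cG$-symmetry carried by the \emph{corresponding} cycle of $S$" is not well posed: the word "corresponding" presupposes a chosen isomorphism of $\cF$-symmetries, different choices induce different pairings among cycles of equal length, and those cycles need not carry isomorphic $\cG$-symmetries. Consequently $\Pr{C_0\simeq S}$ is not a plain product over cycles; it is a sum over matchings, and the reconciliation of that sum with the factor $|\Aut(S)|$ on the left (and with the multiplicities baked into the parametrization of Lemma~\ref{le:symconstr}) is precisely the combinatorial content of the proof of the cycle-index identity~\eqref{eq:cyccomp}. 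You point to the right source and acknowledge the gap, so the route is sound — but as written, the composition case is an outline that invokes~\eqref{eq:cyccomp} rather than a self-contained argument, and the three specific bookkeeping issues you list (canonical choices in Lemma~\ref{le:symconstr}, the structure of $\Aut(S)$ relative to the constituent automorphism groups, and the rescaling $\mathbf{y}\mapsto\mathbf{y}^\ell$ with the $\ell$-th power of $\nu$) must actually be carried out to close it.
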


\subsection{Combinatorial specifications and recursive Boltzmann samplers}
\label{sec:cospec}
\subsubsection{Motivation}
A recursive procedure is a list of instructions that are to be followed step by step and may contain references to the procedure itself. For example, a Galton--Watson tree may be described by the procedure that starts with a root vertex and attaches a random number of independent calls of itself. 

Often one encounter species admitting a recursive isomorphism such as $\cF^\omega \simeq \cX + (\cF^\omega)^2$. If this decomposition satisfies a certain property {\em (R)}, then for any admissible parameter we may apply the rules from Section~\ref{sec:WeBoSa} for the sum, product and composition of species in order to construct a recursive (P\'olya-)Boltzmann sampler for $\cF$. That is, a recursive procedure that terminates almost surely and samples objects according to the (P\'olya)-Boltzmann distribution. 

For the given example $\cF$, such a recursive Boltzmann sampler would first, by the sum rule, make a coin flip in order to determine whether it terminates with a single vertex (a Boltzmann sampler for $\cX^\kappa$), or creates, by the product rule, an ordered pair of independent calls of itself. In other words, its a Galton--Watson tree. As property {\em (R)} guarantees that this process terminates almost surely, we also know that this Galton--Watson tree must be critical or subcritical. It is clear that not any recursive decomposition can have this desired property. For example, the species $1$ which consists of a single object with size zero admits an isomorphism $1 \simeq 1 \cdot 1$, but applying the product rule yields a recursive procedure which never terminates.

Precisely stating property {\em (R)} requires us to introduce the complex concepts of weighted multi-sort species and samplers, as well as related operations such as combinatorial composition and partial derivatives in this context. 

\subsubsection{Combinatorial specifications}
A {\em  $2$-sort species} $\cH$ is a functor that maps any pair $U= (U_1, U_2)$ of finite sets to a finite set $\cH[U] = \cH[U_1, U_2]$ and any pair $\sigma = (\sigma_1, \sigma_2)$ of bijections $\sigma_i: U_i \to V_i$ to a bijection $\cH[\sigma]: \cH[U] \to \cH[V]$ in such a way, that identity maps and composition of maps are preserved. 
A weighted $2$-sort species $\cH^\omega$ additionally carries a weighting $\omega$ given by family of maps
\[
	\omega_{U_1, U_2}: \cH[U_1, U_2] \to \ndR_{\ge 0}
\]
for all pairs $(U_1, U_2)$. The weighting is required to be assign the same weight to isomorphic structures. That is, the diagram
\[
\xymatrix{ \cH[U_1,U_2] \ar[d]_{\cH[\sigma_1,\sigma_2]} \ar[r]^-{\omega_{U_1, U_2}} &\ndR_{\ge 0} \\
	\cH[V_1,V_2] \ar[ur]_{\omega_{V_1,V_2}}	    &}
\]
must commute for all bijections $\sigma_1: U_1 \to V_1$ and $\sigma_2: U_2 \to V_2$.

The operations of sum, product and composition extend naturally to the multi-sort-context Let $\cH$ and $\cK$ be 2-sort species and $U = (U_1,U_2)$ a pair of finite sets. The {\em sum} is defined by
\[
(\cH + \cK)[U] = \cH[U] \sqcup \cK[U].
\]
We write $U = V + W$ if $U_i = V_i \cup W_i$ and $V_i \cap W_i = \emptyset$ for all $i$. The {\em product} is defined by
\[
(\cH \cdot \cK)[U] = \bigsqcup_{V + W = U} \cH[V] \times \cK[W].
\]
The {\em partial derivatives} are given by
\[
\partial_1 \cH[U] = H[U_1 \cup \{*_{U_1}\}, U_2] \quad \text{and} \quad \partial_2 \cH[U] = H[U_1, U_2 \cup \{*_{U_2}\}].
\] 
In order state Joyal's implicit species theorem we also require the substitution operation for multi-sort species; this will allow us to  define species ``recursively'' up to (canonical) isomorphism. Let $\cF_1$ and $\cF_2$ be (1-sort) species and $M$ a finite set. A structure of the {\em composition} $\cH(\cF_1, \cF_2)$ over the set $M$ is a quadruple $(\pi, \chi, \alpha, \beta)$ such that: 
\begin{enumerate}
	\item $\pi$ is partition of the set $M$.
	\item $\chi: \pi \to \{1, 2\}$ is a function assigning to each class a sort.
	\item $\alpha$ a function that assigns to each class $Q \in \pi$ a $\cF_{\chi(Q)}$ object $\alpha(Q) \in \cF_{\chi(Q)}[Q]$.
	\item $\beta$ a $\cH$-structure over the pair $(\chi^{-1}(1), \chi^{-1}(2))$.
\end{enumerate}
This construction is {\em functorial}: any pair of isomorphisms $\alpha_1$, $\alpha_2$ with $\alpha_i: \cF_i \, \simeq \, \cG_i$ {\em induces} an isomorphism $\cH[\alpha_1, \alpha_2]: \cH(\cF_1, \cF_2) \, \simeq \, \cH(\cG_1, \cG_2)$.

Let $\cH$ be a $2$-sort species and recall that $\cX$ denotes the species with a unique object of size one. A solution of the system $\cY = \cH(\cX, \cY)$ is pair $(\cA, \alpha)$ of a species $\cA$ with $\cA[0] = 0$ and an isomorphism $\alpha: \cA  \,\simeq\, \cH(\cX, \cA)$. An isomorphism of two solutions $(\cA, \alpha)$ and $(\cB, \beta)$ is an isomorphism of species $u:\cA \,\simeq\, \cB$ such that the following diagram commutes:
\[
\xymatrix{ \cA \ar[d]^{u} \ar[r]^-{\alpha} &\cH(\cX,\cA)\ar[d]^{\cH(\text{id},u)}\\
	\cB \ar[r]^-{\beta} 		    &\cH(\cX,\cB)}
\]
We may now state Joyal's implicit species theorem. Recall that $0$ denotes the empty species with $0[U] = \emptyset$ for all finite sets $U$.
\begin{theorem}[\cite{MR633783}, Théorème 6]
	\label{te:implicitspecies}
	Let $\cH$ be a 2-sort species satisfying $\cH(0,0)=0$. If $(\partial_2 \cH)(0,0)=0$, then the system $\cY = \cH(\cX, \cY)$ has up to isomorphism only one solution. Moreover, between any two given solutions there is exactly one isomorphism.
\end{theorem}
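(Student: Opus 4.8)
The plan is to prove Joyal's implicit species theorem (Theorem~\ref{te:implicitspecies}) by constructing the unique solution as a colimit of finite approximations and then verifying uniqueness via an inductive ``matching'' argument on the size of objects. The crucial structural input is the hypothesis $(\partial_2 \cH)(0,0) = 0$: unpacking the definition of the composition $\cH(\cF_1,\cF_2)$, this says that a $\cH(\cX,\cY)$-structure over a set $M$ has \emph{no} part of sort $2$ carrying the empty label set sitting alone---more precisely, in any $\cH(\cX, \cA)$-structure the $\cH$-component over $(\chi^{-1}(1),\chi^{-1}(2))$ cannot have empty first coordinate unless it also has empty second coordinate only in the trivial case. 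The upshot, which I would isolate as a lemma, is that in any $\cH(\cX,\cA)$-structure over a non-empty set $M$, every sort-$2$ part $Q \in \pi$ has $|Q| < |M|$ strictly. This ``strict size decrease'' is exactly what makes the recursion well-founded.

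First I would build the solution. Define $\cA_0 = 0$ and inductively $\cA_{k+1} = \cH(\cX, \cA_k)$, using the functoriality statement to get, from the canonical inclusion $0 \hookrightarrow \cA_1$ (vacuous) a compatible chain of species embeddings $\cA_k \hookrightarrow \cA_{k+1}$. One checks by induction that $\cA_{k+1}$ and $\cA_k$ agree on all label sets of size $\le k$: this is immediate from the size-decrease lemma, since a $\cH(\cX,\cA_k)$-structure over a set of size $\le k$ only feeds label sets of size $\le k-1$ into the $\cA_k$-slots, where $\cA_k$ and $\cA_{k-1}$ already agree. Hence the colimit $\cA = \bigcup_k \cA_k$ is a well-defined species with $\cA[\emptyset] = 0$, and the maps $\cA_{k+1} = \cH(\cX,\cA_k) \to \cH(\cX,\cA)$ assemble (again using functoriality of $\cH(\cX,-)$ in the second argument, together with the fact that $\cH(\cX,-)$ commutes with the filtered colimit on each fixed finite label set, which holds because on a label set of size $n$ only finitely many stages matter) to an isomorphism $\alpha: \cA \simeq \cH(\cX,\cA)$. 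So $(\cA,\alpha)$ is a solution. The weighted version is identical: the weightings are transported along all the canonical isomorphisms and the colimit inherits a weighting.

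Next, uniqueness. Given two solutions $(\cA,\alpha)$ and $(\cB,\beta)$, I want to produce a unique species isomorphism $u: \cA \simeq \cB$ making the pentagon commute. I would construct $u$ by strong induction on the label-set size $n$, defining the bijection $u_U: \cA[U] \to \cB[U]$ for $|U| = n$ assuming it is already defined and natural on all smaller sets. Given $a \in \cA[U]$, apply $\alpha_U$ to view it as a $\cH(\cX,\cA)$-structure $(\pi,\chi,\gamma,\delta)$ over $U$; by the size-decrease lemma every sort-$2$ part $Q$ has $|Q| < n$, so $u_Q$ is already defined and we may replace each $\cA$-structure $\gamma(Q)$ by $u_Q(\gamma(Q)) \in \cB[Q]$, leaving the sort-$1$ parts (which carry $\cX$-structures) and the $\cH$-datum $\delta$ untouched; then apply $\beta_U^{-1}$ to land in $\cB[U]$. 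Bijectivity is clear because each step is a bijection, and naturality with respect to bijections $\sigma: U \to V$ follows from the functoriality of all three constructions $\alpha$, $\beta$, $\cH(\cX,-)$ together with the inductive naturality on smaller sets. The commuting pentagon holds essentially by construction. For the ``exactly one'' part, observe that any isomorphism of solutions must, after transport through $\alpha$ and $\beta$, act on a $\cH(\cX,\cA)$-structure by the identity on the $\cX$- and $\cH$-parts and by $u$ on the sort-$2$ substructures; unwinding this forces $u_U$ on size-$n$ sets to be exactly the map we constructed, given its values on smaller sets---so induction on $n$ pins down $u$ uniquely.

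The main obstacle I anticipate is \emph{not} the recursion itself but the careful bookkeeping around the composition operation $\cH(\cF_1,\cF_2)$: one must check that ``substitute $u_Q$ into the sort-$2$ slots'' is genuinely well-defined independently of how the partition $\pi$ is presented, that it is compatible with the transport functions $\cH(\cX,\cA)[\sigma]$ (which act by simultaneously transporting $\pi$, relabelling within parts, and applying $\cH[\sigma_1,\sigma_2]$ to $\delta$), and that the colimit $\cH(\cX,\cA) = \varinjlim_k \cH(\cX,\cA_k)$ really does hold on each finite label set---this last point again leans on the size-decrease lemma to guarantee that for $|U| = n$ the sequence $\cH(\cX,\cA_k)[U]$ stabilizes at $k = n$. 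Once the size-decrease lemma is cleanly established, every remaining step is a diagram chase of the kind routinely suppressed in the species literature (cf.\ Joyal \cite[Sec.\ 6]{MR633783}), so I would state the lemma carefully, prove it from $(\partial_2\cH)(0,0)=0$, and then present the construction and uniqueness arguments at the level of detail above, relegating the naturality verifications to ``one checks directly.''
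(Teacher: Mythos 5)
The paper only cites this result (Joyal~\cite{MR633783}, Théorème 6) and does not reproduce a proof, so there is no internal argument to compare against; your proposal must stand on its own, and it does — it is essentially the standard proof of Joyal's implicit species theorem and it is correct. The decisive step is the one you isolate as a lemma: $(\partial_2\cH)(0,0)=0$ amounts to $\cH[\emptyset,\{*\}]=\emptyset$, which by species-functoriality empties $\cH[\emptyset,S]$ for every singleton $S$; this forbids the degenerate datum $\pi=\{M\}$, $\chi(M)=2$ in any $\cH(\cX,\cA)$-structure over a non-empty $M$, so every sort-$2$ block is a \emph{strict} subset of $M$, and the recursion is well-founded. (The companion hypothesis $\cH(0,0)=0$, i.e.\ $\cH[\emptyset,\emptyset]=\emptyset$, is what guarantees $\cA_k[\emptyset]=\emptyset$ throughout, which you also need.) With the size-decrease lemma in hand, the iteration $\cA_k=\cH(\cX,\cA_{k-1})$ stabilizes on label sets of size at most $k$ and yields existence, while the recursion $u_U=\beta_U^{-1}\circ\cH(\text{id},u)_U\circ\alpha_U$ on $|U|=n$, which by the lemma references $u$ only on sets of size $<n$, yields both existence and uniqueness of the connecting isomorphism of solutions.

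One small looseness worth flagging: you invoke functoriality of $\cH(\cX,-)$ along the non-invertible species morphism $0\hookrightarrow\cA_1$ to build the chain of embeddings, whereas the paper only records functoriality with respect to isomorphisms. This is harmless — multi-sort composition is in fact functorial in each argument for arbitrary species morphisms — but the filtered-colimit language can be bypassed entirely: since $\cA_k[U]=\cA_{k+1}[U]$ for $|U|\le k$, one may simply set $\cA[U]:=\cA_{|U|}[U]$ and observe that, by the size-decrease lemma, $\cH(\cX,\cA)[U]$ depends only on the values of $\cA$ on strict subsets of $U$, giving the on-the-nose equality $\cH(\cX,\cA)[U]=\cH(\cX,\cA_{|U|-1})[U]=\cA_{|U|}[U]=\cA[U]$ without any colimit apparatus.
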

\noindent We say that an isomorphism $\cF \simeq \cH(\cX, \cF)$ is a {\em combinatorial specification} for the species $\cF$ if the 2-sort species $\cH$ satisfies the requirements of Theorem~\ref{te:implicitspecies}.

\subsubsection{Recursive Boltzmann samplers}
\label{sec:recur}
Given a combinatorial specification $\cF \simeq \cH(\cX, \cF)$ we may apply the rules of Lemma~\ref{le:pobole} recursively to construct a recursive Boltzmann sampler that is guaranteed to terminated almost surely.  A justification of this fact is given by Bodirsky, Fusy, Kang and Vigerske~\cite[Thm. 40]{MR2810913} for species without weights, and the generalization to the weighted setting is straight-forward. Let us demonstrate this with an example.  Let $\cF$ is the species of binary unordered rooted trees, where each tree receives weight $1$. Any such tree is either a single root vertex, or root vertex with two binary trees dangling from it. This yields an isomorphism
$
	\cF \simeq \cX  + \cX \cdot \cF^2,
$
where we set $\cF^2 = \cF \cdot \cF$, and the two summands correspond to the two described cases. This may be reformulated by
$
	\cF \simeq \cX \cdot \cG(\cF) = \cH(\cX, \cF)
$
with $\cG = 1 + \cX^2$. It holds that $(\partial_2 \cH)(0,0) = 0 \cdot \cG'(0) = 0$, hence for each $y_1, y_2, \ldots \ge 0$ with $0 < Z_{\cF}(y_1,y_2, \ldots) < \infty$ we may apply the rules of Lemma~\ref{le:pobole} to obtain a recursive procedure that terminates almost surely and whose output follows a $\mathbb{P}_{Z_\cF, (y_i)_i}$-distribution. Briefly described, the procedure starts with a root-vertex and then draws a $\cG$-symmetry according to a Boltzmann distribution. There are three different outcomes, either the symmetry has size $0$, in which the sampler stops, or it has size $2$, in which case the sampler calls itself recursively to determine the two subtrees dangling from the root-vertex, with the parameters depending on whether the $\cG$-symmetry consists of two fix-points or a single $2$-cycle.

\section{Unlabelled Gibbs partitions and subexponential sequences}
\label{sec:gibbs}
The term Gibbs partitions was coined by Pitman~\cite{MR2245368} in his comprehensive survey on combinatorial stochastic processes. It describes a model of random partitions of sets, where the collection of classes as well as each individual partition class are endowed with a weighted structure.  

Many structures such as classes of graphs may also be viewed up to symmetry. The symmetric group acts in a canonical way on the collection of composite structures over a fixed set, and its orbits may be identified with the unlabelled objects. Sampling such an isomorphism class with probability proportional to its weight is the natural unlabelled version of the Gibbs partition model.

Let $\cF^\omega$ and $\cG^\nu$ be weighted species such that $\cG[\emptyset] = \emptyset$, and such that the ordinary generating series $\widetilde{\cF^\omega \circ \cG^\nu}(z)$ is not a polynomial.  An \emph{unlabelled Gibbs partitions} is a random composite structure \[\mS_n = (\pi_n, \mF_n, (\mG_Q)_{Q \in \pi_n})
\] sampled from the set of all unlabelled $\cF \circ \cG$-objects with probability proportional to its weight.   

We are going to study the asymptotic behaviour of the remainder $\mR_n$ obtained by deleting "the" largest component from $\mS_n$.  More specifically, we make a uniform choice of a component $Q_0 \in \pi_n$ having maximal size, and let $\mF_n'$ denote the $\cF'$-object obtained from the $\cF$-object $\mF_n$ by relabelling the $Q_0$ atom of $\mF_n$ to a $*$-placeholder.

This yields an unlabelled $\cF' \circ \cG$-object
\[
\mR_n := (\pi_n \setminus \{Q_0\}, \mF_n', (\mG_Q)_{Q \in \pi_n \setminus \{Q_0\}}) \in \mathscr{U}(\cF' \circ \cG).
\]
In the so called convergent case, the remainder $\mR_n$ is stochastically bounded and even converges in total variation toward a limit object.

\begin{theorem}[{\cite[Thm. 3.1]{2016arXiv161001401S}}]
	\label{te:gibbs}
	Suppose that the  ordinary generating series  $\tilde{\cG}^\nu(z)$ has positive radius of convergence $\rho$, and that the coefficients $g_i = [z^i] \tilde{\cG}^\nu(z)$ satisfy
	\begin{align*}
	\frac{g_n}{g_{n+d}} \sim \rho, \qquad \frac{1}{g_n}\sum_{i+j=n}g_ig_j \sim 2 \tilde{\cG}^\nu(\rho) < \infty.
	\end{align*}
	Suppose further that
	\begin{align*}
	Z_{\cF^\omega}( \tilde{\cG}^\nu(\rho) + \epsilon, \tilde{\cG}^{\nu^2}((\rho + \epsilon)^2), \tilde{\cG}^{\nu^3}((\rho + \epsilon)^3), \ldots) < \infty
	\end{align*}
	for some $\epsilon >0$. Then 
	\[
		[z^n] \widetilde{\cF^\omega \circ \cG^\nu}(z) \sim \widetilde{(\cF')^\omega \circ \cG^\nu}(\rho) [z^n] \cG^\nu(z).
	\]
	and 
	\begin{align*}
	d_{\textsc{TV}}(\mR_n, \mR) \to 0,  \qquad n\to \infty
	\end{align*}
	with $\mR$ denoting a random unlabelled $\cF' \circ \cG$-element  that follows a $\mathbb{P}_{ \widetilde{(\cF')^\omega \circ \cG^\nu}, \rho}$-Boltzmann distribution.	
\end{theorem}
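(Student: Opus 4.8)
The plan is to translate the statement into a coefficient asymptotic for generating series via the cycle index machinery of Section~\ref{sec:combspec}, extract that asymptotic from a composition principle for subexponential sequences, and then upgrade it to the total variation statement through a ``one big jump'' analysis of the underlying P\'olya--Boltzmann symmetry.

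\emph{Set-up and translation.} Write $g(z) = \tilde\cG^\nu(z)$ (so that $[z^n]g(z)$ is the quantity denoted $[z^n]\cG^\nu(z)$ in the statement), $g_i(z) = \tilde\cG^{\nu^i}(z^i)$ with $g = g_1$, and abbreviate $F(z) = \widetilde{\cF^\omega\circ\cG^\nu}(z)$ and $H(z) = \widetilde{(\cF')^\omega \circ \cG^\nu}(z)$. Combining \eqref{eq:relcyc}, the substitution rule \eqref{eq:cyccomp}, and the identity $Z_{(\cF')^\omega} = \partial Z_{\cF^\omega}/\partial s_1$, one gets $F(z) = Z_{\cF^\omega}(g_1(z), g_2(z), \ldots)$ and $H(z) = (\partial Z_{\cF^\omega}/\partial s_1)(g_1(z), g_2(z), \ldots)$. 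The hypotheses say that $g$ has radius of convergence $\rho$ with $g(\rho) < \infty$, that $(g_n)_n$ is subexponential (the two displayed conditions), that each $g_i$ with $i \ge 2$ is analytic on $\{|z| < \rho + \epsilon\}$, and — writing $Z_{\cF^\omega} = \sum_{\mathbf a} c_{\mathbf a} s_1^{a_1}s_2^{a_2}\cdots$ with $c_{\mathbf a}\ge 0$ — that $\sum_{\mathbf a} c_{\mathbf a}(g(\rho)+\epsilon)^{a_1}\prod_{i\ge 2}g_i(\rho+\epsilon)^{a_i} < \infty$; a Cauchy estimate then yields $H(\rho) < \infty$, so the Boltzmann distribution $\mathbb{P}_{\widetilde{(\cF')^\omega \circ \cG^\nu},\rho}$ is a genuine probability measure.

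\emph{First asymptotic.} Collecting powers of $s_1$, write $Z_{\cF^\omega}(s_1, g_2(z), \ldots) = \sum_{j\ge 0}\phi_j(z)s_1^j$, so $F(z) = \sum_j \phi_j(z)g(z)^j$ and $H(z) = \sum_j j\phi_j(z)g(z)^{j-1}$, with each $\phi_j$ analytic on $\{|z|<\rho+\epsilon\}$ and $\sum_j \phi_j(\rho+\epsilon)(g(\rho)+\epsilon)^j < \infty$. I would then invoke the composition lemma for subexponential sequences: since $(g_n)$ is subexponential with $g(\rho)<\infty$, one has $[z^n]g(z)^j \sim j g(\rho)^{j-1}[z^n]g(z)$, and multiplying by the power series $\phi_j$, which is analytic past $\rho$, only inserts the factor $\phi_j(\rho)$, whence $[z^n](\phi_j(z)g(z)^j) \sim j\phi_j(\rho)g(\rho)^{j-1}[z^n]g(z)$; the standard estimates bounding convolution powers of subexponential sequences by geometric factors let one dominate $[z^n](\phi_j(z)g(z)^j)/[z^n]g(z)$ by a quantity summable in $j$, using the finiteness above. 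Summing over $j$ gives $[z^n]F(z) \sim \bigl(\sum_j j\phi_j(\rho)g(\rho)^{j-1}\bigr)[z^n]g(z) = H(\rho)[z^n]g(z)$, which is the first claim.

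\emph{Total variation via one big jump.} By Lemma~\ref{le:relcyc}, $\mS_n$ is the isomorphism type of a P\'olya--Boltzmann $\cF\circ\cG$-symmetry (at parameters $(z^j)_j$) conditioned on size $n$; by Lemma~\ref{le:pobole}(3) this symmetry is assembled from an $\cF$-symmetry $(\mF,\sigma)$ with independent $\cG$-symmetries attached to the cycles of $\sigma$, a length-$\ell$ cycle carrying a $\cG$-symmetry of size $m$ producing exactly $\ell$ of the classes $Q\in\pi_n$, each of size $m$. Let $E_n$ be the event that the largest class has size $>n/2$. On $E_n$ this class is unique and necessarily sits in a fixed point of $\sigma$ (a length-$\ell\ge 2$ cycle would force $\ell$ classes of size $>n/2$), so $\mR_n$ is obtained deterministically from $\mS_n$ by deleting that fixed point with its $\cG$-symmetry and marking the vacated atom. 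To show $\mathbb{P}(E_n)\to 1$, I would use — via $Z_{\cX\cdot\cF'} = s_1\,\partial Z_\cF/\partial s_1$ and \eqref{eq:cyccomp} — the weight-preserving species isomorphism $(\cX\cdot\cF')\circ\cG \simeq \cG\cdot\bigl((\cF')\circ\cG\bigr)$ between $\cF\circ\cG$-objects carrying a distinguished partition class and that product; under it the objects having a class of size $>n/2$, with that class distinguished, correspond bijectively to the $\cG\cdot((\cF')\circ\cG)$-objects whose $\cG$-part has size $>n/2$, whose total weight is $\sum_{m>n/2}[z^m]g(z)\cdot[z^{n-m}]H(z) \sim H(\rho)[z^n]g(z)$ by the same subexponential estimate; dividing by $[z^n]F(z)\sim H(\rho)[z^n]g(z)$ gives $\mathbb{P}(E_n)\to 1$. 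Finally, for a fixed isomorphism type $r$ of an $\cF'\circ\cG$-object of size $k$, write $\mathbb{P}(\mR_n=r) = \mathbb{P}(\mR_n=r,\,E_n)+o(1)$; on $E_n$ the event $\{\mR_n=r\}$ means $\mS_n$ arises from $r$ by replacing the $*$-atom with an ordinary atom carrying a $\cG$-structure of size $n-k$, and the same isomorphism evaluates the total weight of such objects as $[z^{n-k}]g(z)\cdot\omega(r)$. Hence $\mathbb{P}(\mR_n=r) = [z^{n-k}]g(z)\,\omega(r)/[z^n]F(z)+o(1) \to \rho^{k}\omega(r)/H(\rho) = \mathbb{P}_{\widetilde{(\cF')^\omega \circ \cG^\nu},\rho}(r)$, using $[z^n]F(z)\sim H(\rho)[z^n]g(z)$ and the assumed coefficient-ratio asymptotics (which give $[z^{n-k}]g(z)/[z^n]g(z)\to\rho^{k}$ along the relevant sublattice). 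Since the state space is countable and the limiting masses sum to one, Scheff\'e's lemma upgrades this pointwise convergence to $d_{\textsc{TV}}(\mR_n,\mR)\to 0$.

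\emph{Main obstacle.} The crux is the subexponential composition lemma with uniformity in $j$: obtaining $[z^n](\phi_j(z)g(z)^j)\sim j\phi_j(\rho)g(\rho)^{j-1}[z^n]g(z)$ together with a domination that allows exchanging $\sum_j$ with the asymptotic limit requires the delicate, though by now standard, estimates for convolutions of subexponential sequences. The species-theoretic bookkeeping behind the marked-class isomorphism and its weight-preservation is the secondary, essentially routine, ingredient.
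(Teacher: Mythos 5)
This theorem is imported into the paper from the cited reference \cite{2016arXiv161001401S} and is stated without proof, so there is no ``paper's own proof'' to compare against. Evaluating your proposal on its own merits: the two-phase architecture is correct and is what one would expect the cited source to do. The reduction $F(z)=\sum_j\phi_j(z)g(z)^j$, $H(z)=\sum_j j\phi_j(z)g(z)^{j-1}$ with each $\phi_j$ analytic on $|z|<\rho+\epsilon$ (Pringsheim plus the cycle-index finiteness hypothesis) correctly isolates the subexponential factor, and the finiteness of $Z_{\cF^\omega}(\tilde\cG^\nu(\rho)+\epsilon, \tilde\cG^{\nu^2}((\rho+\epsilon)^2),\ldots)$ is precisely the domination needed to interchange $\sum_j$ with $\lim_n$ once one has the uniform Kesten/Chover--Ney--Wainger bound $[z^n]g(z)^j\le K_\delta(g(\rho)+\delta)^{j-1}[z^n]g(z)$; you correctly flag this as the crux. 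The one-big-jump step via the weight-preserving isomorphism $(\cX\cdot\cF')\circ\cG\simeq\cG\cdot(\cF'\circ\cG)$ is the right way to turn the deletion of the largest class into a generating-series weight count, and your observation that a class of size $>n/2$ necessarily sits on a $1$-cycle of the $\cF$-automorphism (by the symmetry parametrization of Lemma~\ref{le:symconstr}) is exactly what makes the remainder well-defined at the level of symmetries. Scheff\'e then upgrades pointwise to total variation. Two small points to keep in mind when writing this out: the ratio $g_{n-k}/g_n\to\rho^k$ holds only along the residue class $n\equiv n_0\pmod d$ (which the hypotheses encode, but the final TV statement should be read along that sublattice); and the sum $\sum_{m>n/2}g_m[z^{n-m}]H(z)\sim H(\rho)[z^n]g(z)$ needs the same Kesten-type domination applied once more. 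With those caveats the plan is sound and complete.
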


\section{Probabilistic and combinatorial tools}
\label{sec:tools}



\subsection{The lattice case of the multivariate central local limit theorem}
We will make frequent use of the classical local limit theorem for random walks.



\begin{lemma}[Central local limit theorem for lattice distributions]
	\label{le:llt1dim}
	\label{le:llt2dim}
	Let $\mathbf{Y}$ be a random vector in $\ndR^d$ whose support is contained in the lattice $\mathbf{a} + \mathbf{D} \ndZ^d$ with $\mathbf{D} \in \textnormal{GL}_d(\ndR)$, $\mathbf{a} \in \ndR^d$, and in no proper sublattice. Suppose that $\mathbf{Y}$ has a finite non-zero covariance matrix $\mathbf{\Sigma}$, and let $(\mathbf{Y}_i)_{i \ge 1}$ be a family of independent copies of $\mathbf{Y}$.  For all $n$ and $\mathbf{y}$ set
	\[
		p_n(\mathbf{y}) := \Pr{ \sum_{i=1}^n\mathbf{Y_i} = \mathbf{y}}
	\]
	and, as our assumptions imply that $\Sigma$ is positive-definite, we may also set
	\[
		\bar{p}_n(\mathbf{y}) := \frac{|\det \mathbf{D}| }{(2\pi n)^{d/2} \sqrt{\det  \mathbf{\Sigma}}}  \exp \left(-\frac{1}{2n}(\mathbf{y} - n \Ex{\mathbf{Y}})^\intercal \mathbf{\Sigma}^{-1} (\mathbf{y} - n \Ex{\mathbf{Y}}) \right).
	\]
	Then  
	\[
	\sup_{\mathbf{y} \in \mathbf{a} + \mathbf{D} \ndZ^d} |p_n(\mathbf{y}) - \bar{p}_n(\mathbf{y})| =o (n^{-d/2}).
	\]
	In particular, $p_n(\mathbf{y}) \sim \bar{p}_n(\mathbf{y})$ uniformly for  $(\mathbf{y} - n \Ex{\mathbf{Y}})/\sqrt{n}$ bounded.	
\end{lemma}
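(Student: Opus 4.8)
The plan is to prove the local limit theorem by Fourier analysis, following the classical approach (as in Gnedenko--Kolmogorov, Petrov, or Spitzer), adapted to the lattice structure $\mathbf{a} + \mathbf{D}\ndZ^d$. First I would reduce to the case $\mathbf{a} = 0$ and $\mathbf{D} = I_d$ by the linear change of variables $\mathbf{Y}_i \mapsto \mathbf{D}^{-1}(\mathbf{Y}_i - \mathbf{a}/n)$; more cleanly, since $\mathbf{Y}$ is supported on $\mathbf{a} + \mathbf{D}\ndZ^d$, the variable $\mathbf{Z} := \mathbf{D}^{-1}(\mathbf{Y} - \mathbf{a})$ is supported on $\ndZ^d$ and in no proper sublattice, has covariance $\mathbf{D}^{-1}\mathbf{\Sigma}(\mathbf{D}^{-1})^\intercal$, and $\sum_{i=1}^n \mathbf{Y}_i = \mathbf{y}$ iff $\sum_{i=1}^n \mathbf{Z}_i = \mathbf{D}^{-1}(\mathbf{y} - n\mathbf{a})$. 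The Jacobian factor $|\det \mathbf{D}|$ appearing in $\bar p_n$ is exactly the normalization that accounts for the covolume of the lattice, so the statement for $\mathbf{Y}$ follows from the statement for $\mathbf{Z}$ on $\ndZ^d$. Hence it suffices to treat integer-valued vectors $\mathbf{Z}$ supported on no proper sublattice of $\ndZ^d$, with finite positive-definite covariance $\mathbf{\Sigma}_0$ and mean $\boldsymbol{\mu}_0$.

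Next I would use the Fourier inversion formula on the torus: for $\mathbf{k} \in \ndZ^d$,
\[
p_n(\mathbf{k}) = \frac{1}{(2\pi)^d}\int_{[-\pi,\pi]^d} \varphi(\mathbf{t})^n e^{-i \langle \mathbf{t}, \mathbf{k}\rangle}\, d\mathbf{t},
\]
where $\varphi(\mathbf{t}) = \Ex{e^{i\langle \mathbf{t}, \mathbf{Z}\rangle}}$ is the characteristic function. Similarly $\bar p_n(\mathbf{k})$ has an exact Gaussian-type integral representation over all of $\ndR^d$. The standard argument splits $[-\pi,\pi]^d$ into a shrinking neighbourhood $\|\mathbf{t}\| \le \delta n^{-1/2}$ of the origin and its complement. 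On the small neighbourhood one substitutes $\mathbf{t} = \mathbf{s}/\sqrt{n}$ and uses the second-order Taylor expansion $\varphi(\mathbf{s}/\sqrt n)^n = \exp(i\sqrt n\langle \mathbf{s}, \boldsymbol{\mu}_0\rangle - \tfrac{1}{2}\langle \mathbf{s}, \mathbf{\Sigma}_0 \mathbf{s}\rangle + o(1))$, valid uniformly on compacts in $\mathbf{s}$; dominated convergence (with a Gaussian-type majorant obtained from $|\varphi(\mathbf{t})| \le \exp(-c\|\mathbf{t}\|^2)$ near $0$) then shows this piece converges, after multiplying by $n^{d/2}$, to the inverse Fourier transform of the Gaussian, which is precisely $n^{d/2}\bar p_n(\mathbf{k})$ up to the uniformly-small error. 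On the complement, the aperiodicity hypothesis — $\mathbf{Z}$ lives in no proper sublattice — gives $|\varphi(\mathbf{t})| < 1$ for all $\mathbf{t} \in [-\pi,\pi]^d \setminus \{0\}$, hence by compactness $\sup_{\delta n^{-1/2} \le \|\mathbf{t}\|_\infty \le \pi} |\varphi(\mathbf{t})| \le 1 - \eta$ for some $\eta = \eta(\delta) > 0$ once $n$ is large, so that piece is $O((1-\eta)^n)$ after accounting for the volume, which is $o(n^{-d/2})$. The corresponding tail of the Gaussian integral defining $\bar p_n$ is also exponentially small, so both contributions are negligible in the $o(n^{-d/2})$ sense uniformly in $\mathbf{k}$.

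Assembling these pieces gives $\sup_{\mathbf{k} \in \ndZ^d} |p_n(\mathbf{k}) - \bar p_n(\mathbf{k})| = o(n^{-d/2})$, and translating back through the change of variables yields the statement for $\mathbf{Y}$. The final ``in particular'' clause is immediate: for $(\mathbf{y} - n\Ex{\mathbf{Y}})/\sqrt n$ in a fixed compact set, $\bar p_n(\mathbf{y})$ is bounded below by a constant times $n^{-d/2}$, so the uniform $o(n^{-d/2})$ error implies $p_n(\mathbf{y})/\bar p_n(\mathbf{y}) \to 1$ uniformly over that range.

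I expect the main obstacle to be organizing the uniformity carefully — in particular making sure the Taylor-expansion error on the central region and the dominated-convergence majorant are uniform in $\mathbf{k}$ (they are, since the $\mathbf{k}$-dependence sits only in the oscillatory factor $e^{-i\langle\mathbf{t},\mathbf{k}\rangle}$, which has modulus one), and handling the non-lattice-aligned shift $\mathbf{a}$ together with the general matrix $\mathbf{D}$ cleanly enough that the covolume factor $|\det\mathbf{D}|$ drops out correctly. Since this is a classical result, I would in the actual write-up most likely just cite a standard reference (e.g. \cite{MR1835418}-style probability texts, or Bhattacharya--Rao) and indicate the reduction to the $\ndZ^d$ case, rather than reproduce the full Fourier argument.
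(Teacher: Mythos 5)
The paper does not actually prove this lemma: it is stated in Section~\ref{sec:tools} as a classical probabilistic tool (a multivariate lattice local limit theorem) and used without proof, the way a text would cite Gnedenko--Kolmogorov, Petrov, Spitzer, or Bhattacharya--Rao. So there is no proof in the paper to compare your argument against.

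That said, your sketch is a correct and essentially complete outline of the standard Fourier-analytic proof, and the one point most likely to be mishandled is handled correctly. The reduction $\mathbf{Z} := \mathbf{D}^{-1}(\mathbf{Y} - \mathbf{a})$ is right; the covariance transforms to $\mathbf{D}^{-1}\mathbf{\Sigma}(\mathbf{D}^{-1})^{\intercal}$, and since $\det(\mathbf{D}^{-1}\mathbf{\Sigma}(\mathbf{D}^{-1})^{\intercal}) = \det\mathbf{\Sigma}/(\det\mathbf{D})^2$, the prefactor $1/\sqrt{\det\mathbf{\Sigma}_0}$ in the $\ndZ^d$-case becomes $|\det\mathbf{D}|/\sqrt{\det\mathbf{\Sigma}}$ in the original variables, which is exactly why the covolume factor drops out cleanly. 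The Fourier inversion on $[-\pi,\pi]^d$, the split at $\|\mathbf{t}\|\le\delta/\sqrt{n}$, the second-order expansion of $\log\varphi$ combined with a Gaussian majorant $|\varphi(\mathbf{t})|\le e^{-c\|\mathbf{t}\|^2}$ near the origin, and the bound $\sup_{\delta\le\|\mathbf{t}\|}|\varphi(\mathbf{t})|<1$ away from the origin are all the right ingredients. The aperiodicity step does need the observation (which you invoke but do not spell out) that if $|\varphi(\mathbf{t})|=1$ for some $\mathbf{t}\in[-\pi,\pi]^d$, then $\langle\mathbf{t},\mathbf{z}_1-\mathbf{z}_2\rangle\in 2\pi\ndZ$ for all $\mathbf{z}_1,\mathbf{z}_2$ in the support, so that the no-proper-sublattice hypothesis (differences generate $\ndZ^d$) forces $\mathbf{t}\in 2\pi\ndZ^d\cap[-\pi,\pi]^d=\{0\}$. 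Your comment that the $\mathbf{k}$-dependence sits entirely in the unimodular oscillatory factor, so that all estimates are automatically uniform in $\mathbf{k}$, is precisely the right way to see the uniformity. Citing a standard reference and indicating the reduction, as you suggest, is exactly what the paper itself chooses to do by not including a proof at all.
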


\subsection{A large deviation inequality for functions on finite Markov chains}

The following Chernoff-type bound for finite Markov chains was established by Lezaud~\cite{MR1627795} and does not require the chain to be reversible. 

\begin{lemma}[{\cite[Thm. 3.3]{MR1627795}}]
	\label{le:nonrev}
	\label{le:markov}
	Let $(X_n)_{n \ge 1}$ denote an irreducible Markov chain on a finite state space $S$ with transition matrix $\mathbf{P}$ and stationary distribution $\pi$. Suppose that the multiplicative symmetrization $\mathbf{K} = \mathbf{P}^\intercal \mathbf{P}$ is irreducible, and let $\epsilon(\mathbf{K})$ denote its spectral gap. Let $f: S \to [-1,1]$ denote a function  whose expected value with respect to the distribution $\pi$ equals $\mathbb{E}_\pi[f] = 0$. Let $b>0$ be a constant such that $0 < \|f\|_2 \le b$.  Then for each initial distribution $q = \cL(X_1)$ and each  $0 < \delta \le 1$ and~$n \ge 1$ it holds that
	\[
	\mathbb{P}({|f(X_1) + \ldots + f(X_n)| \ge \delta n }) \le 2 N_q \exp\left(- \frac{n \delta^2 \epsilon(\mathbf{K})}{8 b^2(1 + h(5\delta/b^2)} \right)
	\]
	where $N_q = \| q / \pi \|_2$ and 
	\[
		h(x) = \frac{1}{2}\left(\sqrt{1+x} -1 + \frac{x}{2} \right).
	\]
\end{lemma}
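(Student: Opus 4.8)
The inequality is quoted verbatim from \cite[Thm.\ 3.3]{MR1627795}, so in the paper one simply invokes that reference; the plan one would follow to reconstruct it is the classical exponential-moment (Chernoff) method for Markov chains. Write $S_n=f(X_1)+\cdots+f(X_n)$. The first step is Markov's inequality applied to $e^{rS_n}$: for every $r>0$,
\[
\mathbb{P}(S_n\ge\delta n)\le e^{-r\delta n}\,\mathbb{E}_q\big[e^{rS_n}\big],
\]
and the two-sided tail is obtained by running the same estimate with $f$ replaced by $-f$ and taking a union bound — this is the source of the prefactor $2$. Everything then reduces to bounding the exponential moment $\mathbb{E}_q[e^{rS_n}]$ for small $r>0$.

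The second step is to encode this moment spectrally. With $D_r=\mathrm{diag}(e^{rf(x)})$ and working in $\mathbb{L}_2(\pi)$, one introduces the positive operator $P_r:=D_r^{1/2}\mathbf{P}D_r^{1/2}$, which is not self-adjoint when $\mathbf{P}$ is non-reversible. A telescoping computation shows that $\mathbb{E}_\pi[e^{rS_n}]$ equals, up to bounded boundary factors, the quadratic form $\langle\phi_r,P_r^{\,n-1}\phi_r\rangle_\pi$ in the bounded function $\phi_r=e^{rf/2}$, and a general initial law $q$ is handled by Cauchy--Schwarz in $\mathbb{L}_2(\pi)$, which produces exactly the constant $N_q=\|q/\pi\|_2$. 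Hence $\mathbb{E}_q[e^{rS_n}]$ is at most a constant (depending on $r$ but not on $n$) times $N_q\,\lambda(r)^n$, where $\lambda(r)$ is the Perron--Frobenius eigenvalue of $P_r$, normalised so that $\lambda(0)=1$. The task becomes to estimate $\log\lambda(r)$ for $r$ near $0$.

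The third step is the substantive one. Because $P_r$ is not self-adjoint, $\lambda(r)$ is not the Rayleigh quotient of $P_r$ itself; one instead controls it through the spectral gap $\epsilon(\mathbf{K})$ of the multiplicative symmetrisation $\mathbf{K}=\mathbf{P}^\intercal\mathbf{P}$, using the contraction $\|\mathbf{P}g-\pi(g)\|_\pi^2\le(1-\epsilon(\mathbf{K}))\|g-\pi(g)\|_\pi^2$ together with the normalisation $\|f\|_\infty\le 1$ to bound the tail of the series $e^{rf}=1+rf+\cdots$. The output is a bound of the schematic form $\log\lambda(r)\le \dfrac{r^2 b^2}{2\,\epsilon(\mathbf{K})}\big(1+h(5r/b^2)\big)$, with all the higher-order terms collapsed into the function $h$; this bookkeeping is the technical core of \cite{MR1627795}. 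Feeding this into the Chernoff bound gives $\mathbb{P}(S_n\ge\delta n)\le N_q\exp\!\big(-n\,(r\delta-cr^2)\big)$ with $c$ explicit in $b$ and $\epsilon(\mathbf{K})$; optimising over $r$ — the optimal $r$ has order $\delta/b^2$, which is admissible precisely because $0<\delta\le 1$ — yields the exponent $\delta^2\epsilon(\mathbf{K})/\big(8b^2(1+h(5\delta/b^2))\big)$ and, after the union bound over $\{f,-f\}$, the prefactor $2N_q$.

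The hard part is the third step: one must produce a quadratic-in-$r$ estimate for $\lambda(r)$ whose leading coefficient involves only $\|f\|_2\le b$ and not $\|f\|_\infty$, for a chain that need not be reversible. This is exactly Lezaud's refinement over earlier reversible-only bounds, and it forces one to pass through the symmetrised gap $\epsilon(\mathbf{K})$ and to track the perturbation of $P_r$ around $r=0$ with considerable care. The remaining ingredients — the Chernoff inequality, the change of initial distribution, and the final optimisation over $r$ — are routine.
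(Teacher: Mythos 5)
The paper offers no proof of this lemma: it is quoted directly from Lezaud \cite{MR1627795}, as the citation in the lemma header indicates. You correctly recognise this, and your sketch of the Chernoff/exponential-moment argument — spectral perturbation of $D_r^{1/2}\mathbf{P}D_r^{1/2}$, the $N_q$ factor from Cauchy--Schwarz, control of $\lambda(r)$ via the symmetrised gap $\epsilon(\mathbf{K})$, and the final optimisation at $r\asymp\delta/b^2$ — is a faithful account of Lezaud's proof strategy, so there is nothing to compare against in the paper itself.
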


\subsection{A deviation inequality for random walk}
\label{sec:deviation}
The following deviation inequality is found in most textbooks on the subject.

\begin{lemma}[Medium deviation inequality for one-dimensional random walk]
	\label{le:deviation}
	Let $(X_i)_{i \in \ndN}$ be an i.i.d. family of real-valued random variables with $\Ex{X_1} = 0$ and $\Ex{e^{t X_1}} < \infty$ for all $t$ in some interval around zero. Then there are constants $\delta, c>0$ such that for all $n\in \ndN$, $x \ge 0$ and $0 \le\lambda\le\delta$ it holds that \[\Pr{|X_1 + \ldots + X_n| \ge x} \le 2 \exp(c n \lambda^2 - \lambda x).\]
\end{lemma}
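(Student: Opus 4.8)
The statement to be proved is the medium-deviation (Bernstein-type) inequality for one-dimensional random walk: if $(X_i)$ is i.i.d. with $\Ex{X_1}=0$ and a finite moment generating function $\Ex{e^{tX_1}}$ near $t=0$, then there exist $\delta,c>0$ with $\Pr{|X_1+\dots+X_n|\ge x}\le 2\exp(cn\lambda^2-\lambda x)$ for all $n$, all $x\ge 0$, and all $0\le\lambda\le\delta$. The natural route is the exponential Markov inequality (Chernoff bounding). First I would fix a small interval $(-\delta_0,\delta_0)$ on which $\varphi(t):=\Ex{e^{tX_1}}<\infty$; standard facts then give that $\varphi$ is smooth there, with $\varphi(0)=1$, $\varphi'(0)=\Ex{X_1}=0$, and $\varphi''(0)=\Va{X_1}<\infty$. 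By a second-order Taylor expansion around $0$, there is a constant $c>0$ and a possibly smaller $\delta\in(0,\delta_0]$ such that $\varphi(t)\le \exp(ct^2)$ for all $|t|\le\delta$ — this uses $\log\varphi(t)=\tfrac12\varphi''(0)t^2+o(t^2)$ as $t\to0$, so on a small enough neighbourhood $\log\varphi(t)\le ct^2$.

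The second step is the Chernoff bound itself. For $0\le\lambda\le\delta$, Markov's inequality applied to $e^{\lambda S_n}$ (where $S_n=X_1+\dots+X_n$) gives $\Pr{S_n\ge x}\le e^{-\lambda x}\Ex{e^{\lambda S_n}}=e^{-\lambda x}\varphi(\lambda)^n\le e^{-\lambda x}e^{cn\lambda^2}$, using independence to factor the MGF and the Taylor bound from the first step. Applying the identical argument to $-S_n=(-X_1)+\dots+(-X_n)$ — whose summands are again i.i.d., mean zero, with MGF $t\mapsto\varphi(-t)$ finite on $(-\delta_0,\delta_0)$, and which satisfies the same Taylor bound possibly after shrinking $\delta$ and enlarging $c$ — yields $\Pr{S_n\le -x}\le e^{-\lambda x}e^{cn\lambda^2}$. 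A union bound then gives $\Pr{|S_n|\ge x}\le 2\exp(cn\lambda^2-\lambda x)$, which is exactly the claim. (One should choose the final $\delta$ and $c$ to work simultaneously for both tails, e.g. take the minimum of the two $\delta$'s and the maximum of the two $c$'s.)

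There is essentially no hard part here; this is a textbook computation, as the statement itself concedes. The only points requiring a little care are: (i) justifying that $\varphi$ is twice differentiable at $0$ with the derivatives equal to the moments — this follows from dominated convergence / the standard theory of MGFs on the interior of their domain of finiteness; and (ii) making sure the constants $\delta,c$ are chosen uniformly in $n$, $x$, and $\lambda$, which is automatic since the Taylor bound $\varphi(\lambda)\le e^{c\lambda^2}$ does not involve $n$ or $x$ and the Chernoff step is valid for every $n\ge1$ and $x\ge0$. No further structural input (not even the local limit theorem or the Markov-chain inequality stated earlier) is needed.
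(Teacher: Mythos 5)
Your proof is correct, and it follows the standard Chernoff/Cram\'er argument; this is essentially what the paper's one-sentence sketch must have intended. Be aware, though, that the sketch in the paper is stated slightly incorrectly: it claims $\Ex{e^{\lambda|X_1|}}\le 1+c\lambda^2$ for small $\lambda$ and proposes applying Markov's inequality to $\exp(\lambda(|X_1|+\dots+|X_n|))$. But whenever $X_1$ is not almost surely zero one has $\Ex{|X_1|}>0$, so $\Ex{e^{\lambda|X_1|}}=1+\lambda\Ex{|X_1|}+O(\lambda^2)$ has a nonzero linear term and cannot be bounded by $1+c\lambda^2$; moreover $|X_1|+\dots+|X_n|$ dominates $|X_1+\dots+X_n|$ and would not give the right rate. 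The correct repair is exactly what you do: use $\Ex{e^{\lambda X_1}}\le 1+c\lambda^2\le e^{c\lambda^2}$ for $|\lambda|\le\delta$ (valid because $\Ex{X_1}=0$ kills the linear term of the moment generating function near $0$), bound $\Pr{S_n\ge x}$ and $\Pr{-S_n\ge x}$ separately via Markov applied to $e^{\pm\lambda S_n}$, and combine with a union bound, which also accounts for the factor $2$. Your care in choosing $\delta$ and $c$ uniformly for both tails is exactly the right bookkeeping. So your write-up both matches the paper's intent and fixes its small slip.
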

The proof is by observing that $\Ex{e^{\lambda |X_1|}} \le 1 + c\lambda^2$ for some constant $c$ and sufficiently small $\lambda$, and applying Markov's inequality to the random variable $\exp(\lambda(|X_1| + \ldots + |X_n|))$.

\subsection{The cycle lemma}
The following combinatorial result is given for example in Tak{\'a}cs \cite{MR0138139}.
\begin{lemma}[The cycle lemma]
	\label{le:cycle}
	For each sequence $k_1, \ldots, k_n \ge -1$ of integers with $\sum_i k_i = -r \le 0$ there exist precisely $r$ values of $0 \le j \le n-1$ such that the cyclic shift \[(k_{1,j}, \ldots, k_{n,j}) :=   (k_{1+j},\ldots, k_n, k_1, \ldots, k_{j})\] satisfies $\sum_{i=1}^{u} k_{i,j} > -r$ for all $1 \le u \le n-1$.
\end{lemma}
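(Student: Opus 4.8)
The plan is to prove this by a counting argument that pairs up cyclic shifts, following the classical approach (the same one used by Dvoretzky and Motzkin and recounted in Takács). First I would set up the walk associated to the sequence: define partial sums $S_0 = 0$ and $S_u = k_1 + \cdots + k_u$ for $1 \le u \le n$, so that $S_n = -r$. A cyclic shift by $j$ produces the walk with increments $(k_{1+j}, \ldots, k_n, k_1, \ldots, k_j)$; its partial sums are $S_{u+j} - S_j$ for the first $n-j$ steps and $S_n + S_{u-(n-j)} - S_j$ afterwards. The condition that the shifted sequence satisfies $\sum_{i=1}^u k_{i,j} > -r$ for all $1 \le u \le n-1$ translates, after unwinding, into a statement about which index $j$ realizes a \emph{strict} running minimum of the walk read cyclically.

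The key step is the following observation. Because each increment $k_i \ge -1$, the walk $S_0, S_1, \ldots, S_n$ decreases by at most $1$ at each step, so it must pass through every integer value between $0$ and $-r$ on the way down; in particular each of the $r$ ``new record low'' values $0 > m_1 > m_2 > \cdots$ down to $-(r-1)$ is attained, and I would let $j_1, \ldots, j_r$ be the \emph{last} indices at which the walk attains the values $0, -1, \ldots, -(r-1)$ respectively (equivalently, the indices $j$ with $0 \le j \le n-1$ such that $S_j < S_i$ for all $j < i \le n$, i.e.\ $S_j$ is a strict minimum over the suffix). There are exactly $r$ such indices since each of the $r$ values $0, -1, \dots, -(r-1)$ has a unique last occurrence among $S_0, \dots, S_{n-1}$ (the value $-r$ itself occurs only at $S_n$). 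I would then verify that $j$ is one of these $r$ indices if and only if the cyclic shift $(k_{1,j}, \ldots, k_{n,j})$ has all its proper partial sums $> -r$: indeed, a proper partial sum of the shifted walk equals either $S_{u+j} - S_j$ or $-r + (S_{u-(n-j)} - S_j)$, and the suffix-minimum property of $j$ together with $S_j \le 0$ forces each of these to exceed $-r$; conversely if $S_j$ is not a strict suffix minimum there is some later index where the walk reaches $S_j$ again or goes lower, which produces a proper partial sum of the shifted sequence equal to $-r$ (or smaller, but smaller is impossible since the total is $-r$ and increments are $\ge -1$), violating the condition.

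The main obstacle I expect is purely bookkeeping: being careful about the ``wrap-around'' partial sums (the ones indexed by $u > n - j$) and about strict versus non-strict inequalities, since the distinction between $\ge$ and $>$ and between ``first'' and ``last'' occurrence is exactly what makes the count come out to $r$ rather than something else. Once the translation between ``shifted partial sums all $> -r$'' and ``$S_j$ is a strict minimum over $\{S_j, S_{j+1}, \ldots, S_n\}$'' is established cleanly, the enumeration is immediate: the strict suffix-minima of the walk at levels $0, -1, \ldots, -(r-1)$ are in bijection with the $r$ admissible shifts. I would also double-check the edge case $r = 0$ (where $S_n = 0$ but the walk may dip below $0$), in which the claim correctly asserts that there are no admissible shifts, consistent with the statement that there are precisely $r = 0$ of them.
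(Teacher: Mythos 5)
The approach you outline---expressing the shifted partial sums via the original walk $S_0,\ldots,S_n$ and counting admissible shifts through extremal indices---is the right kind of argument (the paper does not prove the lemma; it cites Tak\'acs), but your characterization of the admissible $j$ is wrong, and the two descriptions you offer are not even equivalent to each other. A strict suffix minimum, $S_j<S_i$ for all $j<i\le n$, forces $S_j<S_n=-r$, hence $S_j\le -r-1$; whereas ``last index at which $S$ attains one of $0,-1,\ldots,-(r-1)$'' forces $S_j\ge -(r-1)>-r$. Neither matches the admissible set: for $n=4$, $(k_1,\ldots,k_4)=(-1,1,-1,-1)$, so $S=(0,-1,0,-1,-2)$ and $r=2$, the admissible shifts are $j=0,1$, yet no $j$ is a strict suffix minimum, and the last occurrences of levels $0,-1$ among $S_0,\ldots,S_3$ are $j=2,3$. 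The parenthetical ``the value $-r$ itself occurs only at $S_n$'' is also false in general (try $(-1,-1,1,-1)$, where $S_2=S_4=-2$).

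The reason the suffix-minimum criterion cannot work is visible in your own verification attempt. In the wrap-around range $u>n-j$ the shifted partial sum is $-r+S_v-S_j$ with $1\le v\le j-1$, so to conclude that it exceeds $-r$ you need $S_v>S_j$ for all $v<j$, a \emph{prefix} condition, which is the opposite of what you imposed; ``together with $S_j\le 0$'' does not supply it. Combining both ranges correctly, $j$ is admissible iff $\bar{S}_i>\bar{S}_{j+n}$ for all $j<i<j+n$, where $\bar{S}$ is the periodic extension of the walk given by $\bar{S}_{n+i}:=S_i-r$, and one checks this is equivalent to $j+n$ being a strict new record low of $\bar{S}$. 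Because the increments are $\ge -1$, the extended walk attains each non-positive integer as a record exactly once, the record levels drop by $r$ per period, and $n$ is a record time iff $2n$ is, so exactly $r$ record times lie in $\{n,\ldots,2n-1\}$. That gives the count; the ``last occurrence'' / ``suffix minimum'' picture needs to be replaced by this ``record low of the extended walk'' picture.
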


\section{Random unlabelled weighted enriched trees with applications}
\label{sec:limits}

We develop a framework for random enriched trees considered up to symmetry and present our main applications to different models of random unlabelled graphs. 

\vspace{10pt}

\paragraph{ \em Index of notation} 

The following list summarizes frequently used terminology.

\begin{center}
	\begin{tabular}{@{}lp{358pt}@{}}
		$\cR^\kappa$ & $\kappa$-weighted species of $\cR$-structures, page \pageref{eq:omw}\\[2pt]
		$\cA_\cR^\omega$ & $\omega$-weighted species of $\cR$-enriched trees, page \pageref{eq:omw}\\[2pt]
		$\rho$ & radius of convergence of the ordinary generating series $\tilde{\cA}_\cR^\omega(z)$, page \pageref{mark:rho}\\[2pt]
		$\tilde{\mA}_n^\cR$ & random $n$-sized unlabelled $\cR$-enriched tree, page \pageref{mark:an}\\[2pt]
		$\mC_n^\omega$ & random $n$-sized unlabelled rooted block-weighted connected graph, page \pageref{mark:cn}\\[2pt]
		$(\cC^\bullet)^\omega$ & block-weighted species of rooted connected graphs, page \pageref{mark:c}\\[2pt]
		$\cB^\gamma$ & weighted species of $2$-connected graphs, page \pageref{mark:b}\\[2pt]
		$\cK$ & species of $k$-dimensional front-rooted trees, page \pageref{mark:k}\\[2pt]
		$\mK_n$ &  random unlabelled front-rooted $k$-tree, page \pageref{mark:kn}\\[2pt]
		$\cK^\circ$ & subspecies of $\cK$ of $k$-trees where the root front lies in precisely one hedron, page \pageref{mark:kc}\\[2pt]
		$\mK_n^\circ$ & random unlabelled $k$-tree from the class $\cK^\circ$ , page \pageref{mark:knc}\\[2pt]
		$d_G(\cdot, \cdot)$ & graph-metric on a connected graph $G$, page \pageref{mark:dc}\\[2pt]
		$d_{\textsc{block}}(\cdot, \cdot)$ & block-metric, page \pageref{mark:db}\\[2pt]
		$V_k(\cdot)$ & graph-distance $k$-neighbourhood, page \pageref{mark:nv}\\[2pt]
		$U_k(\cdot)$ & block-distance $k$-neighbourhood, page \pageref{mark:nu}\\[2pt]
		$f(A,v)$ & enriched fringe subtree of an enriched tree $A$ at a vertex $v$, page \pageref{mark:fringe}\\[2pt]
		$(\cT, \beta)$ & random $\Sym(\cR)$-enriched plane tree, page \pageref{le:sampler}\\[2pt]
		$\cT^f$ & fixpoint subtree corresponding to $(\cT, \beta)$, page \pageref{mark:tf}\\[2pt]
		$(\cT_n, \beta_n)$ & the random $\Sym(\cR)$-enriched plane tree $(\cT_n, \beta_n)$ conditioned on having $n$ vertices, page \pageref{mark:tn}\\[2pt]
		$\cT^f_n$ & fixpoint subtree corresponding to $(\cT_n, \beta_n)$, page \pageref{mark:tnf}\\[2pt]
		$(\cT^{(\ell)}, \beta^{(\ell)})$ & size-biased $\Sym(\cR)$-enriched tree, page \pageref{le:sampler2}\\[2pt]
		$(\cT^{(\infty)}, \beta^{(\infty)})$ & local weak limit of the $\Sym(\cR)$-enriched tree $(\cT_n, \beta_n)$, page \pageref{mark:ti}\\[2pt]
		$\tau^{[k]}$ & plane tree trimmed at height $k$, page \pageref{mark:tauk}\\[2pt]
		$(\tau, \gamma)^{<k>}$ & $\cG$-enriched tree trimmed at height $k$, page \pageref{mark:taugk}\\[2pt]
		$\mG$ & random $\cG$-object, page \pageref{mark:go}\\[2pt]
		$\hat{\mG}$ & random $\cG$-object with a bias on the number of fixpoints, page \pageref{mark:goh}\\[2pt]
		$\bar{\mG}$ & random $\cG$-object with a bias on the number of non-fixpoints, page \pageref{mark:gob}\\[2pt]
		$\hat{\mH}^\bullet=(\hat{\mH}, u^*)$ & the local limit of $(\cT_n, \beta_n)$ near a random vertex, page \pageref{mark:hk}\\[2pt]
	\end{tabular}        
	
\end{center}

\vspace{1 \baselineskip}

\subsection{Random weighted $\cR$-enriched trees}
\label{sec:intro1}

The concept of $\cR$-enriched trees was introduced by Labelle~\cite{MR642392}, and facilitates the unified treatment of a large variety of tree-like combinatorial structures. 



Given a species of structures $\cR$, the corresponding species of {\em $\cR$-enriched trees} $\cA_\cR$ is constructed as follows. For each finite set $U$ let $\cA_\cR[U]$ be the set of all pairs $(A, \alpha)$ with $A \in \cA[U]$ a rooted unordered tree with labels in $U$, and $\alpha$ a function that assigns to each vertex $v$ of $A$ with offspring set $M_v$ an $\cR$-structure $\alpha(v) \in \cR[M_v]$. The transport along a bijection $\sigma: U \to V$ relabels the vertices of the tree and the $\cR$-structures on the offspring sets accordingly. That is, $\cA_\cR[\sigma]$ maps the enriched tree $(A, \alpha)$ to the tree $(B, \beta)$ with $B=\cA[\sigma](A)$ and $\beta(\sigma(v)) = \cR[\sigma|_{M_{v}}](\alpha(v))$ for each $v \in A$. The species of $\cR$-enriched trees admits the combinatorial specification
\begin{align}
\label{eq:renr}
\cA_\cR \simeq \cX \cdot \cR(\cA_\cR),
\end{align}
as any $\cR$-enriched tree consists of a root vertex (corresponding to the factor $\cX$) together with an $\cR$-structure, in which each atom is identified with the root of a further $\cR$-enriched tree.  By Theorem~\ref{te:implicitspecies} it holds that given any species $\cF$ with an isomorphism $\cF \simeq \cX \cdot \cR(\cF)$, there is a natural choice of an isomorphism
$
\cF \simeq \cA_\cR.
$ 
Hence a large variety of combinatorial structures have a natural interpretation as enriched trees. 


Given a weighting $\kappa$ on the species $\cR$,  we obtain a weighting $\omega$ on the species $\cA_\cR$ given by 
\begin{align}
\label{eq:omw}
\omega(A, \alpha) = \prod_{v \in A} \kappa(\alpha(v)).
\end{align}
This weighting is consistent with the isomorphism in \eqref{eq:renr}, that is,
\begin{align}
\label{eq:weighting}
\cA_\cR^\omega \simeq \cX \cdot \cR^\kappa(\cA_\cR^\omega).
\end{align}

We are going to study  the random unlabelled enriched tree $\tilde{\mA}_n^\cR$,\label{mark:an} drawn with probability  proportional to its weight among all unlabelled objects with size $n$. In the following, we  illustrate how this model of random enriched trees generalizes various models of random graphs. The list is of course non-exhaustive as a huge variety of other special cases of $\tilde{\mA}_n^\cR$ may be found in the literature. 

 
\subsubsection{Simply generated P\'olya trees}
\label{sec:sigepo}
For $\cR = \Set$, the species $\cA_\cR$ describes rooted unordered trees. The corresponding unlabelled objects are also called P\'olya trees. Given a weight sequence $\om = (\omega_k)_k$, we may assign weight $\omega_k$ to each $k$-sized $\cR$-structure. Then $\tilde{\mA}_n^\cR$ is the random unordered unlabelled tree such that any P\'olya tree $A$ with $n$ vertices gets drawn with probability proportional to $\prod_{v \in A} \omega_{d^+_A(v)}$, with $d^+_A(v)$ denoting the outdegree of a vertex $v$. Note that setting weights to zero allows us to impose arbitrary degree restrictions. It is known that, depending on the weight-sequence, simply generated plane trees may show a very different behaviour, and it is natural to ask the same questions for simply generated P\'olya trees.

\subsubsection{Random unlabelled connected rooted graphs with weights on the blocks}
\label{sec:bijblock}

\begin{figure}[t]
	\centering
	\begin{minipage}{1.0\textwidth}
		\centering
		\includegraphics[width=1.0\textwidth]{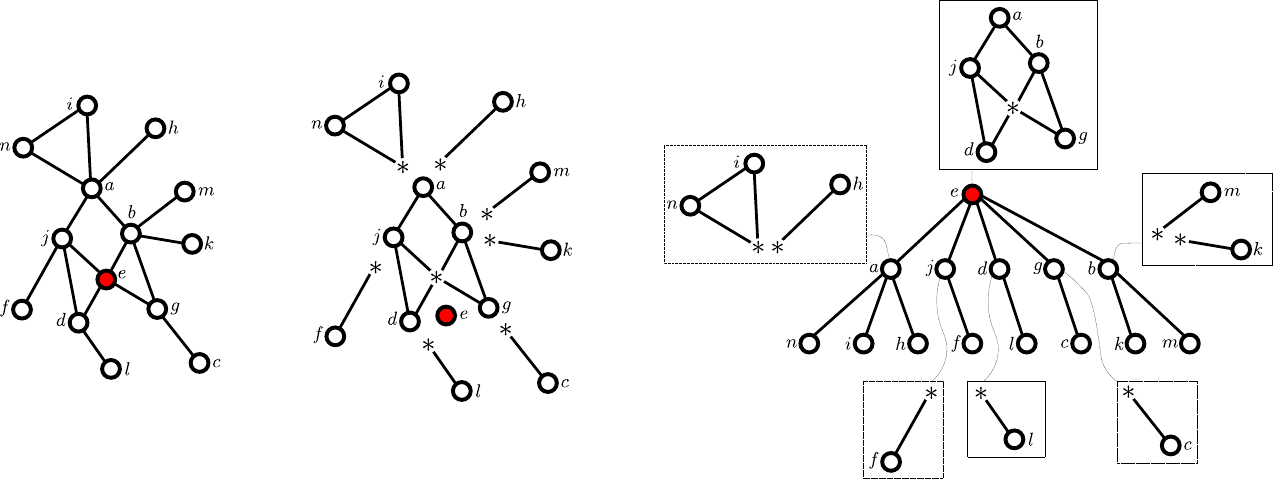}
		\caption{Correspondence of rooted connected graphs and enriched trees.}
		\label{fi:enrblockgraph}
	\end{minipage}
\end{figure}


The species $\cC$ connected graphs admits a decomposition in terms of the species $\cB$ of graphs that are $2$-connected  consist of two distinct vertices joint by an edge. The well-known combinatorial specification
\begin{align}
\label{eq:blode}
\cC^\bullet \simeq \cX \cdot \Set(\cB'(\cC^\bullet))
\end{align}
is illustrated in Figure~\ref{fi:enrblockgraph}. This  allows us to identify the species  $\cC^\bullet$  of rooted connected graphs with $\Set \circ \cB'$-enriched trees. That is, rooted trees, in which each offspring set gets partitioned, and each partition class $Q$ carries a $\cB'$-structure, that has $|Q| +1$ vertices, as the $*$-vertex receives no label. The isomorphism \eqref{eq:blode} can be found for example in Harary and Palmer \cite[1.3.3, 8.7.1]{MR0357214},  Robinson \cite[Thm. 4]{MR0284380}, and Labelle \cite[2.10]{MR699986}.

Let $\gamma$ be a weighting on the species $\cB$. \label{mark:b} We may consider the weighting $\omega$ on $\cC$ \label{mark:c} that assigns weight \[\omega(C) = \prod_B \gamma(B)\] to any graph $C$, with the index $B$ ranging over the blocks of $C$. The random graph $\mC_n^\omega$ \label{mark:cn} drawn from  the unlabelled $n$-sized  $\cC^\bullet$-objects with probability proportional to its $\omega$-weight is distributed like the random unlabelled enriched tree $\tilde{\mA}_n^\cR$ for the weighted species $\cR^\kappa=(\Set \circ \cB')^\kappa$, with $\kappa$ assigning the product of the $\gamma$-weights of the individual classes to any assembly of $\cB'$-structures. 

If all $\gamma$-weights are equal either to $0$ or $1$, we obtain random connected graphs from so called {\em block-classes} (or {\em block-stable} classes), that is, classes of graphs defined by placing constraints on the allowed blocks. For example, any class of graphs $\mathbf{Ex}(\cM)$ that may be defined by excluding a set $\cM$ of $2$-connected minors is also block-stable. Here a {\em minor} of a graph $G$ refers to any graph that may be obtained from $G$ by repeated deletion and contraction of edges. Prominent examples are outerplanar graphs $\mathbf{Ex}(K_4, K_{2,3})$, that may be drawn in the plane such that each vertex lies on the frontier of the infinite face, and series-parallel graphs $\mathbf{Ex}(K_4)$, that may be constructed similar to electric networks in terms of repeated serial and parallel composition. These two classes fall under the more general setting of random graphs from {\em subcritical} block-classes in the sense of  Drmota, Fusy, Kang, Kraus and Ru\'e \cite{MR2873207}, which also are special cases of the random graph $\mC_n^\omega$.

\begin{wrapfigure}{r}{0.3\textwidth}
	\begin{center}
		\includegraphics[width=0.15\textwidth]{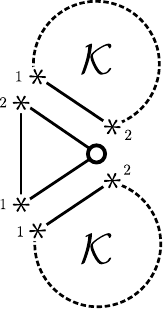}
	\end{center}
	\caption{Decomposition of the class $\cK^\circ$ for $k=2$.}
	\label{fi:ktree}
\end{wrapfigure}

\subsubsection{Random unlabelled front-rooted $k$-trees}
\label{sec:ktree}

Let $\cK$ \label{mark:k} denote the species of $k$-dimensional trees that are rooted at a front of distinguishable $*$-placeholder vertices. We let $\mK_n$ \label{mark:kn} denote the random unlabelled front-rooted $k$-tree that gets drawn uniformly at random among all unlabelled $\cK$-objects with $n$ hedra.

We consider the subspecies $\cK^\circ$ \label{mark:kc} front-rooted $k$-trees where the root-front is contained in precisely one hedron, and let $\mK_n^\circ$ \label{mark:knc} be sampled uniformly at random from the unlabelled $\cK$-objects with $n$ hedra.

Any element from $\cK$ may be obtained in a unique way by  glueing an arbitrary unordered collection of $\cK^\circ$-objects together at their root-fronts. Hence
\begin{align*}
\hspace{-100pt}
\cK \simeq \Set(\cK^\circ).
\end{align*}



As illustrated in Figure~\ref{fi:ktree}, any $\cK_{1}$-object may be constructed in a unique way by starting with a hedron $H$ consisting of the root-front and a vertex $v$, and then choosing, for each front $M$ of $H$ that contains $v$, a $k$-tree from $\cK$ whose root-front gets identified in a canonical way with $M$. Hence
\begin{align*}
\cK^\circ \simeq \cX \cdot \Seq_{\{k\}}(\cK).
\end{align*}
Combining the isomorphisms yields
\begin{align*}
\cK^\circ \simeq \cX \cdot (\Seq_{\{k\}}\circ \Set)(\cK^\circ).
\end{align*}
This identifies the species $\cK^\circ$ as $\Seq_{\{k\}}\circ \Set$-enriched trees, and the species $\cK$ as unordered forest of enriched trees. In particular, $\mK_n^\circ$ corresponds to the random unlabelled enriched tree $\tilde{\mA}_n^\omega$.

\subsection{Local convergence of random unlabelled enriched trees}
\label{sec:locunl}
{\em In the following, we let $\cR^\kappa$ denote an arbitrary  weighted species such that the inventory $|\cR[k]|_\kappa$ is positive for $k=0$ and for at least one $k \ge 2$. We let $\cA_\cR^\omega$ denote the corresponding species of weighted $\cR$-enriched trees. The radius of convergence of the ordinary generating series $\tilde{\cA}_\cR^\omega(z)$ will be denoted by $\rho$. \label{mark:rho}}

\subsubsection{A coupling with a random $\cG$-enriched (plane) tree}
\label{sec:unlcoupling}

Our first observation is that any symmetry $S=((T,\alpha), \sigma)$ of an $\cR$-enriched tree $A = (T,\alpha)$ admits a tree-like decomposition in form of a $\Sym(\cR)$-enriched tree $(T, \beta)$. Indeed, the automorphism $\sigma$ fixes the root $o$ of $T$ and permutes the roots of the $\cR$-enriched trees dangling from $o$ in such a way, that the induced permutation $\sigma(o)$ on the offspring of $o$ is an automorphism of the $\cR$-structure $\alpha(o)$. This yields an $\cR$-symmetry $\beta(o) := (\alpha(o), \sigma(o))$. For each fixpoint $v$ of the permutation $\sigma(o)$ it holds that the restriction of $\sigma$ to the $\cR$-enriched fringe subtree $f(A,v)$ \label{mark:fringe} (the maximum enriched subtree rooted at the vertex $v$) yields an $\cA_\cR$-enriched symmetry $(f(A,v), \sigma|_{f(A,v)})$ and we may proceed with the construction of $\beta$ in the same way. For each cycle $\tau=(v_1, \ldots, v_t)$ of $\sigma(o)$ having length $t \ge 2$ the situation is more complicated. We know that $\sigma$ permutes the $\cR$-enriched fringe subtrees $f(A, v_i)$ cyclically. Hence they are all structurally equivalent, and in fact, by the discussion in Section~\ref{sec:symmetry}, up to isomorphism composed out of isomorphic symmetries $(f(A, v_i), \sigma_i)$ with $\sigma_i = \sigma^t|_{f(A, v_i)}$. Hence we may proceed with the construction of $\beta$ as before, by considering the individual symmetries. This process is illustrated in Figure~\ref{fi:symdecomp}. Note that the $\Sym(\cR)$-enriched tree $(T, \beta)$ does not contain all information about the symmetry $S$, but we may reconstruct $S$ up to relabelling from $(T, \beta)$.

\begin{figure}[t]
	\centering
	\begin{minipage}{1.0\textwidth}
		\centering
		\includegraphics[width=0.5\textwidth]{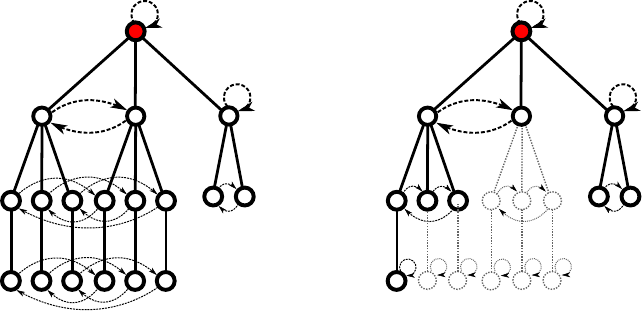}
		\caption{The encoding of $\cA_\cR$-symmetries as $\Sym(\cR)$-enriched trees.}
		\label{fi:symdecomp}
	\end{minipage}
\end{figure}

The fixpoints of the automorphism $\sigma$ form a subtree $T^f$ of $T$. Each fixpoint $v$ has a possibly empty set $f(v)$ of other fixpoints as offspring, and the remaining offspring correspond to a forest $F(v)$ of $\Sym(\cR)$-enriched fringe subtrees $f((T,\beta), v_i)$, which consist of non-fixpoints of $\sigma$. We are going to say the triple $G(v) := (\beta(v), f(v), F(v))$ is a $\cG$-object {\em on} the fixpoints $f(v)$ and define $|f(v)|$ to be its size. Formally, $\cG$-objects do not correspond to any species, but the analogy is clear, and we may call $(T^f, (G(v))_{v \in T^f})$ a $\cG$-enriched tree. 

Similarly, we may define the concept of a {\em $\cG$-enriched plane tree}, in which the label set of each occurring $\cR$-symmetry is required to belong to the collection $\{[k] \mid k \ge 0\}$. We are going to use the following recursive procedure illustrated in Figure~\ref{fi:sampler1} in order to sample random $\cA_\cR$-symmetries according to a weighted Boltzmann-distribution.

\begin{wrapfigure}{r}{0.32\textwidth}
	\begin{center}
		\includegraphics[width=0.22\textwidth]{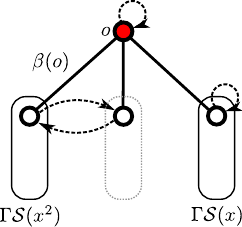}
	\end{center}
	\caption{The sampler $\Gamma \cS(x)$.}
	\label{fi:sampler1}
\end{wrapfigure}

\begin{lemma}[A coupling of random unlabelled $\cR$-enriched trees with random $\cG$-enriched trees]
	\label{le:sampler}
	For any parameter $x>0$ with $\tilde{\cA}^\omega_\cR(x) < \infty$ consider the following recursive procedure $\Gamma \cS(x)$ which draws a random $\Sym(\cR)$-enriched plane tree $(\cT, \beta)$.
	\begin{enumerate}[1.]
		\item Start with a root vertex $o$ and
		draw a random symmetry \[\beta(o)=(R(o), \sigma(o))\] from the set $\bigcup_{k \ge 0} \Sym(\cR)[k]$ such that $\beta(o)$ gets drawn with probability proportional to 
		\[
		\frac{\kappa(R(o))}{|R(o)|!}\tilde{\cA}^\omega_{\cR}(x)^{\sigma_1(o)} \tilde{\cA}^\omega_{\cR}(x^2)^{\sigma_2(o)} \cdots.
		\] Here $\sigma_i(o)$ denotes the number of $i$-cycles of the permutation $\sigma(o)$, with fixpoints counting as $1$-cycles. 
		\item  For each cycle $\tau$ of the permutation $\sigma(o)$ draw an independent copy $(\cT^\tau, \beta^\tau)$ of the recursively called sampler $\Gamma \cS(x^{|\tau|})$. Here $|\tau| \ge 1$ denotes the length of the cycle. For each atom $a$ of $\tau$ make an identical copy $(\cT^a, \beta^a)$ of $(\cT^\tau, \beta^\tau)$.
		\item Let $k$ denote the size of the $\Sym(\cR)$-structure $\beta(o)$. For each label $a \in [k]$ add an edge between the root vertex $o$ and the root of the plane tree $\cT^a$. The ordering of the offspring set is given by the order on the label set $[k]$. This defines a plane tree $\cT$ with root-vertex $o$. Moreover, for each $a \in [k]$ and each vertex $v \in \cT^a$ set $\beta(v) := \beta^a(v)$. This defines a $\Sym(\cR)$-enriched plane tree $(\cT, \beta)$.
	\end{enumerate}
	This procedure terminates almost surely and the resulting $\Sym(\cR)$-enriched plane tree $(\cT, \beta)$ corresponds to a symmetry on the vertex set of the plane tree $\cT$. Let $\Gamma Z_{\cA_\cR^\omega}(x)$ denote the result of relabelling this symmetry uniformly at random with labels from the set $[n]$, with $n$ denoting the number of vertices of the tree $\cT$. Then for any symmetry $(A, \sigma)$ from the set $\bigcup_{k \ge 0} \Sym(\cA_\cR)[k]$ it holds that
	\[
	\Pr{\Gamma Z_{\cA_\cR^\omega}(x) = (A, \sigma)} = \omega(A) \frac{x^{|A|}}{|A|!}  \tilde{\cA}_\cR^\omega(x)^{-1}
	.\]
\end{lemma}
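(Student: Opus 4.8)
The plan is to identify the procedure $\Gamma\cS(x)$, composed with the terminal uniform relabelling $\Gamma Z_{\cA_\cR^\omega}(x)$, with the recursive weighted P\'olya--Boltzmann sampler attached to the combinatorial specification \eqref{eq:weighting} at the parameter sequence $\mathbf{y}=(x,x^2,x^3,\ldots)$, and then to read off the output law. First I would check that \eqref{eq:weighting} is covered by the recursive-sampler machinery. Writing it as $\cA_\cR^\omega\simeq\cH(\cX,\cA_\cR^\omega)$ with the $2$-sort species $\cH(\cX,\cY)=\cX\cdot\cR^\kappa(\cY)$: since $\cX[\emptyset]=\emptyset$, the product carries no object over a pair of empty label sets, so $\cH(0,0)=0$; and by the chain rule (Proposition~\ref{pro:chainprod}) one has $\partial_2\cH\simeq\cX\cdot(\cR^\kappa)'(\cY)$, which also vanishes at $(0,0)$ because of the $\cX$-factor. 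Thus the hypotheses of Theorem~\ref{te:implicitspecies} hold, and with them (Section~\ref{sec:recur}, in the weighted extension of \cite[Thm.~40]{MR2810913}) the recursive P\'olya--Boltzmann sampler for $\cA_\cR^\omega$ terminates almost surely and returns a $\mathbb{P}_{\Sym(\cA_\cR^\omega),\mathbf{y}}$-distributed symmetry for every admissible $\mathbf{y}$. The sequence $(x,x^2,x^3,\ldots)$ is admissible because $Z_{\cA_\cR^\omega}(x,x^2,x^3,\ldots)=\tilde{\cA}_\cR^\omega(x)<\infty$ by \eqref{eq:relcyc}; and every recursive call uses a sequence of the shape $(x^\ell,(x^\ell)^2,(x^\ell)^3,\ldots)$, which stays in this family and remains admissible since $\tilde{\cA}_\cR^\omega(x)<\infty$ forces $\tilde{\cA}_\cR^{\omega^j}(x^j)<\infty$ for every $j\ge 1$.

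Next I would unfold one step of that recursion and match it with steps~1--3 of $\Gamma\cS(x)$. Applying the product rule (Lemma~\ref{le:pobole}(1)) to $\cX\cdot\cR^\kappa(\cA_\cR^\omega)$ draws the unique $\cX$-atom --- the root vertex $o$ of step~1 --- together with an independent $\cR^\kappa\circ\cA_\cR^\omega$-symmetry; the composition rule (Lemma~\ref{le:pobole}(3)) generates the latter by first drawing an $\cR$-symmetry $(R(o),\sigma(o))$ from the P\'olya--Boltzmann distribution whose parameter attached to $j$-cycles is the $j$-th plethystic substitution of $Z_{\cA_\cR^\omega}$ at $\mathbf{y}$, which by \eqref{eq:relcyc} is precisely the weight appearing in step~1, and then, for each cycle $\tau$ of $\sigma(o)$, an independent $\cA_\cR$-symmetry with law $\mathbb{P}_{\Sym(\cA_\cR^\omega),\mathbf{y}^{|\tau|}}$ --- i.e.\ an independent recursive call $\Gamma\cS(x^{|\tau|})$ --- these pieces being assembled into the composite symmetry by the parametrization of Lemma~\ref{le:symconstr}, which is exactly the operation of taking $|\tau|$ identical copies of the $\tau$-piece for each cycle and forming the composed cycles as in Section~\ref{sec:symmetry} (steps~2--3). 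This reproduces the $\Sym(\cR)$-enriched plane tree $(\cT,\beta)$ of $\Gamma\cS(x)$, whose vertex set carries the sampled $\cA_\cR$-symmetry up to relabelling (Section~\ref{sec:unlcoupling}), and the uniform relabelling of that symmetry with $[n]$, $n=|V(\cT)|$, is $\Gamma Z_{\cA_\cR^\omega}(x)$. Since all the maps involved are equivariant under transport and $\mathbb{P}_{\Sym(\cA_\cR^\omega),\mathbf{y}}$ is transport-invariant on each $\Sym(\cA_\cR)[m]$, carrying out the uniform relabellings prescribed by the product and composition rules all at once at the very end rather than step by step does not change the law; hence $\Gamma Z_{\cA_\cR^\omega}(x)$ has distribution $\mathbb{P}_{\Sym(\cA_\cR^\omega),(x,x^2,x^3,\ldots)}$.

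It then remains to evaluate this distribution. For $(A,\sigma)\in\Sym(\cA_\cR)[m]$ the permutation $\sigma$ acts on a set of size $m=|A|$, so $\sum_{i\ge1}i\,\sigma_i=|A|$ and therefore $y_1^{\sigma_1}y_2^{\sigma_2}\cdots=x^{\sigma_1}x^{2\sigma_2}\cdots=x^{|A|}$, while $Z_{\cA_\cR^\omega}(x,x^2,x^3,\ldots)=\tilde{\cA}_\cR^\omega(x)$ by \eqref{eq:relcyc}; substituting into the definition \eqref{eq:defwebopo} gives $\Pr{\Gamma Z_{\cA_\cR^\omega}(x)=(A,\sigma)}=\omega(A)\,x^{|A|}/(|A|!\,\tilde{\cA}_\cR^\omega(x))$, which is the assertion. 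I expect the main obstacle to be the bookkeeping of the second paragraph: confirming that the cycle-composition recipe of Section~\ref{sec:symmetry} --- the $k\ell$-cycle formula for the composed cycles --- reproduces exactly the cycle structure demanded by Lemma~\ref{le:symconstr}, so that $(\cT,\beta)$ genuinely encodes the sampled symmetry, and that the intermediate uniform relabellings may legitimately be deferred to the end; both points reduce, via the transport-equivariance of the construction, to the per-step statements already recorded in Lemmas~\ref{le:symconstr} and~\ref{le:pobole}. Almost-sure termination, by contrast, is immediate once \eqref{eq:weighting} is put in the form $\cH(\cX,\cY)$.
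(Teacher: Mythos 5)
Your proof follows essentially the same route as the paper's: verify the hypotheses of Joyal's implicit species theorem for $\cH(\cX,\cY)=\cX\cdot\cR^\kappa(\cY)$, invoke the weighted P\'olya--Boltzmann sampler rules of Lemma~\ref{le:pobole} (via the recursive-sampler machinery of Section~\ref{sec:recur}), observe that the intermediate uniform relabellings can be deferred to a single relabelling at the end without changing the law, and conclude by evaluating $Z_{\cA_\cR^\omega}(x,x^2,\ldots)=\tilde{\cA}_\cR^\omega(x)$ via \eqref{eq:relcyc}. Your explicit check that $y_1^{\sigma_1}y_2^{\sigma_2}\cdots=x^{|A|}$ since $\sum_i i\sigma_i=|A|$, and the remark on admissibility of the parameter sequence under recursion, are minor additional details the paper leaves implicit.
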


If we condition the sampler $\Gamma Z_{\cA_\cR^\omega}(x)$ on producing a symmetry with size $n$, then any symmetry from $\Sym(\cA_\cR)[n]$ gets drawn with probability proportional to the $\omega$-weight of its $\cA_\cR$-object. By the discussion in Section~\ref{sec:preop} it follows that the isomorphism class of this $\cA_\cR$-object is distributed like the random unlabelled $\cR$-enriched tree $\tilde{\mA}^\cR_n$. 
	
Suppose that the radius of convergence $\rho$ of the ordinary generating series $\tilde{\cA}^\omega_\cR(z)$ is positive. As we state below, it holds that $\tilde{\cA}^\omega_\cR(\rho)$ is finite and hence we may consider the random $\Sym(\cR)$-enriched $(\cT, \beta)$ be drawn according to the sampler $\Gamma \cS(\rho)$. The vertices of $\cT$ that correspond to fixpoints of the symmetry $\Gamma Z_{\cA_\cR^\omega}(\rho)$ form a subtree $\cT^f \subset \cT$ containing the root. Note that by the discussion in Section~\ref{sec:symmetry} the fixpoints correspond precisely to the vertices in which the sampler $\Gamma \cS$ calls itself with parameter $\rho$ (as opposed to parameter $\rho^i$ for some $i \ge 2$). 
	
For each vertex $v$ of $\cT^f$ \label{mark:tf} let $G_{\cT^f}(v)=(\beta(v), f(v), F(v))$ denote the corresponding $\cG$-object. Moreover, let $(\cT_n, \beta_n)$, $\cT^f_n$ \label{mark:tn} \label{mark:tnf} and $G_{\cT^f_n}(\cdot) = (\beta_n(\cdot), f_n(\cdot), F_n(\cdot))$ denote the corresponding random variables conditioned on the event $|\cT| = n$. Let $\mG$ \label{mark:go} be a random variable that is identically distributed to the $\cG$-object $G_{\cT^f}(o)$ corresponding to the root $o$ of $\cT^f$. Moreover, let $\xi$ denote the number of the fixpoints of $\mG$ and $\zeta$ the size of the enriched forest corresponding to the non-fixpoints.

\begin{lemma}[Properties of the coupling with $\cG$-enriched trees] \label{le:ucoup} We make the following observations.
\begin{enumerate}
\item The radius of convergence $\rho$ of  $\tilde{\cA}^\omega_\cR(z)$ and the sum $\tilde{\cA}^\omega_\cR(\rho)$ are both finite.
\item The size of the tree $\cT$ satisfies
\begin{align*}
|\cT| = \sum_{v \in \cT^f} (1 + |F(v)|).
\end{align*}
\item For any $\Sym(\cR)$-enriched plane tree $(\cT', \beta')$ corresponding to $\cG$-objects $G_1, \ldots, G_\ell$ it holds that
\begin{align*}
\Pr{(\cT, \beta)  = (\cT', \beta')} = \prod_{i=1}^\ell \Pr{\mG = G_i}.
\end{align*}
\item
An arbitrary sequence of $\cG$-objects $G_i = (S_i, f_i, F_i)$, $i=1,\ldots ,\ell$ corresponds to a $\Sym(\cR)$-enriched tree if and only if
\begin{align*}
\sum_{i=1}^\ell |f_i| = \ell -1 \quad \text{and} \quad \sum_{i=1}^m |f_i| \ge m \quad \text{for all} \quad 1 \le m \le \ell-1.
\end{align*}
Let $\mG_i = (\mathsf{S}_i, \mathsf{f}_i, \mathsf{F}_i)$ denote independent identical copies of $\mG$. Let $L$ denote depth-first-search ordered list $L$ of the $\cG$-objects of $\cT^f_n$ and $|L|$ its length. Then $(L \mid |L|=\ell)$ is distributed like
\begin{align*}
((\mG_1, \ldots, \mG_{\ell}) \mid  \sum_{i=1}^\ell (1 + |\mathsf{F}_i|) = n, \sum_{i=1}^\ell |\mathsf{f}_i| = \ell -1,  \sum_{i=1}^m |\mathsf{f}_i| \ge m \text{ for all } 1 \le m \le \ell-1).
\end{align*}
\item 
The plane tree $\cT^f$ is distributed like a Galton--Watson tree with offspring distribution $\xi$ having probability generating function
\begin{align*}
\label{eq:pgf1} \Ex{z^\xi} = Z_{\cR^\kappa}(z \tilde{\cA}_\cR^\omega(\rho), \tilde{\cA}_\cR^\omega(\rho^2), \ldots) \rho / \tilde{\cA}_\cR^\omega(\rho).
\end{align*}
\item 
Given $\cT^f$, the forests $(F(v))_{v \in \cT^f}$ are conditionally independent. The conditional distribution of each forest depends only on the outdegree $d^+_{\cT^f}(v)$. The distribution of the forest size $\zeta$ is given by its probability generating function 
\begin{align*}
\Ex{z^\zeta} = Z_{\cR^\kappa}(\tilde{\cA}_\cR^\omega(\rho), \tilde{\cA}_\cR^\omega((\rho z)^2), \tilde{\cA}_\cR^\omega((\rho z)^3),\ldots) \rho / \tilde{\cA}_\cR^\omega(\rho).
\end{align*}
\end{enumerate}
\end{lemma}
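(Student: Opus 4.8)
The plan is to derive~(1) by an analytic comparison argument and to obtain (2)--(7) by unwinding the recursive sampler $\Gamma \cS(\rho)$ of Lemma~\ref{le:sampler}. For~(1), the specification~\eqref{eq:weighting} combined with~\eqref{eq:cyccomp} and~\eqref{eq:relcyc} gives the functional equation $\tilde{\cA}_\cR^\omega(z) = z\, Z_{\cR^\kappa}(\tilde{\cA}_\cR^\omega(z), \tilde{\cA}_\cR^\omega(z^2), \tilde{\cA}_\cR^\omega(z^3), \ldots)$. By hypothesis $\cR$ carries a positive-weight structure of size $0$ and one of size $m$ for some $m \ge 2$, so isolating in $Z_{\cR^\kappa}$ the constant term and the identity-permutation contribution of the size-$m$ structures gives $Z_{\cR^\kappa}(t_1, t_2, \ldots) \ge a_0 + c\, t_1^m$ with constants $a_0, c > 0$, whence $\tilde{\cA}_\cR^\omega(z) \ge z\bigl(a_0 + c\,\tilde{\cA}_\cR^\omega(z)^m\bigr)$ for every real $z$ in the domain of convergence. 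Writing $p_z(t) := c z t^m - t + z a_0$, this says $p_z(\tilde{\cA}_\cR^\omega(z)) \le 0$; since $p_z$ has positive leading coefficient, $p_z(0) > 0$ and $m \ge 2$, the set $\{t \ge 0 : p_z(t) \le 0\}$ is a closed interval $[t_-(z), t_+(z)]$ which is nonempty precisely when $z \le z_0$ for a finite threshold $z_0$ (finite since $\min_{t \ge 0} p_z(t) \to +\infty$ as $z \to \infty$, its minimiser tending to $0$). Hence $\tilde{\cA}_\cR^\omega(z) \in [t_-(z), t_+(z)]$ forces $z \le z_0$, so $\rho \le z_0 < \infty$, and $\tilde{\cA}_\cR^\omega(z) \le t_+(z)$ for $z < \rho$; as $t_+$ is finite and continuous on $(0, z_0]$, letting $z \uparrow \rho$ and using monotone convergence yields $\tilde{\cA}_\cR^\omega(\rho) \le t_+(\rho) < \infty$.

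Statement~(2) is the bookkeeping identity that the non-fixpoint vertices of $\cT$ split according to the fixpoint $v \in \cT^f$ from whose length-$\ge 2$ cycles they descend --- contributing $|F(v)|$ vertices each --- while the fixpoints contribute one each. For~(3) one unwinds $\Gamma \cS(\rho)$. The root draws $\beta(o) = (R(o), \sigma(o))$ with probability proportional to $\frac{\kappa(R(o))}{|R(o)|!}\,\tilde{\cA}_\cR^\omega(\rho)^{\sigma_1(o)}\tilde{\cA}_\cR^\omega(\rho^2)^{\sigma_2(o)}\cdots$, whose normalising constant is $Z_{\cR^\kappa}(\tilde{\cA}_\cR^\omega(\rho), \tilde{\cA}_\cR^\omega(\rho^2), \ldots) = \tilde{\cA}_\cR^\omega(\rho)/\rho$ by the functional equation at $z = \rho$, valid and finite by~(1). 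Each length-$\ge 2$ cycle $\tau$ of $\sigma(o)$ spawns an independent copy of $\Gamma \cS(\rho^{|\tau|})$ whose entire output lies inside $F(o)$, whereas each fixpoint of $\sigma(o)$ spawns an independent copy of $\Gamma \cS(\rho)$ whose own fixpoint-skeleton is an independent copy of $\cT^f$. Thus $G_{\cT^f}(o) = (\beta(o), f(o), F(o))$ is a function of $\beta(o)$ and $F(o)$, has by definition the law of $\mG$, and the remaining data of $(\cT, \beta)$ decomposes into independent copies of the same process rooted at the fixpoints of $\sigma(o)$; an induction on $|\cT^f|$ then gives $\Pr{(\cT, \beta) = (\cT', \beta')} = \prod_{i=1}^\ell \Pr{\mG = G_i}$, the map sending $(\cT', \beta')$ to the depth-first list $(G_1, \ldots, G_\ell)$ of its $\cG$-objects being a bijection onto the valid sequences.

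For~(4), the combinatorial input is the {\L}ukasiewicz encoding of plane trees: recording the outdegrees $|f(v)| = |f_i|$ of $\cT^f$ in depth-first order produces a word whose partial sums $\sum_{i \le m}(|f_i| - 1)$ stay $\ge 0$ for $m < \ell$ and equal $-1$ at $m = \ell$, and every such sequence of outdegrees reconstructs a unique plane tree. Since $G_i = (S_i, f_i, F_i)$ records both $|f_i|$ and the remaining data, this yields the stated ``if and only if'' and shows that the depth-first list of $\cG$-objects determines $\cT^f$ together with all $G(v)$. Combining this bijection with~(3), the unconditional law of the depth-first list $L$ equals $\prod_i \Pr{\mG = G_i}$ on valid sequences --- equivalently, $L$ is the i.i.d.\ list $(\mG_1, \mG_2, \ldots)$ read up to the first index $\ell$ with $\sum_{i=1}^\ell(|\mathsf{f}_i| - 1) = -1$ (the partial sums remaining nonnegative before that being automatic, each step being $\ge -1$); conditioning on $|L| = \ell$ and, by~(2), on $|\cT| = \sum_{i=1}^\ell(1 + |\mathsf{F}_i|) = n$ reproduces exactly the claimed conditional distribution.

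Finally, (5) and (6) are read off from the same unwinding. A vertex of $\cT^f$ has $|f(o)| = \sigma_1(o)$ children, so $\cT^f$ is the Galton--Watson tree with $\Ex{z^\xi} = \frac{\rho}{\tilde{\cA}_\cR^\omega(\rho)} \sum_{(R, \sigma)} \frac{\kappa(R)}{|R|!}\,(z\tilde{\cA}_\cR^\omega(\rho))^{\sigma_1}\tilde{\cA}_\cR^\omega(\rho^2)^{\sigma_2}\cdots = \frac{\rho}{\tilde{\cA}_\cR^\omega(\rho)}\,Z_{\cR^\kappa}(z\tilde{\cA}_\cR^\omega(\rho), \tilde{\cA}_\cR^\omega(\rho^2), \tilde{\cA}_\cR^\omega(\rho^3), \ldots)$, the sum ranging over $\bigcup_{k \ge 0}\Sym(\cR)[k]$, and this equals $1$ at $z = 1$ by the functional equation. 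Conditionally on $\cT^f$, equivalently on the outdegrees $\sigma_1(v) = d^+_{\cT^f}(v)$, the parts of the $(R(v), \sigma(v))$ beyond their fixpoint counts --- hence the forests $F(v)$ they generate --- are drawn independently across $v$ with a law depending only on $d^+_{\cT^f}(v)$, which is the asserted conditional independence. For $\zeta = |F(o)|$, each length-$i$ cycle with $i \ge 2$ contributes $i$ independent copies of $\Gamma \cS(\rho^i)$, and since Lemma~\ref{le:sampler} gives $\Ex{y^{|\Gamma \cS(x)|}} = \tilde{\cA}_\cR^\omega(xy)/\tilde{\cA}_\cR^\omega(x)$ we get $\Ex{z^\zeta \mid \beta(o)} = \prod_{i \ge 2}\bigl(\tilde{\cA}_\cR^\omega((\rho z)^i)/\tilde{\cA}_\cR^\omega(\rho^i)\bigr)^{\sigma_i}$; averaging over $\beta(o)$ cancels the factors $\tilde{\cA}_\cR^\omega(\rho^i)^{\sigma_i}$ and leaves $\Ex{z^\zeta} = \frac{\rho}{\tilde{\cA}_\cR^\omega(\rho)}\,Z_{\cR^\kappa}(\tilde{\cA}_\cR^\omega(\rho), \tilde{\cA}_\cR^\omega((\rho z)^2), \tilde{\cA}_\cR^\omega((\rho z)^3), \ldots)$. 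The one genuine obstacle is~(1): parts (2)--(7) are formal consequences of the recursive structure of $\Gamma \cS(\rho)$ together with the encodings above once $\tilde{\cA}_\cR^\omega(\rho) < \infty$ is known, whereas~(1) is the only place that uses the hypothesis on the sizes of $\cR$-structures --- a size-$0$ structure provides leaves and a size-$\ge 2$ structure forces genuine branching, so the singularity is a branch point rather than a pole and the explicit comparison with a simply generated tree supported on $\{0, m\}$ applies.
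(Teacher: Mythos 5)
Your proof is correct and follows essentially the same route as the paper's: part~(1) via the functional equation $\tilde{\cA}^\omega_\cR(z) = zZ_{\cR^\kappa}(\tilde{\cA}^\omega_\cR(z), \tilde{\cA}^\omega_\cR(z^2), \ldots)$ and the comparison $\tilde{\cA}^\omega_\cR(z) \ge z(a_0 + c\,\tilde{\cA}^\omega_\cR(z)^m)$, and parts~(2)--(6) by unwinding the sampler $\Gamma\cS(\rho)$ and the {\L}ukasiewicz encoding of $\cT^f$. The paper dispatches (2)--(6) in a single sentence, so your filled-in details are an elaboration rather than a different argument.

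One small slip of language in your treatment of~(6): the sampler $\Gamma\cS$ attaches, for each length-$i$ cycle $\tau$ of $\sigma(o)$ with $i\ge 2$, one independent draw $(\cT^\tau,\beta^\tau)$ of $\Gamma\cS(\rho^i)$ and then $i$ \emph{identical} copies of it, not $i$ independent copies. Your formula
\[
\Ex{z^\zeta \mid \beta(o)} = \prod_{i\ge 2}\left(\tilde{\cA}_\cR^\omega((\rho z)^i)/\tilde{\cA}_\cR^\omega(\rho^i)\right)^{\sigma_i}
\]
is precisely what the identical-copy structure gives: a cycle of length $i$ contributes $i\cdot |\Gamma\cS(\rho^i)|$ vertices, so $\Ex{z^{i\,|\Gamma\cS(\rho^i)|}} = \tilde{\cA}_\cR^\omega(\rho^i z^i)/\tilde{\cA}_\cR^\omega(\rho^i)$. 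Had the copies actually been independent you would have obtained $(\tilde{\cA}_\cR^\omega(\rho^i z)/\tilde{\cA}_\cR^\omega(\rho^i))^{i\sigma_i}$ instead, which does not match the claimed generating function. So the computation is correct; only the word ``independent'' should read ``identical.''
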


In a more specific setting, where the random vector $(\xi,\zeta)$ has finite exponential moments, even more can be said. 
\begin{lemma}[Further properties of the coupling with $\cR$-enriched trees in a specific setting]
	\label{le:asymptotic}
	 Suppose that $\rho>0$ and that the function \[E(z, u) = z Z_{\cR^{\kappa}}(u, \tilde{\cA}^\omega_\cR(z^2), \tilde{\cA}^\omega_\cR(z^3), \ldots)\] satisfies $E(\rho + \epsilon,\tilde{\cA}^\omega_\cR(\rho) + \epsilon) < \infty$ for some $\epsilon>0$. 
	\begin{enumerate}
		\item Then the $n$th coefficient of $\tilde{\cA}^\omega_\cR(z)$ is asymptotically given by
	\[
	[z^n] \tilde{\cA}^\omega_\cR(z) \sim \spa(\mathbf{w}) \sqrt{\frac{\rho E_z(\rho, \tilde{\cA}^\omega_\cR(\rho))}{2\pi E_{uu}(\rho, \tilde{\cA}^\omega_\cR(\rho))}} \rho^{-n} n^{-3/2}
	\]
	as $n \equiv 1 \mod \spa(\mathbf{w})$ tends to infinity. 
	\item The series $\tilde{\cA}^\omega_\cR(z)$ has square root singularities at the points \[
	s_k = \rho \exp(2\pi i k/ \spa(\mathbf{w})), \quad k=0, \ldots, \spa(\mathbf{w})-1,
	\] with local expansions as analytic functions of $\sqrt{1- z/s_k}$.
	\item The offspring distribution $\xi$ of the Galton--Watson tree $\cT^f$ and the random variable $\zeta$ have finite exponential moments. Moreover,
	\begin{align*}
	\Ex{\xi}&=E_u(\rho, \tilde{\cA}_\cR^\omega(\rho)) = \rho \widetilde{(\cR')^\kappa \circ \cA_\cR^\omega}(\rho) = 1, \\ 	\Va{\xi}&=E_{uu} (\rho, \tilde{\cA}_\cR^\omega(\rho))\tilde{\cA}_\cR^\omega(\rho), \quad \text{and} \quad 
	\Ex{\zeta}= E_z(\rho, \tilde{\cA}_\cR^\omega(\rho)) \rho / \tilde{\cA}_\cR^\omega(\rho) -1.
	\end{align*}
	Consequently,
	\[
	\Pr{|\cT|=n} \sim \spa(\mathbf{w}) n^{-3/2}  \sqrt{\frac{1 + \Ex{\zeta}}{2\pi \Va{\xi}}}
	\]
	\item Suppose that at least one $\cR$-structure with positive $\kappa$-weight has a non-trivial automorphism. Then the lattice spanned by the support of $(\xi, \zeta)$ has a $2$-dimensional $\ndZ$-basis $\mathbf{B} \in \ndZ^{2\times2}$, and the covariance matrix $\mathbf{\Sigma}$ of $(\xi, \zeta)$ is positive-definite. Set
	\[
		\mu = \frac{1}{1 + \Ex{\zeta}},  \quad \sigma^2 = \frac{\det \mathbf{\Sigma}}{\Va{\zeta}(1 + \Ex{\zeta})^3}, \quad \text{and} \quad d = \frac{|\det \mathbf{B}|}{\spa(\mathbf{w})}.
	\]
	Then, as $n \equiv 1 \mod \spa(\mathbf{w})$ tends to infinity, 
	\[
		\sqrt{n} \Pr{|\cT_n^f| = \ell} \sim  \frac{d}{\sigma \sqrt{2 \pi }} \exp(- \frac{x^2}{2 \sigma^2})
	\]
	uniformly for all bounded $x$ satisfying
	\[
		\ell := \mu n + x \sqrt{n} \in n + d \ndZ.
	\]
	In particular,
	\[
		\frac{|\cT_n^f| - n \mu}{\sqrt{n}} \convdis \cN(0, \sigma^2).
	\]
	\end{enumerate}
\end{lemma}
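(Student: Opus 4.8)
The plan is to read off from the specification \eqref{eq:weighting}, via \eqref{eq:relcyc} and the substitution rule \eqref{eq:cyccomp}, the functional equation $y(z) = E(z, y(z))$ for $y(z) := \tilde{\cA}_\cR^\omega(z)$, with $E$ the nonnegative--coefficient power series from the statement, and then to run the standard singularity analysis of a smooth implicit system. Write $\tau := \tilde{\cA}_\cR^\omega(\rho)$, finite by Lemma~\ref{le:ucoup}. I would first note that the hypothesis $E(\rho+\epsilon,\tau+\epsilon)<\infty$ makes $E$ jointly analytic on an open bidisc containing $(\rho,\tau)$ --- when $\cR$ admits a non--trivial automorphism this also uses that $\tilde{\cA}_\cR^\omega((\rho+\epsilon)^2)<\infty$ forces the series $\tilde{\cA}_\cR^\omega(z^j)$, $j\ge2$, occurring in $E$ to be analytic past $|z|=\rho$ --- and that $E$ has a nonzero leaf term $|\cR[0]|_\kappa z$ and contains $z\,u^k$ with positive coefficient for some $k\ge2$ (the all--fixpoint structure in $\cR[k]$, present by the standing hypothesis on $\cR$), so that $E$ is analytic and strictly convex in $u$ near $(\rho,\tau)$.

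\emph{Parts (1) and (2).} Since $y$ has nonnegative coefficients, $\rho$ is a singularity of $y$ (Pringsheim); as $E$ is analytic at $(\rho,\tau)$, the analytic implicit function theorem would continue $y$ past $\rho$ unless $E_u(\rho,\tau)=1$, and the usual monotonicity argument (from $y'(z)(1-E_u(z,y(z)))=E_z(z,y(z))\ge0$ one gets $E_u(z,y(z))\le1$ on $[0,\rho)$, hence by continuity $E_u(\rho,\tau)\le1$) leaves $\tau=E(\rho,\tau)$ and $E_u(\rho,\tau)=1$. Substituting $u=y(z)$ into the Taylor expansion of $E$ at $(\rho,\tau)$ and using these identities gives the branch relation $0=E_z(\rho,\tau)(z-\rho)+\tfrac12 E_{uu}(\rho,\tau)(y-\tau)^2+O((z-\rho)(y-\tau))+O((y-\tau)^3)$, hence $y(z)=\tau-\sqrt{2\rho E_z(\rho,\tau)/E_{uu}(\rho,\tau)}\,\sqrt{1-z/\rho}\,(1+o(1))$, with $E_z(\rho,\tau),E_{uu}(\rho,\tau)>0$. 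For the lattice I would check by induction on the specification that $[z^n]y=0$ unless $n\equiv1\bmod\spa(\mathbf{w})$, write $y(z)=z\,h(z^{\spa(\mathbf{w})})$ with $h$ aperiodic, and invoke the periodic form of the implicit--function schema to conclude that $y$ has square--root singularities exactly at $s_k=\rho\,e^{2\pi i k/\spa(\mathbf{w})}$, $k=0,\dots,\spa(\mathbf{w})-1$, with a $\Delta$--analytic continuation at each; summing the transfer--theorem contributions of these $\spa(\mathbf{w})$ dominant singularities (equivalently, applying the transfer theorem to $h$ and re--expressing the constant through $E_z,E_{uu}$) yields the coefficient asymptotics of (1). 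The hard part is exactly this step: exploiting the hypothesis to identify $\rho$ as a branch point rather than a singularity of $E$ (or of one of the $\tilde{\cA}_\cR^\omega(z^j)$), and organising the periodic case cleanly.

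\emph{Part (3).} Exponential moments and the moment formulas I would get from the generating functions supplied by Lemma~\ref{le:ucoup}: one has $\Ex{z^\xi}=E(\rho,z\tau)/\tau$ and $\Ex{z^\zeta}=(\rho/\tau)\,Z_{\cR^\kappa}(\tau,\tilde{\cA}_\cR^\omega((\rho z)^2),\dots)$, both finite for some $z>1$ by nonnegativity of coefficients and the hypothesis; differentiating once and twice at $z=1$ and using $\tau=E(\rho,\tau)$ yields $\Ex{\xi}=E_u(\rho,\tau)$, $\Va{\xi}=E_{uu}(\rho,\tau)\tau$ and $\Ex{\zeta}=\rho E_z(\rho,\tau)/\tau-1$, and the identity $E_u(\rho,\tau)=1$ from Part (1) (together with the chain rule for cycle index sums, which gives $E_u(\rho,\tau)=\rho\,\widetilde{(\cR')^\kappa\circ\cA_\cR^\omega}(\rho)$) gives the displayed alternative forms and $\Ex{\xi}=1$. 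Finally, the sampler of Lemma~\ref{le:sampler} and Lemma~\ref{le:relcyc} give $\Pr{|\cT|=n}=\rho^n\,[z^n]\tilde{\cA}_\cR^\omega(z)/\tilde{\cA}_\cR^\omega(\rho)$, so the asymptotics of (1) together with the moment formulas produce the stated form of $\Pr{|\cT|=n}$.

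\emph{Part (4).} Let $(\xi_i,\zeta_i)_{i\ge1}$ be i.i.d.\ copies of $(\xi,\zeta)$. Combining Lemma~\ref{le:ucoup}(2)--(4) with the cycle lemma (Lemma~\ref{le:cycle}) --- the depth--first list of $\cG$--objects of $\cT^f$ is a cyclically shifted \L ukasiewicz path, and exactly one of the $\ell$ cyclic shifts of a length--$\ell$ sequence with offspring sum $\ell-1$ is admissible --- I would obtain $\Pr{|\cT^f|=\ell,\ |\cT|=n}=\tfrac1\ell\Pr{\sum_{i=1}^{\ell}\xi_i=\ell-1,\ \sum_{i=1}^{\ell}\zeta_i=n-\ell}$, hence $\Pr{|\cT_n^f|=\ell}=\Pr{\sum_{i=1}^{\ell}\xi_i=\ell-1,\ \sum_{i=1}^{\ell}\zeta_i=n-\ell}\big/(\ell\,\Pr{|\cT|=n})$. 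Under the non--trivial--automorphism hypothesis the support of $(\xi,\zeta)$ contains $(0,0)$, some $(k,0)$ with $k\ge2$, and a point with positive second coordinate, so it affinely spans a rank--$2$ sublattice $\mathbf{B}\ndZ^2$, $\mathbf{\Sigma}$ is positive definite, and the admissible $\ell$ form an arithmetic progression with common difference $d=|\det\mathbf{B}|/\spa(\mathbf{w})$. Applying the bivariate local limit theorem (Lemma~\ref{le:llt2dim}) to $\sum_{i\le\ell}(\xi_i,\zeta_i)$ at $\ell=\mu n+x\sqrt n$ --- where, because $\Ex{\xi}=1$, the first coordinate sits within $O(1)$ of its mean while the second is displaced by $-x(1+\Ex{\zeta})\sqrt n$ --- reduces the numerator to $\Theta(n^{-1})$ times $\exp(-x^2/(2\sigma^2))$, and dividing by $\ell\,\Pr{|\cT|=n}$ from Part (3) and simplifying the constants gives the stated local limit law with the stated $\mu$, $\sigma^2$, $d$; the central limit theorem then follows by summing this estimate over windows of width $\Theta(\sqrt n)$. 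The delicate points here are the lattice bookkeeping (the precise shape of $\mathbf{B}$ and of the common difference $d$) and the uniformity in $x$ in Lemma~\ref{le:llt2dim}.
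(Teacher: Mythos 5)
Your plan follows essentially the same route as the paper's proof. Parts (1)--(2) are handled in the paper by citing Bell, Burris and Yeats \cite[Thm.~28]{MR2240769}, which packages exactly the implicit-function/singularity-analysis argument you reconstruct directly (Pringsheim, the monotonicity argument forcing $E_u(\rho,\tau)=1$, the periodic form of the smooth implicit schema); these are equivalent. Part (3) matches the paper's treatment, which likewise derives the moment formulas from Lemma~\ref{le:ucoup} and gets $\Ex{\xi}=1$ from the implicit function theorem at the singularity. For part (4) your setup is right: the cycle lemma gives $\Pr{|\cT^f|=\ell,\,|\cT|=n}=\tfrac1\ell\Pr{\sum_{i\le\ell}(\xi_i,\zeta_i)=(\ell-1,n-\ell)}$, the three support points $(0,0)$, $(k,0)$ with $k\ge2$, and one with positive second coordinate force a rank-2 lattice and a positive-definite $\mathbf{\Sigma}$, and the bivariate local limit theorem at $\ell=\mu n+x\sqrt n$ is the right tool.

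The one genuine gap is the assertion, which you yourself flag as the delicate point, that the admissible $\ell$ form the arithmetic progression $n+d\ndZ$ with $d=|\det\mathbf{B}|/\spa(\mathbf{w})$. This is the paper's Equation~\eqref{eq:madness}, and its proof is not a formality: one must first choose the basis $\mathbf{B}$ of the lattice $\Lambda$ in upper-triangular form with entries $a,b,c$ (using the structure theorem for submodules of a free $\ndZ$-module so that the first basis vector lies along the $\xi$-axis), then change coordinates to $(\ell-1,n-1)$ to show $(\ell-1,n-\ell)\in\Lambda$ iff $n-1\in\ndZ\gcd(a,b+c)$ and $\ell\in n+\ndZ\,ac/\gcd(a,b+c)$, and finally verify $\spa(\mathbf{w})=\gcd(a,b+c)$ by a separate argument --- the paper does this probabilistically, using the local limit theorem to exhibit two consecutive multiples of $\gcd(a,b+c)$ attained by $|\cT|-1$. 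Until this is carried out the constant $d$ in your stated local limit law is not actually determined; your plan names the problem but does not solve it.
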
 

Properties {\em (1)} and {\em (2)} are an application of results by Bell, Burris and Yeats \cite{MR2240769}. The requirement in {\em (4)} that $\cR$ has at least one structure (with positive $\kappa$-weight) with non-trivial symmetries is not really a restriction. If it fails, then almost surely $|\cT_n^f|=n$ for all $n$, which makes the analysis of $\tilde{\mA}_n^\omega$ even easier.


\subsubsection{Local convergence around the fixed root}

\label{sec:localenrunl}

\begin{figure}[t]
	\centering
	\begin{minipage}{1.0\textwidth}
		\centering
		\includegraphics[width=0.5\textwidth]{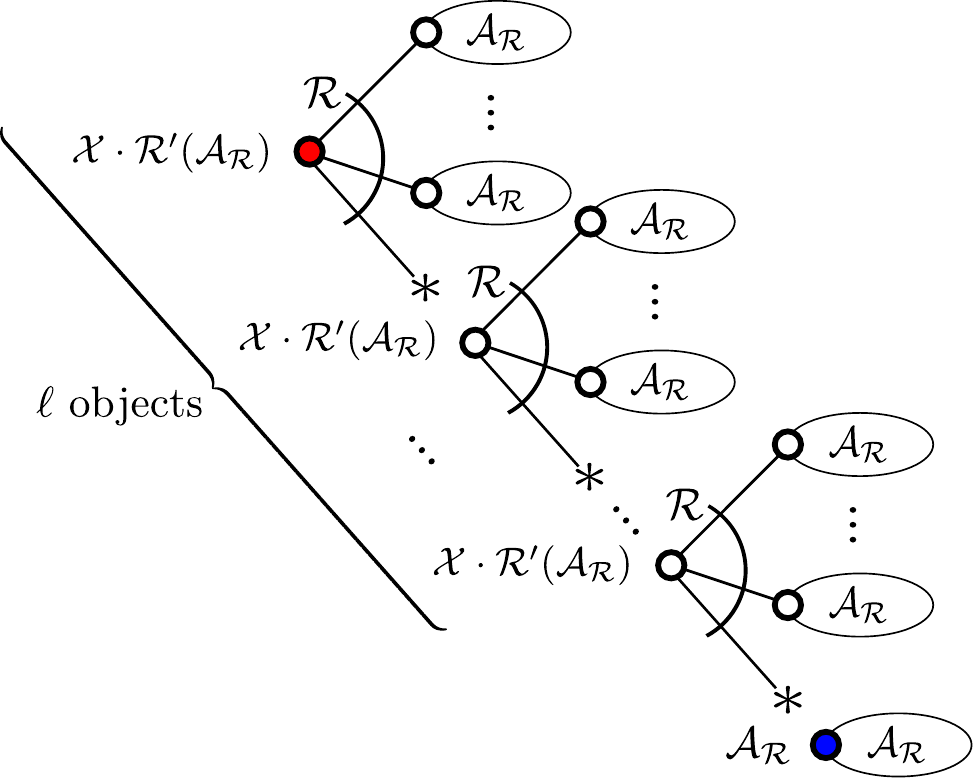}
		\caption{The decomposition of $\cA_\cR^{(\ell)}$.}
		\label{fi:pointenr}
	\end{minipage}
\end{figure}

\begin{wrapfigure}[9]{r}{0.4\textwidth}
	\begin{center}
		\includegraphics[width=0.25\textwidth]{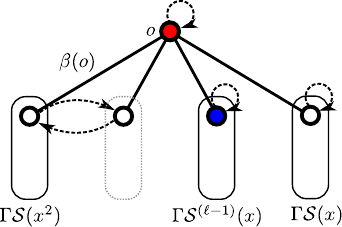}
	\end{center}
	\caption{The sampler $\Gamma \cS^{(\ell)}(x)$.}
	\label{fi:sampler2}
\end{wrapfigure}

Labelle established in \cite[Thm. A]{MR642392} the following decomposition of pointed $\cR$-enriched trees, which will aid us studying the behaviour of the $\cR$-structures along paths starting from the root in random enriched trees. The weighted species $\cA_\cR^\omega$ satisfies an isomorphism $\cA_\cR^\omega \simeq \cX \cdot \cR^\kappa(\cA_\cR^\omega)$. The derivative operator satisfies a product rule similar to the product rule for the derivative of smooth functions, see Proposition~\ref{pro:chainprod}. Hence applying the pointing operator yields a weight-compatible isomorphism 
\newline\begin{minipage}{0.60\textwidth}
\vspace{-3pt}
\begin{align*}
\cA_\cR^\bullet \simeq \cA_\cR + \cX \cdot \cR'(\cA_\cR) \cdot \cA_\cR^\bullet.
\end{align*}
\end{minipage}
\vspace{3pt}
\newline We may apply Joyal's implicit species theorem \cite[Th. 6]{MR633783} in order to unwind this recursion and obtain an isomorphism \[
\cA_\cR^\bullet \simeq \sum_{\ell=0}^\infty \cA_\cR^{(\ell)}, \quad \cA_\cR^{(\ell)} = (\cX \cdot \cR'(\cA_\cR))^\ell \cA_\cR
\] corresponding to the pointed enriched trees $((A,\alpha), v)$ in which the outer root $v$ has height $\ell$ in the rooted tree $A$. The correspondence is illustrated in Figure~\ref{fi:pointenr}. Again, this isomorphism is compatible with the weightings, and we may use it to construct the following sampler illustrated in Figure~\ref{fi:sampler2}.

\begin{lemma}[A modified random $\Sym(\cR)$-enriched tree]
	\label{le:sampler2}
	For any integer $\ell \ge 0$ and parameter $x > 0$ with $(\tilde{\cA}_\cR^{(1)})^\omega(x) < \infty$ consider the following recursive procedure $\Gamma \cS^{(\ell)}(x)$ that samples a random $\Sym(\cR)$-enriched plane tree $(\cT^{(\ell)}, \beta^{(\ell)})$ together with a distinguished vertex $r$ which we call the  outer root.
	\begin{enumerate}[1.]
		\item If $\ell = 0$ then return (an independent copy of) the random enriched plane tree $(\cT, \beta)$ from Lemma~\ref{le:sampler} with the outer root being the root-vertex of $\cT$. Otherwise, if $\ell \ge 1$, then proceed with the following steps.
		\item Start with a root vertex $o$ and draw a random $\cR'$-symmetry $(R, \sigma)$ from $\bigcup_{k \ge 0} \Sym(\cR')[k]$ with probability proportional to  \[\frac{\kappa(R)}{|R|!}\tilde{\cA}^\omega_{\cR}(x)^{\sigma_1} \tilde{\cA}^\omega_{\cR}(x^2)^{\sigma_2} \cdots.\] Set $k := |R|$ and make a uniformly at random choice of a bijection $f$ from the set $[k] \cup \{*_k\}$ of labels of the $\cR$-structure $R$ to the set of integers $[k+1]$. Relabel the symmetry via the transport function: \[\beta(o) := (R(o), \sigma(o)) := \Sym(\cR)[f](R,\sigma).\] Let $b := f(*_k)$ denote the vertex corresponding to $*_k$.
		\item Note that $b$ is a fixpoint of the permutation $\sigma(o)$. For each cycle $\tau \ne (b)$ of $\sigma(o)$ draw an independent copy $(\cT^\tau, \beta^\tau)$ of the sampler $\Gamma \cS(x^{|\tau|})$ with $|\tau| \ge 1$ denoting the length of the cycle. For each atom $a$ of the cycle $\tau$ make an identical copy $(\cT^a, \beta^a)$ of $(\cT^\tau, \beta^\tau)$.
		\item Draw an independent copy $(\cT^b, \beta^b)$ of the sampler $\Gamma \cS^{(\ell-1)}(x)$.
		\item For each label $a \in [k+1]$ add an edge between the root vertex $o$ and the root of the plane tree $\cT^a$. The ordering of the offspring set is given by the order on the label set $[k+1]$. This defines a plane tree $\cT$ with root-vertex $v$. Moreover, for each $a \in [k+1]$ and each vertex $u \in \cT^a$ set $\beta(u) := \beta^a(u)$. This defines an $\Sym(\cR)$-enriched plane tree $(\cT^{(\ell)}, \beta^{(\ell)})$.
	\end{enumerate}
	This procedure terminates almost surely. As described in Section~\ref{sec:unlcoupling}, the resulting $\Sym(\cR)$-enriched plane tree $(\cT^{(\ell)}, \beta^{(\ell)})$ corresponds to a symmetry on the vertex set of the tree $\cT^{(\ell)}$. Let $n$ denote the number of vertices of the tree $\cT^{(\ell)}$ and let $\Gamma Z_{\cA_\cR^{(\ell)}}(x)$ denote the result of relabelling this symmetry uniformly at random with labels from the set $[n]$. Then $\Gamma Z_{\cA_\cR^{(\ell)}}(x)$ satisfies a weighted Boltzmann distribution, that is, for any symmetry $((A,r), \sigma)$ from the set $\bigcup_{k=0}^\infty \Sym(\cA_\cR^{(\ell)})[k]$ we have that
	\begin{align}
	\label{eq:holygrail}
	\Pr{\Gamma Z_{\cA_\cR^{(\ell)}}(x) = ((A,r), \sigma)} = \omega(A) \frac{x^{|A|}}{|A|!} \tilde{\cA}_\cR^{(\ell)}(x)^{-1} 
	\end{align}
	In particular,
	\[
	\Pr{\Gamma Z_{\cA_\cR^{(\ell)}}(x) = ((A,r), \sigma)} = (x \widetilde{\cR' \circ \cA_\cR^\omega}(x))^\ell \Pr{\Gamma Z_{\cA_\cR^\omega}(x) = (A,\sigma)}.
	\]
\end{lemma}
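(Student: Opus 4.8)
The plan is to induct on $\ell$, peeling one factor off Labelle's decomposition $\cA_\cR^{(\ell)}=(\cX\cdot\cR'(\cA_\cR))^\ell\,\cA_\cR$ at a time and feeding the weighted P\'olya--Boltzmann sampler rules of Lemma~\ref{le:pobole} through it. The base case $\ell=0$ will be Lemma~\ref{le:sampler}, and every recursive sub-call $\Gamma\cS(x^{|\tau|})$ produced along the way is again furnished by Lemma~\ref{le:sampler}, which also guarantees these calls are legitimate and terminate almost surely. Throughout, the relevant P\'olya--Boltzmann parameter sequence is the diagonal one $\mathbf{y}=(x,x^2,x^3,\dots)$, for which, by \eqref{eq:relcyc}, the symmetry marginal of the P\'olya--Boltzmann law for $\cA_\cR^{(\ell)}$ is exactly the right-hand side of \eqref{eq:holygrail}, so it suffices to identify the output law of $\Gamma\cS^{(\ell)}(x)$ (after uniform relabelling) with that P\'olya--Boltzmann law.

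For $\ell=0$ there is nothing to prove: $\Gamma\cS^{(0)}(x)$ is by definition $\Gamma\cS(x)$, $\cA_\cR^{(0)}=\cA_\cR$, so \eqref{eq:holygrail} is the last assertion of Lemma~\ref{le:sampler}, and the subsequent displayed identity is trivial since the prefactor raised to the power $0$ equals $1$. For the inductive step, fix $\ell\ge1$, assume the statement for $\ell-1$, and split off the first factor to obtain the weight-compatible isomorphism
\[
\cA_\cR^{(\ell)}\;\simeq\;\bigl(\cX\cdot\cR'(\cA_\cR)\bigr)\cdot\cA_\cR^{(\ell-1)},
\]
which is weight-compatible because every operation in sight is (Proposition~\ref{pro:chainprod} and the weighting~\eqref{eq:weighting}). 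Apply Lemma~\ref{le:pobole} to this. By the product rule (part~(1), used twice), a P\'olya--Boltzmann symmetry for $\cA_\cR^{(\ell)}$ at $\mathbf{y}$ is obtained by independently sampling a single $\cX$-atom $o$, a P\'olya--Boltzmann symmetry for $\cR'(\cA_\cR)$ at $\mathbf{y}$, and a P\'olya--Boltzmann symmetry for $\cA_\cR^{(\ell-1)}$ at $\mathbf{y}$, and then relabelling the disjoint union uniformly. By the composition rule (part~(3)) the middle object is built by first drawing an $\cR'$-symmetry $(R,\sigma)$ with probability proportional to $\tfrac{\kappa(R)}{|R|!}\,\tilde{\cA}_\cR^\omega(x)^{\sigma_1}\tilde{\cA}_\cR^\omega(x^2)^{\sigma_2}\cdots$ — here one uses $Z_{(\cR^\kappa)'}=\partial_{s_1}Z_{\cR^\kappa}$ together with \eqref{eq:relcyc} to recognise the P\'olya--Boltzmann parameters as $\tilde{\cA}_\cR^\omega(x),\tilde{\cA}_\cR^\omega(x^2),\dots$ — and then, independently for each cycle $\tau$ of $\sigma$, fixing a common copy of a P\'olya--Boltzmann symmetry for $\cA_\cR^\omega$ at parameters $\mathbf{y}^{|\tau|}=(x^{|\tau|},x^{2|\tau|},\dots)$ and transporting it onto each atom of $\tau$; by Lemma~\ref{le:sampler} such a symmetry is precisely a copy of $\Gamma\cS(x^{|\tau|})$. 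By the induction hypothesis, the third object is the uniform relabelling of $\Gamma\cS^{(\ell-1)}(x)$.

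Reading these three ingredients back as an $\cA_\cR^{(\ell)}$-symmetry amounts to identifying the derived-species placeholder $*_k$ of $\cR'$ with the outer root of the $\Gamma\cS^{(\ell-1)}$-tree — i.e.\ with the neighbour of $o$ on the path to the outer root — which is exactly what step~2 (the random bijection $f$) and step~5 of $\Gamma\cS^{(\ell)}(x)$ encode, while the closing uniform relabelling to $[n]$ is the uniform relabelling demanded by the product and composition rules; that the resulting $\Sym(\cR)$-enriched plane tree genuinely corresponds to a symmetry on its vertex set is the content of Lemma~\ref{le:symconstr}, just as in Lemma~\ref{le:sampler}. Matching ingredients with the steps of $\Gamma\cS^{(\ell)}(x)$ one to one then yields \eqref{eq:holygrail}. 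Almost sure termination is automatic, as each recursive layer lowers $\ell$ by one and bottoms out into finitely many almost surely terminating calls of $\Gamma\cS$. Finally, the displayed identity after \eqref{eq:holygrail} follows by combining \eqref{eq:holygrail} with Lemma~\ref{le:sampler} and the elementary generating-series identity $\tilde{\cA}_\cR^{(\ell)}(x)=\bigl(x\,\widetilde{\cR'\circ\cA_\cR^\omega}(x)\bigr)^{\ell}\tilde{\cA}_\cR^\omega(x)$, which is immediate from the multiplicativity of ordinary generating series under products and substitution (Section~\ref{sec:opspe}).

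I expect the only real difficulty to be bookkeeping rather than anything probabilistic: one must keep careful track of the derived-species marker $*_k$ as it passes through the composition rule, so that it ends up being the path-successor of $o$ carrying the recursively sampled $\Gamma\cS^{(\ell-1)}$-structure (and not an ordinary $\cA_\cR$-fringe subtree), and one must verify that the ad hoc relabellings inserted in steps~2 and~5 of the sampler reproduce precisely the uniform relabellings prescribed by the product and composition rules of Lemma~\ref{le:pobole}. Once this dictionary between the species operations and the sampler steps is set up, \eqref{eq:holygrail} drops out of Lemma~\ref{le:pobole}, Lemma~\ref{le:sampler}, and the induction hypothesis.
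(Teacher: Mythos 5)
Your argument follows essentially the same route as the paper's proof: peel off one factor of the decomposition $\cA_\cR^{(\ell)}\simeq\cX\cdot(\cR')^\kappa(\cA_\cR^\omega)\cdot\cA_\cR^{(\ell-1)}$, feed the product and composition rules of Lemma~\ref{le:pobole} through it while deferring relabelling and keeping the $\cR'$-symmetries explicit rather than collapsing them via Lemma~\ref{le:symconstr} at each layer, and close with the generating-series identity $\tilde{\cA}_\cR^{(\ell)}(x)=\bigl(x\,\widetilde{(\cR')^\kappa\circ\cA_\cR^\omega}(x)\bigr)^\ell\tilde{\cA}_\cR^\omega(x)$. (You may want to note that this last computation actually yields the prefactor raised to the power $-\ell$, not $\ell$, matching the paper's own proof and revealing a sign typo in the lemma statement.)
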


Suppose that $\rho>0$ and consider the $\Sym(\cR)$-enriched tree $(\cT^{(\ell)}, \beta^{(\ell)})$ generated by the procedure $\Gamma \cS^{(\ell)}(\rho)$. The path from the root to the distinguished vertex in $\cT^{(\ell)}$ is its {\em spine}. If we set $\ell=\infty$, the above construction yields an infinite but locally finite $\Sym(\cR)$-enriched tree $(\cT^{(\infty)}, \beta^{(\infty)})$ \label{mark:ti} having an infinite spine. We will show that this object, is the local limit weak limit of the random graph $\mA_n^\omega$, if certain conditions are met.

In order to formalize our notion of local convergence, we require the concept of trimmed $\cG$-enriched trees. For any $\cG$-enriched plane tree $(\tau,\gamma)$ and any  non-negative integer $k$ let $(\tau, \gamma)^{<k>}$ \label{mark:taugk}  denote the result of trimming at height $k$. That is,
\[
(\tau, \gamma)^{<k>} = (\tau^{[k]}, (\gamma(v))_{v \in \tau^{[k-1]}}),
\]
with $\tau^{[k]}$ \label{mark:tauk} denoting the plane tree trimmed hat height $k$. That is, we delete all vertices from $\tau$ with height larger than $k$.
In order to simplify notation, we also set $(\cT', \beta')^{<k>} := (\tau, \gamma)^{<k>}$ for the $\Sym(\cR)$-enriched plane tree $(\cT', \beta')$ corresponding to $(\tau, \gamma)$.

\begin{theorem}[Local convergence of random unlabelled $\cR$-enriched trees]
	\label{te:localunlabelled}
	Suppose that the ordinary generating series generating series $\tilde{\cA}^\omega_\cR(z)$ has radius of convergence $\rho > 0$, and that the series \[E(z, u) = z Z_{\cR^{\kappa}}(u, \tilde{\cA}^\omega_\cR(z^2), \tilde{\cA}^\omega_\cR(z^3), \ldots)\] satisfies $E(\rho + \epsilon, \tilde{\cA}^\omega_\cR(\rho) + \epsilon) < \infty$ for some $\epsilon>0$. Then for any sequence $k_n = o(\sqrt{n})$ of non-negative integers it holds that
	\[
		d_{\textsc{TV}}((\cT_n, \beta_n)^{<k_n>}, (\cT^{(\infty)}, \beta^{(\infty)})^{<k_n>}) \to 0.
	\]
	as $n$ becomes large.
\end{theorem}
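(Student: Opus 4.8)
The proof follows the Aldous--Kesten route for local limits of conditioned Galton--Watson trees, transported to the $\cG$-enriched setting via the couplings of Lemmas~\ref{le:sampler}, \ref{le:sampler2} and \ref{le:ucoup}. The plan is to express both sides as explicit measures on the countable set of admissible trimmed $\cG$-enriched plane trees $\mathfrak{t} = (t^\circ, (G(v))_{v\in t^\circ})$ (where $t^\circ$ is the layer of vertices of height $\le k_n-1$, carrying the recorded $\cG$-objects) and to control the likelihood ratio. Throughout, $n$ ranges over the sublattice $n\equiv 1 \pmod{\spa(\mathbf{w})}$ on which $\tilde{\mA}_n^\cR$ is defined.

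\textbf{Step 1: the two laws.} By Lemma~\ref{le:ucoup}(3) the unconditioned $\cG$-enriched plane tree $(\cT,\beta)$ is coded by an i.i.d.\ sequence of copies of $\mG$, and it is a.s.\ finite. Recording it up to height $k_n$ and conditioning on $|\cT|=n$ yields, for every admissible $\mathfrak{t}$ with boundary count $\mathsf{b}=\mathsf{b}(\mathfrak{t})\ge 1$ and interior size $\Sigma(\mathfrak{t}) := \sum_{v\in t^\circ}(1+|F(v)|)$,
\[
\Pr{(\cT_n,\beta_n)^{<k_n>} = \mathfrak{t}} = \Bigl(\prod_{v\in t^\circ}\Pr{\mG = G(v)}\Bigr)\,\frac{\Pr{\sum_{i=1}^{\mathsf{b}}|\cT^{(i)}| = n - \Sigma(\mathfrak{t})}}{\Pr{|\cT| = n}},
\]
where $\cT^{(1)},\cT^{(2)},\dots$ are independent copies of $\cT$; this uses that, given the interior layers, the $\mathsf{b}$ enriched fringe subtrees rooted at the height-$k_n$ vertices are independent copies of $(\cT,\beta)$ and account for exactly $n-\Sigma(\mathfrak{t})$ further vertices. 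On the other side, Lemma~\ref{le:sampler2} at $x=\rho$ together with the criticality identity $\rho\,\widetilde{(\cR')^\kappa\circ\cA_\cR^\omega}(\rho) = \Ex{\xi}=1$ of Lemma~\ref{le:asymptotic}(3) (equivalently $\tilde{\cA}_\cR^{(\ell)}(\rho)=\tilde{\cA}_\cR^\omega(\rho)$, so the prefactor $(\rho\,\widetilde{\cR'\circ\cA_\cR^\omega}(\rho))^\ell$ equals $1$) shows that the underlying symmetry of $\Gamma Z_{\cA_\cR^{(\ell)}}(\rho)$ is that of $\Gamma Z_{\cA_\cR^\omega}(\rho)$ size-biased by the number of height-$\ell$ fixpoints, with the outer root uniform among them; letting $\ell=\infty$ and trimming at $k_n$ therefore gives
\[
\Pr{(\cT^{(\infty)},\beta^{(\infty)})^{<k_n>} = \mathfrak{t}} = \mathsf{b}(\mathfrak{t})\prod_{v\in t^\circ}\Pr{\mG = G(v)},
\]
which integrates to $1$ because $\Ex{L_{k_n}(\cT^f)}=1$ for the critical Galton--Watson tree $\cT^f$.

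\textbf{Step 2: the likelihood ratio.} Dividing the two displays,
\[
R_n(\mathfrak{t}) := \frac{\Pr{(\cT_n,\beta_n)^{<k_n>} = \mathfrak{t}}}{\Pr{(\cT^{(\infty)},\beta^{(\infty)})^{<k_n>} = \mathfrak{t}}} = \frac{1}{\mathsf{b}(\mathfrak{t})}\cdot\frac{\Pr{\sum_{i=1}^{\mathsf{b}(\mathfrak{t})}|\cT^{(i)}| = n - \Sigma(\mathfrak{t})}}{\Pr{|\cT| = n}}.
\]
Write $g(z)=\tilde{\cA}_\cR^\omega(\rho z)/\tilde{\cA}_\cR^\omega(\rho)$ for the probability generating function of $|\cT|$, so that $\Pr{\sum_{i=1}^{\mathsf{b}}|\cT^{(i)}|=m}=[z^m]g(z)^{\mathsf{b}}$. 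By Lemma~\ref{le:asymptotic}(1)--(3), $g$ has square-root singularities at $z=1$ and the other $\spa(\mathbf{w})$-th roots of unity, and $\Pr{|\cT|=m}=[z^m]g(z)$ is asymptotic to a positive constant times $m^{-3/2}$ along the lattice; expanding $g(z)^{\mathsf{b}}=1-\mathsf{b}\,c\sqrt{1-z}+\cdots$ gives the one-big-jump estimate $[z^m]g(z)^{\mathsf{b}}\sim \mathsf{b}\,[z^m]g(z)$ as $m\to\infty$ along $m\equiv \mathsf{b}\pmod{\spa(\mathbf{w})}$, uniformly for $\mathsf{b}=o(\sqrt{m})$. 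Since each $|\cT^{(i)}|\equiv 1\pmod{\spa(\mathbf{w})}$, for admissible $\mathfrak{t}$ and $n\equiv 1$ one automatically has $n-\Sigma(\mathfrak{t})\equiv\mathsf{b}(\mathfrak{t})\pmod{\spa(\mathbf{w})}$, so the estimate applies; combined with $\Pr{|\cT|=n-\Sigma(\mathfrak{t})}/\Pr{|\cT|=n}\to 1$ whenever $\Sigma(\mathfrak{t})=o(n)$, we conclude $R_n(\mathfrak{t})\to 1$ uniformly over the set $\mathfrak{D}_n$ of admissible $\mathfrak{t}$ with $1\le\mathsf{b}(\mathfrak{t})\le B_n$ and $\Sigma(\mathfrak{t})\le \epsilon_n n$, for any $B_n\to\infty$ with $B_n=o(\sqrt{n})$ and any $\epsilon_n\to 0$ with $k_n^2=o(\epsilon_n n)$ --- both choices being possible precisely because $k_n=o(\sqrt{n})$.

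\textbf{Step 3: conclusion.} From $2\,d_{\textsc{TV}}\bigl((\cT_n,\beta_n)^{<k_n>},(\cT^{(\infty)},\beta^{(\infty)})^{<k_n>}\bigr) \le \sup_{\mathfrak{t}\in\mathfrak{D}_n}|R_n(\mathfrak{t})-1| + \Pr{(\cT_n,\beta_n)^{<k_n>}\notin\mathfrak{D}_n} + \Pr{(\cT^{(\infty)},\beta^{(\infty)})^{<k_n>}\notin\mathfrak{D}_n}$, it remains to show the two "bad" probabilities vanish. Here $\mathsf{b}(\cdot)$ is the number of height-$k_n$ vertices of the relevant fixpoint tree and $\Sigma(\cdot)$ the size of its interior; using the level-width bound \eqref{eq:ineqstr1} transported from $\cT_n$ to $\cT^f_n$ through the coupling, the concentration $|\cT_n^f|\sim\mu n$ of Lemma~\ref{le:asymptotic}(4), and the finite exponential moments of $\zeta$, one gets $\Ex{\mathsf{b}}=O(k_n)$ and $\Ex{\Sigma}=O(k_n^2)$ for the conditioned object, while the spine decomposition turns the corresponding bounds for the limit object into $\Ex{\mathsf{b}}=\Ex{L_{k_n}(\cT^f)^2}=O(k_n)$ and $\Ex{\Sigma}=O(k_n^2)$ as well; moreover $\Pr{\mathsf{b}((\cT_n,\beta_n)^{<k_n>})=0}=\Pr{\He(\cT^f_n)\le k_n-1}\to 0$ by a left-tail height bound of the type \eqref{eq:gwtlefttail} adapted to $\cT^f_n$. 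Markov's inequality with the above $B_n,\epsilon_n$ then forces both bad probabilities to $0$, completing the proof.

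\textbf{Main obstacle.} The conceptual crux is the product identity of Step~1 for $(\cT^{(\infty)},\beta^{(\infty)})$: recognising the criticality relation $\rho\,\widetilde{(\cR')^\kappa\circ\cA_\cR^\omega}(\rho)=1$ as exactly the statement that renders the $\cR'$-decorated spine "weightless", so that the trimmed limit law is the unconditioned trimmed law size-biased by the boundary count. The technical heart is the uniform one-big-jump/local estimate $[z^m]g^{\mathsf{b}}\sim\mathsf{b}\,[z^m]g$ together with the first-moment control of the large-$\Sigma$ and large-$\mathsf{b}$ events --- this is where the hypothesis $k_n=o(\sqrt{n})$ is used and is known to be sharp.
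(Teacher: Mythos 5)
Your overall architecture matches the paper's: identify the two trimmed laws explicitly, control the likelihood ratio uniformly on a good event, and show both laws concentrate on that event. Your formula for the limit law ($\mathsf{b}(\mathfrak{t})\prod_v\Pr{\mG=G(v)}$) is precisely the paper's Lemma~\ref{le:slow}(3), and your formula for the conditioned law in terms of the forest of $\mathsf{b}$ i.i.d.\ subtrees is a valid restatement of what the paper obtains via Lemma~\ref{le:ucoup}(4) and the cycle lemma. However, your technical engine diverges from the paper's. The paper conditions on the number of fixpoints $|\cT_n^f|=\ell$, applies the cycle lemma, and then the \emph{bivariate} central local limit theorem (Lemma~\ref{le:llt2dim}) to the i.i.d.\ pairs $(\xi_i,\zeta_i)$, summing over $\ell\in I_n$ with the resulting Gaussian integral canceling between numerator and denominator. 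You instead invoke a \emph{univariate} one-big-jump local estimate $[z^m]g^{\mathsf{b}}\sim\mathsf{b}[z^m]g$ uniformly for $\mathsf{b}=o(\sqrt{m})$. That estimate is true and gives a cleaner-looking argument, but it is not a consequence of naively expanding $g(z)^{\mathsf{b}}=1-\mathsf{b}c\sqrt{1-z}+\cdots$ (the error terms are not controlled uniformly as $\mathsf{b}\to\infty$); it is a moderate-deviation local limit theorem for sums in the domain of attraction of the spectrally positive $1/2$-stable subordinator, and needs either a proof or a citation. The paper's bivariate LCLT route stays entirely inside the Gaussian regime, which is why it only requires Lemma~\ref{le:llt2dim}.

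There is also a genuine gap in your Step~3 for the conditioned object. You apply the level-width bound \eqref{eq:ineqstr1} to $\cT_n^f$, but \eqref{eq:ineqstr1} is a theorem about a single-type Galton--Watson tree conditioned on its \emph{own} size, whereas $\cT_n^f$ is the fixpoint tree conditioned on the \emph{total} size $|\cT|=n$. Because $(\xi,\zeta)$ need not be independent, this conditioning changes the law of the offspring sequence, so $\cT_n^f$ is not a size-conditioned $\xi$-Galton--Watson tree, and \eqref{eq:ineqstr1} does not apply to it directly (nor does the crude inclusion $L_k(\cT_n^f)\le L_k(\cT_n)$ help, since $\cT_n$ is not a size-conditioned single-type GW tree either). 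This entire computation can simply be removed: once you have $\Pr{(\cT^{(\infty)},\beta^{(\infty)})^{<k_n>}\in\mathfrak{D}_n}\to1$ (which follows from Lemma~\ref{le:slow}(4) via Markov/Chebyshev as you observe) \emph{and} the uniform ratio estimate on $\mathfrak{D}_n$, taking $\cH=\mathfrak{D}_n$ forces $\Pr{(\cT_n,\beta_n)^{<k_n>}\in\mathfrak{D}_n}\to1$, which is exactly the trick the paper uses. That substitution repairs the gap and shortens the proof. Similarly, $\Pr{\mathsf{b}=0}\to0$ for the conditioned object then also comes for free (the limit object always has $\mathsf{b}\ge1$), so the left-tail height bound \eqref{eq:gwtlefttail} is not needed either.
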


The limit object $(\cT^{(\infty)}, \beta^{(\infty)})$ admits a more accessible description in terms of $\cG$-enriched trees, that we are going to use in the proof of Theorem~\ref{te:localunlabelled}. Let $\hat{\mG}$ \label{mark:goh} denote a random variable that is distributed like the $\cG$-object corresponding to the root of $\cT^{(\infty)}$. Here we do not explicitly distinguish the vertex corresponding the $*$-vertex of the $\cR'$-symmetry.

\begin{lemma}
	\label{le:slow}
	Suppose that $\rho >0$, $\Ex{\xi}=1$ and that $(\xi, \zeta)$ has a finite covariance matrix.
	\begin{enumerate}
	\item The $\cG$-object corresponding to the root of $\cT^{(\infty)}$ together with the fixpoint corresponding to the second spine-vertex is distributed like $\mG$ with a uniformly at random selected marked fixpoint.
	\item The distribution of the limit enriched tree $(\cT^{(\infty)}, \beta^{(\infty)})$ may be described as follows. There are normal fixpoints and mutant fixpoints, and we start with a mutant root. Each normal fixpoint receives as $\cG$-object an independent copy of $\mG$, and each of the fixpoints of this $\cG$-object is declared normal. Ever mutant fixpoint receives an independent copy of $\hat{\mG}$, and one of the corresponding fixpoints is selected uniformly at random and declared mutant, whereas the remaining fixpoints are declared normal.
		\item Let $(\tau, \gamma)$ denote a $\cG$-enriched tree with height at least $k$. Let $G_1, \ldots, G_t$ denote the depth-first-search ordered list of the $\cG$-objects of $\tau^{[k-1]}$. Then
		\[
		\Pr{ (\cT^{(\infty)}, \beta^{(\infty)})^{<k>} = (\tau, \gamma)^{<k>}}= L_k(\tau) \prod_{i=1}^t \Pr{\mG = G_i}.
		\]
	\item 	For any $\cG$-enriched tree $(\tau, \gamma)$ with $\gamma(v) = (\beta_\tau(v), f_\tau(v), F_\tau(v))$ and any integer $k \ge 0$ we set
	\[
	\qquad L_k(\tau) = |\{v \in \tau \mid \he_\tau(v)  = k\}|, \quad L_k^\cG(\tau) = \sum_{\substack{v \in \tau\\ \he_\tau(v) = k}} |F_\tau(v)|, \quad H_k^\cG(\tau) = \sum_{i=0}^k L_i^\cG(\tau).
	\]
	Let $(\hat{\xi}, \hat{\zeta})$ denote the sizes of the fixpoints and non-fixpoints of $\hat{\mG}$. When $(\tau, \gamma)$ is the $\cG$-enriched tree corresponding to $(\cT^{(\infty)}, \beta^{(\infty)})$, it holds for all $k \ge 1$ that
	\begin{align*}
		\Ex{L_k(\tau)} &= 
		k(\Ex{\hat{\xi}} -1)+1 \\
		\Ex{|\tau^{[k]}|} &= 
		k(k+1)(\Ex{\hat{\xi}}-1)/2+ k+ 1 \\
		\Ex{L_k^\cG(\tau)} &= 
		k(\Ex{\hat{\xi}}-1)\Ex{\zeta} + \Ex{\hat{\zeta}} \\
		\Ex{H_k^\cG(\tau) -  |\tau^{[k]}| \Ex{\zeta}} &=  (k+1)(\Ex{\hat{\zeta}} - \Ex{\zeta}) \\
		\Va{H_k^\cG(\tau) -  |\tau^{[k]}| \Ex{\zeta}} &= k(k+1)(\Ex{\hat{\xi}}-1)\Va{\zeta}/2+ (k+ 1)(\Va{\hat{\zeta}} + \Va{\zeta})
	\end{align*}
	\end{enumerate}
\end{lemma}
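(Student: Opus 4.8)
The proof runs through the four items in sequence, each relying on the previous; the common tool is the recursive structure of the samplers $\Gamma\cS$ and $\Gamma\cS^{(\ell)}$ from Lemmas~\ref{le:sampler} and~\ref{le:sampler2}, together with Lemma~\ref{le:ucoup}.

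\emph{Part (1).} I would compute both laws directly. Abbreviate $a_i = \tilde{\cA}_\cR^\omega(\rho^i)$. By Lemma~\ref{le:sampler}, $\Gamma\cS(\rho)$ gives $\mG$ the $\cR$-symmetry $(R_0,\sigma_0)$ on $[m]$ with probability $\kappa(R_0)\prod_i a_i^{(\sigma_0)_i}/(m!\, Z_{\cR^\kappa}(a_1,a_2,\ldots))$ and then fills the cycles of length $\ge 2$ with independent copies of $\Gamma\cS$. By Lemma~\ref{le:sampler2}, $\Gamma\cS^{(\infty)}(\rho)$ assigns the root an $\cR'$-symmetry of size $k=m-1$ with probability proportional to the same weight (normalised by $Z_{(\cR')^\kappa}(a_1,a_2,\ldots)$), relabels it by a uniform bijection $[k]\cup\{*_k\}\to[m]$, and then treats the cycles of length $\ge 2$ exactly as $\Gamma\cS$ does. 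The bookkeeping step is that for a $\cG$-object $g$ with underlying $\cR$-symmetry $(R_0,\sigma_0)$ on $[m]$ and a fixpoint $v$ of $\sigma_0$, there are exactly $(m-1)!$ pairs consisting of an $\cR'$-symmetry and a relabelling sending $*_k$ to $v$ that produce it; carrying out the arithmetic gives
\[
\Pr{(\hat\mG, b) = (g, v)} = \frac{\kappa(R_0)\prod_i a_i^{(\sigma_0)_i}}{m!\, a_1 Z_{(\cR')^\kappa}(a_1,a_2,\ldots)}\cdot[\text{forest factor}]
\]
with the same ``forest factor'' as in $\Pr{\mG = g}$. Now $a_1 Z_{(\cR')^\kappa}(a_1,a_2,\ldots)$ equals $s_1\tfrac{\partial}{\partial s_1}Z_{\cR^\kappa}$ evaluated at $(a_1,a_2,\ldots)$, that is $Z_{(\cR^\bullet)^\kappa}(a_1,a_2,\ldots)$, and since $\Ex\xi = \rho\,\widetilde{(\cR')^\kappa\circ\cA_\cR^\omega}(\rho) = 1$ (Lemma~\ref{le:asymptotic}) this forces $Z_{(\cR^\bullet)^\kappa}(a_1,a_2,\ldots) = Z_{\cR^\kappa}(a_1,a_2,\ldots)$. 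Hence $\Pr{(\hat\mG, b) = (g, v)} = \Pr{\mG = g}$ for each fixpoint $v$ of $g$, which is exactly the assertion that $(\hat\mG, b)$ is $\mG$ with a uniformly chosen marked fixpoint (a probability measure precisely because $\Ex\xi = 1$; equivalently $\hat\mG$ is $\mG$ biased by $\xi$, with $b$ conditionally uniform among its fixpoints).

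\emph{Parts (2) and (3).} Unwinding the recursion in $\Gamma\cS^{(\infty)}$: at the marked vertex $b$ the sampler calls $\Gamma\cS^{(\infty)}$ again, while every other fixpoint and every cycle of length $\ge 2$ is handled as in $\Gamma\cS$; by Lemma~\ref{le:ucoup}(3) the $\cG$-enriched tree produced by $\Gamma\cS$ is precisely the one in which each vertex independently carries a copy of $\mG$ and all its fixpoints are again such vertices. Calling the vertices visited along the $\Gamma\cS^{(\infty)}$-recursion \emph{mutant} and the rest \emph{normal}, and invoking Part~(1) to recognise the $\cG$-object at a mutant, together with the direction of the next mutant, as $(\hat\mG, b)$, yields the description in (2). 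For (3), fix a $\cG$-enriched tree $(\tau,\gamma)$ of height $\ge k$. The event $(\cT^{(\infty)},\beta^{(\infty)})^{<k>} = (\tau,\gamma)^{<k>}$ does not see which vertices are mutant, so I would sum over the $L_k(\tau)$ root-to-level-$k$ paths of $\tau^{[k]}$ (the possible spines). Along a fixed such path the spine vertices contribute $\prod_j\Pr{(\hat\mG, b) = (\gamma(v_j), v_{j+1})} = \prod_j\Pr{\mG = \gamma(v_j)}$ by Part~(1), while the remaining vertices of $\tau^{[k-1]}$ each contribute $\Pr{\mG = \gamma(v)}$; since the children of a vertex of $\tau^{[k-1]}$ are precisely the fixpoints of its $\cG$-object, the tree $\tau^{[k]}$ is determined and each path contributes $\prod_{i=1}^t\Pr{\mG = G_i}$, independently of the path. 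Summing over the $L_k(\tau)$ paths gives the stated identity.

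\emph{Part (4).} With (2) in hand this is a computation along the spine. The spine $\cG$-objects $\hat\mG_0,\hat\mG_1,\ldots$ are i.i.d.\ copies of $\hat\mG$; the spine vertex at level $i$ has $\hat\xi_i - 1$ normal children at level $i+1$, each the root of an independent critical Galton--Watson tree with offspring $\xi$ (so with expected population $1$ at every level) whose vertices carry independent forest sizes distributed as $\zeta$. Taking expectations level by level gives $\Ex{L_k(\tau)} = 1 + k(\Ex{\hat\xi}-1)$, and then $\Ex{|\tau^{[k]}|}$, $\Ex{L_k^\cG(\tau)}$ and $\Ex{H_k^\cG(\tau) - |\tau^{[k]}|\Ex\zeta}$ follow by summing the resulting arithmetic series. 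For the variance I would condition on $\hat\mG_0,\ldots,\hat\mG_k$ and write $H_k^\cG(\tau) - |\tau^{[k]}|\Ex\zeta = \sum_{i\le k}(\hat\zeta_i - \Ex\zeta) + \sum_{i<k}\sum_{u\in N_i} W_u$, where $N_i$ is the set of normal children of the level-$i$ spine vertex and $W_u$ is the centred forest-size sum of the subtree rooted at $u$ and trimmed to level $k$; by the elementary branching recursion $\Va{W^{(j)}} = \Va\zeta + \Va{W^{(j-1)}}$ this $W_u$ has mean $0$ and variance $(k-i)\Va\zeta$. The law of total variance, using the independence of the $\hat\mG_i$ and of the subtrees, then produces the stated quadratic polynomial in $k$.

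\emph{Main obstacle.} The delicate point is Part~(1): one must keep careful track of the derived-species relabelling in $\Gamma\cS^{(\infty)}$ and notice that the normalising constants collapse \emph{exactly} when $\Ex\xi = 1$, so that the criticality supplied by Lemma~\ref{le:asymptotic} is what makes the limit object well defined. Once this identity is available, Parts~(2)--(4) are routine spine manipulations.
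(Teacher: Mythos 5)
Your proposal follows essentially the same route as the paper: part (1) rests on the cycle-index identity $Z_{(\cR')^\kappa} = \partial_{s_1} Z_{\cR^\kappa}$ together with the normalisation of the $\Gamma\cS^{(\ell)}$-sampler, and parts (2)--(3) unwind the recursion into the mutant/normal decomposition and sum over the $L_k(\tau)$ candidate spine positions. Making the reliance on $\Ex{\xi}=1$ explicit in part (1) is a genuine clarification: the paper compresses this into one sentence, but the two P\'olya--Boltzmann normalisations collapse only because criticality forces $\tilde{\cA}_\cR^\omega(\rho)\,Z_{(\cR')^\kappa}(\mathbf{a})= Z_{\cR^\kappa}(\mathbf{a})$ where $\mathbf{a}=(\tilde{\cA}_\cR^\omega(\rho^i))_i$. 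Your variance route in part (4) differs mildly from the paper's, which introduces the centred increments $D_i=L_i^\cG(\tau)-L_i(\tau)\Ex{\zeta}-\Ex{\hat{\zeta}}+\Ex{\zeta}$, verifies $\Ex{D_iD_j\mid(\tau^{[j]},\ldots)}=0$ for $i<j$, and sums $\Ex{D_i^2}$; your conditioning-on-the-spine argument via $\Va{W^{(j)}}=\Va{\zeta}+\Va{W^{(j-1)}}$ (which silently uses $\Ex{\xi}=1$) is equivalent.

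One thing to watch: carried through carefully, either route gives $(k+1)\Va{\hat{\zeta}}$ as the linear-in-$k$ contribution, \emph{not} $(k+1)(\Va{\hat{\zeta}}+\Va{\zeta})$ as printed. The paper's extra $\Va{\zeta}$ comes from writing $\Ex{D_i^2\mid L_i(\tau)=\ell}=\ell\,\Va{\zeta}+\Va{\hat{\zeta}}$, which overcounts by one: at level $i$ there are $\ell-1$ normal fixpoints and one mutant, so the correct value is $(\ell-1)\Va{\zeta}+\Va{\hat{\zeta}}$. A $k=1$ check confirms this, since
\[
H_1^\cG(\tau)-|\tau^{[1]}|\Ex{\zeta}
=(\hat{\zeta}_0-\Ex{\zeta})+(\hat{\zeta}_1-\Ex{\zeta})+\sum_{j=1}^{\hat{\xi}_0-1}(\zeta_j-\Ex{\zeta})
\]
has variance $2\Va{\hat{\zeta}}+(\Ex{\hat{\xi}}-1)\Va{\zeta}$, whereas the stated formula gives an extra $2\Va{\zeta}$. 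This off-by-one slip is a lower-order $(k+1)\Va{\zeta}$ term and leaves the $O(k^2)$ bound used in the proof of Theorem~\ref{te:localunlabelled} unaffected; but your computation will not reproduce the printed coefficient verbatim, so do not agonise over that discrepancy.
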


\subsubsection{Local convergence around a random root}

Inspired by Aldous' approach \cite{MR1102319} on fringe subtrees of random trees, we may also treat local convergence with respect to a uniformly at random drawn root of the random unlabelled enriched tree $\tilde{\mA}_n^\omega$ in a similar manner. 
Let $v^*$ be an uniformly at random drawn vertex of the tree $\cT_n$, and let $v_0$ denote the unique nearest vertex of $\cT_n^f$. That is, $v_0 = v^*$ if $v^* \in \cT_n^f$, and otherwise $v_0$ is the unique vertex of the fixpoint tree whose $\cG$-object contains $v^*$. 
For any $i \ge 1$, let $v_{i}$ denote the $i$'th predecessor of $v_0$ in the fixpoint tree $\cT_n^f$, if this predecessor exists. If not, that is, if $v_0$ has height greater than $i$ in $\cT_n^f$, then set $v_i = \diamond$ and $f((\cT_n, \beta_n), v_i)  = \diamond$ for some symbol $\diamond$ not contained in the set of $\cG$-enriched trees. (For example, we could use the empty set.) For any $k \ge 0$ we consider the vector of increasing fringe subtrees \label{mark:hn}
\[
\mH^n_{[k]} := (\mH^n_i)_{0 \le i \le k} := (f((\cT_n, \beta_n), v_0), \ldots, f((\cT_n, \beta_n), v_{k})).
\]

We are going to establish convergence of these random vectors of enriched trees toward extended enriched fringe subtrees of a limit object, which we introduce in the following lemma.

\begin{lemma}
	\label{le:slow2}
	Suppose that $\rho >0$, $\Ex{\xi}=1$ and that $(\xi, \zeta)$ has a finite covariance matrix.
	\begin{enumerate}
		\item Let $\bar{\mG} = (\bar{\mathsf{S}}, \bar{\mathsf{f}}, \label{mark:gob} \bar{\mathsf{F}})$ denote the random $\cG$-object with distribution given by
		\[
			\Pr{\bar{\mG} = (S,f,F)} = (1+ |F|) \Pr{\mG = (S,f,F)} / (1 + \Ex{\zeta}).
		\]
		We define an infinite random $\cG$-enriched tree $\hat{\mathsf{H}}$ in terms of its sequence $(\hat{\mathsf{H}}_k)_{k \ge 0}$ of increasing extended enriched fringe subtrees. The distribution of the fringe subtree tree $\hat{\mathsf{H}}_k$ is given as follows. There are normal fixpoints and special fixpoints, and we start with a special root. Each normal fixpoint receives as $\cG$-object an independent copy of $\mG$, and all fixpoints of this $\cG$-object are declared normal. Every special fixpoint with height less than $\ell$ receives an independent copy of $\hat{\mG}$, and one of the corresponding fixpoints is selected uniformly at random and declared special, whereas the remaining fixpoints are declared normal. A special fixpoint with height $\ell$ receives $\bar{\mG}$ and all fixpoints of this $\cG$-object are declared normal.
		
		Then the special vertices of $\hat{\mathsf{H}}$ form an infinite spine $u_0, u_1, \ldots$ that grows backwards, with 
		\[
			\hat{\mathsf{H}}_k = f(\hat{\mathsf{H}}, u_k)
		\]
		for all $k$. We distinguish a point $u^*$ that is drawn uniformly at random from the set $\{u_0\} \cup \hat{\mathsf{F}}(u_0)$, with $\hat{\mathsf{F}}(u_0)$ denoting the set of non-fixpoints of $\cG$-object corresponding to the root of $\hat{\mH}_0$, and set $\hat{\mH}^\bullet = (\hat{\mH}, u^*)$. \label{mark:hk}

		\item 
		For any two $\cG$-enriched trees $A$ and $A'$, let $Q(A,A')$ denote the number of fixpoint sons $v$ of the root of $A$ with  $f(A,v) = A'$. 
		For any increasing finge subtree representation $\mathbf{H} = (H_i)_{0 \le i \le k}$ of a $\cG$-enriched tree we set
		\[
		p(\mathbf{H}) = \prod_{i=1}^{k} Q(H_i, H_{i-1}).
		\]
		Then  $p(\mathbf{H})$ counts the number of fixpoints $v$ at height $k$ in $H_k$ with the property, that the extended enriched fringe subtree respresentation with respect to $v$ is identical to $\mathbf{H}$.
		
		\item Let $u$ be either the root of $H_0$ or a non-fixpoint of $\cG$-object corresponding to the root of $H_0$, and let $G_1, \ldots, G_t$ be the $\cG$-objects corresponding to $H_k$. Then 
		\[
		\Pr{\hat{\mH}_{[k]} = \mathbf{H}} = p(\mathbf{H})(1 + \Ex{\zeta})^{-1} \prod_{i=1}^t \Pr{\mG = G_i}.
		\]
		\item For any $\cG$-enriched tree $(\tau, \gamma)$ with $\gamma(v) = (\beta_\tau(v), f_\tau(v), F_\tau(v))$  let $\#_f(\tau,\gamma)= |\tau|$ denote its number of fixpoints, and $\#(\tau, \gamma) = \sum_{v \in \tau}(1 + F_\tau(v))$ its total size. Then for any sequence $k_n = \sqrt{n} t_n$ of non-negative integers with $t_n = o(1)$ it holds with probability tending to one that
		\begin{align*}
		\qquad \quad \#_f \hat{\mH}_{k_n} \le  n t_n \quad \text{and} \quad |\#\hat{\mH}_{k_n} - \Ex{\zeta} \#_f\hat{\mH}_{k_n} | \le \sqrt{n t_n}. \\
		\end{align*}
	\end{enumerate}
\end{lemma}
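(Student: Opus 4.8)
The plan is to read off (1)--(3) directly from the spinal construction and to treat (4), the quantitative part, by a truncation argument.

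For (1), I would first observe that since $\hat{\mG}$ is the $\xi$-size-biasing of $\mG$ and $\Ex{\xi}=1$, a $\cG$-object with no fixpoint receives size-biased mass zero, so a special child can always be chosen and the spine never terminates; the family $(\hat{\mH}_k)_{k\ge0}$ is then manifestly consistent (inside $\hat{\mH}_{k+1}$ the fringe subtree rooted at the second spine vertex has, level by level, exactly the law of $\hat{\mH}_k$), so Kolmogorov's extension theorem yields $\hat{\mH}$ together with a backward spine $u_0,u_1,\dots$ with $\hat{\mH}_k=f(\hat{\mH},u_k)$. Statement (2) is an induction on $k$: a height-$k$ fixpoint $v$ of $H_k$ with representation $\mathbf{H}=(H_0,\dots,H_k)$ amounts to the choice of a child $w$ of the root of $H_k$ with $f(H_k,w)=H_{k-1}$, of which there are $Q(H_k,H_{k-1})$, together with a height-$(k-1)$ fixpoint of $H_{k-1}$ with representation $(H_0,\dots,H_{k-1})$, of which there are $p(H_0,\dots,H_{k-1})$ by induction; the product is $p(\mathbf{H})$. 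For (3), I would split the event $\{\hat{\mH}_{[k]}=\mathbf{H}\}$ over the $p(\mathbf{H})$ ways the spine can lie inside $H_k$ (exactly the vertices counted in (2)); for a fixed spine the construction produces that configuration with probability $\bigl(\prod_{j=1}^{k}\Pr{\mG=G^{(j)}}\bigr)\Pr{\bar{\mG}=G^{(0)}}\prod_v\Pr{\mG=G_v}$, where $G^{(0)}$ is the $\cG$-object at $u_0$, $G^{(j)}$ the one at the spine vertex of height $k-j$ in $H_k$, and the last product runs over the off-spine fixpoints — here I use that the joint law of $(\hat{\mG},\text{its special child})$ assigns mass $\Pr{\mG=G}$ to the event ``object $G$, special child its $i$-th fixpoint'' for each $i$. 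Because $f(H_k,u_0)=H_0$ along every valid spine, the number $\zeta_0$ of non-fixpoints of the root $\cG$-object of $H_0$ is the same for all of them; summing over the $p(\mathbf{H})$ spines and using $\Pr{\bar{\mG}=G^{(0)}}=(1+\zeta_0)\Pr{\mG=G^{(0)}}/(1+\Ex{\zeta})$, then dividing by $1+\zeta_0$ to account for the uniform choice of the marked vertex $u$, gives the stated formula.

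For (4), the structural input is that $\hat{\mH}_{k_n}=f(\hat{\mH},u_{k_n})$ consists of the length-$k_n$ spine $u_0,\dots,u_{k_n}$ together with a family $T_1,\dots,T_{N'}$ of independent critical $\cG$-enriched Galton--Watson trees (offspring law $\xi$, non-fixpoint mark $\zeta$) attached to it, where $N'=\sum_{i=1}^{k_n}(\hat{\xi}_i-1)+\bar{\xi}$. Since $\Ex{\hat{\xi}}=\Ex{\xi^2}<\infty$ and $\Ex{\bar{\xi}}<\infty$ under the finite-covariance hypothesis, the weak law of large numbers gives $N'\le Ck_n$ with high probability; moreover $\#_f\hat{\mH}_{k_n}=(k_n+1)+\sum_{l=1}^{N'}|T_l^f|$ where each $|T_l^f|$ is the size of a critical finite-variance Galton--Watson tree, so $\Pr{|T_l^f|\ge m}\asymp m^{-1/2}$ and $\Ex{|T_l^f|\one_{|T_l^f|\le M}}=O(M^{1/2})$. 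As $nt_n=\sqrt{n}\,k_n$, whence $k_n+1=o(nt_n)$ and $k_n=o(\sqrt{nt_n})$, truncating all $T_l$ at size $nt_n$ gives, on $\{N'\le Ck_n\}$, $\Pr{\exists l\colon |T_l^f|>nt_n}\le Ck_n(nt_n)^{-1/2}=O(t_n^{1/2})$ and $\Ex{\sum_l|T_l^f|\one_{|T_l^f|\le nt_n}}\le Ck_n(nt_n)^{1/2}=O(t_n^{1/2}\cdot nt_n)$, so Markov's inequality yields $\#_f\hat{\mH}_{k_n}\le nt_n$ with high probability.

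For the second bound I treat the centered fluctuation $B_n:=\sum_{v\in\hat{\mH}_{k_n}}(|F_\tau(v)|-\Ex{\zeta})$ of the non-fixpoint count around its mean, which decomposes into a spine part $\sum_{i=0}^{k_n}(|F_\tau(u_i)|-\Ex{\zeta})$ and an off-spine part $\sum_{l=1}^{N'}Y_{T_l}$ with $Y_T:=\sum_{v\in T}(|F(v)|-\Ex{\zeta})$. The spine part is an $O_p(1)$ contribution from $u_0$ plus a sum of $k_n$ i.i.d.\ terms of finite mean, hence $O_p(k_n)=o_p(\sqrt{nt_n})$. For $\sum_l Y_{T_l}$ I again truncate the $T_l$ at size $nt_n$; the two facts needed about $Y_{\cT^f}$ are that, conditionally on $|\cT^f|=m$, it has variance $O(m)$ and, crucially, bounded mean uniformly in $m$ — then the truncated means sum to $O(N')=O(k_n)=o(\sqrt{nt_n})$, the centered truncated terms have conditional variance $O(N'(nt_n)^{1/2})=O(k_n(nt_n)^{1/2})$ and so sum to $o_p(\sqrt{nt_n})$ by Chebyshev, while $\Pr{\exists l\colon |T_l^f|>nt_n}\to0$ as above; altogether $|B_n|\le\sqrt{nt_n}$ with high probability. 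The \textbf{main obstacle} is precisely the boundedness of $\Ex{Y_{\cT^f}\mid|\cT^f|=m}$: $Y_{\cT^f}$ fails to be integrable (its tail is $\asymp y^{-1}$), so a careless truncation would inject a spurious logarithmic drift and force a hypothesis stronger than $t_n=o(1)$. To rule it out, write $|F(v)|-\Ex{\zeta}=(|F(v)|-\Ex{\zeta\mid\xi_v})+g(\xi_v)$ with $g(x)=\Ex{\zeta\mid\xi=x}-\Ex{\zeta}$; the first summand is conditionally centered given the fixpoint-tree shape, and for the second write $g=c(\cdot-1)+r$ with $c=\mathrm{Cov}(g(\xi),\xi)/\Va{\xi}$, so that $\sum_{v\in T}g(\xi_v)=-c+\sum_{v\in T}r(\xi_v)$ with $r$ orthogonal to $\xi-1$. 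Since a critical Galton--Watson tree conditioned on its size is a positively conditioned random-walk bridge, the orthogonality $r\perp(\xi-1)$ forces $\sum_v r(\xi_v)$ to have conditional mean $O(1)$ and conditional variance $O(m)$, which supplies the two estimates. (Throughout one may assume $k_n\to\infty$; if $k_n$ stays bounded both assertions are immediate because $nt_n,\sqrt{nt_n}\to\infty$ while the left-hand sides are $O_p(1)$.)
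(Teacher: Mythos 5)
For parts (1)--(3) your approach coincides with the paper's: the paper declares (1) and (2) straightforward, and your computation for (3) is exactly the one the paper carries out, splitting over the $p(\mathbf{H})$ possible spine locations and using $\Pr{\hat{\mG}=G}\,|f|^{-1}=\Pr{\mG=G}$ at internal spine vertices and $\Pr{\bar{\mG}=G}\,(1+|F|)^{-1}=\Pr{\mG=G}(1+\Ex{\zeta})^{-1}$ at the spine tip. Similarly, your argument for the first estimate in (4) is the paper's: truncate the number of off-spine trees, apply the Galton--Watson forest tail bound $\Pr{S_m\ge x}\le Cmx^{-1/2}$, and note $k_n/(nt_n)^{1/2}=t_n^{1/2}\to0$.

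For the second estimate in (4), however, your route differs from the paper's in a way that opens a genuine gap. You truncate the off-spine trees $T_l$ by \emph{size} and therefore need uniform control of the truncated means, i.e.\ that $\Ex{Y_{\cT^f}\mid|\cT^f|=m}=O(1)$ uniformly in $m$. You correctly flag this as the main obstacle, but the step you offer to close it --- that orthogonality $r\perp(\xi-1)$ ``forces $\sum_v r(\xi_v)$ to have conditional mean $O(1)$'' under the bridge conditioning --- is asserted, not proved, and it is not a soft consequence of the random-walk-bridge picture. Making it rigorous amounts to showing
\[
m\,\Ex{r(\xi_1)\mid S_m=m-1}
= m\sum_x r(x)\Pr{\xi=x}\frac{\Pr{S_{m-1}=m-1-x}-\Pr{S_m=m-1}}{\Pr{S_m=m-1}} = O(1),
\]
and the plain local limit theorem (valid under finite variance) only gives a uniform error $o(m^{-1/2})$ in the numerator, yielding $o(m^{1/2})$, not $O(1)$. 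Extracting the $O(1)$ requires a first-order Edgeworth-type refinement of the LLT, which uses third-moment information on $\xi$ that the lemma's hypothesis (finite covariance of $(\xi,\zeta)$) does not provide, and even when such moments are available it is a nontrivial subsidiary lemma rather than a remark. The paper avoids this entirely by truncating the $T_l$ by \emph{height} rather than size: writing $|\cT|-(1+\Ex{\zeta})|\cT^f|=\sum_{i\ge0}D_i$ with $D_i=L_i^{\cG}(\cT^f)-L_i(\cT^f)\Ex{\zeta}$, each $D_i$ is conditionally centered given the level sizes, so $\Ex{\sum_{i\le h}D_i}=0$ and $\Va{\sum_{i\le h}D_i}=(h+1)\Va{\zeta}$ \emph{exactly}, with no local-CLT input; one then bounds the heights of the $M_{k_n}$ off-spine trees via Kolmogorov's estimate and closes with Chebyshev. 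Note also that you (like the paper's proof) are really bounding $\#\hat{\mH}_{k_n}-(1+\Ex{\zeta})\#_f\hat{\mH}_{k_n}=\sum_v(|F_\tau(v)|-\Ex{\zeta})$, whereas the statement of the lemma has $\Ex{\zeta}$ in place of $1+\Ex{\zeta}$; this is consistent with the paper's proof, so the discrepancy appears to be a typo in the statement rather than in either proof. I would strongly recommend replacing your size-truncation argument by the paper's height-truncation decomposition; as written your proof of the second estimate in (4) is incomplete.
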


We may now establish convergence of the extended enriched fringe subtrees, that will help us to apply our main theorems to specific examples of random discrete structures, in particular random graphs.

\begin{theorem}[Local convergence of random unlabelled $\cR$-enriched trees around a random root]
	\label{te:localhaupt}
	Suppose that the ordinary generating series generating series $\tilde{\cA}^\omega_\cR(z)$ has radius of convergence $\rho > 0$, and that the series \[E(z, u) = z Z_{\cR^{\kappa}}(u, \tilde{\cA}^\omega_\cR(z^2), \tilde{\cA}^\omega_\cR(z^3), \ldots)\] satisfies $E(\rho + \epsilon, \tilde{\cA}^\omega_\cR(\rho) + \epsilon) < \infty$ for some $\epsilon>0$. Then for any sequence $k_n = o(\sqrt{n})$ of non-negative integers the increasing fringe subtree sequence $\mH^n_{[k_n]}$ of length $k_n$, corresponding to the uniformly at random drawn vertex $v^*$ of $(\cT_n, \beta_n)$, converges in total variation to the fringe subtree sequence of $\hat{\mH}$ with the same length. That is,
	\begin{align}
		\label{le:toshow}
		d_{\textsc{TV}}( (\mH^n_{[k_n]}, v^*), (\hat{\mH}_{[k_n]}, u^*) ) \to 0.
\end{align}
\end{theorem}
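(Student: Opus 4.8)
The plan is to transport the statement to the level of the $\cG$-enriched fixpoint tree, reduce it to a first-moment asymptotics for fringe-subtree counts in the (doubly) conditioned Galton--Watson tree $\cT_n^f$, and finally upgrade pointwise convergence to total variation by a truncation argument, using machinery parallel to that behind Theorem~\ref{te:localunlabelled}. First I would invoke the coupling of Lemma~\ref{le:sampler}: conditioning $\Gamma\cS(\rho)$ on $|\cT|=n$ yields the $\Sym(\cR)$-enriched plane tree $(\cT_n,\beta_n)$ whose associated symmetry represents $\tilde{\mA}_n^\omega$, so that the pair $(\mH^n_{[k_n]},v^*)$ is a measurable function of the fixpoint tree $\cT_n^f$, its $\cG$-decorations $(G_{\cT_n^f}(v))_v$, and the uniform random vertex $v^*$. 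By Lemma~\ref{le:ucoup}, $\cT_n^f$ is a Galton--Watson tree with mean-one offspring law $\xi$ whose depth-first list of $\cG$-objects is an i.i.d.\ sequence $(\mG_1,\dots,\mG_\ell)$ conditioned on $\sum_i(1+|\mathsf F_i|)=n$, on $\sum_i|\mathsf f_i|=\ell-1$, and on the cycle-lemma partial-sum constraints, while by Lemma~\ref{le:asymptotic} we have $|\cT_n^f|/n\convp\mu=(1+\Ex\zeta)^{-1}$.

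Next I would record an exact identity at finite $n$. Since $v^*$ is uniform on the $n$ vertices of $\cT_n$ and falls in the cell $\{v_0\}\cup F(v_0)$ of its nearest fixpoint $v_0$, for any increasing enriched fringe representation $\mathbf H=(H_0,\dots,H_k)$ and any admissible position $u$ in the root-cell of $H_0$,
\[
\Pr{(\mH^n_{[k]},v^*)=(\mathbf H,u)}=\tfrac1n\,\Ex{N_k^{(n)}(\mathbf H)},
\]
where $N_k^{(n)}(\mathbf H)$ is the number of fixpoints of $\cT_n^f$ of height at least $k$ with enriched fringe representation $\mathbf H$. By the branching identity of Lemma~\ref{le:slow2}(2), $N_k^{(n)}(\mathbf H)=p(\mathbf H)\,X_n(H_k)$ with $p(\mathbf H)=\prod_{i=1}^kQ(H_i,H_{i-1})$ and $X_n(H_k)$ the number of $\cG$-enriched fringe subtrees of $\cT_n^f$ isomorphic to $H_k$. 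On the limit side, Lemma~\ref{le:slow2}(1) and (3) give $\Pr{(\hat{\mH}_{[k]},u^*)=(\mathbf H,u)}=p(\mathbf H)\,(1+\Ex\zeta)^{-1}\prod_{i=1}^t\Pr{\mG=G_i}$ with $G_1,\dots,G_t$ the $\cG$-objects of $H_k$. Cancelling $p(\mathbf H)$, the theorem reduces to the fringe-count asymptotics $\tfrac1n\Ex{X_n(H_k)}\to(1+\Ex\zeta)^{-1}\prod_{i=1}^t\Pr{\mG=G_i}$, uniformly enough to let $k=k_n$.

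For fixed $k$ and fixed bounded-size $H_k$, I would establish this asymptotics by encoding the conditioned list via the cycle lemma (Lemma~\ref{le:cycle}) and applying the two-dimensional central local limit theorem (Lemma~\ref{le:llt2dim}) to the lattice vector $\big(\sum_i(1+|\mathsf F_i|),\sum_i|\mathsf f_i|\big)$, exactly as in the derivation of Lemma~\ref{le:asymptotic}(4): the conditioning events have only polynomially small probability and do not bias the empirical frequency of a fixed $\cG$-pattern, so $\Ex{X_n(H_k)}/|\cT_n^f|\to\Pr{(\cT^f\text{-decorated fringe})=H_k}=\prod_{i=1}^t\Pr{\mG=G_i}$ by Lemma~\ref{le:ucoup}(3); multiplying by $|\cT_n^f|/n\to\mu$ gives the claim, hence pointwise convergence of the laws for every fixed $k$.

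The main obstacle is passing from this pointwise convergence to convergence in total variation while allowing $k=k_n=o(\sqrt n)$. On the countable set of increasing enriched fringe representations the total variation distance is half the $\ell_1$ distance of the two laws, so it suffices to exclude escape of mass uniformly along $k\le k_n$. I would truncate to representations whose total size $\#\hat{\mH}_{k_n}$ and fixpoint count $\#_f\hat{\mH}_{k_n}$ lie in a window of width $o(n)$ around their typical values, bounding the discarded mass on the limit side by Lemma~\ref{le:slow2}(4) and on the finite side by the corresponding estimates for $\cT_n^f$ obtained from the conditioned Galton--Watson height and level-width tails~\eqref{eq:gwttail} and \eqref{eq:ineqstr1}; on the truncated family the pointwise convergence above is summable by a Scheff\'e-type argument together with the uniformity supplied by the local limit theorem. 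The restriction $k_n=o(\sqrt n)$ is exactly what makes the truncation effective, since a backward spine of length $k_n$ accumulates only $\Theta(k_n^2)=o(n)$ vertices in the limit tree (compare the second-moment formulas of Lemma~\ref{le:slow}(4)), so the global conditioning $|\cT|=n$ does not distort the picture at that scale. A subtle structural point, already visible in the exact identity, is that $v_0$ is selected with probability proportional to its cell size $1+|F(v_0)|$, which is precisely why the far end of the spine in $\hat{\mH}$ carries the size-biased object $\bar{\mG}$ instead of $\hat{\mG}$; keeping the bookkeeping of this size-bias consistent under the two simultaneous conditionings is where the argument needs the most care.
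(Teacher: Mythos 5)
Your proposal follows essentially the same route as the paper's proof: the exact counting identity $\Pr{(\mH^n_{[k]},v^*)=(\mathbf H,u)}=\tfrac1n\Ex{p(\mathbf H)X_n(H_k)}$ (the paper's equations~\eqref{eq:tmp10}--\eqref{eq:anotherstep} encode exactly this), reduction to fringe-subtree count asymptotics in $\cT_n^f$, the cycle lemma to remove the path-positivity conditioning, the two-dimensional local CLT on $(\sum_i|\mathsf f_i|,\sum_i|\mathsf F_i|)$ summed over a window for $|\cT_n^f|$, and the truncation to the set where $\#_f H_k$ and the size discrepancy $|\#H_k-(1+\Ex\zeta)\#_fH_k|$ are controlled, with Lemma~\ref{le:slow2}(4) certifying that $\hat{\mH}_{[k_n]}$ lands there with high probability. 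One mild divergence worth noting: for the finite-side escape mass you suggest a separate control via the conditioned Galton--Watson tail bounds~\eqref{eq:gwttail},~\eqref{eq:ineqstr1}, whereas the paper avoids these entirely---it first proves the uniform ratio convergence on the truncated set $\cE_{k_n,n}$ and then \emph{infers} that $(\mH^n_{[k_n]},v^*)$ concentrates on $\cE_{k_n,n}$ from the limit-side concentration together with the uniform ratio, so no direct tail estimate on $\cT_n^f$ is needed. Your remark that "the conditioning does not bias the empirical frequency of a fixed pattern" is a bit loose as stated (the conditioning \emph{does} bias it, but the bias vanishes for $o(\sqrt n)$-sized patterns); this is exactly what the paper makes precise through the local CLT comparison of the conditioned and unconditioned walk, which you do invoke, so it is an expository imprecision rather than a gap.
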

Here $u^*$ denotes a random vertex of $\hat{\mH}$ that we defined in Lemma~\ref{le:slow2}. 


\subsection{Scaling limits of metric spaces based on $\cR$-enriched trees}
\label{sec:partA}


\begin{wrapfigure}{r}{0.29\textwidth}
	\begin{center}
		\includegraphics[width=0.24\textwidth]{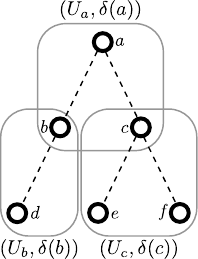}
	\end{center}
	\caption{Patching together discrete metric spaces.}
	\label{fi:patch}
\end{wrapfigure}

\subsubsection{Patching together discrete metric spaces}

We study  metric spaces patched together from metrics associated to the vertices of a tree. Let $A$ be a rooted tree. For each vertex $v \in A$ let $M_v$ denote its offspring set. Let $\delta$ be a map that assigns to each vertex $v$ of $A$ a metric $\delta(v)$ on the set $U_v := M_v \cup \{v\}$. This induces a metric $d$ on the vertex vertex set $V(A)$ that extends the metrics $\delta(v)$ by patching together as illustrated in Figure~\ref{fi:patch}. Formally, we define this  metric as follows. Consider the graph $G$ on the vertex set of $A$ obtained by connecting any two vertices $x \ne y$ if and only if there is some vertex $v$ of the tree $A$ with $x,y \in U_v$ and assigning the weight $\delta(v)(x,y)$ to the edge. The resulting graph is connected and the distance of any two vertices $a$ and $b$ is defined by the minimum of all sums of edge-weights along paths joining $a$ and $b$ in the graph~$G$.

Suppose that for each finite set $U$ and each $\cR$-structure $R \in \cR[U]$ we are given a random metric $\delta_R$ on the set $U \cup \{*_U\}$ with $*_U$ denoting an arbitrary fixed element not contained in $U$. For example, we could set $*_U := \{ U\}$.  Let $\tilde{\mA}_n^\cR = (\tilde{\mA}_n, \alpha_n)$ denote the random $n$-sized $\cR$-enriched tree drawn with probability proportional to its $\omega$-weight. We construct a random $n$-element metric space $\mY_n$ as follows. For each vertex $v$ of $\tilde{\mA}_n^\cR$ with offspring set $M_v$ let $\delta_n(v)$ be the metric on the set $M_v \cup \{v\}$ obtained by taking an independent copy of $\delta_{\alpha_n(v)}$ and identifying $*_{M_v}$ with $v$. Let $d_{\mY_n}$ denote the metric patched together from the family $(\delta_n(v))_v$ as described in the preceding paragraph.

In order for this to be a sensible model of a random tree-like structure we require the following two assumptions. 
\begin{enumerate}
\item We assume that there is a real-valued random variable $\chi\ge0$ such that for any $\cR$-structure $R$ the diameter of the metric $\delta_R$ is stochastically bounded by the sum of $|R|$ independent copies $\chi_1^R, \ldots, \chi_{|R|}^R$ of $\chi$.
\item For any bijection $\sigma: U \to V$ of finite sets and for any $\cR$-structure $R \in \cR[U]$ we require that the metric $\delta_{\cR[\sigma](R)}$ is identically distributed to the push-forward of the metric $\delta_R$ by the bijection $\bar{\sigma}: U \cup \{*_U\} \to V \cup \{*_V\}$ with $\bar{\sigma}|_U = \sigma$. 
\end{enumerate}

Recall that by Lemma~\ref{le:asymptotic} the radius of convergence $\rho$ of $\tilde{\cA}^\omega_\cR(z)$ is finite, and the sum $\tilde{\cA}^\omega_\cR(\rho)$ is finite as well. 

\begin{theorem}[Scaling limits in the unlabelled setting]
\label{te:main2}
Suppose that the ordinary generating series generating series $\tilde{\cA}^\omega_\cR(z)$ has radius of convergence $\rho > 0$ and that the series \[E(z, u) = z Z_{\cR^{\kappa}}(u, \tilde{\cA}^\omega_\cR(z^2), \tilde{\cA}^\omega_\cR(z^3), \ldots)\] satisfies \[E(\rho + \epsilon, \tilde{\cA}^\omega_\cR(\rho) + \epsilon) < \infty\] for some $\epsilon>0$. Then the rescaled space $(\mY_n, n^{-1/2}d_{\mY_n})$ converges weakly to a constant multiple of the (Brownian) continuum random tree $\CRT$ with respect to the Gromov--Hausdorff metric as $n \equiv 1 \mod \spa(\mathbf{w})$ tends to infinity.
\end{theorem}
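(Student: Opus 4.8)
The plan is to reduce the statement, via the coupling of Lemma~\ref{le:sampler}, to a metric comparison on the conditioned $\Sym(\cR)$-enriched plane tree $(\cT_n,\beta_n)$, and then to combine (i) a scaling limit for the fixpoint subtree $\cT_n^f$ with (ii) a uniform law of large numbers showing that $d_{\mY_n}$ is asymptotically a deterministic multiple of the graph metric of $\cT_n^f$. By Lemma~\ref{le:sampler} and the remark following it, we may take the enriched tree underlying $\tilde{\mA}_n^\cR$ to be the one encoded by $(\cT_n,\beta_n)$; then $\mY_n$ has point set $V(\cT_n)$ and, for each $v$, $\delta_n(v)$ is an independent copy of $\delta_{\alpha_n(v)}$ on $M_v\cup\{v\}$ with $\ast$ identified with $v$. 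Since the graph $G$ defining $d_{\mY_n}$ restricts on each star $M_v\cup\{v\}$ to $\delta_n(v)$ and consecutive stars along a tree path overlap only in single cutvertices, for two fixpoints $x,y$ the distance $d_{\mY_n}(x,y)$ equals a sum of $d_{\cT_n^f}(x,y)$ local ``$\ast$-costs'', one for each edge of the path in $\cT_n^f$ (the cost at an edge $\{c,p\}$, with $c$ a child of $p$, being $\delta_n(p)(c,p)$), minus a nonnegative correction $\theta$ supported at the lowest common ancestor $z$ with $\theta\le 2\,\mathrm{diam}\,\delta_n(z)$; moreover $\cT_n^f$ is a subtree of $\cT_n$ containing the root, so $d_{\cT_n}$ restricts to $d_{\cT_n^f}$ on it.

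\emph{Step 1: $\cT_n^f$ rescales to a multiple of the CRT.} By Lemma~\ref{le:ucoup}(4)--(5) and the cycle lemma (Lemma~\ref{le:cycle}), conditionally on $|\cT_n^f|=\ell$ the tree $\cT_n^f$ is a Galton--Watson tree with offspring law $\xi$ conditioned on having $\ell$ vertices, reweighted by the event that the hanging forests (conditionally independent given the outdegrees) have total size $n-\ell$. By Lemma~\ref{le:asymptotic}(3)--(4), $\Ex{\xi}=1$, $0<\Va{\xi}<\infty$, and $|\cT_n^f|=\mu n+O_p(\sqrt n)$ with $\mu=1/(1+\Ex{\zeta})$. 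A local limit argument (Lemma~\ref{le:llt2dim}) shows that the reweighting is asymptotically a Gaussian factor depending on the tree only through $\ell$, so that $(\cT_n^f\mid |\cT_n^f|=\ell)$ is contiguous to the plain Galton--Watson tree conditioned on $\ell$ vertices; together with $\ell/n\to\mu$ in probability and the invariance principle of Section~\ref{sec:gwt} (and its standard extensions) this gives
\[
\Bigl(\cT_n^f,\ \tfrac12\sqrt{\Va{\xi}/\mu}\,n^{-1/2}d_{\cT_n^f},\ \emptyset\Bigr)\convdis(\CRT,d_{\CRT},\bar{0})
\]
as $n\equiv 1\bmod\spa(\mathbf{w})$ tends to infinity. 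The tail bounds \eqref{eq:gwttail} and \eqref{eq:ineqstr1} supply the auxiliary estimates (maximal height $O(\sqrt{n\log n})$, etc.) used in Step~2.

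\emph{Step 2: $d_{\mY_n}$ is uniformly close to $c\,d_{\cT_n^f}$.} On the high-probability event that every hanging forest has size and height $O(\log n)$ and every outdegree of $\cT_n^f$ is $O(\log n)$ — which holds by the exponential moments of $\xi$ and $\zeta$ (Lemma~\ref{le:asymptotic}(3)) and \eqref{eq:gwttail} — each point $x\in V(\cT_n)$ has a unique nearest fixpoint $v_0(x)\in\cT_n^f$, and $d_{\mY_n}(x,v_0(x))$ is dominated by a sum of $O(\log^2 n)$ independent copies of $\chi$; assuming $\chi$ is sufficiently integrable (finite exponential moments suffice), $\max_x d_{\mY_n}(x,v_0(x))=o_p(\sqrt n)$ and the lowest-common-ancestor correction in the formula above is $o_p(\sqrt n)$ uniformly. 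Conditionally on $(\cT_n,\beta_n)$ the local $\ast$-costs along any fixed path are independent, and the cost per step along an ancestral line satisfies a law of large numbers with a deterministic limit $c>0$; that $c$ does not depend on the chosen vertex follows from the local weak limit $\hat{\mH}$ of Theorem~\ref{te:localhaupt} (see Lemma~\ref{le:slow2}), which identifies $c$ as an expectation attached to $\hat{\mH}$. A Bernstein/Rosenthal-type maximal inequality over the $O(n^2)$ paths, each of length $O(\sqrt{n\log n})$, then upgrades this to
\[
\max_{x,y\in\cT_n^f}\bigl|d_{\mY_n}(x,y)-c\,d_{\cT_n^f}(x,y)\bigr|=o_p(\sqrt n).
\]

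Combining the two steps, the map sending each point $x$ to the image of $v_0(x)$ under the correspondence realizing Step~1 gives, after rescaling by $n^{-1/2}$, a correspondence between $(\mY_n,n^{-1/2}d_{\mY_n})$ and $\tfrac{2c\sqrt{\mu}}{\sqrt{\Va{\xi}}}\cdot\CRT$ whose distortion tends to $0$ in probability; this proves the theorem. The main obstacle is Step~2: obtaining the law of large numbers for the patched metric costs \emph{uniformly over all pairs of points}. Since the local costs are not independent in the conditioned model, one must either transfer the estimate from the Boltzmann model — using that $\Pr{|\cT|=n}$ is of order $n^{-3/2}$ (Lemma~\ref{le:asymptotic}(3)) — or argue conditionally on favourable structural events, and in both cases control the arising sums of copies of $\chi$ via the integrability assumption on $\chi$ and the tail bounds of Section~\ref{sec:gwt}.
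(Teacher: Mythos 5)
Your sketch follows the same strategy as the paper's proof: reduce to the fixpoint subtree $\cT_n^f$ via Lemma~\ref{le:blobs} (blob diameters are $O(\log n)$, $\cT_n^f$ rescales to the CRT), decompose $d_{\mY_n}(x,y)$ for $x,y\in\cT_n^f$ into ancestral segments plus a bounded LCA correction, and then prove a uniform law of large numbers so that $d_{\mY_n}\approx c\,d_{\cT_n^f}$ with a deterministic $c$. Your scaling constant $2c\sqrt{\mu}/\sqrt{\Va{\xi}}$ with $c=\Ex{\eta}$ and $\mu=1/(1+\Ex\zeta)$ matches the paper's $\sqrt{(1+\Ex\zeta)\Va\xi}\,/(2\Ex\eta)$ prefactor.

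The gap is in Step 2, which you correctly flag as the main obstacle. Your proposed Bernstein/Rosenthal-type union bound over ancestral paths in the conditioned model would give concentration of each ancestral-line cost around its conditional mean given $(\cT_n,\beta_n)$, but that conditional mean depends on the $\cR'$-symmetries encountered along the specific path — it is not a priori a single constant $c$ — and invoking the local weak limit $\hat{\mH}$ only controls neighbourhoods of a typical vertex, not simultaneously all $\Theta(\sqrt n)$-long ancestral lines. You do mention the right remedy — transfer to the Boltzmann model using $\Pr{|\cT|=n}\asymp n^{-3/2}$ — but leave it as one of two unresolved options. The paper executes precisely that remedy: it introduces events $\cE_1,\cE_2$ with cutoffs $s_n=\log^s n$, $t_n=n^t$, pays the tilt $\Theta(n^{3/2})=1/\Pr{|\cT|=n}$, and (crucially) re-expresses the bad-pair probabilities via $\Gamma Z_{\cA_\cR^{(\ell)}}(\rho)$ through \eqref{eq:holygrail}, along whose spine the per-step $\delta$-costs are honest i.i.d.\ copies of a random variable $\eta$ (the $\delta_\mR$-distance between the two $*$-labels of a weighted random $\cR'$-symmetry) with finite exponential moments; Lemma~\ref{le:deviation} then finishes. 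It is the spine-i.i.d.\ structure supplied by Lemma~\ref{le:sampler2}, rather than a maximal inequality in the conditioned model, that makes the uniform estimate go through — your sketch is missing this ingredient even though it correctly names the tilt.
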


An explicit expression of the scaling constant in  Theorem~\ref{te:main2} is given in the corresponding proof in Section~\ref{sec:propartA}.  It is interesting to note that  the local weak limit $(\cT^{(\infty)}, \beta^{(\infty)})$ contains some  information on the scaling limit, as it is responsible for one of the factors in the scaling factor. We also provide the following sharp tail-bound for the diameter. 
\begin{theorem}[tail bounds for the diameter in the unlabelled setting]
\label{te:tail2}
Under the same assumptions of Theorem~\ref{te:main2} there are constants $C,c>0$ such that for all $n$ and $x \ge 0$ it holds that
\[
\Pr{\Di(\mY_n) \ge x} \le C (\exp(-c x^2/n) + \exp(-c x)).
\]
\end{theorem}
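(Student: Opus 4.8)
The plan is to bound the diameter of $\mY_n$ by twice a height, to realise that height as the height of an auxiliary conditioned critical Galton--Watson tree, and then to invoke the Addario-Berry--Devroye--Janson tail estimate~\eqref{eq:gwttail}. Write $\mathrm{anc}(a)$ for the set of strict ancestors of a vertex $a$ in the tree $\tilde{\mA}_n$. Since $(\mY_n,d_{\mY_n})$ is patched together along $\tilde{\mA}_n$, following for each $a$ the tree-path from the root $o$ to $a$ inside the patching graph yields $d_{\mY_n}(o,a)\le\sum_{u\in\mathrm{anc}(a)}\mathrm{diam}(\delta_n(u))$, whence
\[
\Di(\mY_n)\ \le\ 2\max_a d_{\mY_n}(o,a)\ \le\ 2\max_{v}\sum_{u\in\mathrm{anc}(v)}\mathrm{diam}(\delta_n(u)),
\]
the last maximum ranging over the leaves $v$ of $\tilde{\mA}_n$. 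By the first standing assumption on the metrics $\delta_R$, conditionally on $\tilde{\mA}_n$ the quantity $\mathrm{diam}(\delta_n(u))$ is stochastically dominated by $\Sigma_u:=\sum_{j=1}^{d_{\tilde{\mA}_n}^+(u)}\chi^u_j$, and one may realise everything on a common probability space so that $\mathrm{diam}(\delta_n(u))\le\Sigma_u$ simultaneously for all $u$, with the $\chi^u_j$ independent copies of $\chi$ jointly independent of $\tilde{\mA}_n$. Throughout I use that $\chi$ has finite exponential moments and that, by Lemma~\ref{le:asymptotic}, so do the offspring law $\xi$ of the Galton--Watson tree $\cT^f$ and the forest sizes $\zeta$; in particular the outdegrees $d_{\cT_n}^+(u)$, and hence the $\Sigma_u$, are exponentially integrable uniformly in $u$.

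Next I would split on the size of $x$. For $x\le\sqrt n$ the claimed inequality holds trivially once $C$ is chosen with $Ce^{-c}\ge 1$. For $x\ge K_0 n$ with $K_0$ a large enough constant, the crude bound $\Di(\mY_n)\le 2\sum_u\Sigma_u=2\sum_{i=1}^{n-1}\chi_i$ (each edge of $\tilde{\mA}_n$ contributing exactly one copy of $\chi$), together with the medium deviation inequality of Lemma~\ref{le:deviation} applied to the centred variable $\chi-\Ex{\chi}$, gives $\Pr{\Di(\mY_n)\ge x}\le 2e^{-cx}$. The substantial range is $\sqrt n\le x\le K_0 n$, where I aim at $\Pr{\Di(\mY_n)\ge x}\le C(e^{-cx^2/n}+e^{-cx})$.

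For that range I would encode $\max_v\sum_{u\in\mathrm{anc}(v)}\Sigma_u$ as a genuine height. Let $\mathcal B_n$ be the auxiliary tree obtained from $\tilde{\mA}_n\cong\cT_n$ by replacing each vertex $u$ with a path of $1+\lceil\Sigma_u\rceil$ edges and re-attaching the children of $u$ at its bottom; then $\He(\mathcal B_n)\ge\max_v\sum_{u\in\mathrm{anc}(v)}\Sigma_u$, so it suffices to control $\Pr{\He(\mathcal B_n)\ge x/2}$. Because $\cT^f$ is critical Galton--Watson with exponentially integrable offspring, $\cT_n$ is, up to the attached forests $F(v)$ (whose sizes are exponentially integrable), a conditioned critical Galton--Watson tree, and $\mathcal B_n$ is again of this type --- a conditioned critical multi-type Galton--Watson tree with exponentially integrable offspring, the conditioning fixing the number $n$ of ``block-bottom'' vertices. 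Its total size has exponential tails, $\Pr{|\mathcal B_n|\ge N}\le e^{-c(N-2n)}$ for large $N$ (again by Lemma~\ref{le:deviation}, since $|\mathcal B_n|\le 2n+\sum_{i=1}^{n-1}\chi_i$), so summing the height tail bound~\eqref{eq:gwttail} for $\mathcal B_n$ over the possible values $N=|\mathcal B_n|$ yields $\Pr{\He(\mathcal B_n)\ge x/2}\le C(e^{-cx^2/n}+e^{-cx})$. Assembling the three regimes and relabelling constants then proves the theorem.

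The main obstacle is this last invocation, namely establishing an Addario-Berry--Devroye--Janson-type height tail bound of the shape \eqref{eq:gwttail} for the conditioned tree $\mathcal B_n$. One has to handle the multi-type structure, the fact that the conditioning constrains a count of vertices of a prescribed type rather than the total number of vertices, and --- most delicately --- that the fixpoint tree $\cT^f$, once conditioned on its own size, is not literally a size-conditioned Galton--Watson tree, because the number of fixpoints and the number of non-fixpoints at a vertex are correlated through its $\cG$-object (Lemma~\ref{le:ucoup}). This is precisely where the branching description of Lemma~\ref{le:ucoup}, the exponential moments of Lemma~\ref{le:asymptotic}, the lattice local limit theorem Lemma~\ref{le:llt1dim} for the joint law of the $(\xi,\zeta)$-sums, and the cycle lemma Lemma~\ref{le:cycle} for the ballot-type constraint all enter; a more hands-on alternative is to first condition on $\He(\cT_n)\le h$ using these tools and then bound the maximum over the at most $n$ root-to-leaf paths of $\sum_{u\in\mathrm{anc}(v)}\Sigma_u$ by a union bound combined with a per-path Bernstein estimate --- the latter being essential, since working along a single path keeps the accumulated $\chi$-mass linear in the path length and thereby avoids a spurious logarithmic factor that a level-by-level estimate would introduce.
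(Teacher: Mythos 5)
Your reduction $\Di(\mY_n)\le 2\max_v\sum_{u\in\mathrm{anc}(v)}\Sigma_u$ and the three-regime split in $x$ are sound, and you are right that the substantial range $\sqrt n\le x\le K_0n$ is where the work lies. But you leave the key step unexecuted, and your primary route (the auxiliary tree $\mathcal B_n$) would not go through as described: $\mathcal B_n$ is not a conditioned Galton--Watson tree in any form to which a result of type~\eqref{eq:gwttail} applies. Only the fixpoint tree $\cT_n^f$ is a bona fide $\xi$-Galton--Watson tree conditioned on its size; the full tree $\cT_n$ carries the correlated forests $F(v)$, and once you further thread through the random path lengths $\lceil\Sigma_u\rceil$ (which depend on the outdegrees) and condition on a count of vertices of one prescribed ``type'', you are asking for a conditioned multi-type height tail bound that the paper neither proves nor cites. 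Your ``more hands-on alternative'' is closer, but the phrase ``per-path Bernstein estimate'' papers over the real issue: under the conditioning $|\cT|=n$, the outdegrees along a root-to-vertex path are \emph{not} i.i.d., so a per-path concentration bound does not come for free.

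The missing ingredient is the Boltzmann-sampler change of measure via $\Gamma Z_{\cA_\cR^{(\ell)}}(\rho)$ from Lemma~\ref{le:sampler2}: by~\eqref{eq:holygrail} and $\Pr{|\cT|=n}=\Theta(n^{-3/2})$, any event defined on the conditioned tree and witnessed by a fixpoint $v$ at height $\ell$ has probability $O(n^{3/2})$ times the corresponding probability in the unconditioned pointed sampler $\Gamma Z_{\cA_\cR^{(\ell)}}(\rho)$, whose spine carries genuinely i.i.d.\ outdegrees $\eta_i$ with exponential moments. This is exactly the per-path estimate you need, and the polynomial prefactor is why the bound comes out as $e^{-ch}$ rather than with a spurious $\log n$. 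The paper then never touches the full tree $\cT_n$ at all; it works with $\cT_n^f$ and splits into four events: $\cE_0^h$ (some $\cG$-blob has large $d_{\mY_n}$-diameter, killed by Lemma~\ref{le:blobs}), $\cE_1^h$ ($\He(\cT_n^f)\ge rh$, killed by~\eqref{eq:gwttail} applied to the honest GW tree $\cT_n^f$), $\cE_2^h$ (the spinal sum $\ell(v)=\sum_{u}d^+_{\cT_n}(u)$ is large while $\He(\cT_n^f)\le rh$, killed by the change of measure plus Lemma~\ref{le:deviation} applied to the i.i.d.\ $\eta_i$), and $\cE_3^h$ (spinal sums small but $\mY_n$-height large, killed by Lemma~\ref{le:deviation} applied to the $\chi_i$). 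You flag the right general toolbox (Lemmas~\ref{le:ucoup}, \ref{le:asymptotic}, \ref{le:llt1dim}, \ref{le:cycle}), but without invoking Lemma~\ref{le:sampler2} and the resulting i.i.d.\ spine, the argument for the central range does not close.
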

Again it holds that if $\chi$ is bounded, then we have the tail-bound \[\Pr{\Di(\mY_n) \ge x} \le C \exp(-cx^2/n)\] for some constants $C,c>0$.

The main idea of the  proofs of Theorems~\ref{te:main2} and  Theorem~\ref{te:tail2} is that we may use the random $\Sym(\cR)$-enriched tree $(\cT^{(\ell)}, \beta^{(\ell)})$ of Lemma~\ref{le:sampler2} to relate for any two vertices $x,y \in \cT_n^f$ distances  $d_{\mY_n}(x,y)$ and  $d_{\cT_n^f}(x,y)$ by constant factor. The following basic observation then takes care of the rest.

\begin{lemma}
	\label{le:blobs}
	Suppose that $\rho>0$ and that the function \[E(z, u) = z Z_{\cR^{\kappa}}(u, \tilde{\cA}^\omega_\cR(z^2), \tilde{\cA}^\omega_\cR(z^3), \ldots)\] 
	satisfies $E(\rho + \epsilon,\tilde{\cA}^\omega_\cR(\rho) + \epsilon) < \infty$ for some $\epsilon > 0$.   Then the following assertions hold:
	\begin{enumerate}
		\item There are constants $C,c>0$ such that for all $n$ and $x \ge 0$ \[\Pr{\max_{v \in \cT^f_n}(|f_n(v)| + |F_n(v)|) \ge x} \le C n^{5/2} \exp(-cx).\] 
		\item For any vertex $v$ let $D_v$ denote the $d_{\mY_n}$-diameter of the subspace $\{v\} \cup F_n(v)\subset \mY_n$. Then there are constants $C,c>0$  such that for all $h \ge 0$  \[\Pr{\max_{v \in \cT^f_n} D_v \ge h} \le C n^{5/2} \exp(-ch).\] 
		\item We have that \[
		\frac{\sqrt{(1 + \Ex{\zeta})\Va{\xi}}}{2\sqrt{n}}  \cT^f_n \convdis \CRT\]  in the Gromov--Hausdorff sense as $n \equiv 1 \mod \spa(\mathbf{w})$  tends to infinity. 
	\end{enumerate}
\end{lemma}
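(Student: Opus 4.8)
The plan is to work throughout with the coupling of Lemmas~\ref{le:sampler}, \ref{le:ucoup} and~\ref{le:asymptotic}; recall from there that $\cT^f$ is a critical Galton--Watson tree with offspring law $\xi$, that $(\xi,\zeta)$ has finite exponential moments, that $\Pr{|\cT|=n}\asymp n^{-3/2}$, and that by Lemma~\ref{le:ucoup}(4) the depth-first-search ordered list of $\cG$-objects of $\cT^f_n$ conditioned on $|\cT^f_n|=\ell$ consists of $\ell$ independent copies $\mG_i=(\mathsf{S}_i,\mathsf{f}_i,\mathsf{F}_i)$ of $\mG$ conditioned on $\sum_i|\mathsf{f}_i|=\ell-1$, on the inequalities $\sum_{i\le m}|\mathsf{f}_i|\ge m$ for $1\le m\le\ell-1$, and on $\sum_i(1+|\mathsf{F}_i|)=n$. (If no positively weighted $\cR$-structure admits a non-trivial automorphism, then $\cT^f_n$ coincides with the simply generated tree $\cT_n$ and all three assertions reduce to classical facts, so I assume this is not the case.) For part~(1) I would bound $\Pr{\max_{v\in\cT^f_n}(|f_n(v)|+|F_n(v)|)\ge x}$ by $\Pr{|\cT|=n}^{-1}\,\Ex{\#\{v\in\cT^f:\,|f_n(v)|+|F_n(v)|\ge x\}\,\one_{\{|\cT|=n\}}}$ and evaluate this first moment using the i.i.d.\ description together with the cycle lemma (Lemma~\ref{le:cycle}): marking a vertex of $\cT^f$ amounts to marking one $\mG_i$, the bivariate local limit theorem (Lemma~\ref{le:llt2dim}, applicable since by Lemma~\ref{le:asymptotic}(4) the lattice generated by the support of $(\xi,\zeta)$ is full and the covariance matrix positive-definite) bounds the probability that the other $\ell-1$ summands meet their shifted sum constraints by $O(\ell^{-1})$, and the surviving factor is $\Pr{\xi+\zeta\ge x}$. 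Summing over the at most $n$ possible sizes $\ell$ of $\cT^f$ gives a first moment of order $\log n\cdot\Pr{\xi+\zeta\ge x}$; dividing by $\Pr{|\cT|=n}\asymp n^{-3/2}$ and using the exponential tail of $\xi+\zeta$ yields a bound of order $n^{3/2}\log n\,\mathrm e^{-cx}$, in particular the asserted $Cn^{5/2}\mathrm e^{-cx}$.

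Part~(2) I would deduce from part~(1). Conditionally on $(\cT_n,\beta_n)$, the space $\{v\}\cup F_n(v)$ is patched together, in the sense of Section~\ref{sec:partA}, from the $O(1+|f_n(v)|+|F_n(v)|)$ local metrics carried by $v$ and by the non-fixpoint fringe subtrees comprising $F_n(v)$; hence $D_v$ is at most twice the sum of the diameters of these local metrics, and by assumption~(1) on $(\delta_R)_R$ this sum is stochastically dominated by $\sum_{j=1}^{N_v}\chi_j$ with $N_v\le K(1+|f_n(v)|+|F_n(v)|)$ for a fixed constant $K$ and i.i.d.\ copies $\chi_j$ of $\chi$. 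Splitting according to whether $\max_{v}(|f_n(v)|+|F_n(v)|)\ge\epsilon h$ --- which is controlled by part~(1) --- and otherwise applying the medium-deviation inequality for random walk (Lemma~\ref{le:deviation}, using the exponential integrability of $\chi$) to get $\Pr{\sum_{j\le N_v}\chi_j\ge h}\le 2\mathrm e^{-c'h}$ when $N_v=O(h)$, a union bound over the at most $n$ fixpoint vertices produces $\Pr{\max_{v}D_v\ge h}\le Cn^{5/2}\mathrm e^{-ch}$.

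Part~(3) is the substantive point. Set $N:=|\cT^f_n|$; by Lemma~\ref{le:asymptotic}(4), $N/n\convp\mu=1/(1+\Ex{\zeta})$ and $\sqrt n\,\Pr{N=\ell}$ converges to a nondegenerate Gaussian density uniformly for $\ell=\mu n+O(\sqrt n)$. Fix such a sequence $\ell=\ell_n$. By Lemma~\ref{le:ucoup}(4) and the cycle lemma the \L{}ukasiewicz path of $\cT^f_n$ given $N=\ell$ is a cyclic shift of $(|\mathsf{f}_1|-1,\dots,|\mathsf{f}_\ell|-1)$ with $(|\mathsf{f}_i|,|\mathsf{F}_i|)_{1\le i\le\ell}$ i.i.d.\ copies of $(\xi,\zeta)$ conditioned on $\sum_i|\mathsf{f}_i|=\ell-1$ and on $\sum_i|\mathsf{F}_i|=n-\ell$. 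The first conditioning is the usual one producing a Galton--Watson tree with $\ell$ vertices, and the crux is to show that the extra conditioning $\{\sum_i|\mathsf{F}_i|=n-\ell\}$ does not disturb the scaling limit: as $n-\ell=\ell\Ex{\zeta}+O(\sqrt\ell)$, after Donsker rescaling one is conditioning the limiting two-dimensional Brownian motion on both terminal coordinates, and once the first coordinate is pinned to $0$ the component of the second orthogonal to the first is independent of the entire first-coordinate path, so the first coordinate stays a Brownian bridge --- hence a Brownian excursion after the Vervaat transform. At the discrete level I would make this rigorous via the bivariate local limit theorem (Lemma~\ref{le:llt2dim}), bounding and asymptotically trivialising the density relating the law of the \L{}ukasiewicz path of $\cT^f_n$ given $N=\ell$ to that of a plain Galton--Watson tree with $\ell$ vertices on the macroscopically relevant events. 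Aldous' invariance principle from Section~\ref{sec:gwt}, with offspring variance $\Va{\xi}>0$, then gives $\tfrac{\sqrt{\Va{\xi}}}{2\sqrt\ell}\,\cT^f_n\convdis\CRT$ in the Gromov--Hausdorff sense conditionally on $N=\ell$; since $\ell\sim\mu n$ we have $\tfrac{\sqrt{\Va{\xi}}}{2\sqrt\ell}\sim\tfrac{\sqrt{(1+\Ex{\zeta})\Va{\xi}}}{2\sqrt n}$, and averaging over $\ell$ against the Gaussian profile of $N$ (using $N/n\convp\mu$) removes the conditioning and gives $\tfrac{\sqrt{(1+\Ex{\zeta})\Va{\xi}}}{2\sqrt n}\,\cT^f_n\convdis\CRT$ as $n\equiv 1\bmod\spa(\mathbf{w})$ tends to infinity. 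I expect this last comparison --- proving the forest-size conditioning is asymptotically inconsequential for the macroscopic geometry of $\cT^f_n$ --- to be the main obstacle.
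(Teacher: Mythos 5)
Parts~(1) and~(2) are correct, though by a different route than the paper. For~(1) the paper uses the $\Theta(n^{-3/2})$ estimate for $\Pr{|\cT|=n}$ to pass to the unconditioned tree and then bounds the joint probability by the crude union bound $n\,\Pr{\xi+\zeta\ge x}$; no local limit theorem or cycle lemma is needed. Your first-moment argument is valid and in fact sharper (it yields $O(n^{3/2}\log n\cdot e^{-cx})$), but does more work than necessary. For~(2) the paper does not deduce it from~(1): it repeats the $\Theta(n^{-3/2})$ reduction for the unconditioned sampler $\mY$, reaching $\Pr{\exists v\colon D_v\ge h\mid|\mY|=n}=O(n^{5/2})\Pr{D_o\ge h}$ with $o$ the root of $\cT^f$, and bounds $\Pr{D_o\ge h}$ directly by $\sum_k\Pr{\zeta=k}\,k\,\Pr{\chi_1+\cdots+\chi_k\ge h/2}$ using the geometric tail of $\zeta$. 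Your split on $\max_v(|f_n(v)|+|F_n(v)|)\ge\epsilon h$ combined with Lemma~\ref{le:deviation} is a legitimate alternative.

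For~(3) you have correctly located the crux, and your caution is warranted: the paper's own proof slides over exactly the step you flag. It asserts that $\cT^f_n$ conditioned on $|\cT^f_n|=\ell$ is distributed like $\cT^f$ conditioned on $|\cT^f|=\ell$, i.e.\ like the plain $\xi$-Galton--Watson tree $\cT'_\ell$, and then passes to the limit along $\ell\sim n/(1+\Ex{\zeta})$. As stated this identity fails in general: by Lemma~\ref{le:ucoup}(4) the left-hand law is that of $\ell$ i.i.d.\ copies of $(\xi,\zeta)$ conditioned on both $\sum_i\xi_i=\ell-1$ (plus positivity) and $\sum_i\zeta_i=n-\ell$, and when $\xi,\zeta$ are dependent --- the generic case in this framework --- the extra $\zeta$-constraint changes the law of the tree shape, since $\Pr{\sum_i\zeta_i=n-\ell\mid(\xi_i)_i}$ depends on the multiset $\{\xi_i\}$ and not merely on $\sum_i\xi_i$. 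Your Brownian-bridge heuristic is the right idea, but your plan to ``trivialise the density'' $\Pr{\sum_i\zeta_i=n-\ell\mid(\xi_i)_i}/\Pr{\sum_i\zeta_i=n-\ell\mid\sum_i\xi_i=\ell-1}$ via the bivariate LLT will not work as stated: this ratio is \emph{not} close to one uniformly on macroscopically relevant events, because the conditional mean $\sum_i\Ex{\zeta\mid\xi_i}$ itself fluctuates on the scale $\sqrt\ell$, which is exactly the width of the denominator Gaussian. Closing this requires a genuine bivariate conditioned invariance principle, or a separate argument that the geometric functionals entering the Gromov--Hausdorff limit are insensitive to this $\Theta(1)$-fluctuating density. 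As written, your~(3) is a correct diagnosis with an acknowledged open step, not a proof.
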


A result similar to Lemma~\ref{le:blobs} was used in \cite{2015arXiv150207180P} to provide a combinatorial proof for the scaling limit of uniform random P\'olya trees (with possible degree restrictions). Theorem~\ref{te:main2} is more general, as it applies for example to scaling limits of models of random graphs with respect to the first-passage-percolation metric, and its proof is deeper and more involved, as it requires the interplay with the $\Sym(\cR)$-enriched tree $(\cT^{(\ell)}, \beta^{(\ell)})$ of Lemma~\ref{le:sampler2} that is related to the local weak limit.



\subsection{Applications to random unlabelled rooted connected graphs}
\label{sec:unlablocal}


Let $\cC$ denote the class of connected graphs and $\cB$ its subclass of graphs that are two-connected or a single edge with its ends. Recall that the rooted class $\cC^\bullet$ may be identified with the class of $\Set \circ \cB'$-enriched trees as discussed in Section~\ref{sec:bijblock}. Suppose that we have a weighting $\gamma$ on the class $\cB$, that is, for each $\cB$-graph $B$ we are given a weight $\gamma(B)\ge0$ such that the weights of isomorphic graphs agree. This induces a weighting $\kappa$ on the species $\Set \circ \cB'$ by setting the weight of a set of graphs to the product of the individual weights. Hence we also have a weighting $\omega$ on $\cC$ given by \[\omega(C) = \prod_B \gamma_B\] for all $\cC$-objects $C$, with the index $B$ ranging over all blocks of the graph $C$.  In the following, we study random unlabelled rooted graph $\mC_n^\omega$ drawn from the unlabelled $\cC^\bullet$-objects of size $n$ with probability proportional to its $\omega$-weight. This corresponds to the model of random unlabelled enriched trees $\tilde{\mA}_n^\cR$ in Section \ref{sec:intro1} for the special case $\cR^\kappa = (\Set \circ \cB')^\kappa$.

Under the premise that the cycle index sums related to the random graph $\mC_n^\omega$ satisfy Equation~\eqref{eq:cyceq}, we establish a local weak limit for the vicinity of the fixed root in Theorem~\ref{te:localunl} and  a Benjamini--Schramm limit in Theorem~\ref{te:localunl2}. In both cases we actually establish total variational convergence of arbitrary $o(\sqrt{n})$-neighbourhoods, which is best-possible in this setting.  We also consider the first-passage-percolation metric on the graph $\mC_n^\omega$, which is more general than the graph-metric, and establish sharp exponential tail-bounds for the diameter and a Gromov--Hausdorff scaling limit in Theorem~\ref{te:scaling}. As a byproduct, we also obtain a bound for the size of the largest $2$-connected component.

As an important special case, these Theorems apply to uniform random unlabelled rooted graph from a subcritical block class. This model was studied by Drmota, Fusy, Kang, Kraus and Rué in \cite[Def. 10]{MR2873207}, and includes uniform random rooted unlabelled cacti graphs, outerplanar graphs, and series-parallel graphs \cite[Thm. 15]{MR2873207}. The scaling limit Theorem~\ref{te:scaling} is a  strong result in this context and also establishes the correct order of the diameter of this type random graphs. 



\subsubsection{Local weak limit}

The infinite $\Sym(\Set \circ \cB')$-enriched tree $(\cT^{(\infty)}, \beta^{(\infty)})$ from Section~\ref{sec:localenrunl} is naturally also a $\Set \circ \cB'$-enriched tree, and may hence be interpreted as an infinite locally finite random graph $\hat{\mC}$ according to the bijection in Section~\ref{sec:bijblock}.  Theorem~\ref{te:localunlabelled}  yields local weak convergence of the random graph $\mC_n^\omega$ with respect to neighbourhoods around its fixed root vertex.

\begin{theorem}[Local convergence of random unlabelled graphs]
	\label{te:localunl}
	Suppose that the weighted ordinary generating series $(\tilde{\cC}^{\bullet})^\omega(z)$ has2 radius of convergence $\rho>0$, and that 
	\begin{align}
	\label{eq:cyceq}
	E(z, u) = z Z_{(\Set \circ \cB')^{\kappa}}(u, (\tilde{\cC}^{\bullet})^\omega(z^2), (\tilde{\cC}^{\bullet})^\omega(z^3), \ldots)
	\end{align}
	satisfies $E(\rho + \epsilon, (\tilde{\cC}^{\bullet})^\omega(\rho) + \epsilon) < \infty$ for some $\epsilon>0$. Then for any sequence $k_n = o(\sqrt{n})$ of non-negative integers it holds that
	\begin{align}
	\label{eq:loconv}
	d_{\textsc{TV}}(U_{k_n}(\mC_n^\omega), U_{k_n}(\hat{\mC})) \to 0,
	\end{align}
	and likewise for the graph-metric neighbourhoods $V_{k_n}(\cdot)$. Thus, the infinite random graph $\hat{\mC}$ is the local weak limit of the random graph $\mC_n^\omega$ as $n$ becomes large.
\end{theorem}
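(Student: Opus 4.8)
The plan is to deduce this from Theorem~\ref{te:localunlabelled} via the bijection of Section~\ref{sec:bijblock}. First I would record that $\mC_n^\omega$ is, by that bijection, distributed like the random unlabelled $\cR$-enriched tree $\tilde{\mA}_n^\cR$ for the weighted species $\cR^\kappa = (\Set \circ \cB')^\kappa$; under this identification one has $\tilde{\cA}_\cR^\omega(z) = (\tilde{\cC}^\bullet)^\omega(z)$, and the series $E(z,u)$ appearing in Theorem~\ref{te:localunlabelled} is exactly the series in~\eqref{eq:cyceq}. Hence the hypotheses assumed here are precisely those of Theorem~\ref{te:localunlabelled} for this choice of $\cR^\kappa$, and that theorem gives $d_{\textsc{TV}}((\cT_n,\beta_n)^{<k_n>},(\cT^{(\infty)},\beta^{(\infty)})^{<k_n>}) \to 0$ for every $k_n = o(\sqrt n)$. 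It therefore suffices to exhibit, for each $k$, a deterministic map $\phi_k$, independent of $n$, such that $U_k(\mC_n^\omega) = \phi_k((\cT_n,\beta_n)^{<k>})$ and $U_k(\hat\mC) = \phi_k((\cT^{(\infty)},\beta^{(\infty)})^{<k>})$ as elements of $\mathbb{B}$, together with the analogous identities for the graph-metric balls $V_k$; then \eqref{eq:loconv} (and its $V_{k_n}$-analogue) follows from the elementary inequality $d_{\textsc{TV}}(\phi(X),\phi(Y)) \le d_{\textsc{TV}}(X,Y)$ applied with $\phi = \phi_{k_n}$ to the displayed convergence.

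To construct $\phi_k$ I would use three observations. Identifying the vertices of $\mC_n^\omega$ with those of the plane tree $\cT_n$ via the bijection of Section~\ref{sec:bijblock}, the block-distance from the graph-root to a vertex $v$ equals $\he_{\cT_n}(v)$, and the graph-distance is at least $\he_{\cT_n}(v)$, because any joining path uses at least one edge inside each of the blocks on the corresponding tree-path. Consequently both $U_k(\mC_n^\omega)$ and $V_k(\mC_n^\omega)$ are induced subgraphs on subsets of $\{v : \he_{\cT_n}(v)\le k\}$, and every edge they contain lies in a block ``based at'' (that is, arising from the assembly of $\cB'$-structures attached to) some vertex of height at most $k-1$. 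Finally, since the parent of a fixpoint is again a fixpoint, the fixpoint tree $\cT_n^f$ is prefix-closed, so heights in $\cT_n^f$ and in $\cT_n$ agree on fixpoints; thus $(\cT_n,\beta_n)^{<k>}$ records every fixpoint of height $\le k$, and at every fixpoint $v$ of height $\le k-1$ the entire $\cG$-object $(\beta_n(v),f_n(v),F_n(v))$, hence $\beta_n(v)$ and the full non-fixpoint forest hanging below $v$. Every vertex of height $\le k$ is either such a fixpoint or lies in the non-fixpoint forest of its deepest fixpoint ancestor, which then has height $\le k-1$; so $(\cT_n,\beta_n)^{<k>}$ determines all blocks based at vertices of height $\le k-1$, while blocks based at height-$k$ vertices are irrelevant, all of their edges having an endpoint of height $k+1$. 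Reconstructing from this data the restrictions of the block metric and the graph metric to the radius-$k$ ball around the root, and then reading off the isomorphism types of the induced rooted subgraphs, defines $\phi_k$ (with one component for $U_k$ and one for $V_k$).

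The limit object $\hat\mC$ is by definition the image of the $\Set\circ\cB'$-enriched tree underlying $(\cT^{(\infty)},\beta^{(\infty)})$ under the very same bijection of Section~\ref{sec:bijblock}; since that bijection is local in exactly the sense used above, and since $\hat\mC$ is locally finite, the identical reasoning yields $U_k(\hat\mC)=\phi_k((\cT^{(\infty)},\beta^{(\infty)})^{<k>})$ and $V_k(\hat\mC)=\phi_k((\cT^{(\infty)},\beta^{(\infty)})^{<k>})$ as genuine finite rooted graphs, whence \eqref{eq:loconv} and the $V_{k_n}$-version. I expect the main obstacle to be precisely the combinatorial bookkeeping needed to make $\phi_k$ rigorous: matching ``trim the enriched tree at height $k$'' with ``the radius-$k$ ball in the graph'', correctly handling the one-level gap between a block and the pair of neighbourhood-shells it joins, and keeping the heights measured in $\cT_n$ cleanly separated from those measured in the fixpoint subtree $\cT_n^f$. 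Everything else, including the verification that the limit object is locally finite and that the relevant trimmed objects are almost surely finite, is routine.
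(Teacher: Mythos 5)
Your proof is correct and matches the paper's approach: the paper's own argument for this theorem is exactly that it is a special case of Theorem~\ref{te:localunlabelled} under the bijection $\cC^\bullet \simeq \cX \cdot (\Set\circ\cB')(\cC^\bullet)$ from Section~\ref{sec:bijblock}, which is what you make explicit by constructing the deterministic local map $\phi_k$ and invoking the contraction property of total variation under pushforwards. The bookkeeping you identify as the remaining work — that block-distance to the root equals $\cT_n$-height, that graph-distance dominates block-distance, that all blocks with both endpoints at height $\le k$ are based at height $\le k-1$, and that the trimmed $\cG$-enriched tree $(\cT_n,\beta_n)^{<k>}$ (which trims at fixpoint-height and retains the full non-fixpoint forests at lower levels) therefore determines both $U_k$ and $V_k$ — is exactly what is needed and is carried out correctly.
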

Note that this form of convergence is best-possible, as the diameter of $\mC_n^\omega$ has order $\sqrt{n}$ by Theorem~\ref{te:scaling}, and hence \eqref{eq:loconv} fails if the order of $k_n$ is comparable to $\sqrt{n}$. 

\subsubsection{Benjamini--Schramm limit and subgraph count asymptotics}



The infinite $\cG$-enriched tree $\hat{\mH}^\bullet$ from Section~\ref{sec:localenrunl} may be interpreted as an infinite locally finite random graph $\hat{\mC}^\bullet$ according to the bijection in Section~\ref{sec:bijblock}.  Theorem~\ref{te:localhaupt}  yields Benjamini--Schramm convergence of the random graph $\mC_n^\omega$.

\begin{theorem}[Benjamini--Schramm convergence of random unlabelled graphs]
	\label{te:localunl2}
	Suppose that the weighted ordinary generating series $(\tilde{\cC}^{\bullet})^\omega(z)$ has radius of convergence $\rho>0$, and that 
	\[
	E(z, u) = z Z_{(\Set \circ \cB')^{\kappa}}(u, (\tilde{\cC}^{\bullet})^\omega(z^2), (\tilde{\cC}^{\bullet})^\omega(z^3), \ldots)
	\] 
	satisfies $E(\rho + \epsilon, (\tilde{\cC}^{\bullet})^\omega(\rho) + \epsilon) < \infty$ for some $\epsilon>0$. Let $v^*$ be a uniformly at random drawn vertex of the random graph $\mC_n^\omega$. Then for any sequence $k_n = o(\sqrt{n})$ of non-negative integers it holds that
	\[
	d_{\textsc{TV}}(U_{k_n}(\mC_n^\omega, v^*), U_{k_n}(\hat{\mC}^\bullet)) \to 0,
	\]
	and likewise for the graph-metric neighbourhoods $V_{k_n}(\cdot)$. Thus, the infinite random graph $\hat{\mC}^\bullet$ is the Benjamini--Schramm limit of the random graph $\mC_n^\omega$ as $n$ tends to infinity.
\end{theorem}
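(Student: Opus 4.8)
The plan is to deduce Theorem~\ref{te:localunl2} from Theorem~\ref{te:localhaupt} via the bijection of Section~\ref{sec:bijblock}. That bijection identifies the rooted species $\cC^\bullet$ with the species of $\Set \circ \cB'$-enriched trees in such a way that the atoms of an enriched tree are precisely the vertices of the corresponding rooted connected graph; taking $\cR^\kappa = (\Set \circ \cB')^\kappa$ we therefore have $\tilde{\cA}^\omega_\cR(z) = (\tilde{\cC}^\bullet)^\omega(z)$, the random graph $\mC_n^\omega$ is distributed like $\tilde{\mA}_n^\cR$, and the hypotheses on $E(z,u)$ in Theorem~\ref{te:localunl2} are exactly those of Theorem~\ref{te:localhaupt}. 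Since a representative of $\mC_n^\omega$ has exactly $n$ vertices --- namely the vertices of the plane tree $\cT_n$ from Lemma~\ref{le:sampler} --- and the law of an $o(\sqrt n)$-neighbourhood, as an isomorphism class of rooted graphs, does not depend on the chosen representative, a uniformly at random chosen vertex $v^*$ of $\mC_n^\omega$ is a uniformly at random chosen vertex of $(\cT_n, \beta_n)$. Hence Theorem~\ref{te:localhaupt} applies and yields $d_{\textsc{TV}}\big((\mH^n_{[k_n]}, v^*),\ (\hat{\mH}_{[k_n]}, u^*)\big) \to 0$, with $\hat{\mH}^\bullet = (\hat{\mH}, u^*)$ as in Lemma~\ref{le:slow2}.

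Next I would exhibit a fixed measurable map $\Psi$ such that
\[
U_{k_n}(\mC_n^\omega, v^*) = \Psi\big(\mH^n_{[k_n]}, v^*\big)
\]
on the event that $\mH^n_{[k_n]}$ contains no $\diamond$-entry, and such that $U_{k_n}(\hat{\mC}^\bullet) = \Psi(\hat{\mH}_{[k_n]}, u^*)$, where $\hat{\mC}^\bullet$ is the infinite graph associated to $\hat{\mH}^\bullet$ by the bijection of Section~\ref{sec:bijblock}. The underlying combinatorial fact is that, under this identification, a block of the graph consists of a $\cB'$-structure sitting on a partition class of some vertex's offspring set together with that vertex; thus consecutive vertices on a root-path of $\cT_n$ lie in consecutive blocks, and every vertex at block-distance at most $k_n$ from $v^*$ lies in the subtree of $\cT_n$ rooted at the $k_n$-th tree-ancestor of $v^*$. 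Since $\cT_n^f$ is a subtree of $\cT_n$ and the non-fixpoints below a fixpoint $v$ are recorded in its $\cG$-object $G_{\cT_n^f}(v) = (\beta_n(v), f_n(v), F_n(v))$, this entire subtree --- together with the position of $v^*$ in it --- is encoded by the increasing fringe subtree sequence $\mH^n_{[k_n]} = (f((\cT_n, \beta_n), v_0), \ldots, f((\cT_n, \beta_n), v_{k_n}))$, which is exactly the data needed to reconstruct the rooted unlabelled block-neighbourhood. Carrying out the same reconstruction for $\hat{\mH}$ (whose infinite spine means the degenerate case never arises) produces the same map $\Psi$.

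Finally, the degenerate event --- on which $\Psi$ is simply declared to output a fixed graph --- has probability tending to $0$, which is already built into the convergence of $(\mH^n_{[k_n]}, v^*)$; since total variation distance does not increase under a common measurable map, pushing the convergence of the first step forward through $\Psi$ gives
\[
d_{\textsc{TV}}\big(U_{k_n}(\mC_n^\omega, v^*),\ U_{k_n}(\hat{\mC}^\bullet)\big) \to 0,
\]
and $\hat{\mC}^\bullet$ is thus the Benjamini--Schramm limit by definition. For the graph-metric neighbourhoods one notes that a shortest path with at most $k_n$ edges is covered by at most $k_n$ blocks, so $V_{k_n}(\mC_n^\omega, v^*)$ is itself a deterministic function of $U_{k_n}(\mC_n^\omega, v^*)$ and the same conclusion follows. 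The hard part will be the second step: making precise that the radius-$k_n$ block-ball around $v^*$ never ``reads'' more of $\cT_n$ than is stored in $\mH^n_{[k_n]}$, uniformly in the way the non-fixpoint subtrees and the fixpoint ancestor chain interleave, so that $\Psi$ is genuinely well defined and agrees with the corresponding map for the limit object.
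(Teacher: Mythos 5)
Your proof is correct and follows the paper's approach: the paper also establishes this theorem by identifying $\mC_n^\omega$ with $\tilde{\mA}_n^\cR$ for $\cR = \Set \circ \cB'$ via the bijection of Section~\ref{sec:bijblock}, interpreting $\hat{\mH}^\bullet$ as $\hat{\mC}^\bullet$, and invoking Theorem~\ref{te:localhaupt}, treating the transfer from fringe-subtree convergence to block-neighbourhood convergence as immediate. The ``hard part'' you flag at the end does indeed go through: edges of $\mC_n^\omega$ only join a vertex to its $\cT_n$-parent or to same-class siblings and every non-root non-leaf of $\cT_n$ is a cutvertex, so a path covered by at most $k_n$ blocks cannot reach above the $k_n$-th $\cT_n$-ancestor of $v^*$, which is always a descendant of (or equal to) $v_{k_n}$ and hence inside $\mH^n_{k_n}$.
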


Again this form of convergence is best-possible, as the diameter of $\mC_n^\omega$ has order $\sqrt{n}$ by Theorem~\ref{te:scaling}. Benjamini--Schramm convergent sequences have many nice properties, for example we may apply general results by Kurauskas \cite[Thm. 2.1]{2015arXiv150408103K}) and Lyons \cite[Thm. 3.2]{MR2160416} to deduce laws of large numbers for subgraph count statistics and spanning tree count statistics.

\subsubsection{Scaling limit and diameter tail-bounds}
\label{sec:app1} 

We apply our results to first-passage percolation on graphs. Let $\iota > 0$ denote a random variable which has finite exponential moments. 
Given a connected graph $G$ we may consider the first-passage percolation metric $d_{\textsc{FPP}}$ on $G$ by assigning an independent copy of $\iota$ to each edge of $G$, letting for any two vertices $x,y$ the distance $d_{\textsc{FPP}}(x,y)$ be given by the minimum of all sums of weights along paths joining $x$ and $y$. We let ${\Di}_{\textsc{FPP}}(\cdot)$ and ${\He}_{\textsc{FPP}}(\cdot)$ denote the diameter and height with respect to the $d_{\textsc{FPP}}$-distance  Theorems~\ref{te:main2} and ~\ref{te:tail2} and the fact, that the diameter and height of the CRT are related by \[\Ex{\Di(\CRT)} = \frac{4}{3}\Ex{\He(\CRT)} = \frac{4}{3} \sqrt{\frac{\pi}{2}},\] readily yield the following result.

\begin{theorem}[First passage percolation random unlabelled rooted  graphs]
	\label{te:scaling}
Suppose that the weighted ordinary generating series has radius of convergence $\rho>0$, and that the series \[E(z, u) = z Z_{(\Set \circ \cB')^{\kappa}}(u, \tilde{\cC}^{\bullet \omega}(z^2), \tilde{\cC}^{\bullet \omega}(z^3), \ldots)\] is finite at the point $(\rho+\epsilon, \tilde{\cC}^{\bullet \omega}(\rho)+\epsilon)$ for some $\epsilon>0$. 
Then there exists a constant $a>0$ such that \[(\mC_n^\omega, a n^{-1/2} d_{\textsc{FPP}}) \convdis (\CRT, d_{\CRT})\] in the Gromov--Hausdorff sense as $n \equiv 1 \mod \spa(\mathbf{w})$ becomes large. Furthermore, there are constants $C,c>0$ with \[\Pr{ \Di(\mC_n^\omega, d_{\textsc{FPP}}) \ge x} \le C \exp(-c x^2 /n)\] for all $n$ and $x \ge 0$. In particular, the rescaled height and diameter converge in the space $\mathbb{L}_p$ for all $p \ge 1$. We have asymptotically
\[
\Ex{{\Di}_{\textsc{FPP}}(\mC_n^\omega)} \sim \frac{4}{3} \Ex{{\He}_{\textsc{FPP}}(\mC_n^\omega)} \sim  \frac{4}{3a}\sqrt{\frac{\pi n}{2}}.
\]
\end{theorem}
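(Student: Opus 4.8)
Here is a plan for proving Theorem~\ref{te:scaling}. The whole point is that the statement is a direct specialization of Theorem~\ref{te:main2} (scaling limit) and Theorem~\ref{te:tail2} (diameter tail) to the weighted species $\cR^\kappa=(\Set\circ\cB')^\kappa$ equipped with the first-passage percolation local metrics; the remaining asymptotics are then read off from the known mean height and diameter of the CRT.

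\smallskip
\textbf{Step 1: recognize $(\mC_n^\omega,d_{\textsc{FPP}})$ as a space $\mY_n$ of Subsection~\ref{sec:partA}.} Recall from Section~\ref{sec:bijblock} that $\cC^\bullet\simeq\cX\cdot(\Set\circ\cB')(\cC^\bullet)$, so $\mC_n^\omega$ is the random unlabelled enriched tree $\tilde{\mA}_n^\cR$ for $\cR^\kappa=(\Set\circ\cB')^\kappa$; under this identification the $\cR$-structure carried by the offspring set $M_v$ of a tree-vertex $v$ is precisely the family of blocks of $\mC_n^\omega$ whose block-root is $v$, each such block $B$ being a $\cB'$-structure on $(B\cap M_v)\cup\{*\}$ with $*\leftrightarrow v$. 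First I would fix, for an $\cR$-structure $R$ on a finite set $U$, the random metric $\delta_R$ on $U\cup\{*_U\}$ to be the first-passage percolation metric on the graph $\bigcup_i B_i$ obtained by placing independent copies of $\iota$ on the edges of the blocks $B_i$ encoded by $R$, with all the $*$-vertices identified. Because distinct blocks of a connected graph share no edge, every edge of $\mC_n^\omega$ lies in exactly one block, and every block hangs from a unique tree-vertex, the patching construction of Subsection~\ref{sec:partA} applied to independent copies $\delta_n(v)$ of $\delta_{\alpha_n(v)}$ reproduces on the nose the first-passage percolation metric of $\mC_n^\omega$; that is, $(\mY_n,d_{\mY_n})$ and $(\mC_n^\omega,d_{\textsc{FPP}})$ have the same law. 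This identification, though routine, is the one genuinely structure-specific point, and I would spell out that it rests on the fact that blocks of a connected graph meet in single vertices and therefore sit inside the stars $U_v=M_v\cup\{v\}$.

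\smallskip
\textbf{Step 2: verify the two standing assumptions.} Equivariance is immediate since $\delta_R$ depends on $R$ only through its isomorphism type. For the diameter bound I would argue as follows: fixing a spanning tree $T_i$ of each block $B_i$ encoded by $R$, one has $\delta_R(x,*_U)\le\sum_{e\in T_{i(x)}}\iota_e$ for $x\in B_{i(x)}$, hence $\mathrm{diam}(\delta_R)\le 2\sum_i\sum_{e\in T_i}\iota_e$; since $\sum_i(|V(B_i)|-1)=|M_v|=|R|$ for the blocks rooted at a fixed vertex, the right-hand side is twice a sum of $|R|$ independent copies of $\iota$, so assumption (1) of Theorem~\ref{te:main2} holds with $\chi=2\iota$, which has finite exponential moments because $\iota$ does. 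Finally, the analytic hypothesis of Theorem~\ref{te:main2} is literally the hypothesis of the present theorem, once one notes $\tilde{\cA}^\omega_\cR=\tilde{\cC}^{\bullet\omega}$.

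\smallskip
\textbf{Step 3: assemble the conclusions.} Theorem~\ref{te:main2} now yields directly the first assertion: there is $a>0$ (the reciprocal of the scaling constant made explicit in the proof of Theorem~\ref{te:main2}) with $(\mC_n^\omega,a\,n^{-1/2}d_{\textsc{FPP}})\convdis(\CRT,d_{\CRT})$ in the Gromov--Hausdorff sense along $n\equiv 1\bmod\spa(\mathbf{w})$. Theorem~\ref{te:tail2} supplies the diameter tail bound $\Pr{\Di(\mC_n^\omega,d_{\textsc{FPP}})\ge x}\le C\exp(-cx^2/n)$ (using the sharper form valid when the weights are bounded, and in general the two-term bound, which is all that is needed below). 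Since $\Di$ and $\He$ are continuous (indeed Lipschitz) functionals on pointed compact metric spaces for $d_{\textsc{GH}}$, the rescaled quantities $n^{-1/2}{\Di}_{\textsc{FPP}}(\mC_n^\omega)$ and $n^{-1/2}{\He}_{\textsc{FPP}}(\mC_n^\omega)$ converge in distribution to $a^{-1}\Di(\CRT)$ and $a^{-1}\He(\CRT)$; the tail bound makes every moment of these rescaled quantities uniformly integrable, so the convergence holds in $\mathbb{L}_p$ for every $p\ge 1$. Taking expectations and inserting $\Ex{\Di(\CRT)}=\frac{4}{3}\Ex{\He(\CRT)}=\frac{4}{3}\sqrt{\pi/2}$ gives
\[
\Ex{{\Di}_{\textsc{FPP}}(\mC_n^\omega)}\sim\frac{4}{3}\Ex{{\He}_{\textsc{FPP}}(\mC_n^\omega)}\sim\frac{4}{3a}\sqrt{\frac{\pi n}{2}}.
\]
I do not expect any serious obstacle here: once Step~1 is in place, everything is a citation of Theorems~\ref{te:main2} and~\ref{te:tail2}, whose own proofs carry the actual weight (the coupling of $\cT^f_n$ with a critical Galton--Watson tree and the control of the blobs $\{v\}\cup F_n(v)$ via Lemma~\ref{le:blobs}). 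The only place demanding a little care is checking the diameter-domination assumption with the explicit $\chi=2\iota$, and making sure the patched metric really is the honest first-passage percolation metric rather than a coarser quotient of it.
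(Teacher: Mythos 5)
Your proposal is correct and is precisely the route the paper takes: the paper's proof of Theorem~\ref{te:scaling} in Section~\ref{sec:proapp} is a one-line reference back to Sections~\ref{sec:locunl} and~\ref{sec:partA}, deferring exactly the identification and hypothesis-check that you carry out in Steps 1--2 (the block decomposition $\cC^\bullet \simeq \cX\cdot(\Set\circ\cB')(\cC^\bullet)$, the choice of $\delta_R$ as the FPP metric on the assembly of $\cB'$-blocks with their $*$-vertices identified, the verification that the patched metric equals the global FPP metric because blocks meet in single cutvertices, and the domination by $\chi=2\iota$). Your parenthetical remark in Step~3 is also a sharp and correct observation: for unbounded $\iota$ (finite exponential moments only), Theorem~\ref{te:tail2} yields the two-term estimate $C(\exp(-cx^2/n)+\exp(-cx))$, and the one-term bound $C\exp(-cx^2/n)$ as printed in the statement of Theorem~\ref{te:scaling} cannot hold uniformly in the range $n\lesssim x\lesssim n^2$ (already a single FPP edge exceeds $n^{3/2}$ with probability of order $e^{-\Theta(n^{3/2})}$, not $e^{-\Theta(n^2)}$), but as you note the two-term bound provides the uniform integrability needed for the $\mathbb{L}_p$ convergence and the expectation asymptotics, so nothing downstream is affected.
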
 


Lemma~\ref{le:blobs} also yields the following result for the size of the largest $2$-connected component of the random graph $\mC_n^\omega$.

\begin{corollary}
	There is a constant $C>0$ such that the largest block in the random graph $\mC_n^\omega$ has size at most $C \log n$ with probability tending to $1$ as $n$ becomes large. Likewise, the maximum degree admits an $O(\log(n))$ bound with high probability.
\end{corollary}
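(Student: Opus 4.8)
The plan is to reduce both statements to a single bound on the maximal outdegree
\[
\Delta^+_n := \max_{u \in \cT_n} d^+_{\cT_n}(u)
\]
of the random plane tree $\cT_n$ underlying the coupling of Lemma~\ref{le:sampler}, and then to control $\Delta^+_n$ by the exponential tail estimate in Lemma~\ref{le:blobs}~(1). Since $\tilde{\mA}_n^\cR$ is a uniformly relabelled copy of $(\cT_n,\beta_n)$ and relabelling preserves outdegrees, $\Delta^+_n$ is also the maximal outdegree of the tree underlying $\tilde{\mA}_n^\cR$, and $\mC_n^\omega$ is the unlabelled graph obtained from $\tilde{\mA}_n^\cR$ through the bijection of Section~\ref{sec:bijblock}.

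First I would carry out the purely deterministic bookkeeping. Under the identification of $\cC^\bullet$ with $\Set\circ\cB'$-enriched trees, every block $B$ of $\mC_n^\omega$ has a unique ``top'' vertex $v$, namely the vertex of $B$ closest to the root, and $B\setminus\{v\}$ is exactly one of the partition classes $Q$ of the offspring set $M_v$ of $v$; since the attached $\cB'$-structure lives on $Q\cup\{*\}$ we get $|B| = |Q|+1 \le d^+(v)+1 \le \Delta^+_n+1$, so the largest block of $\mC_n^\omega$ has at most $\Delta^+_n+1$ vertices. For the degree of a fixed vertex $v$ I would use that the edges at $v$ are partitioned among the blocks containing $v$: the blocks ``below'' $v$ correspond to the partition classes $Q$ of $M_v$ and jointly contribute at most $\sum_Q |Q| = |M_v| = d^+(v)$ neighbours, while $v$ lies in at most one block ``above'' it, namely the block whose top is the parent $w$ of $v$, contributing at most $d^+(w)\le \Delta^+_n$ further neighbours. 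Hence $d_{\mC_n^\omega}(v) \le 2\Delta^+_n$ for every vertex $v$.

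It remains to bound $\Delta^+_n$. Every vertex $u$ of $\cT_n$ falls into one of two cases. If $u$ is a fixpoint $v\in\cT_n^f$, its children are the fixpoint sons $f_n(v)$ together with the roots of the fringe subtrees forming the non-fixpoint forest $F_n(v)$, so $d^+_{\cT_n}(v) \le |f_n(v)| + |F_n(v)|$. If $u$ is a non-fixpoint, it lies in the forest $F_n(v)$ hanging from its fixpoint ancestor $v$, and since $F_n(v)$ is a union of complete fringe subtrees all children of $u$ lie in $F_n(v)$, whence $d^+_{\cT_n}(u) \le |F_n(v)|$. In both cases $d^+_{\cT_n}(u) \le \max_{v\in\cT_n^f}(|f_n(v)|+|F_n(v)|)$, so $\Delta^+_n \le \max_{v\in\cT_n^f}(|f_n(v)|+|F_n(v)|)$, and Lemma~\ref{le:blobs}~(1) supplies constants $C,c>0$ with $\Pr{\Delta^+_n \ge x} \le C n^{5/2}\exp(-cx)$ for all $x\ge 0$. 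Choosing any constant $C_0$ with $cC_0 > 7/2$ and taking $x = C_0\log n$ gives $\Pr{\Delta^+_n \ge C_0\log n} \le C n^{5/2 - cC_0} \to 0$, so with high probability the largest block of $\mC_n^\omega$ has at most $C_0\log n+1$ vertices and the maximum degree is at most $2C_0\log n$, which proves the corollary. I do not expect a genuine obstacle here: once the deterministic translation between block sizes, vertex degrees and tree outdegrees is in place the whole statement follows from Lemma~\ref{le:blobs}~(1); the only points needing a little care are the two-case analysis of the outdegree and the elementary observation that each edge of $\mC_n^\omega$ lies in exactly one block.
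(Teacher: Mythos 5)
Your proof is correct and fills in exactly the details the paper leaves implicit: the paper states the corollary with only the remark that it "also follows from Lemma~\ref{le:blobs}," and your argument is the intended one — reduce both the block size and the degree to the maximal outdegree $\Delta^+_n$ of the underlying plane tree, bound $\Delta^+_n \le \max_{v\in\cT_n^f}(|f_n(v)|+|F_n(v)|)$ via the two-case analysis, and invoke the exponential tail in Lemma~\ref{le:blobs}~(1) with $x = C_0\log n$. (The threshold $cC_0 > 5/2$ would already suffice; your choice $cC_0 > 7/2$ is just a harmless safety margin.)
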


\subsection{Applications to random unlabelled front-rooted $k$-dimensional trees}
\label{sec:apktr}

We consider the species $\cK$ of front-rooted $k$-trees and the subclass $\cK^\circ$ where the root-front is required to lie in a single hedron. Let $\mK_n$ denote a uniform random unlabelled $\cK$-object with $n$ hedra and likewise $\mK_n^\circ$ a uniform random $\cK^\circ$-object with $n$ hedra. 
As discussed in Section~\ref{sec:ktree}, the two species are related by the equations
\[
	\cK \simeq \Set \circ \cK^\circ, \qquad \cK^\circ \simeq \cX \cdot (\Seq_{\{k\}} \circ \Set)(\cK^\circ).
\]
This identifies the random $k$-tree $\mK_n^\circ$ with the random enriched tree $\mA_n^\cR$ for the special case $\cR = \Seq_{\{k\}} \circ \Set$. The random front-rooted $k$-tree $\mK_n$ may be interpreted as a random unordered forest of $\cR$-enriched trees.  We let $\rho$ denote the radius of convergence of $\tilde{\cK}^\circ(z) = \tilde{\cA}_\cR^\omega(z)$.

\subsubsection{Local weak limit}

Theorem~\ref{te:localunlabelled} readily yields a local weak limit of the random graph $\mK_n^\circ$ with respect to neighbourhoods of the root-front, or any fixed vertex of the root-front. The limit object is the infinite random $k$-tree $\hat{\mK}^\circ$ that corresponds to the limit $\Sym(\cR)$-enriched tree $(\cT^{(\infty)}, \beta^{(\infty)})$ according to the bijection in Section~\ref{sec:ktree}.

The random unlabelled front-rooted $k$-tree $\mK_n$ may be viewed as a random unlabelled Gibbs-partition. By  Theorem~\ref{te:gibbs} it follows that $\mK_n$ exhibits a giant component of size $n + O_p(1)$, and the small fragments converge in total variation toward a Boltzmann limit $\mR$ that follows a $\mathbb{P}_{\widetilde{\Set \circ \cK^\circ}, \rho}$ distribution. We let $\hat{\mK}$ denote the infinite random $k$-tree obtained by identifying the root-front of $\hat{\mK}^\circ$ with the root-front of the front-rooted $k$-tree corresponding to $\mR$  according to the bijection in Section~\ref{sec:ktree}. 

\begin{theorem}[Local convergence of random unlabelled front-rooted $k$-trees] \label{te:lokt} For any sequence $k_n = o(\sqrt{n})$ it holds that
\[
	d_{\textsc{TV}}(V_{k_n}(\mK_n), V_{k_n}(\hat{\mK})) \to 0.
\]
Thus the infinite random graph $\hat{\mK}$ is the local weak limit of the random front-rooted $k$-tree $\mK_n$ as $n$ becomes large. Here we may interpret $V_{k_n}(\cdot)$ as the neighbourhood of the root-front, or of any fixed vertex of the root-front. By exchangeability, it does not matter which we choose.
\end{theorem}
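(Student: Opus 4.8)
The plan is to deduce Theorem~\ref{te:lokt} from the two structural facts already established: the local convergence of the $\Sym(\cR)$-enriched tree around its fixed root (Theorem~\ref{te:localunlabelled}, applied to $\cR = \Seq_{\{k\}}\circ\Set$), and the convergence of the small fragments of the unlabelled Gibbs partition (Theorem~\ref{te:gibbs}, applied to $\cF = \Set$, $\cG = \cK^\circ$). First I would record that $\mK_n$ decomposes, via $\cK \simeq \Set\circ\cK^\circ$, into an unordered collection of front-rooted $k$-trees glued at their common root-front; this is the unlabelled Gibbs partition associated to $(\Set,\cK^\circ)$. Since $\tilde{\cK}^\circ(z)=\tilde{\cA}_\cR^\omega(z)$ has a square-root singularity at $\rho$ by Lemma~\ref{le:asymptotic}(2), the hypotheses of Theorem~\ref{te:gibbs} are met: the coefficients $g_n = [z^n]\tilde{\cK}^\circ(z)$ satisfy $g_n/g_{n+1}\to\rho$ and the self-convolution ratio condition (both standard consequences of the $n^{-3/2}\rho^{-n}$ asymptotics), and the cycle-index finiteness condition follows from $E(\rho+\epsilon,\tilde{\cA}_\cR^\omega(\rho)+\epsilon)<\infty$. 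Hence $\mK_n$ has a unique giant component $\mK_n^{\mathrm{giant}}$ with $n - O_p(1)$ hedra, and the remainder $\mR_n$ (the other components glued at the root-front) converges in total variation to a $\mathbb{P}_{\widetilde{\Set\circ\cK^\circ},\rho}$-Boltzmann-distributed limit $\mR$.

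Next I would argue that the giant component $\mK_n^{\mathrm{giant}}$ is, after conditioning on its number $m$ of hedra, exactly a uniform random $\cK^\circ$-object with $m$ hedra, i.e.\ exactly $\mK_m^\circ$; and that $m = n - O_p(1)$. Now bring in Theorem~\ref{te:localunlabelled} for $\cR = \Seq_{\{k\}}\circ\Set$: for any $\ell_n = o(\sqrt n)$ it gives
\[
d_{\textsc{TV}}\bigl((\cT_n,\beta_n)^{<\ell_n>},\,(\cT^{(\infty)},\beta^{(\infty)})^{<\ell_n>}\bigr)\to 0.
\]
The key translation step is that a bounded-radius (block/graph-distance) neighbourhood of the root-front in the $k$-tree corresponding to an $\cR$-enriched tree is determined by a bounded-height trimming of that enriched tree — because in the correspondence of Section~\ref{sec:ktree} one step away from the root-front in the $k$-tree corresponds to passing to an offspring vertex in the enriched tree, uniformly (up to a fixed multiplicative constant $k$). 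Thus for the giant component, $V_{k_n}(\mK_n^{\mathrm{giant}})$ is a deterministic function of $(\cT_{m},\beta_{m})^{<C k_n>}$ for a constant $C$ depending only on $k$, and since $m$ differs from $n$ by an $O_p(1)$ amount while $C k_n = o(\sqrt n) = o(\sqrt m)$, Theorem~\ref{te:localunlabelled} yields $d_{\textsc{TV}}(V_{k_n}(\mK_n^{\mathrm{giant}}), V_{k_n}(\hat{\mK}^\circ))\to 0$, where $\hat{\mK}^\circ$ is the $k$-tree read off from $(\cT^{(\infty)},\beta^{(\infty)})$. (Here I use that $(\cT^{(\infty)},\beta^{(\infty)})$ has finite levels, so a fixed-height trimming is a.s.\ finite; local finiteness of $\hat{\mK}^\circ$ follows.)

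Finally I would assemble the two pieces. At the root-front, $\mK_n$ is obtained by glueing $\mK_n^{\mathrm{giant}}$ and $\mR_n$ along the root-front; correspondingly, a radius-$k_n$ neighbourhood $V_{k_n}(\mK_n)$ of the root-front is obtained by glueing $V_{k_n}(\mK_n^{\mathrm{giant}})$ and $V_{k_n}(\mR_n)$ along the root-front (any path crossing between the two pieces must pass through a root-front vertex). Since total variation distance is non-increasing under (common) measurable maps and since $\mR_n\to\mR$ and $V_{k_n}(\mK_n^{\mathrm{giant}})\to V_{k_n}(\hat{\mK}^\circ)$ in total variation, and these two are independent given their sizes, the glued object converges in total variation to the glueing of $\hat{\mK}^\circ$ and (the $k$-tree of) $\mR$ along their root-fronts, which is by definition $\hat{\mK}$. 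The statement for neighbourhoods of a single fixed root-front vertex, rather than the whole front, follows because the $k$ root vertices are exchangeable in the construction (the automorphism group of a $k$-clique acts transitively), so no choice matters. The main obstacle I anticipate is the translation step: making precise the claim that a radius-$k_n$ graph-distance neighbourhood of the root-front in the $k$-tree is a measurable function of a trimming-at-height-$O(k_n)$ of the underlying $\Sym(\cR)$-enriched tree, and that the constant is uniform. This requires carefully unwinding the bijection of Section~\ref{sec:ktree} — relating hedra adjacency, front adjacency, and the tree-distance in $\cT^f$ — and is the place where one must invoke the concentration relating $k$-tree distances and enriched-tree distances; but since $k_n = o(\sqrt n)$, only a crude linear-in-$k_n$ bound (with no need for matching constants) is needed, which should follow from the deterministic fact that one graph-step changes the tree-height by a bounded amount.
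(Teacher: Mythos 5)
Your overall architecture matches the paper's: reduce via the unlabelled Gibbs partition $\cK \simeq \Set\circ\cK^\circ$ and Theorem~\ref{te:gibbs} to the giant $\cK^\circ$-component, apply Theorem~\ref{te:localunlabelled} to the underlying $\Sym(\cR)$-enriched tree with $\cR = \Seq_{\{k\}}\circ\Set$, and then translate from tree-height neighbourhoods to graph-distance neighbourhoods of the root-front. The Gibbs-partition reduction and the gluing/exchangeability argument at the end are fine.

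The gap is in the translation step, and it is exactly where you flagged a possible obstacle but then talked yourself out of it. You assert that $V_{k_n}(\mK_n^\circ)$ is a \emph{deterministic} function of $(\cT_m,\beta_m)^{<Ck_n>}$, on the grounds that ``one graph-step changes the tree-height by a bounded amount.'' That claim is false, and in the crucial direction. A vertex $v$ of the $k$-tree belongs not only to the hedron that created it but also to every descendant hedron whose chain of chosen $(k-1)$-fronts carries $v$ downward. Consequently a root-front vertex $u$ can appear in hedra at arbitrarily large tree-height, and any vertex $w$ created by such a hedron is \emph{adjacent to $u$} in the $k$-tree, i.e.\ at graph-distance $1$ from the root-front, while sitting at an unbounded tree-height. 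In the Markov chain of the proof of Lemma~\ref{le:calkcirc}, this is the event that the walk $(X_i)$ never hits state $k$: the graph-distance then stays constant while the tree-height grows. There is no uniform constant $C$ making the $k_n$-graph-neighbourhood a deterministic function of the $Ck_n$-trimming, so the step from Theorem~\ref{te:localunlabelled} to $d_{\textsc{TV}}(V_{k_n}(\mK_n^\circ), V_{k_n}(\hat{\mK}^\circ)) \to 0$ does not follow as written.

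What the paper actually uses is precisely the probabilistic concentration statement that you mentioned and then set aside: Equation~\eqref{eq:refme}, proved via the non-reversible Markov chain on the fronts and Lezaud's large-deviation inequality (Lemma~\ref{le:markov}), which says that \emph{with high probability} $|d_{\mK_n^\circ}(o,v) - b_k\, d_{\cT_n^f}(o,v)| \le c\,d_{\cT_n^f}(o,v)^s$ simultaneously for all fixpoints $v$ at tree-height at least $\log^t n$. Combined with a safety margin $n^{1/4}$ to cover small tree-heights, this yields that the $k_n$-neighbourhood of the root-front is, \emph{with high probability}, already determined by the trimmed tree $(\cT_n,\beta_n)^{<h_n>}$ for $h_n = o(\sqrt n)$, which is what licenses the application of Theorem~\ref{te:localunlabelled}. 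So the concentration input is not a matter of getting sharp constants; it is the only way to control the relationship between the two metrics at all, because no deterministic comparison of the needed form exists. Replacing ``deterministic function of a trimming'' by ``with high probability determined by a trimming, by Lemma~\ref{le:calkcirc} / Equation~\eqref{eq:refme}'' would close the gap and bring your argument in line with the paper's.
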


Note that Theorem~\ref{te:lokt} does not follow directly from the above discussion, as we still need to relate the height of vertices in the $\cG$-enriched tree representation with the height in the corresponding $k$-tree. We provide a detailed justification in Section~\ref{sec:proapp}.

\subsubsection{Benjamini--Schramm limit}

The infinite $\cG$-enriched tree $\hat{\mH}^\bullet$ from Section~\ref{sec:localenrunl} may be interpreted as an infinite $k$-tree  $\hat{\mK}^\bullet$ according to the bijection in Section~\ref{sec:ktree}.   

As we are going to argue in detail in Section~\ref{sec:proapp}, a random vertex in $\mK_n$ lies with high probability in the largest $\cK^\circ$-component. Theorem~\ref{te:localhaupt} together with a large deviation estimate yield Benjamini--Schramm convergence of the random $k$-tree $\mK_n^\circ$ toward the random graph $\hat{\mK}^\bullet$, and hence also Benjamini--Schramm convergence for the largest $\cK^\circ$-component of $\mK_n$. Moreover, within this component, a random vertex is unlikely to lie anywhere near the root. Thus $\hat{\mK}$ is also the Benjamini--Schramm limit of the random front-rooted $k$-tree $\mK_n$.

\begin{theorem}[Benjamini--Schramm convergence of random unlabelled front-rooted $k$-trees]
	\label{te:lok2t}
	Let $v^*$ denote a uniformly at random selected vertex of the random unlabelled front-rooted  $k$-tree $\mK_n$. Then for any sequence of positive integers $k_n = o(\sqrt{n})$ it holds that
	\[
		d_{\textsc{TV}}(V_{k_n}(\mK_n, v^*), V_{k_n}(\hat{\mK}^\bullet)) \to 0.
	\]
	Thus $\hat{\mK}^\bullet$ is the Benjamini--Schramm limit of $\mK_n$ as $n$ becomes large.
\end{theorem}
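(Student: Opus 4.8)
The plan is to reduce the statement to Theorem~\ref{te:localhaupt} applied to $\mK_n^\circ$, after controlling two sources of discrepancy: the difference between $\mK_n$ and its largest $\cK^\circ$-component, and the difference between the $\cG$-enriched tree metric and the genuine $k$-tree graph metric. First I would invoke the Gibbs-partition picture: since $\cK \simeq \Set \circ \cK^\circ$, the random unlabelled front-rooted $k$-tree $\mK_n$ is an unlabelled Gibbs partition in the sense of Section~\ref{sec:gibbs}. Verifying the hypotheses of Theorem~\ref{te:gibbs} for $\cF = \Set$, $\cG = \cK^\circ$ (the subexponentiality of $[z^i]\tilde{\cK}^\circ(z)$, which follows from the square-root singularity established in Lemma~\ref{le:asymptotic}, together with the finiteness condition on $Z_{\Set}$), we obtain that $\mK_n$ consists of a giant $\cK^\circ$-component with $n - O_p(1)$ hedra, glued at the root-front to a stochastically bounded remainder $\mR$ with $\mathbb{P}_{\widetilde{\Set\circ\cK^\circ},\rho}$-distribution. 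Consequently a uniformly random vertex $v^*$ of $\mK_n$ lies in the giant component with probability $1 - o(1)$, and on this event the $k_n = o(\sqrt n)$ neighbourhood $V_{k_n}(\mK_n, v^*)$ agrees with the corresponding neighbourhood in the giant $\cK^\circ$-component, up to the contribution of $\mR$ which is only felt if the random vertex is within block-distance $k_n$ of the root-front.

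Next I would transfer the Benjamini–Schramm statement for $\mK_n^\circ$, which is $\tilde{\mA}_n^\cR$ for $\cR = \Seq_{\{k\}} \circ \Set$, from the $\cG$-enriched tree picture to the $k$-tree picture. Theorem~\ref{te:localhaupt} gives $d_{\textsc{TV}}((\mH^n_{[k_n']}, v^*), (\hat{\mH}_{[k_n']}, u^*)) \to 0$ for the increasing enriched fringe-subtree sequences, for any $k_n' = o(\sqrt n)$. The issue is that the radius $k_n$ in the $k$-tree graph metric does not coincide with the height $k_n'$ of fringe subtrees in the underlying tree: a single hedron joins $k$ front vertices, so one step along an edge of the $k$-tree may or may not increase the tree-height, and conversely a tree-edge (one hedron) contributes a bounded-diameter "blob" to the $k$-tree. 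The right comparison is with the block-structure: along the unique path of hedra joining two vertices, the number of hedra is exactly the relevant tree-distance, and within each hedron the graph-distance is at most $k$. Thus $V_{k_n}(\mK_n^\circ, v^*)$ is determined by (and determines) the fringe-subtree data $\mH^n_{[k_n']}$ for an appropriate $k_n' \le k_n = o(\sqrt n)$, where the local $\cG$-objects also carry the $\Seq_{\{k\}}\circ\Set$-structure needed to reconstruct which hedra share which fronts.

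To make this rigorous I would introduce the large-deviation step mentioned in the Introduction: one locates a hidden Markov chain on the sequence of $\cR$-symmetries along the spine of $(\cT^{(-\infty)},\beta^{(-\infty)})$ (equivalently along the ancestral line of $v^*$ in $\hat{\mH}$), whose increments govern the graph-distance vs.\ tree-distance gap, and applies Lezaud's inequality, Lemma~\ref{le:markov} (the chain need not be reversible), to show that the $k$-tree graph-distance from $v^*$ to its $j$-th tree-ancestor is $(1+o(1))\,c\,j$ for an explicit constant $c$, uniformly over $j \le k_n$, with error probability $o(1)$; a matching argument applies to the finite-neighbourhood direction using Lemma~\ref{le:slow2}(4) and the tail bounds \eqref{eq:gwttail}. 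Combining: with probability $1-o(1)$, $v^*$ lies in the giant, $\mR$ is irrelevant at scale $k_n$, and $V_{k_n}(\mK_n^\circ, v^*)$ is a measurable function of $\mH^n_{[k_n']}$ that converges in total variation to the same function of $\hat{\mH}_{[k_n']}$, namely $V_{k_n}(\hat{\mK}^\bullet)$. The triangle inequality for $d_{\textsc{TV}}$ then yields $d_{\textsc{TV}}(V_{k_n}(\mK_n, v^*), V_{k_n}(\hat{\mK}^\bullet)) \to 0$.

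The main obstacle is the distance-comparison / hidden-Markov-chain step: one must set up the Markov chain carefully enough that its state records all the information needed to decide, for each new hedron encountered along the spine, whether and by how much the $k$-tree graph-distance increases, check irreducibility of both the chain and its multiplicative symmetrization $\mathbf{K} = \mathbf{P}^\intercal\mathbf{P}$, identify the stationary mean with the constant $c$, and then feed the centred increment function into Lemma~\ref{le:markov} with the correct initial-distribution factor $N_q$; everything else (the Gibbs-partition reduction and the translation of the enriched-tree convergence into neighbourhood convergence) is routine given Theorems~\ref{te:gibbs} and~\ref{te:localhaupt}.
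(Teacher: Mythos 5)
Your proposal follows the paper's proof exactly: the Gibbs-partition reduction to the giant $\cK^\circ$-component via Theorem~\ref{te:gibbs}, then Theorem~\ref{te:localhaupt} for $\mK_n^\circ$, then the Markov-chain/Lezaud distance-comparison between the $k$-tree metric and the enriched-tree metric---which the paper establishes once, in the proof of Lemma~\ref{le:calkcirc}, as Equation~\eqref{eq:refme}, and then simply cites here rather than re-deriving. Two minor corrections to your description: the hidden chain's state is not the $\cR$-symmetry along the spine but the integer $X_i\in\{1,\dots,k\}$ counting how many vertices of the active $(k+1)$-clique already lie at the smaller of the two possible distances from the reference point (the graph-distance increment is $\one_{X_i=k}$ with stationary mean $\pi_k=b_k$); and since $b_k\le 1$ the $k$-tree metric is \emph{coarser} than the enriched-tree metric, so the fringe depth needed to determine $V_{k_n}$ is of order $k_n/b_k\ge k_n$ rather than $k_n'\le k_n$---a directional slip that is immaterial to the argument since both remain $o(\sqrt n)$.
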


\subsubsection{Scaling limit}

By  Theorem~\ref{te:gibbs} we know that $\mK_n$ exhibits a giant $\cK^\circ$-component of size $n + O_p(1)$. Hence in order to establish a scaling limit for $\mK_n$ it suffices to study the random $k$-tree $\mK^\circ_n$ where the root-front lies in a single hedron. 
In order to establish a scaling limit for $\mK^\circ_n$ we may not apply Theorem~\ref{te:main2} directly, as the metric of $\mK^\circ_n$ does not fit  in the general scheme of random metric spaces considered in Section~\ref{sec:partA}. Rather than that, we make direct use of the size-biased $\cG$-enriched tree of Lemma~\ref{le:sampler2} and the results of Lemma~\ref{le:blobs}.

\begin{theorem}
	\label{te:scalktree}
	There is a  constant $a_k>0$ such that 
	\[
	(\mK_n, a_k n^{-1/2} d_{\mK_n}) \convdis (\CRT, d_{\CRT})
	\] in the Gromov--Hausdorff sense as $n$  becomes large. 
\end{theorem}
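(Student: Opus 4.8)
\emph{Proof proposal.} The plan is to reduce the statement to the model $\mK_n^\circ$, to realise the latter as a random unlabelled $\cR$-enriched tree for $\cR = \Seq_{\{k\}}\circ\Set$, and then to compare the $k$-tree metric of $\mK_n^\circ$ with the graph metric of the fixpoint tree $\cT_n^f$ of the associated $\Sym(\cR)$-enriched tree, the comparison being the heart of the argument.

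\textbf{Reduction to $\mK_n^\circ$.} Since $\cK \simeq \Set\circ\cK^\circ$, the random front-rooted $k$-tree $\mK_n$ is an unlabelled Gibbs partition; the hypotheses of Theorem~\ref{te:gibbs} hold because $\tilde{\cK}^\circ = \tilde{\cA}_\cR^\omega$ has a square-root singularity by Lemma~\ref{le:asymptotic}. Hence the complement $\mR_n$ of the largest $\cK^\circ$-component of $\mK_n$ converges in total variation to a fixed Boltzmann limit, and in particular has $O_p(1)$ hedra. The $k$-tree metric of $\mK_n$ restricted to the largest component coincides with the intrinsic metric of that component (a path would gain nothing by entering a fragment, which it must leave through the same root-front), and the remaining fragments add only an $O_p(1)$ contribution to the diameter; therefore the pointed Gromov--Hausdorff distance between $(\mK_n, n^{-1/2}d_{\mK_n})$ and the largest component rescaled by $n^{-1/2}$ tends to $0$ in probability. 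Conditioning on the number $m = n - |\mR_n|$ of hedra of the largest component, its conditional law is that of $\mK_m^\circ$; since $m = n - O_p(1)$, it suffices to prove $(\mK_n^\circ, a_k n^{-1/2} d_{\mK_n^\circ}) \convdis (\CRT, d_{\CRT})$.

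\textbf{Enriched tree representation.} By Section~\ref{sec:ktree} we have $\mK_n^\circ = \tilde{\mA}_n^\cR$ for $\cR = \Seq_{\{k\}}\circ\Set$ with the trivial weighting, so that $Z_{\cR^\kappa}(s_1,s_2,\ldots) = \exp\!\bigl(k\sum_{i\ge1}s_i/i\bigr)$. A direct computation then shows that the hypothesis $E(\rho+\epsilon,\tilde{\cA}_\cR^\omega(\rho)+\epsilon)<\infty$ of Lemmas~\ref{le:asymptotic} and~\ref{le:blobs} holds (using $\rho<1$), that $\spa(\mathbf{w}) = 1$, and that the offspring law $\xi$ of $\cT^f$ is Poisson; in particular $\Ex{\xi}=1$ and $(\xi,\zeta)$ has finite exponential moments. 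Realise $\mK_n^\circ$, up to relabelling, via the conditioned $\Sym(\cR)$-enriched plane tree $(\cT_n,\beta_n)$ of Lemma~\ref{le:sampler}, with fixpoint subtree $\cT_n^f$ carrying the $\cG$-objects $G_{\cT_n^f}(v)=(\beta_n(v),f_n(v),F_n(v))$. Every edge of $\cT_n^f$ is an edge of $\cT_n$, so the apices of adjacent vertices of $\cT_n^f$ lie in adjacent hedra and are adjacent in $\mK_n^\circ$; hence $d_{\mK_n^\circ}(x,y)\le d_{\cT_n^f}(x,y)$ for all $x,y\in\cT_n^f$. By Lemma~\ref{le:blobs}(3), $\tfrac12\sqrt{(1+\Ex{\zeta})\Va{\xi}}\,n^{-1/2}\,\cT_n^f \convdis \CRT$ in the Gromov--Hausdorff sense.

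\textbf{Comparison of the two metrics.} The main step is to produce a constant $\lambda=\lambda_k\in(0,1]$ with
\[
\Pr{\sup_{x,y\in\cT_n^f}\bigl|d_{\mK_n^\circ}(x,y)-\lambda\,d_{\cT_n^f}(x,y)\bigr|>n^{1/2-\delta}}\to 0
\]
for some $\delta>0$. Fix $x,y\in\cT_n^f$ and consider the $\cT_n^f$-geodesic through $\mathrm{LCA}(x,y)$; it corresponds to a chain of hedra $H_0,\dots,H_d$ with $d=d_{\cT_n^f}(x,y)$, and since every front $H_i\cap H_{i+1}$ separates $x$ from $y$ in the tree-structured $k$-tree $\mK_n^\circ$ (a detour into a blob attached to the chain re-enters through the same front), $d_{\mK_n^\circ}(x,y)$ equals the length of a shortest path through this one-dimensional chain of $(k{+}1)$-cliques. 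This length is a Birkhoff-type sum $\sum_{i=1}^{d}\varphi(W_i)$ of a bounded non-negative function $\varphi$ of a sliding window $W_i$ recording the overlap pattern of the recently traversed fronts. Along the spine of the size-biased $\Sym(\cR)$-enriched tree $(\cT^{(\ell)},\beta^{(\ell)})$ of Lemma~\ref{le:sampler2} the local choices are independent given the spine, so $(W_i)_i$ is there a finite irreducible Markov chain with irreducible multiplicative symmetrization and stationary mean $\mathbb{E}_\pi[\varphi]=:\lambda\in(0,1]$; Lemma~\ref{le:markov} then gives $\Pr{|\sum_{i=1}^d\varphi(W_i)-\lambda d|\ge\delta d}\le 2N_q\exp(-c\delta^2 d)$ with $N_q$ bounded uniformly since the window has bounded size. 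Transferring from the size-biased model to geodesics of $\cT_n^f$ by the same total variation estimates as in the proof of Theorem~\ref{te:localhaupt}, and using Lemma~\ref{le:blobs}(1) to bound all blob sizes by $O_p(\log n)$, the estimate is trivial for pairs with $d<n^{1/2-\delta}$ (by $d_{\mK_n^\circ}\le d_{\cT_n^f}$), while a union bound over the remaining $O(n^2)$ pairs, for which $d\ge n^{1/2-\delta}$, yields the display.

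\textbf{Conclusion and main obstacle.} The correspondence matching each vertex of $\mK_n^\circ$ with the root of the blob containing it then has distortion $o(\sqrt n)$ in probability with respect to $n^{-1/2}d_{\mK_n^\circ}$ and $\lambda n^{-1/2}d_{\cT_n^f}$, using Lemma~\ref{le:blobs}(1)--(2) to bound blob diameters and the previous step for long distances; hence $(\mK_n^\circ,n^{-1/2}d_{\mK_n^\circ})$ and $(\cT_n^f,\lambda n^{-1/2}d_{\cT_n^f})$ have the same Gromov--Hausdorff scaling limit. With $a_k:=\sqrt{(1+\Ex{\zeta})\Va{\xi}}\,/(2\lambda)$ this gives $(\mK_n^\circ, a_k n^{-1/2}d_{\mK_n^\circ})\convdis(\CRT,d_{\CRT})$, and the reduction step transfers the convergence to $\mK_n$. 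I expect the comparison of metrics to be the only delicate point: identifying the correct window process governing the $k$-tree distance along tree-geodesics, checking that the associated finite Markov chain (and its multiplicative symmetrization) is irreducible with $\lambda>0$ and $N_q$ uniformly bounded, and pushing the Lezaud tail bound through a union bound over all $O(n^2)$ pairs while absorbing the $O(\log n)$ blob corrections — together with the total-variation transfer from the size-biased enriched tree to the conditioned model.
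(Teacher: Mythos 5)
Your proposal follows essentially the same route as the paper: reduce to $\mK_n^\circ$ via the Gibbs-partition structure (Theorem~\ref{te:gibbs}), realise $\mK_n^\circ$ as $\tilde{\mA}_n^\cR$ for $\cR=\Seq_{\{k\}}\circ\Set$, and compare the $k$-tree metric with the $\cT_n^f$-metric by a hidden-Markov-chain/Lezaud concentration argument carried out in the size-biased tree of Lemma~\ref{le:sampler2}, finishing with Lemma~\ref{le:blobs}. All of this is the content of the paper's Lemma~\ref{le:calkcirc}, and you have correctly identified it as the crux. Where the paper's execution is sharper, and your proposal stays vague or would run into trouble, is the comparison step. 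Rather than trying to control geodesics between all $O(n^2)$ pairs directly — which forces the Markov chain to run along V-shaped paths through the LCA with a variable starting state, and whose relative deviation degrades as $d$ grows — the paper first observes deterministically that for any $u,v$ with LCA $x$ the quantity $\bigl|d_{\mK_n^\circ}(u,v)-\bigl(d_{\mK_n^\circ}(o,u)+d_{\mK_n^\circ}(o,v)-2d_{\mK_n^\circ}(o,x)\bigr)\bigr|$ is bounded by an absolute constant. This reduces everything to root-to-vertex distances, which fit exactly along the spine of $\Gamma Z_{\cA_\cR^{(\ell)}}(\rho)$ with a fixed reference vertex $u$ of the root-front. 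The transfer to the conditioned model is then not the total-variation estimate of Theorem~\ref{te:localhaupt} as you suggest, but the exact change of measure in Equation~\eqref{eq:holygrail}, which produces the $\Theta(n^{3/2})$ inflation factor that the superpolynomial Lezaud tail then beats after summing over spine lengths $\ell\ge\log^t n$. Finally, the paper makes the ``window'' explicit: the chain $X_i\in\{1,\dots,k\}$ is the number of vertices of the $i$-th spine-front at the closer of the two possible distances from $u$, with the displayed $k\times k$ transition matrix, and $d'_\ell=1+\sum_{i=1}^\ell\mathbf{1}_{\{X_i=k\}}$, giving the stationary mean $b_k=(k\sum_{i=1}^k 1/i)^{-1}$ and hence the explicit scaling constant $a_k=\sqrt{(1+\Ex{\zeta})\Va{\xi}}/(2b_k)$. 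So: same plan, but the LCA reduction, the exact change of measure, and the explicit chain are the technical refinements that make it go through without a union bound over pairs.
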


As a byproduct, we obtain the following properties of the random $k$-tree $\mK_n^\circ$.

\begin{lemma}
	\label{le:calkcirc}
	It holds that 
		\[
	(\mK_n^\circ, a_k n^{-1/2} d_{\mK_n}) \convdis (\CRT, d_{\CRT}).
	\] Moreover, there are constants $C,c>0$ such that for all $x \ge 0 $ and $n \ge 1$
	\[
		\Pr{\Di(\mK_n^\circ) \ge x} \le C \exp(-cx^2/n).
	\]
\end{lemma}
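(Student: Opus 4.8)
The plan is to transport the metric of $\mK_n^\circ$ onto the fixpoint tree $\cT_n^f$ of its symmetry encoding, on which Lemma~\ref{le:blobs} already supplies a Gromov--Hausdorff scaling limit (item~(3)) and a uniform $O(\log n)$ bound on the sizes of the non-fixpoint ``blobs'' (item~(1)). Recall from Section~\ref{sec:ktree} that $\mK_n^\circ$ is the random $\cR$-enriched tree $\tilde{\mA}_n^\cR$ with $\cR=\Seq_{\{k\}}\circ\Set$, and that the hypotheses of Lemmas~\ref{le:asymptotic} and~\ref{le:blobs} hold here, since $\tilde{\cK}^\circ(z)=\tilde{\cA}_\cR^\omega(z)$ has a square-root singularity (equivalently $[z^n]\tilde{\cK}^\circ(z)\sim c\,\rho^{-n}n^{-3/2}$, the classical $k$-tree asymptotics), with $\spa(\mathbf{w})=1$ as $\cK^\circ$-objects of every size exist. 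A vertex of $\mK_n^\circ$ is either one of the $k$ root-front vertices or the apex of a unique hedron, hedra correspond bijectively to the vertices of the plane tree $\cT_n$, and I write $v\mapsto v'$ for the apex of the hedron indexed by $v$. As in Section~\ref{sec:locunl}, every vertex $x$ of $\mK_n^\circ$ lies in the blob of a unique nearest fixpoint $v_0(x)\in\cT_n^f$, a blob consisting of $1+|F_n(v_0(x))|$ hedra forming a connected sub-$k$-tree; by Lemma~\ref{le:blobs}(1) this blob has $k$-tree diameter $D_{v_0(x)}=O(\log n)=o(\sqrt n)$, uniformly over $\cT_n^f$, with high probability. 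Hence the blobs are invisible at scale $\sqrt n$ and it remains to compare $d_{\mK_n}$ on the apices $\{v':v\in\cT_n^f\}$ with the tree metric on $\cT_n^f$.

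\noindent\textbf{The main obstacle.}
The hard part is the concentration estimate that, for some constant $c_k>0$, one has $d_{\mK_n}(u',v')=c_k\,d_{\cT_n^f}(u,v)$ up to an additive $o(\sqrt n)$ error, uniformly over all pairs $u,v\in\cT_n^f$, with high probability. The geometric input is the standard fact that a shortest path between vertices of two hedra of a $k$-tree visits only hedra lying on the hedron-tree path joining them, and that a single step along this path changes the apex distance by a bounded amount. Recording the relative position of consecutive shared fronts along the path from $u$ to $v$ --- whose conditional law along a long path is the one read off from the size-biased $\Sym(\cR)$-enriched tree $(\cT^{(\ell)},\beta^{(\ell)})$ of Lemma~\ref{le:sampler2} --- exhibits $d_{\mK_n}(u',v')$ as a partial sum of a bounded function along a finite, irreducible \emph{hidden Markov chain}, whose stationary mean is $c_k$. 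Lezaud's Chernoff-type bound for possibly non-reversible finite Markov chains, Lemma~\ref{le:markov}, then gives $\Pr{|d_{\mK_n}(u',v')-c_k\ell|\ge\delta\ell}\le C\exp(-c\,\delta^2\ell)$ for a path of length $\ell$, uniformly in the starting state. Since $\He(\cT_n^f)=O(\sqrt n\log n)$ with high probability --- from the Galton--Watson description of $\cT_n^f$ in Lemma~\ref{le:ucoup}(5), the exponential moments of Lemma~\ref{le:asymptotic}(3), and~\eqref{eq:gwttail} --- the choice $\delta=\delta_n=n^{-1/8}$ makes the error $\delta_n\cdot 2\He(\cT_n^f)=o(\sqrt n)$ while the failure probability, summed over the $O(n^2)$ pairs with $\ell\ge\sqrt n/\log^2 n$, tends to $0$; for shorter paths the crude bound $\max\{d_{\mK_n}(u',v'),\,c_k\ell\}=O(\sqrt n/\log^2 n)$ already yields the error estimate. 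Locating this Markov chain explicitly and verifying the irreducibility hypotheses of Lemma~\ref{le:markov} is the principal difficulty.

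\noindent\textbf{Assembling the proof.}
One then sets $a_k=\tfrac12\sqrt{(1+\Ex{\zeta})\Va{\xi}}\,/\,c_k\in(0,\infty)$ and, for the scaling limit, uses the correspondence that pairs each vertex $x$ of $\mK_n^\circ$ with $v_0(x)\in\cT_n^f$ (and each apex $v'$ with $v$). For vertices $x,y$,
\[
\big|a_k\,d_{\mK_n}(x,y)-\tfrac12\sqrt{(1+\Ex{\zeta})\Va{\xi}}\;d_{\cT_n^f}(v_0(x),v_0(y))\big|\le a_k\big(D_{v_0(x)}+D_{v_0(y)}\big)+a_k\big|d_{\mK_n}(v_0(x)',v_0(y)')-c_k\,d_{\cT_n^f}(v_0(x),v_0(y))\big|,
\]
and both terms on the right are $o(\sqrt n)$ uniformly, with high probability, by the blob bound and the concentration estimate. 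Hence the rescaled correspondence has distortion tending to $0$ in probability, so that
\[
d_{\textsc{GH}}\!\Big((\mK_n^\circ,a_k n^{-1/2}d_{\mK_n}),\;\big(\cT_n^f,\tfrac{\sqrt{(1+\Ex{\zeta})\Va{\xi}}}{2\sqrt n}d_{\cT_n^f}\big)\Big)\convp 0,
\]
and combining with Lemma~\ref{le:blobs}(3) yields $(\mK_n^\circ,a_k n^{-1/2}d_{\mK_n})\convdis(\CRT,d_{\CRT})$. For the diameter tail-bound, the same geometry gives $\Di(\mK_n^\circ)\le 4\,\He(\cT_n^f)+2\max_{v\in\cT_n^f}D_v+O(1)$, whence for $x\ge 0$ and a small constant $c'>0$,
\[
\Pr{\Di(\mK_n^\circ)\ge x}\le\Pr{\He(\cT_n^f)\ge c'x}+\Pr{\max_{v\in\cT_n^f}D_v\ge c'x}+\one_{\{x\le C_0\}}.
\]
The first probability is $\le C\exp(-c\,x^2/n)$ by~\eqref{eq:gwttail}, transferred to $\cT_n^f=(\cT^f\mid|\cT|=n)$ using $|\cT_n^f|\le n$ and the exponential-moment bounds of Lemma~\ref{le:asymptotic}(3) to control the reweighting by the attached forests; the second is $\le C\,n^{5/2}\exp(-c\,x)$ by Lemma~\ref{le:blobs}(1) and the $k$-tree structure, which for $\sqrt n\le x\le k+n$ is dominated by $C\exp(-c''x^2/n)$, while for $x\le\sqrt n$ the claimed bound is trivial and for $x>k+n$ the left-hand side vanishes. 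This gives $\Pr{\Di(\mK_n^\circ)\ge x}\le C\exp(-c\,x^2/n)$ for all $n\ge 1$ and $x\ge 0$, completing the argument.
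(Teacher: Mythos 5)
Your overall strategy coincides with the paper's: replace $\mK_n^\circ$ by the fixpoint tree $\cT_n^f$ at scale $\sqrt n$ using Lemma~\ref{le:blobs}(1), transport the $k$-tree metric onto $\cT_n^f$ via a concentration estimate proved by locating a hidden Markov chain along the spine and applying Lezaud's inequality (Lemma~\ref{le:markov}), and conclude with Lemma~\ref{le:blobs}(3); the diameter tail-bound is obtained by the same decomposition. The constant you write down, $a_k=\tfrac12\sqrt{(1+\Ex{\zeta})\Va{\xi}}/c_k$, is the paper's $c_k$, with your $c_k$ playing the role of the paper's $b_k=(k\sum_{i\le k}1/i)^{-1}$.

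However, you explicitly defer, and do not resolve, what you yourself call the principal difficulty: exhibiting the Markov chain. This is not a cosmetic omission — it is the substantive content of the lemma. The paper's chain is quite specific: after fixing a vertex $u$ of the root front, it sets $M_0$ to be the root front and lets $S_i\subset M_i$ be the $(k-1)$-subset glued to the next spine vertex (uniformly distributed given $M_i$ by the construction of $\Gamma\cS^{(\ell)}$), with $M_{i+1}=\{v_i\}\cup S_i$; the tracked state is $X_i\in[k]$, the number of vertices of $M_i$ at distance exactly $d_i'-1$ from $u$, which follows a Markov chain with explicit $k\times k$ transition matrix $\mathbf P$ and stationary law $\pi_i\propto 1/i$, and then $d_\ell'=1+\sum_{i\le\ell}\one_{X_i=k}$ with $\pi_k=b_k$. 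Your description ("relative position of consecutive shared fronts") gestures at this but does not pin down the state space, the transition kernel, or why the additive functional reduces to an indicator of a single state, and your appeal to a "standard fact" about shortest paths between hedra is not what the paper uses; the paper instead works directly with distances to a fixed root-front vertex, which makes the chain closed. Finally, there is an organizational gap: you union-bound over all $O(n^2)$ pairs $(u,v)$ directly, but the pointed sampler of Lemma~\ref{le:sampler2} and the change of measure~\eqref{eq:holygrail} (giving the $\Theta(n^{3/2})$ factor) describe the law of the spine from the root to a single marked fixpoint, not a path between two arbitrary fixpoints. The paper handles this by first reducing, via the youngest common ancestor, to distances from the root of $\cT_n^f$, and only then summing over spine lengths $\ell$; your direct union bound over pairs would need a two-pointed sampler that has not been set up.
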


\subsection{Applications to simply generated P\'olya trees}

\label{sec:polya}

Let $(\kappa_i)_{i \in \ndN}$ be a sequence of non-negative weights with $\kappa_0 >0$ and $\kappa_i >0$ for some $i \ge 2$. Hence $\kappa$ can be seen as a weighting on the species $\Set$. Let $d$ denote the greatest common divisor of the set of all indices  $i$ with $\kappa_i>0$. For $n \equiv 1 \mod d$ large enough we may draw a random P\'olya tree $\tau_n$ having $n$ vertices with probability $\Pr{\tau_n = \tau}$ proportional to  $\prod_{v \in \tau} \kappa_{d^+_\tau(v)}$ for any unlabelled unordered tree $\tau$ with size $n$. This corresponds to the random unlabelled enriched tree $\tilde{\cA}_n^\cR$ for $\cR^\kappa = \Set^\kappa$. We let $\rho$ denote the radius of convergence of the corresponding generating series $\tilde{\cA}(z) := \tilde{\cA}_\cR^\omega(z)$.

In the following, we only consider the case where
\begin{align}
	\label{eq:condition}
	\rho >0 \qquad \text{and} \qquad Z_{\Set^\kappa}(\tilde{\cA}(\rho) +\epsilon, \tilde{\cA}((\rho+\epsilon)^2), \ldots) < \infty
\end{align}
for some $\epsilon > 0$.

\subsubsection{Local weak limit}
Let $\hat{\tau}$ denote the tree $\cT^{(\infty)}$ for the special case $\cR^\kappa = \Set^\kappa$.  Theorem~\ref{te:localunlabelled} readily yields the following result.

\begin{theorem}
	\label{te:locpol}
	If Condition \eqref{eq:condition} is satisfied, then for any sequence $k_n = o(\sqrt{n})$ of positive integers it holds that
	\[
		d_{\textsc{TV}}(V_{k_n}(\tau_n), V_{k_n}(\hat{\tau})) \to 0.
	\]
	Thus, $\hat{\tau}$ is the local weak limit of $\tau_n$ as $n$ becomes large.
\end{theorem}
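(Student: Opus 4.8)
The plan is to obtain the statement as the special case $\cR^\kappa = \Set^\kappa$ of Theorem~\ref{te:localunlabelled}, once two things are in place: that Condition~\eqref{eq:condition} is precisely the hypothesis of that theorem for this species, and that its conclusion, which is phrased in terms of trimmed $\Sym(\Set)$-enriched trees, translates into convergence of graph-metric neighbourhoods of the P\'olya trees. For the first point, recall from Section~\ref{sec:sigepo} that a $\Set$-enriched tree is nothing but a rooted unordered tree (the $\Set$-structure on each offspring set being trivial) and that $\omega(A,\alpha) = \prod_{v\in A}\kappa(\alpha(v)) = \prod_{v\in A}\kappa_{d^+_A(v)}$, so $\tilde{\mA}_n^\cR$ has exactly the law of $\tau_n$ and $\tilde{\cA}(z) = \tilde{\cA}_\cR^\omega(z)$. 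The hypothesis of Theorem~\ref{te:localunlabelled} asks that $\rho>0$ and $E(\rho+\epsilon,\tilde{\cA}(\rho)+\epsilon)<\infty$ for some $\epsilon>0$, where $E(z,u) = z\,Z_{\Set^\kappa}(u,\tilde{\cA}(z^2),\tilde{\cA}(z^3),\dots)$; since $E(\rho+\epsilon,\tilde{\cA}(\rho)+\epsilon) = (\rho+\epsilon)\,Z_{\Set^\kappa}(\tilde{\cA}(\rho)+\epsilon,\tilde{\cA}((\rho+\epsilon)^2),\tilde{\cA}((\rho+\epsilon)^3),\dots)$ and the factor $\rho+\epsilon$ is finite, this is exactly Condition~\eqref{eq:condition}. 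Hence Theorem~\ref{te:localunlabelled} applies and yields $d_{\textsc{TV}}\big((\cT_n,\beta_n)^{<k_n>},(\cT^{(\infty)},\beta^{(\infty)})^{<k_n>}\big)\to 0$ for every $k_n = o(\sqrt n)$.

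It remains to transfer this to the trees themselves. For a rooted tree the graph-metric $k$-neighbourhood of the root coincides with the trimming at height $k$, so $V_{k}(\tau_n) = \tau_n^{[k]}$ and $V_{k}(\hat\tau) = \hat\tau^{[k]}$, where $\hat\tau$ is $\cT^{(\infty)}$ regarded, as in Section~\ref{sec:sigepo}, as an infinite locally finite rooted unordered tree. The key claim is that $\tau_n^{[k]}$ is a deterministic function of $(\cT_n,\beta_n)^{<k>}$, and that $\hat\tau^{[k]}$ is the \emph{same} function of $(\cT^{(\infty)},\beta^{(\infty)})^{<k>}$. Indeed, by the symmetry decomposition of Section~\ref{sec:unlcoupling}, the fixpoint tree $\cT_n^f$ is a subtree of $\cT_n$ containing the root with the property that once a branch leaves $\cT_n^f$ it consists entirely of non-fixpoints; hence the height of a fixpoint vertex in $\cT_n^f$ equals its height in the underlying P\'olya tree $\tau_n$, and every non-fixpoint vertex of height $h$ in $\tau_n$ lies in the forest $F_n(v)$ of some fixpoint ancestor $v$ of height $<h$. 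Consequently, reconstructing $\tau_n^{[k]}$ requires exactly the fixpoint tree trimmed at height $k$ together with, for every fixpoint vertex $v$ with $\he_{\cT_n^f}(v)\le k-1$, the $\cG$-object $G_{\cT_n^f}(v) = (\beta_n(v),f_n(v),F_n(v))$ (from which one extracts the portion of $F_n(v)$ lying within graph-distance $k-\he_{\cT_n^f}(v)$ of $v$); and this is precisely the data retained in $(\cT_n,\beta_n)^{<k>}$. The same recipe applies verbatim to $(\cT^{(\infty)},\beta^{(\infty)})$, using local finiteness of the limit object.

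Since total variation distance is non-increasing under pushforward by a fixed measurable map, the two displays combine to give
\[
d_{\textsc{TV}}(V_{k_n}(\tau_n),V_{k_n}(\hat\tau)) = d_{\textsc{TV}}(\tau_n^{[k_n]},\hat\tau^{[k_n]}) \le d_{\textsc{TV}}\big((\cT_n,\beta_n)^{<k_n>},(\cT^{(\infty)},\beta^{(\infty)})^{<k_n>}\big) \to 0,
\]
which is the assertion; that $\hat\tau$ is then the local weak limit of $\tau_n$ follows from the description of the metric on $\mathbb{B}$ in Section~\ref{sec:lowe}. I expect the only genuine work to be the bookkeeping in the translation step, namely checking that the graph-distance $k$-ball of the P\'olya tree is recoverable from the height-$k$ trimming of the associated $\cG$-enriched tree; this is exactly what the ``fixpoints never re-enter'' structure of the symmetry decomposition and the fact that the forests $F_n(v)$ are stored in full inside the $\cG$-objects are there to guarantee.
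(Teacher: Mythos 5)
Your proof is correct and follows precisely the route the paper takes, which it states only as ``Theorem~\ref{te:localunlabelled} readily yields the following result'': specialize to $\cR^\kappa = \Set^\kappa$, match Condition~\eqref{eq:condition} to the hypothesis $E(\rho+\epsilon,\tilde{\cA}(\rho)+\epsilon)<\infty$ (they differ only by the harmless factor $\rho+\epsilon$), and push the total-variation bound for trimmed $\Sym(\Set)$-enriched plane trees through the forgetful map down to unlabelled rooted trees. The one point worth flagging as genuine content, which you handle correctly, is the observation that the fixpoints form a root-containing subtree on which $\cT_n$-height and $\cT_n^f$-height agree, so that $\tau_n^{[k]}$ is recoverable from $(\cT_n,\beta_n)^{<k>}$; this is exactly the spot where the $\Set$ case is simpler than, say, the $k$-tree case, where the graph metric and the tree metric disagree and one has to invoke the Markov-chain concentration estimate of Lemma~\ref{le:calkcirc}.
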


\subsubsection{Benjamini--Schramm limit}
Let $\hat{\tau}^\bullet$ denote the pointed plane tree corresponding to the tree $\hat{\mH}^\bullet$ for the case $\cR^\kappa = \Set^\kappa$. Theorem~\ref{te:localhaupt}  applies directly and yields a Benjamini--Schramm limit for the simply generated P\'olya tree $\tau_n$.

\begin{theorem}
	\label{te:bspol}
	Suppose that Condition \eqref{eq:condition} holds. Let $v^*$ denote a uniformly at random selected vertex of the tree $\tau_n$. Then for any sequence $k_n = o(\sqrt{n})$ of positive integers it holds that
	\[
	d_{\textsc{TV}}(V_{k_n}(\tau_n, v^*), V_{k_n}(\hat{\tau}^\bullet) ) \to 0.
	\]
	In particular,  $\hat{\tau}^\bullet$ is the Benjamini--Schramm limit of $\tau_n$ as $n$ becomes large.
\end{theorem}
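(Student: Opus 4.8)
The plan is to derive the statement from the general local limit theorem around a uniformly random root, Theorem~\ref{te:localhaupt}, applied to the weighted species $\cR^\kappa = \Set^\kappa$, and then to read off the graph-ball $V_{k_n}(\tau_n,v^*)$ from the increasing enriched fringe subtree sequence supplied by that theorem. First I would record the specialization: for $\cR = \Set$ the species $\cA_\cR$ is that of P\'olya trees, the weighting $\omega$ of \eqref{eq:omw} assigns to a rooted tree $A$ the weight $\prod_{v\in A}\kappa_{d_A^+(v)}$, so $\tilde{\mA}_n^\cR$ is exactly $\tau_n$, and the standing assumptions of Section~\ref{sec:locunl} hold since $\kappa_0>0$ and $\kappa_i>0$ for some $i\ge 2$. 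With $\tilde{\cA}(z)=\tilde{\cA}_\cR^\omega(z)$ the auxiliary series of Theorem~\ref{te:localhaupt} is $E(z,u)=z\,Z_{\Set^\kappa}(u,\tilde{\cA}(z^2),\tilde{\cA}(z^3),\ldots)$, so that $E(\rho+\epsilon,\tilde{\cA}(\rho)+\epsilon)<\infty$ is, up to the harmless finite factor $\rho+\epsilon$, exactly Condition~\eqref{eq:condition}. Hence Theorem~\ref{te:localhaupt} applies (and its hypotheses entail those of Lemmas~\ref{le:asymptotic} and~\ref{le:slow2}, so $\hat{\tau}^\bullet$ is well defined) and gives, for every $k_n=o(\sqrt{n})$,
\[
d_{\textsc{TV}}\bigl((\mH^n_{[k_n]},v^*),(\hat{\mH}_{[k_n]},u^*)\bigr)\longrightarrow 0,
\]
where $\mH^n_{[k_n]}$ is the increasing enriched fringe subtree sequence of $(\cT_n,\beta_n)$ at the uniform vertex $v^*$, and $(\hat{\mH}_{[k_n]},u^*)$ the corresponding sequence of the infinite $\cG$-enriched tree $\hat{\mH}^\bullet$ of Lemma~\ref{le:slow2}, all instantiated at $\cR^\kappa=\Set^\kappa$.

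The key step is then to translate this into convergence of graph-balls. By Lemma~\ref{le:sampler} and the discussion following it, the coupling identifies the vertex set of $\cT_n$ with that of the underlying P\'olya tree of $(\cT_n,\beta_n)$; since $\Set$ carries no nontrivial structure, forgetting the enrichment and the labels of that tree leaves precisely $\tau_n$, its graph-metric equals the parent--child metric of $\cT_n$, and a uniform vertex of $\cT_n$ is a uniform vertex of $\tau_n$ (the isomorphism type of a neighbourhood not depending on the chosen labelled representative). Let $v_0$ be the vertex of $\cT_n^f$ nearest to $v^*$ and $v_0,v_1,\ldots$ its ancestors in $\cT_n^f$. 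Any path in $\cT_n$ from $v^*$ to $v_{k_n}$ passes through $v_0$, so $d_{\cT_n}(v^*,v_{k_n})=d_{\cT_n}(v^*,v_0)+k_n\ge k_n$; hence every vertex within graph-distance $k_n$ of $v^*$ lies in the subtree of $\cT_n$ rooted at $v_{k_n}$, i.e.\ inside the enriched fringe subtree $\mH^n_{k_n}=f((\cT_n,\beta_n),v_{k_n})$. I would therefore exhibit a deterministic map $\Phi$ sending a pair (increasing enriched fringe subtree sequence of length $k$, marked vertex of its base) to a rooted tree, with $V_{k_n}(\tau_n,v^*)=\Phi(\mH^n_{[k_n]},v^*)$: from the tower one reconstructs, up to isomorphism, the tree $\mH^n_{k_n}$ together with a chain of fixpoint descendants realizing $\mH^n_{k_n-1},\ldots,\mH^n_0$ and the marked $v^*\in\mH^n_0$, and the $k_n$-ball around $v^*$ computed inside this finite rooted tree is independent of the choices made, since any two consistent realizations agree up to isomorphism within graph-distance $k_n$ of $v^*$.

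The same reasoning applies verbatim to the limit object. Read at $\cR=\Set$, the infinite $\cG$-enriched tree $\hat{\mH}^\bullet$ of Lemma~\ref{le:slow2} is an infinite rooted tree $\hat{\tau}^\bullet$ with marked vertex $u^*$, and its backward spine $u_0,u_1,\ldots$ satisfies $d(u^*,u_{k_n})=d(u^*,u_0)+k_n\ge k_n$ because $u^*\in\{u_0\}\cup\hat{\mathsf{F}}(u_0)$; so $V_{k_n}(\hat{\tau}^\bullet)=\Phi(\hat{\mH}_{[k_n]},u^*)$ for the same map $\Phi$. Applying $\Phi$ to both sides and using that the total variation distance does not increase under a deterministic map,
\[
d_{\textsc{TV}}\bigl(V_{k_n}(\tau_n,v^*),V_{k_n}(\hat{\tau}^\bullet)\bigr)\;\le\;d_{\textsc{TV}}\bigl((\mH^n_{[k_n]},v^*),(\hat{\mH}_{[k_n]},u^*)\bigr)\longrightarrow 0,
\]
and taking $k_n$ constant gives that $\hat{\tau}^\bullet$ is the Benjamini--Schramm limit of $\tau_n$.

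The genuine obstacle is the second paragraph: one must check carefully that even when $v^*$ lies strictly inside one of the non-fixpoint fringe forests attached to $v_0$ (so that $v^*\notin\cT_n^f$ and its graph-distance to $v_0$ is not a priori bounded), the $k_n$-ball around $v^*$ is still entirely captured by, and reconstructible from, the depth-$k_n$ tower $\mH^n_{[k_n]}$, regardless of the symmetry ambiguities in how the fringe subtrees are glued, and that the bookkeeping matches on the infinite limit tree so that a single map $\Phi$ serves both sides. Everything else reduces to the observation that for $\cR=\Set$ the enriched-tree structure is literally the tree structure of $\tau_n$, together with a direct appeal to Theorem~\ref{te:localhaupt}.
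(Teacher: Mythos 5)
Your proposal is correct and follows the same route as the paper, which in Section~\ref{sec:polya} and Section~\ref{sec:proapp} simply states that Theorem~\ref{te:localhaupt} "applies directly" for $\cR^\kappa=\Set^\kappa$; you have filled in the translation step (the map $\Phi$ from the marked fringe-subtree tower to the $k_n$-ball, using that for $\cR=\Set$ the enriched-tree metric on $\cT_n$ is literally the graph-metric on $\tau_n$, so that no analogue of the distance-comparison estimate~\eqref{eq:refme} needed for $k$-trees is required). The only point worth emphasizing is the one you already flag: well-definedness of $\Phi$ up to rooted isomorphism rests on the fact that any two choices of the spine $v_0,\ldots,v_{k_n}$ consistent with the tower differ by an automorphism of the fringe subtree $\mH^n_{k_n}$ which is also a graph automorphism of the corresponding piece of $\tau_n$, so the rooted isomorphism type of $V_{k_n}(\tau_n,v^*)$ is unaffected — this is exactly the content the paper leaves implicit.
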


\subsubsection{Scaling limit and diameter tail-bound}

Another application of Theorems~\ref{te:main2} and~\ref{te:tail2} is the following scaling limit with a sharp tail-bound for the diameter.

\begin{theorem}[Scaling limits of simply generated P\'olya trees]
	\label{te:scapol}
	If Condition~\eqref{eq:condition} is satisfied, then  there is a constant $a>0$ such that \[(\tau_n, a n^{-1/2} d_{\tau_n}) \convdis (\CRT, d_{\CRT})\] with respect to the Gromov--Hausdorff metric as  becomes large. Moreover, there are constants $c,C>0$ such that for all $x \ge 0$ and $n$ it holds that  \[\Pr{\Di(\tau_n) \ge x} \le C \exp(-c x^2/n).\]
\end{theorem}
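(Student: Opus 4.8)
The plan is to derive Theorem~\ref{te:scapol} as a direct application of the general scaling limit Theorem~\ref{te:main2} and the general diameter tail-bound Theorem~\ref{te:tail2} to the special case $\cR^\kappa = \Set^\kappa$. First I would observe that, by the discussion in Section~\ref{sec:polya}, the random P\'olya tree $\tau_n$ is exactly the random unlabelled enriched tree $\tilde{\mA}_n^\cR$ for this choice of weighted species, and that the associated generating series is $\tilde{\cA}(z) = \tilde{\cA}_\cR^\omega(z)$. It therefore suffices to equip the metric on enriched trees from Section~\ref{sec:partA} with the trivial choice: for each $\Set$-structure $R$ on a set $U$ we take $\delta_R$ to be the graph-metric on the star with center $*_U$ and leaves $U$, which is deterministic and has diameter $2$. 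Then the patched metric $d_{\mY_n}$ on $\tilde{\mA}_n$ coincides with the usual tree-metric $d_{\tau_n}$, so $(\mY_n, n^{-1/2}d_{\mY_n}) = (\tau_n, n^{-1/2}d_{\tau_n})$. Assumptions (1) and (2) preceding Theorem~\ref{te:main2} are trivially satisfied: we may take $\chi \equiv 2$ (a bounded random variable), so that the diameter bound in (1) holds, and (2) holds since the star-metric is canonical and commutes with relabelling bijections.

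Next I would check that the analytic hypothesis of Theorems~\ref{te:main2} and~\ref{te:tail2} is precisely Condition~\eqref{eq:condition}. The hypothesis reads $E(\rho + \epsilon, \tilde{\cA}^\omega_\cR(\rho) + \epsilon) < \infty$ for the series $E(z,u) = z Z_{\cR^\kappa}(u, \tilde{\cA}^\omega_\cR(z^2), \tilde{\cA}^\omega_\cR(z^3), \ldots)$, and for $\cR^\kappa = \Set^\kappa$ this is exactly the condition
\[
Z_{\Set^\kappa}(\tilde{\cA}(\rho) + \epsilon, \tilde{\cA}((\rho+\epsilon)^2), \ldots) < \infty
\]
together with $\rho > 0$, after noting the harmless factor $z = \rho+\epsilon$ in front. (One should briefly remark that the substitution of $\tilde{\cA}((\rho+\epsilon)^j)$ for $j \ge 2$ in place of $\tilde{\cA}((\rho)^j)$ only enlarges the arguments, hence finiteness at the larger point implies the stated hypothesis; and conversely the stated Condition~\eqref{eq:condition} is already phrased with the enlarged arguments, so the two match.) Since the excerpt's Condition~\eqref{eq:condition} is assumed, the hypotheses of both general theorems hold.

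With these identifications in place, Theorem~\ref{te:main2} yields that $(\tau_n, n^{-1/2} d_{\tau_n})$ converges in the Gromov--Hausdorff sense to a constant multiple of the CRT as $n \equiv 1 \bmod \spa(\mathbf{w})$ tends to infinity; absorbing the constant, there is $a > 0$ with $(\tau_n, a\, n^{-1/2} d_{\tau_n}) \convdis (\CRT, d_{\CRT})$. For the scaling constant one can either cite the explicit expression promised in the proof of Theorem~\ref{te:main2}, or note via Lemma~\ref{le:blobs}(3) that the relevant constant is built from $\Ex{\zeta}$ and $\Va{\xi}$, which for $\cR^\kappa = \Set^\kappa$ are computed from the probability generating functions in Lemma~\ref{le:ucoup}(5)--(6) with $Z_{\Set^\kappa}$ in place of $Z_{\cR^\kappa}$. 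Similarly, Theorem~\ref{te:tail2} gives constants $C,c>0$ with $\Pr{\Di(\tau_n) \ge x} \le C(\exp(-cx^2/n) + \exp(-cx))$ for all $n,x\ge 0$; but since $\chi$ is bounded here, the remark following Theorem~\ref{te:tail2} upgrades this to $\Pr{\Di(\tau_n) \ge x} \le C\exp(-cx^2/n)$, which is the claimed bound. The main (and essentially only) obstacle is bookkeeping rather than mathematical: one must verify carefully that the star-metric really reproduces the tree-metric under the patching construction (so that $\mY_n$ is genuinely $\tau_n$ and not some distorted space), and that the congruence condition $n \equiv 1 \bmod \spa(\mathbf{w})$ appearing in Theorem~\ref{te:main2} matches the divisibility condition $n \equiv 1 \bmod d$ under which $\tau_n$ is defined — both being governed by the span of the weight sequence, i.e. $\spa(\mathbf{w}) = d$ here.
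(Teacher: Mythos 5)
Your proposal is correct and follows exactly the same route the paper takes: in Section~\ref{sec:polya} and again in the final proof section, the paper simply observes that Theorem~\ref{te:scapol} is a direct application of Theorems~\ref{te:main2} and~\ref{te:tail2} to $\cR^\kappa = \Set^\kappa$ with the star-metric $\delta_R$ reproducing the graph-metric, and that Condition~\eqref{eq:condition} is precisely the analytic hypothesis of those theorems. Your write-up is in fact more detailed than the paper's (which is essentially a one-line citation), and your checks — that the patching construction reproduces $d_{\tau_n}$, that a bounded $\chi$ lets you invoke the remark after Theorem~\ref{te:tail2} to drop the $\exp(-cx)$ term, and that $\spa(\mathbf{w}) = d$ — are the right ones.
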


This is a mild extension of results for uniformly drawn P\'olya trees with $n$ vertices and vertex degree restrictions, whose scaling limits  were studied by Marckert and Miermont \cite{MR2829313}, Haas and Miermont \cite{MR3050512} and Panagiotou and Stufler \cite{2015arXiv150207180P}. 

\section{Proofs of the main results}
\label{sec:proo}

\subsection{Proof of the local convergence of unlabelled enriched trees in Section~\ref{sec:locunl}}

\begin{proof}[Proof of Lemma~\ref{le:sampler}]
	As discussed in Section~\ref{sec:recur}, the rules of Lemma~\ref{le:pobole} concerning the interplay of Boltzmann distributions and operations on species may be used to construct recursive samplers, if the concerning isomorphism satisfies the conditions of Theorem~\ref{te:implicitspecies}. This is the case for isomorphism
	\[
	\cA_\cR^\omega \simeq \cX \cdot \cR^\kappa(\cA_\cR^\omega),
	\]
	which corresponds to the combinatorial specification $\cY = \cH(\cX, \cY)$ with $\cH(\cX, \cY) =  \cX \cdot \cR^\kappa(\cY)$. Indeed, it holds that $\cH(0,0) = 0 \cdot \cR^\kappa(0) = 0$ and $\partial_2 \cH(0, 0) = 0 \cdot (\cR')^\kappa(0) = 0$. 
	
	Thus we may apply the product rule and substitution rule of Lemma~\ref{le:pobole} to construct a recursive procedure that samples according to the $\mathbb{P}_{\Sym(\cA_\cR^\omega),(x^i)_i}$-Boltzmann distribution. The result is the procedure described in Lemma~\ref{le:sampler}, with one important difference. According to Lemma~\ref{le:pobole} we would have to apply Lemma~\ref{le:symconstr} for each recursive call to construct an $\cA_\cR$-symmetry out of the $\cR$-symmetry and the attached $\cA_\cR$-symmetries, and then relabel uniformly at random. Instead of doing this for each recursive call, we skip this step and keep track of all the $\cR$-symmetries, yielding the $\Sym(\cR)$-enriched tree $(\cT, \beta)$. As discussed at the beginning of the Section~\ref{sec:unlcoupling},  we may construct the symmetry that corresponds to $(\cT, \beta)$ in one step by applying Lemma~\ref{le:symconstr}  for each of its vertices, starting with the leaves and working our way upwards. Thus $(\cT, \beta)$ corresponds to a symmetry on the vertex set of the plane tree $\cT$. Moreover, instead of relabelling uniformly at random after each application of Lemma~\ref{le:symconstr}, we may postpone this step and just relabel the symmetry corresponding to $(\cT, \beta)$ once uniformly at random. The result $\Gamma Z_{\cA_\cR^\omega}(x)$ then follows $\mathbb{P}_{\Sym(\cA_\cR^\omega),(x^i)_i}$-Boltzmann distribution, meaning that for each $n \ge 0$ and each symmetry $(A, \sigma) \in \Sym(\cA_\cR)[n]$ it holds that
	\[
	\Pr{\Gamma Z_{\cA_\cR^\omega}(x) = (A, \sigma)} = \omega(A) \frac{x^{n}}{n!} Z_{\cA_\cR^\omega}(x,x^2, \ldots)^{-1}.
	\]
	By Equation~\eqref{eq:relcyc} we know that 
	\[
	Z_{\cA_\cR^\omega}(x,x^2, \ldots) = \tilde{\cA}_\cR^\omega(x).
	\]
	This completes the proof.
\end{proof}

\begin{proof}[Proof of Lemma~\ref{le:ucoup}]
	As for claim {\em (1)}, note that the isomorphism $\tilde{\cA}^\omega_\cR \simeq \cX \cdot \cR^\kappa(\tilde{\cA}^\omega_\cR)$ implies that \[\tilde{\cA}^\omega_\cR(z) = z Z_{\cR^\kappa}(\tilde{\cA}^\omega_\cR(z), \tilde{\cA}^\omega_\cR(z^2), \ldots).\] By assumption, there is an $\cR$-structure with size zero and one with size at least two such that both have a positive $\kappa$-weight. It follows that there are constants $a,b > 0$  and $k \ge 2$ with the property that for all $0 \le x < \rho$ it holds that \[\tilde{\cA}^\omega_\cR(x) \ge x(a + b\tilde{\cA}^\omega_\cR(x)^k).\] This implies that $\lim_{x \uparrow \rho}\tilde{\cA}^\omega_\cR(x) < \infty$ and hence, by non-negativity of coefficients, $\tilde{\cA}^\omega_\cR(\rho) < \infty$.

	Claims {\em (2)} - {\em (6)} all follow after a moments consideration from the explicit description of the sampler in Lemma~\ref{le:sampler}.
\end{proof}

\begin{proof}[Proof of Lemma~\ref{le:asymptotic}]
	Claims {\em (1)} and {\em (2)} follow from a general enumeration theorem by Bell, Burris and Yeats \cite[Thm. 28]{MR2240769} which  implies the asymptotic behaviour of the coefficients of the power series $\tilde{\cA}^\omega_\cR(z)$. 

	Claim { \em (3)}: The expressions for the moments of $\xi$ and $\zeta$ follow from the equations describing the corresponding probability generating functions in Lemma~\ref{le:ucoup}. In order to verify that $\Ex{\xi}=1$, note that by Pringsheim's theorem the function $\tilde{\cA}^\omega_\cR(z)$ cannot be analytically continued in a neighbourhood of $\rho$, and hence by the implicit function theorem it must hold that the function \[H(z,u) := u - E(z,u)\] satisfies \[H_u(\rho, \tilde{\cA}^\omega_\cR(\rho))=0.\] In other words, $\Ex{\xi} = E_u(\rho, \tilde{\cA}^\omega_\cR(\rho)) = 1$. (Compare with the proof of \cite[Cor. 12]{MR2240769}.)
	
	Claim {\em (4)}: Let $\Lambda$ denote the lattice spanned by all differences $\mathbf{x} - \mathbf{y}$ of vectors that $(\xi, \zeta)$ assumes with positive probability. We assumed that  there is an $\cR$-structure with size zero and positive $\kappa$-weight. Its automorphism is the empty map with no cycles at all. Hence $\Pr{(\xi, \zeta)=(0,0)}>0$, and $\Lambda$ is actually generated by the support of $(\xi, \zeta)$. 
	
	We assumed further that at least one $\cR$-structure $R$ with positive $\kappa$-weight has a non-trivial automorphism group. Hence there are positive numbers $a^*, b^*$ and $c^*$ such that $(\xi, \zeta)$ assumes $(a^*,0)$ and $(b^*,c^*)$ with positive probability, as these points correspond the trivial and a non-trivial automorphism of $R$. Hence the lattice $\Lambda$ contains three points that do not lie on any straight line. Consequently, it has rank $2$, that is, $\Lambda \simeq \ndZ^2$ as abelian group.
	
	Let $\mathbf{B} \in \ndZ^{2 \times 2}$ be a basis of the lattice $\Lambda$. As the support of $(\xi, \zeta)$ is not contained on a straight line, it follows that the covariance matrix $\Sigma$ is positive-definite. Let $(\xi_i, \zeta_i)_{i \in \ndN}$ denote a family of independent copies of $(\xi,\zeta)$. By Lemma~\ref{le:ucoup} it holds for any positive integer $n$ and $\ell$ that
	\[
	\Pr{|\cT| = n, |\cT^f| = \ell} = \Pr{\sum_{i=1}^\ell (\xi_i, \zeta_i) = (\ell -1, n - \ell), \sum_{i=1}^m \xi_i \ge m \text{ for all $m < \ell$}}.
	\]
	Using rotational symmetry and the cycle lemma \ref{le:cycle}, it follows that
	\begin{align}
	\label{eq:the}
	\Pr{|\cT| = n, |\cT^f| = \ell} = \frac{1}{\ell} \Pr{\sum_{i=1}^\ell (\xi_i, \zeta_i) = (\ell -1, n - \ell)}.
	\end{align}
	We know by Claim {\em (1)} that if $n$ is larger than some fixed constant, then $\Pr{|\cT|=n}>0$ if and only if $n-1$ is divisible by $\spa(\mathbf{w})$. We shall check below that for each such $n$ 
	\begin{align}
		\label{eq:madness}
			 \{ \ell \in \ndZ \mid (\ell-1, n - \ell) \in \Lambda\} = n + \ndZ d
	\end{align}
	with $d = |\det \mathbf{B}| / \spa(\mathbf{w})$. For now, let us assume that \eqref{eq:madness} holds. Let $M>0$ be a fixed constant. Then it holds uniformly for all $\ell = (1 + \Ex{\zeta}) n + x \sqrt{n}$ with $|x| \le M$ that
	\[
		\ell \sim n / \sqrt{1 + \Ex{\zeta}}
	\]
	and 
	\[
		\ell^{-1/2} \left((\ell -1, n-\ell) - \ell \Ex{(\xi, \zeta)} \right) \sim (0, -x (1 + \Ex{\zeta})^{3/2}).
	\]
	The central local limit theorem given in Lemma~\ref{le:llt2dim} yields that
	\begin{align}
		\label{eq:lltapp}
		\frac{1}{\ell} \Pr{\sum_{i=1}^\ell (\xi_i, \zeta_i) = (\ell -1, n - \ell)} \sim \frac{(1 + \Ex{\zeta})|\det \mathbf{B}| }{n^2\sqrt{2 \pi \det \mathbf{\Sigma}}} \exp(- \frac{x^2}{2 \sigma^2}).
	\end{align}
	with $\sigma^2 = \det \mathbf{\Sigma} / (\Va{\zeta}(1 + \Ex{\zeta})^3)$. By Claim~{\em (3)} we know that
	\[
		\Pr{|\cT|=n} \sim \spa(\mathbf{w}) n^{-3/2}  \sqrt{\frac{1 + \Ex{\zeta}}{2\pi \Va{\xi}}}.
	\]
	Using Equation~\eqref{eq:the} it follows that
	\[
		\Pr{|\cT_n^f| = \ell} \sim \frac{d }{\sigma \sqrt{2 \pi n}} \exp(- \frac{x^2}{2 \sigma^2}).
	\]
	The central limit theorem now follows from Equation~\eqref{eq:madness}, as for any fixed $a<b$
	\[
	\Pr{a \le \frac{|\cT_n^f| - n/(1 + \Ex{\zeta})}{\sqrt{n}} \le b} \sim \frac{d}{\sqrt{n}}\sum_{x}  \frac{1}{\sigma \sqrt{2 \pi}} \exp(- \frac{x^2}{2 \sigma^2}) \sim \int_a^b \frac{1}{\sigma \sqrt{2 \pi}} \exp(- \frac{x^2}{2 \sigma^2}) \,\text{d}x,
	\]
	with the sum index $x$ ranging over $[a,b] \cap ((1- \mu)\sqrt{n} + \ndZ d/\sqrt{n})$.
	
	It remains to verify Equation~\eqref{eq:madness}, which requires careful reasoning, as we have to relate $\spa(\mathbf{w})$ with the involved lattice. In order to simplify our calculations, we would like to pick a "nice" basis $\mathbf{B}$ of $\Lambda$. Note that it does not matter for \eqref{eq:madness} which basis of $\Lambda$ we choose, as for any two bases $\mathbf{B}_1$ and $\mathbf{B}_2$ there is a matrix $\mathbf{M} \in \text{GL}_2(\ndZ)$ with $\mathbf{B}_1 = \mathbf{M} \mathbf{B}_2$, and as $\det \mathbf{M} \in \{1, -1\}$ it follows that $|\det\mathbf{B}_1| = |\det \mathbf{B}_2|$. The inconvenient part is that, contrary to vector spaces, not every linear independent subset of a lattice may be extended to a basis.
	However, a classical algebraic result states that for any free $\ndZ$-module $M$ with rank $r(M)$ and for any submodule $N \subset M$ with rank $r(N)$ there is a $\ndZ$-basis $v_1, \ldots, v_{r(M)}$ of $M$ and integers $\lambda_1, \ldots, \lambda_{r(N)}$ such that $\lambda_1 v_1, \ldots, \lambda_k v_k$ is a basis of $N$. See for example Roman's book \cite[Thm. 6.7]{MR1169104}, which states this in the more general context of modules over principal ideal domains. If $\mathbf{B} = (\mathbf{b}_1, \mathbf{b_2})$ is such a basis of $\Lambda$ for the submodule $N:= \ndZ (a^*,0)^\intercal \subset \Lambda$ (recall that we defined $a^*$ at the beginning of the proof, when we showed that $\Lambda$ has rank $2$), then there is an integer $\lambda$ with $\lambda \mathbf{b}_1 \in \ndZ (a^*,0)^\intercal$. But this implies that the second coordinate of $\mathbf{b}_1$ must be zero, and hence
	\begin{align}
		\label{eq:basis}
		\mathbf{B} = \left ( \begin{matrix}
		a &  b \\
		0 &  c 
		\end{matrix} \right )
	\end{align}
	with $a,b,c>0$ is an upper-triangular matrix.  For any $\ell, n$ it holds that
	$(\ell-1, (n-1) - (\ell -1))^\intercal \in \Lambda$ if and only if $(\ell-1, n-1)^\intercal \in \Lambda' := \mathbf{C} \ndZ^2$ with
	\[
	\mathbf{C} = \left ( \begin{matrix}
	1 &  0 \\
	-1 &  1 
	\end{matrix} \right )^{-1} \left ( \begin{matrix}
	a &  b \\
	0 &  c 
	\end{matrix} \right ) = \left ( \begin{matrix}
	a &  b \\
	a &  b+ c 
	\end{matrix} \right ).
	\]
	We may easily calculate that 
	\[
	(\ell-1, n-1) \in \mathbf{C} \ndZ^2 \quad \text{if and only if} \quad n-1  \in \ndZ \gcd(a, b+c), \,\,  \ell \in n + \ndZ ac / \gcd(a,b+c).
	\]
	Indeed, it necessarily holds that $n-1 \in \ndZ a + \ndZ(b+c) = \ndZ \gcd(a, b+c)$. Conversely, if $n-1 = \lambda_0 a + \mu_0 b$, then any pair $(\lambda, \mu) \in \ndZ^2$ satisfies $\lambda a + \mu b = n-1$ precisely if there is an arbitrary integer $t$ with $\lambda - \lambda_0 = t(b+c)/\gcd(a,b+c)$ and $\mu - \mu_0 = ta/\gcd(a,b+c)$. Hence $(\ell-1, n-1)^\intercal = \mathbf{C} (\lambda, \mu)^\intercal$ reduces to $\ell = n + act/\gcd(a,b+c)$ with $t \in \ndZ$.

	Since $\det(\mathbf{B}) = ac$, it remains to check that $\spa(\mathbf{w}) = \gcd(a,b+c)$. That is, we show that $\spa(\mathbf{w})$ is the span of the projection of $\Lambda'$ on the $y$-axis. The support $S$ of $(|\cT^f| -1, |\cT| -1)$ is a subset of the lattice $\Lambda'$. Hence $\spa(\mathbf{w})$, which is the span of the projection of $S$ to the $y$-axis, is a multiple of $\gcd(a,b+c)$. In order to show equality we need to verify that indeed two successive multiples of $\gcd(a,b+c)$ exist that $|\cT|-1$ assumes with positive probability. We may do this probabilistically. Let $C>0$ be a given constant. By Equation~\ref{eq:the} and the central local limit theorem given in Lemma~\ref{le:llt2dim} it follows that there is a positive integer $L$ such that for all $\ell \ge L$ and all $n$ with $(l-1, n-1)^\intercal \in \Lambda'$ and
	\begin{align}
		\label{eq:qwer11}
		\ell^{-1/2} \left \| ((\ell -1, n-\ell) - \ell \Ex{(\xi, \zeta)} )\right\| \le C
	\end{align}
	it holds that $\Pr{|\cT|=n, |\cT^f|=\ell} >0$. There is a constant $c>0$ such that for every $n$ at least $c \sqrt{n}$ consecutive integers $\ell \ge L$ exist that satisfy Inequality~\eqref{eq:qwer11}. If $n-1$ is additionally a constant multiple of $\gcd(a,b+c)$, then the intersection $\Gamma_n$ of the lattice $\Lambda'$ and the affine space $(0,n-1)^\intercal + \ndZ(1,0)$ is non-empty, and hence an affine subspace of rank one, whose span does not depend on $n$. In particular, $\Gamma_n$ hits the cone \eqref{eq:qwer11} about a constant fraction of $\sqrt{n}$ many times. This proves that $\spa(\mathbf{w}) = \gcd(a,b+c)$ and concludes the proof of Claim {\em (4)}.

\end{proof}

\begin{proof}[Proof of Lemma~\ref{le:sampler2}]
	The task is to provide a sampler for the species $\cA_\cR^{(\ell)}$ using the isomorphism
	\[
	\cA_\cR^{(\ell)} \simeq (\cX \cdot (\cR')^\kappa(\cA_\cR^\omega))^\ell \cA_\cR^\omega.
	\]
	Note that for $\ell =0$ we have $\cA_\cR^{(0)} = \cA_\cR^\omega$ and for $\ell \ge 1$ it holds that
	\[
		\cA_\cR^{(\ell)} \simeq \cX \cdot (\cR')^\kappa(\cA_\cR^\omega) \cdot \cA_\cR^{(\ell -1)}.
	\]
	Thus for $\ell \ge 1$ we may apply the product rule and substitution rule of Lemma~\ref{le:pobole} to construct a  procedure that samples according to the $\mathbb{P}_{\Sym(\cA_\cR^{(\ell)}),(x^i)_i}$-Boltzmann distribution, employing a  $\mathbb{P}_{\Sym(\cA_\cR^{(\ell-1)}),(x^i)_i}$-distributed symmetry, which we may sample by a recursive call to the sampler for $\ell -1$. 
	
	The result is essentially the procedure described in Lemma~\ref{le:sampler2}, but we make some modifications. According to Lemma~\ref{le:pobole} we would have to apply Lemma~\ref{le:symconstr} to construct an $\cA_\cR$-symmetry out of the $\cR'$-symmetry, the attached $\cA_\cR$-symmetries, and the $\cA_\cR^{(\ell -1)}$-symmetry, and relabel uniformly at random afterwards.	
	Instead of doing this for each  call, we skip this step and keep track of all the $\cR'$-symmetries. Also, instead of taking $\cA_\cR$-symmetries directly, we use the $\Sym(\cR)$-enriched tree $(\cT, \beta)$ from Lemma~\ref{le:sampler2}. This yields the $\Sym(\cR)$-enriched tree $(\cT^{(\ell)}, \beta^{(\ell)})$.
	
	As discussed at the beginning of the Section~\ref{sec:unlcoupling},  we may construct the symmetry that corresponds to $(\cT^{(\ell)}, \beta^{(\ell)})$ in one step by applying Lemma~\ref{le:symconstr}  for each of its vertices, starting with the leaves and working our way upwards. If we additionally relabel uniformly at random, the resulting symmetry $\Gamma Z_{\cA_\cR^{(\ell)}}$ follows a $\mathbb{P}_{\Sym(\cA_\cR^{(\ell)}, (x^i)_i}$-distribution. 
	
	We defined $\cA_\cR^{(\ell)}$ as a subspecies of the species of pointed $\cR$-enriched trees. In particular, for all $n \ge 0$ and any symmetry $((A,r), \sigma) \in \Sym(\cA_\cR^{(\ell)})$ we have that $(A, \sigma)$ is an $\cA_\cR^\omega$-symmetry and $r$ is a fixpoint of the automorphism $\sigma$. It follows  from the definition of the Boltzmann distributions and
	\[
		\tilde{\cA}_\cR^{(\ell)}(x) = (x\widetilde{(\cR')^\kappa \circ \cA_\cR^\omega}(x))^{\ell} \tilde{\cA}_\cR^\omega(x)
	\]
	that
	\[
		\Pr{\Gamma Z_{\cA_\cR^{(\ell)}}(x) = ((A,r), \sigma)} = \omega(A) \frac{x^{|A|}}{|A|!} \tilde{\cA}_\cR^{(\ell)}(x)^{-1} = (x \widetilde{(\cR')^\kappa \circ \cA_\cR^\omega}(x))^{-\ell} \Pr{\Gamma Z_{\cA_\cR^\omega}(x) = (A, \sigma)}.
	\]
\end{proof}

\begin{proof}[Proof of Lemma~\ref{le:slow}]
	Claim {\em (1)}: By Lemma~\ref{le:sampler2}, the $\cR$-symmetry $(\hat{\mR}, \hat{\sigma})$ of $\hat{\mG}$ follows, up to relabeling, a weighted P\'olya-Boltzmann distribution for the species $(\cR')^\kappa$ with parameter $\tilde{\cA}_\cR^\omega(\rho)$. It makes no difference whether we distinguish the vertex corresponding to the $*$-fixpoint, or a uniformly at random drawn fixpoint, as the results are identically distributed. By the discussion in Section~\ref{sec:opspe} it holds that
	\[
		Z_{(\cR')^\kappa}(s_1,s_2, \ldots) = \frac{\partial}{\partial s_1} Z_{\cR^\kappa}(s_1,s_2, \ldots).
	\]
	Hence, for any $\cR$-symmetry $(R, \sigma)$ with a marked fixpoint $u$, the probability that $(\hat{\mR}, \hat{\sigma}) = (R, \sigma)$ and that precisely the fixpoint $u$ gets marked, is simply given by the probability that a P\'olya-Boltzmann distributed $\cR$-symmetry with parameter $\tilde{\cA}_\cR^\omega(\rho)$ equals $(R, \sigma)$. Consequently, for any $\cG$-object $G$ with a marked fixpoint $u$ the probability that $\hat{\mG}$ assumes $G$ and that precisely the vertex $u$ is marked is  given by $\Pr{\mG = G}$.
	
	Claim {\em (2)}: This is a reformulation of Lemma~\ref{le:sampler2} in terms of $\cG$-enriched trees.

	Claim {\em (3)}: The event $(\cT^{(\infty)},\beta^{(\infty)})^{<k>} = (\tau, \gamma)^{<k>}$ corresponds to precisely $L_\tau(k)$ different outcomes of the first $k-1$ levels, depending on which leaf of $\tau^{[k]}$ the spine of the fixpoint tree $(\cT^{(\infty)})^f$ is supposed to pass through. By Claim~{\em (1)}, each of these events is equally likely with probability $\prod_{i=1}^t \Pr{\mG = G_i}$.

	Claim {\em (4)}:  The  $k+1$-th level of $(\cT^{(\infty)},\beta^{(\infty)})$ interpreted as $\cG$-enriched tree is obtained by taking for each fixpoint of the $k$-th level an independent copy of $\mG$, except for the unique distinguished fixpoint, which receives an independent copy of $\hat{\mG}$. This yields
	\begin{align*}
	\Ex{L_k(\tau)} &= (\Ex{L_{k-1}(\tau)} - 1)\Ex{\xi} + \Ex{\hat{\xi}} = \ldots =
	k(\Ex{\hat{\xi}} -1)+1, \\
	\Ex{|\tau^{[k]}|} &= \Ex{L_0(\tau) + \ldots + L_k(\tau)} = k(k+1)(\Ex{\hat{\xi}}-1)/2+ k+ 1, \\
	\Ex{L_k^\cG(\tau)} &= (\Ex{L_k} -1) \Ex{\zeta} + \Ex{\hat{\zeta}} = \ldots =
	k(\Ex{\hat{\xi}}-1)\Ex{\zeta} + \Ex{\hat{\zeta}},\\
	\Ex{H_k^\cG(\tau) -  |\tau^{[k]}| \Ex{\zeta}} &= \sum_{i=0}^{k} (\Ex{L_i^\cG(\tau)} - \Ex{L_i(\tau)}\Ex{\zeta}) =  (k+1)(\Ex{\hat{\zeta}} - \Ex{\zeta}).
	\end{align*}
	For the variance, note that
	\[
	\Va{H_k^\cG(\tau) -  |\tau^{[k]}| \Ex{\zeta}} = \Ex{(D_0 + \ldots D_k)^2)}
	\]
	with
	\[
		D_i := L_i^\cG(\tau) - L_i(\tau)\Ex{\zeta} - \Ex{\hat{\zeta}} + \Ex{\zeta}
	\]
	satisfying $\Ex{D_i \mid L_i(\tau) = \ell} = 0$ for all $\ell$. In particular, for all $i<j$, 
	\[
		\Ex{D_i D_j \mid (\tau^{[k]}, (\gamma(v))_{v \in \tau^{[k-1]}})} = D_i \Ex{ D_j \mid L_j(\tau)} = 0.
	\]
	It also holds that
	\[
		\Ex{D_i^2 \mid L_i(\tau)} = L_i(\tau) \Va{\zeta} + \Va{\hat{\zeta}},
	\]
	and consequently
	\begin{align*}
	\Va{H_k^\cG(\tau) -  |\tau^{[k]}| \Ex{\zeta}} &= \sum_{i=0}^k \Ex{D_i^2} = \sum_{i=0}^k (\Ex{L_i(\tau)}\Va{\zeta} + \Va{\hat{\zeta}}) \\
	&=  k(k+1)(\Ex{\hat{\xi}}-1)\Va{\zeta}/2+ (k+ 1)(\Va{\hat{\zeta}} + \Va{\zeta}). 
	\end{align*}
\end{proof}

\begin{proof}[Proof of Theorem~\ref{te:localunlabelled}]
	Let $\cE$ denote the countably infinite set of all $\cG$-enriched plane trees and set
	\[
		\cE_k = \{A^{<k>} \mid A \in \cE\}.
	\]
	We have to show that
	\begin{align}
		\label{eq:toshow}
		\lim_{n \to \infty} \sup_{\cH \subset \cE_{k_n}} | \Pr{(\cT_n, \beta_n)^{<k_n>} \in \cH} - \Pr{(\cT^{(\infty)}, \beta^{(\infty)})^{<k_n>} \in \cH}| = 0.
	\end{align}
	Recall that for any $\cG$-enriched tree $(\tau, (G_\tau(v))_{v })$ with $G_\tau(v) = (\beta_\tau(v), f_\tau(v), F_\tau(v))$ and any integer $k \ge 0$ we set
	\[
		L_k(\tau) = |\{v \in \tau \mid \he_\tau(v)  = k\}|, \quad L_k^\cG(\tau) = \sum_{\substack{v \in \tau\\ \he_\tau(v) = k}} |F_\tau(v)|, \quad \text{and} \quad H_k^\cG(\tau) = \sum_{i=0}^k L_i^\cG(\tau).
	\]
	By assumption, there is a sequence $t_n \to 0$ with $k_n = \sqrt{n} t_n$. We may without loss of generality assume that $k_n$ tends to infinity.	For any $C>0$ and all $k$ and $n$ we define with foresight the set $\cE_{C,k,n}$  of all $(\tau, \gamma) \in \cE_k$ satisfying $\Pr{(\cT^{(\infty)}, \beta^{(\infty)})^{<k>} = (\tau, \gamma)} > 0$ and 
	\begin{align*}
	 1 \le L_k (\tau) \le  C (nt_n)^{1/2}, \quad 
	| H_{k-1}^\cG(\tau) - |\tau^{[k-1]}| \Ex{\zeta} | \le C (n t_n)^{1/2}, \quad |\tau^{[k-1]}| \le nt_n
	.
	\end{align*}
	Using Markov's and Chebyshev's inequalities and the expressions of the moments in Lemma~\ref{le:slow}, it follows that there is a constant $C$ such that 
	\begin{align}
	\label{eq:mark}
	\lim_{n \to \infty} \Pr{(\cT^{(\infty)}, \beta^{(\infty)})^{<k_n>} \in \cE_{C, k_n, n}} = 1.
	\end{align}
	Hence, if we verify \eqref{eq:toshow} with the index $\cH$ ranging only over all subsets of $\cE_{C, k_n, n}$, then \eqref{eq:mark} implies that
	\begin{align}
	\lim_{n \to \infty} \Pr{(\cT_n, \beta_n)^{<k_n>} \notin \cE_{C, k_n,n}} = 1
	\end{align}
	and consequently \eqref{eq:toshow} already holds with the index ranging over all subsets of $\cE_{k_n}$. So the only thing that is left to show is that \eqref{eq:toshow} holds for $\cH \subset \cE_{C, k_n, n}$. That is, we have to verify that for any $\epsilon >0 $ it holds for large enough $n$ that
	\begin{align}
		\label{eq:nextstep}
			\sup_{\cH \subset \cE_{C,k_n,n}} | \Pr{(\cT_n, \beta_n)^{<k_n>} \in \cH} - \Pr{(\cT^{(\infty)}, \beta^{(\infty)})^{<k_n>} \in \cH}| \le \epsilon.
	\end{align}
	Let $\mathbf{B}, \mathbf{\Sigma}, \mu, \sigma$ and $d = |\det \mathbf{B}| / \spa(\mathbf{w})$ be as in Lemma~\ref{le:asymptotic}. We have shown in this Lemma that, as $n \equiv 1 \mod \spa(\mathbf{w})$ tends to infinity, 
	\begin{align}
		\sqrt{n} \Pr{|\cT_n^f| = \ell} \sim  \frac{d}{\sigma \sqrt{2 \pi }} \exp(- \frac{x_\ell^2}{2 \sigma^2})
	\end{align}
	uniformly for all bounded $x_\ell$ with
	\[
		\ell := \mu n + x_\ell \sqrt{n} \in n + d \ndZ.
	\]
	Moreover,
	\[
		\frac{|\cT_n^f| - n \mu}{\sqrt{n}} \convdis \cN(0, \sigma^2)
	\]
	and for any $\epsilon_1>0$ there is a constant $M>0$ such that for all $n$
	\[
		\Pr{|\cT_n^f| \notin I_n} \le \epsilon_1 \quad \text{with} \quad I_n := (n + d\ndZ) \cap [n / (1 + \Ex{\zeta}) - M \sqrt{n}, n / (1 + \Ex{\zeta}) + M \sqrt{n}].
	\]
	Hence the expression in \eqref{eq:nextstep} may be bounded by
	\begin{align}
		\label{eq:bound}
		\epsilon_1 + \sup_{(\tau, \gamma) \in \cE_{C, k_n, n}} \left |\frac{\Pr{(\cT_n, \beta_n)^{<k_n>} = (\tau, \gamma), |\cT_n^f| \in I_n}}{\Pr{(\cT^{(\infty)}, \beta^{(\infty)})^{<k_n>} = (\tau, \gamma)}} - 1 \right|.
	\end{align}
	Let $(\tau, \gamma) \in \cE_{C, k_n, n}$ and let $G_i=(R_i, f_i, F_i)$, $1 \le i \le t$ be the depth-first-search ordered list of its $\cG$-objects. We set $(h,H) := \sum_{i=1}^t(|f_i|, |F_i|)$. Moreover, let $\mG_i = (\mathsf{R}_i, \mathsf{f}_i, \mathsf{F}_i)$ denote a family of independent copies of the random $\cG$-enriched tree $\mG$. For each $\cG$-enriched plane tree $G$ set $\pi_G = \Pr{\mG = G}$.  By Lemma~\ref{le:ucoup} it follows that for all $\ell$ the probability $\Pr{(\cT, \beta)^{<k_n>} = (\tau, \gamma), |\cT^f| = \ell}$ is given by 
	\begin{align*}
	\pi_{G_1} \cdots \pi_{G_t} \Pr{\sum_{i=t+1}^{\ell} (|\mathsf{f}_i|, |\mathsf{F_i}|) = (\ell-1-h, n - \ell - H), \sum_{i=t+1}^m \mathsf{f}_i \ge m - h \text{ for all $1 \le m < \ell$}  }.
	\end{align*}
	Using rotational symmetry and the Cycle Lemma \ref{le:cycle}, this may be simplified further to
	\[
		\pi_{G_1} \cdots \pi_{G_t} \frac{h-t+1}{\ell-t}\Pr{\sum_{i=t+1}^{\ell} (|\mathsf{f}_i|, |\mathsf{F_i}|) = (\ell-1-h, n - \ell - H) }.
	\]
	 The tree $\tau$ has precisely $h-t+1$ many leaves with height $k$ and thus Lemma~\ref{le:slow} implies that
	 \[
	 \Pr{(\cT^{(\infty)}, \beta^{(\infty)})^{<k_n>} = (\tau, \gamma)} = \pi_{G_1} \cdots \pi_{G_t} (h-t+1).
	 \]	 
	It holds uniformly for $(\tau, \gamma) \in \cE_{C, k_n, n}$ and  $\ell = \mu n + x_\ell \sqrt{n} \in I_n$ that 
	\[
		\ell^{-1/2}\left | (\ell -1 - h, n- \ell - H) - (\ell -t)\Ex{(\xi, \zeta)} \right | \sim (0, -x_\ell (1 + \Ex{\zeta})^{3/2})
	\]
	as $n$ becomes large. Hence, as $\ell -t \sim \ell$, we may apply the bivariate Central Local Limit Theorem~\ref{le:llt2dim} to obtain
	\[
	\frac{1}{\ell-t}\Pr{\sum_{i=t+1}^{\ell} (|\mathsf{f}_i|, |\mathsf{F_i}|) = (\ell-1-h, n - \ell - H) } \sim \frac{(1 + \Ex{\zeta})|\det \mathbf{B}| }{n^2\sqrt{2 \pi \det \mathbf{\Sigma}}} \exp(- \frac{x_\ell^2}{2 \sigma^2}).
	\] 
	By Lemma~\ref{le:asymptotic} we know that
	\[
	\Pr{|\cT|=n} \sim \spa(\mathbf{w}) n^{-3/2}  \sqrt{\frac{1 + \Ex{\zeta}}{2\pi \Va{\xi}}}.
	\]
	It follows that uniformly for all $(\tau, \gamma) \in \cE_{C, k_n, n}$ 
	\[
		\frac{\Pr{(\cT_n, \beta_n)^{<k_n>} = (\tau, \gamma), |\cT_n^f| \in I_n}}{\Pr{(\cT^{(\infty)}, \beta^{(\infty)})^{<k_n>} = (\tau, \gamma)}} \sim \frac{d}{\sqrt{n}}\sum_{\ell \in I_n}  \frac{1}{\sigma \sqrt{2 \pi}} \exp(- \frac{x_\ell^2}{2 \sigma^2})\sim \int_{-M}^M \frac{1}{\sigma \sqrt{2 \pi}} \exp(- \frac{x^2}{2 \sigma^2}) \,\text{d}x.
	\]
	Taking $\epsilon_1 = \epsilon/2$ and $M$ sufficiently large it follows that the bound \eqref{eq:bound} is smaller than $\epsilon$ for sufficiently large $n$. This completes the proof.
\end{proof}

\begin{proof}[Proof of Lemma~\ref{le:slow2}]
	Claims {\em (1)} and {\em (2)} are straight-forward. For Claim {\em (3)}, note that the event $\hat{\mH}_{[k]} = \mathbf{H}_{[k]}$ means that $\hat{\mH}_k = H_k$, and that $u_0$ lies at one of the $p(\mathbf{H})$ many locations, and that $u^*$ lies in precisely the location in $\{u_0\} \cup F(u_0)$ that corresponds to $u$, with $F(u_0)$ denoting the forest of non-fixpoints of the $\cG$-object corresponding to $u_0$. Let $G_1, \ldots, G_t$ denote the depth-first-search ordered list of the $\cG$-objects $G_i=(S_i, f_i, F_i)$ of $H_k$ and let $G_{\ell_1}, \ldots, G_{\ell_2}$ be the segment that corresponds to $H_0$. For any $\cG$-object $G=(S,f,F)$ with a marked fixpoint $v$ from $f$, the probability for $\hat{\mG}=(\hat{\mS}, \hat{f}, \hat{F})$ to assume $G$ and that a uniformly at random drawn fixpoint of $\hat{f}$ equals $v$ is given by
	\[
		\Pr{\hat{\mG} = G} |f|^{-1} = \Pr{\mG = G}.
	\]
	Likewise, if we distinguish a vertex $v$ from $\{*\} \sqcup F$, then the probability for $\bar{\mG} = (\bar{\mS}, \bar{\mathsf{f}}, \bar{\mF})$ to assume $G$ and that a uniformly at random drawn vertex from $\{*\} \sqcup \bar{\mF}$ equals $v$ is given by
	\[
		\Pr{\bar{\mG} = G} (1 + |F|)^{-1} = \Pr{\mG=G} (1 + \Ex{\zeta})^{-1}.
	\]
	Thus
	\[
		\Pr{(\hat{\mH}_{[k]}, u^*) = (\mathbf{H}, u)} = p(\mathbf{H}) \Pr{\mG = G_{\ell_1}}(1 + \Ex{\zeta})^{-1} \prod_{i \ne \ell_1} \Pr{\mG_i = G_i}.
	\]
	
	For Claim~{\em (4)}, let us start with the fixpoints. By assumption, there is a sequence $t_n \to 0$ such that $k_n = t_n \sqrt{n} \to \infty$. For any integer $m \ge 0$ let $S_m$ denote the sum of $m$ independent copies of the size of the $\xi$-Galton--Watson tree $\cT^f$. The number of fixpoints $\#_f \hat{\mH}_k$  is given by the sum of
	\[
		\#_f \hat{\mH}_0 \eqdist 1 + S_{|\bar{\mf}|}.
	\]
	and the independent differences
	\[
		\#_f\hat{\mH}_i - \#_f\hat{\mH}_{i-1} \eqdist S_{|\hat{\mf}| -1 }.
	\]
	Consequently, 
	\[
		\#_f \hat{\mH}_k \eqdist 1 + S_{M_k}
	\]
	with $M_k = |\bar{\mf}| + \sum_{i=1}^k (\hat{\xi}_i-1)$ and $(\hat{\xi}_i)_{i \ge 1}$ denoting a family of independent copies of $|\hat{\mf}|$. By a general result for the size of Galton--Watson forests, there is a constant $C>0$ such that 
	\[
	\Pr{S_m \ge x} \le C m x^{-1/2}
	\]
	for all $m$ and $x>0$. See Devroye and Janson \cite[Lem. 2.3]{MR2829308} and Janson \cite[Lem. 2.1]{MR2245498}. As $\hat{\xi}$ and $|\bar{\mf}|$ have finite first moments, it follows that
	\[
	\Pr{S_{M_k} \ge x} \le C \Ex{M_k} x^{-1/2} = C (\Ex{|\bar{\mf}|} + k( \Ex{\hat{\xi}}-1)) x^{-1/2}.
	\]
	Setting $x = nt_n-1$ and $k=k_n$, it follows that $\#_f \hat{\mH}_{k_n} \le nt_n$ with probability tending to one as $n$ becomes large.
	
	For the second statement, let $R_m$ denote the sum of $m$ independent copies of $|\cT| - (1 + \Ex{\zeta}) |\cT^f|$. The difference $\# \hat{\mH}_k - (1 + \Ex{\zeta}) \#_f \hat{\mH}_k$ is given by the sum of 
	\[
		\# \hat{\mH}_0 - (1 + \Ex{\zeta})\#_f \hat{\mH}_0  \eqdist  |\bar{\mF}| - \Ex{\zeta} + R_{|\bar{\mf}|} 
	\]
	 and the independent differences 
	 \[
	 \# \hat{\mH}_i - \# \hat{\mH}_{i-1} - (1 + \Ex{\zeta})(\#_f \hat{\mH}_i - \#_f \hat{\mH}_{i-1})  \eqdist  |\hat{\mF}| - \Ex{\zeta} + R_{|\hat{\mf}|-1}, \quad i=1\ldots k.
	 \]
	 Consequently, 
	\begin{align}
		\label{eq:step1}
		\# \hat{\mH}_k - (1 + \Ex{\zeta}) \#_f \hat{\mH}_k \eqdist |\bar{\mF}| - \Ex{\zeta} + \sum_{i=1}^k(\hat{\zeta}_i - \Ex{\zeta}) + R_{M_k}
	 \end{align}
	 with $\hat{\zeta}_i$ denoting independent copies of $|\hat{\mF}|$. Markov's inequality implies that for $k=k_n$
	 \begin{align}
		 \label{eq:step2}
		| |\bar{\mF}| - \Ex{\zeta} + \sum_{i=1}^{k_n}(\hat{\zeta}_i - \Ex{\zeta}) | \le \sqrt{nt_n} / 2
	 \end{align}
	 with probability tending to one as $n$ becomes large. 
	 
	 It remains to show that $|R_{M_{k_n}}| \le \sqrt{n t_n}/2$ with probability tending to one. If $\xi$ and $\zeta$ were independent, then this would be rather simple. But, as this is not necessarily the case, it requires a bit of effort. We may write
	 \begin{align}
		 \label{eq:interm}
	 |\cT| - (1 + \Ex{\zeta}) |\cT^f|  = \sum_{i=0}^{\infty}D_i\quad \text{with} \quad D_i = L_i^\cG(\cT^f) - L_i(\cT^f) \Ex{\zeta}.
	 \end{align}
	 The sum is finite, as $D_i=0$ for $i> \He(\cT^f)$. Since \[\Pr{\He(\cT^f) \ge h} \sim 2/(\Va{\xi}h)\] as $h$ becomes large, 
	 it follows that the probability for the maximum height of $m$ independent copies of $\cT^f$ to be less than $k_n / \sqrt{t_n}$ is given by
	 \begin{align}
		 \label{eq:expr}
		 \left(1 - \frac{(1+o(1)) 2 \sqrt{t_n}}{\Va{\xi} k_n} \right)^{m},
	 \end{align}
	 with the $o(1)$ term not depending on $m$. As $\Ex{M_{k_n}} \sim k_n(\Ex{\hat{\xi}} - 1)$ and $\Va{M_{k_n}} = k_n \Va{\hat{\xi}}$, it follows by Chebyshev's inequality that \[|M_{k_n} - k_n| \le k_n^{3/4}\] with probability tending to one. Hence Expression~\eqref{eq:expr} implies that the probability for the maximum height of $M_{k_n}$ independent copies of $\cT^f$ to be smaller than $k_n / \sqrt{t_n}$ tends to one as $n$ becomes large. Equation~\eqref{eq:interm} hence implies for all $x$ that
	 \begin{align}
		 \label{eq:last123}
		 \Pr{|R_{M_{k_n}}| \ge x} \le o(1) +  \Pr{|U_1 + \ldots + U_{k_n}| \ge x}
	 \end{align}
	 with the $U_j$ denoting independent copies of $\sum_{i= 0}^{\lfloor k_n/ \sqrt{t_n} \rfloor} D_i$. We aim to apply Chebyshev's inequality again, and hence compute the expected value and variance of the $U_j$.
	 It holds for all $i$ and $\ell$ that
	 \[
		 \Ex{D_i \mid L_i(\cT^f) = \ell} = \Ex{\sum_{j=1}^\ell \zeta_j - \ell \Ex{\zeta}} = 0 ,
	 \]
	 with $(\zeta_j)_j$ denoting a family of independent copies of $\zeta$. Consequently, for all $k$
	 \[
		 \Ex{ \sum_{i=0}^k D_i} = 0 \quad \text{and} \quad \Va{\sum_{i=0}^k D_i} = \sum_{i=0}^k \Ex{D_i^2} + 2 \sum_{0 \le i < j \le k} \Ex{D_i D_j}.
	 \]
	 For all $0\le i<j$ it holds that
	 \[
		\Ex{ D_i D_j \mid ((\cT^f)^{[j]}, (\beta(v))_{v \in (\cT^f)^{[j-1]}}} = D_i \Ex{D_j \mid L_j(\cT^f)} = 0,
	 \]
	 because for all $\ell$
	 \[
	 \Ex{D_j \mid L_j(\cT^f) = \ell} = \Ex{\sum_{r=1}^\ell \zeta_i - \ell \Ex{\zeta} }= 0.
	 \]
	 Clearly it also holds that
	 \[
		 \Ex{D_j^2 \mid L_j(\cT^f)= \ell} = \ell \Va{\zeta}.
	 \]
	 Hence,
	 \[
		 \Va{\sum_{i=0}^k D_i} = \Va{\zeta} \Ex{\sum_{i=0}^k L_i(\cT^f)} = (k+1) \Va{\zeta}.
	 \]
	 Setting $k= \lfloor k_n / \sqrt{t_n} \rfloor$, we may thus apply Chebyshev's inequality to  $\eqref{eq:last123}$ and obtain
	 \[
		 \Pr{|R_{M_{k_n}} \ge \sqrt{nt_n}/2} \le o(1) + \frac{k_n ( k_n / \sqrt{t_n} + 1)\Va{\zeta}}{n t_n / 4} = o(1).
	 \]
	 Together with \eqref{eq:step1} and \eqref{eq:step2} this implies that
	 \[
		 |\# \hat{\mH}_{k_n} - (1 + \Ex{\zeta}) \#_f \hat{\mH}_{k_n} | \le \sqrt{n t_n}
	 \]
	 with probability tending to one as $n$ becomes large.
\end{proof}

\begin{proof}[Proof of Theorem~\ref{te:localhaupt}]
	For any $k$, let $\cE_k$ denote the set of pairs $(\mathbf{H}, u)$ where $\mathbf{H} = (H_i)_{0 \le i \le k}$ is a representation of a $\cG$-enriched tree as sequence of increasing enriched fringe subtrees, and $u$ is either the root of $H_0$ or an element of the non-fixpoints of the $\cG$-object corresponding to the root of $H_0$. We need to show that
	\begin{align}
		\label{eq:showme1}
		\lim_{n \to \infty} \sup_{\cH \subset \cE_{k_n}} | \Pr{ (\mathsf{H}_{[k_n]}^n, v^*) \in \cH } - \Pr{ (\hat{\mathsf{H}}_{[k_n]}, u^*) \in \cH }| = 0.
	\end{align}
	By assumption, there is a sequence $t_n \to 0$ such that $k_n = \sqrt{n}t_n$. We define the subset
	\[
		\cE_{k,n} = \{ ((H_i)_{0 \le i \le k}, u) \in \cE_k \mid \#_f H_k \le n t_n, |\#H_k - (1 + \Ex{\zeta}) \#_f H_k| \le \sqrt{n t_n} \}.
	\]
	Lemma~\ref{le:slow2} implies that $(\hat{\mathsf{H}}_{[k_n]}, u^*)$ lies in $\cE_{k_n, n}$ with probability tending to one as $n$ becomes large. Hence, if we verify Equation~\eqref{eq:showme1} with the index $\cH$ only ranging over the subsets of $\cE_{k_n,n}$, then it follows that $(\mathsf{H}_{[k_n]}^n, v^*)$ also lies with probability tending to one in $\cE_{k_n, n}$. But this already verifies $\eqref{eq:showme1}$ when $\cH$ ranges over all subsets of $\cE_{k_n}$, and we are done.
	
	So it remains to show that for any $\epsilon >0$ it holds for sufficiently large $n$ that
	\begin{align}
		\label{eq:showme2}
		\sup_{\cH \subset \cE_{k_n,n}} | \Pr{ (\mathsf{H}_{[k_n]}^n, v^*) \in \cH } - \Pr{ (\hat{\mathsf{H}}_{[k_n]}, u^*) \in \cH }| \le \epsilon.
	\end{align}
	In order to show this, we first exert some control over the number of fixpoints in $(\cT_n, \beta_n)$. Let $\mathbf{B}, \mathbf{\Sigma}, \mu$ and  $\sigma$  be as in Lemma~\ref{le:asymptotic}. If follows from this Lemma that, as $n \equiv 1 \mod \spa(\mathbf{w})$ tends to infinity, 
		\[
		\sqrt{n} \Pr{|\cT_n^f| = \ell} \sim  \frac{d}{\sigma \sqrt{2 \pi }} \exp(- \frac{x_\ell^2}{2 \sigma^2})
		\]
		uniformly for all bounded $x_\ell \in \ndR$ with
		\[
		\ell := \mu n + x_\ell \sqrt{n} \in n + d \ndZ.
		\]
		Furthermore,
		\[
		\frac{|\cT_n^f| - n \mu}{\sqrt{n}} \convdis \cN(0, \sigma^2)
		\]
		and for any $\epsilon_1>0$ there is a constant $M>0$ such that for all $n$
		\[
		\Pr{|\cT_n^f| \notin I_n} \le \epsilon_1 \quad \text{with} \quad I_n := (n + d\ndZ) \cap [n / (1 + \Ex{\zeta}) - M \sqrt{n}, n / (1 + \Ex{\zeta}) + M \sqrt{n}].
		\]
		Hence we may bound the expression in \eqref{eq:showme2}  by
		\begin{align}
		\label{eq:showme3}
		\epsilon_1 + 
		\sup_{(\mathbf{H},u) \in \subset \cE_{k_n,n}} \left | \frac{\Pr{ (\mathsf{H}_{[k_n]}^n, v^*) =(\mathbf{H},u) , |\cT_n^f|\in I_n} } {\Pr{ (\hat{\mathsf{H}}_{[k_n]}, u^*) = (\mathbf{H},u) }} -1 \right |.
		\end{align}
	Throughout the rest of the proof, we set $k=k_n$. Let $(\mathbf{H}, u)  \in \cE_{k, n}$ with $\mathbf{H} = (H_i)_{0 \le i \le k}$ be given. We set $L = L_k(H_k) + L_k^\cG(H_k)$ and let $F_0$ denote the size of the forest of the $\cG$-object corresponding to the root of $H_0$. Let $A$ denote a $\cG$-enriched tree with a total number of vertices $\#A = n$. Given $(\cT_n, \beta_n) = A$, the vertex $v^*$ is drawn uniformly at random from $A$. Given $(\cT_n, \beta_n) = A$ and $\mH_k^n = H_k$, the vertex $v^*$ is drawn uniformly at random from $L$ possible locations. Out of these, exactly $p(\mathbf{H})(1 + F_0)$ many correspond to the event $ \mH_{[k]}^n = \mathbf{H}$, since the number  $p(\mathbf{H})$ defined in Lemma~\ref{le:slow2} counts the number of fixpoints $v$ at height $k$ in $H_k$ with the property, that the extended enriched fringe subtree representation with respect to $v$ is identical to $\mathbf{H}$. Moreover, given additionally $ \mH_{[k]}^n = \mathbf{H}$, there is precisely one out of $1 + F_0$ possible locations such that $v^* = u$. Hence
	\begin{align}
		\label{eq:tmp10}
			\Pr{ \mH_{[k]}^n = \mathbf{H} \mid (\cT_n, \beta_n) = A} = \Pr{\mH_n^k= H_k \mid (\cT_n, \beta_n) = A}p(\mathbf{H}) / L.
	\end{align}
	There is a $1$ to $L$ correspondence between the fixpoints $v$ of $A$  with $f( A, v) = H_k$, and the possible locations for $v^*$ such that $\mH_k^n = H_k$. Hence
	\begin{align}
		\label{eq:tmp11}
		\Pr{\mH_n^k= H_k \mid (\cT_n, \beta_n) = A} = \Ex{\sum_{v \in \cT_n^f} \one_{f( (\cT_n, \beta_n), v) = H_k} \mid (\cT_n, \beta_n) = A} L / n
	\end{align}
	Let $(G_1, \ldots, G_t)$ and $\mG_i^n$, $i=1, \ldots, |\cT_n^f|$ denote the depth-first-search ordered lists of $\cG$-objects of the enriched trees $H_k$ and $\cT_n^f$. The occurrences of $(G_1, \ldots, G_t)$ as substrings of $(\mG_i^n)_{i}$ correspond precisely to the vertices of $\cT_n^f$ where the fringe-subtree equals $H_k$.
	As Equations \eqref{eq:tmp10} and \eqref{eq:tmp11} hold uniformly for all $\cG$-enriched trees with $n$ vertices, it follows that
	\begin{align}
		\label{eq:anotherstep}
		\Pr{ \mH_{[k]}^n = \mathbf{H}, |\cT_n^f|\in I_n} = \sum_{\ell \in I_n}  \Ex{\sum_{j=1}^\ell J_j^n , |\cT_n^f| = \ell} p(\mathbf{H}) /n
	\end{align}
	with $J_j^n$ denoting the indicator variable for the event $(\mG_{j}^n, \ldots, \mG_{j+t-1}^n) = (G_1, \ldots, G_t)$. Here we set $\mG_{i}^n := \mG_{i -\ell}^n$ whenever $i > \ell$. Note that $J_j = 0$ whenever $j+t-1 > \ell$, as then the sequence $(\mG_{i}^n)_{j \le i \le j+ t-1}$ does not correspond to any $\cG$-enriched tree at all. Hence the sum of the $J_j$ really counts the number of occurrences of $(G_i)_{1 \le i \le t}$ in $(\mG_i)_{1 \le i \le \ell}$. Recall that by Lemma~\ref{le:ucoup} there is a natural coupling of the unconditioned $\cG$-enriched tree $(\cT, \beta)$ with a family $(\mG_i)_{i \in \ndN}$ of independent copies $\mG_i = (\mathsf{S}_i, \mathsf{f}_i, \mathsf{F}_i)$ of the random $\cG$-object $\mG$, conditioned on the event that there is an initial segment of $(\mG_i)_{i \in \ndN}$ that corresponds to an $\cG$-enriched tree.
	Let $(J_j)_{j = 1, \ldots, \ell}$ denote the unconditioned pendants of $J_j^n$ for the sequence $(\mG_i)_{1 \le i \le \ell}$. Lemma~\ref{le:ucoup} implies that
	\[
		\Ex{\sum_{j=1}^\ell J_j^n , |\cT_n^f| = \ell} = \Pr{|\cT_n^f| = \ell } \Ex{\sum_{j=1}^\ell J_j \mid \sum_{i=1}^\ell(|\mathsf{f}_i|, |\mathsf{F}_i|) = (\ell -1, n - \ell), \sum_{i=1}^m |\mathsf{f_i}| \ge m \text{ for all $m < \ell$} }.
	\]
	The sum $\sum_{j_1}^\ell J_j$ is invariant under cyclic permutations of the list $(\mG_1, \ldots, \mG_\ell)$. Hence the Cycle Lemma~\ref{le:cycle} yields
	\begin{align*}
		\Ex{\sum_{j=1}^\ell J_j^n , |\cT_n^f| = \ell} =  \Pr{|\cT_n^f| = \ell } \Ex{\sum_{j=1}^\ell J_j \mid \sum_{i=1}^\ell(|\mathsf{f}_i|, |\mathsf{F}_i|) = (\ell -1, n - \ell)}. 
	\end{align*}
	Conditioned on this simpler event, the $J_j$ are all identically distributed. Hence
	\begin{align*}
	\Ex{\sum_{j=1}^\ell J_j^n , |\cT_n^f| = \ell} = \ell \Pr{|\cT_n^f| = \ell }\Ex{J_1 \mid \sum_{i=1}^\ell(|\mathsf{f}_i|, |\mathsf{F}_i|) = (\ell -1, n - \ell) }.
	\end{align*}
	Setting $\pi_G = \Pr{\mG = G}$ for all $\cG$-objects $G$, it follows that
	\[
		\Ex{J_1 ,\sum_{i=1}^\ell(|\mathsf{f}_i|, |\mathsf{F}_i|) = (\ell -1, n - \ell) } = \pi_{G_1} \cdots \pi_{G_t} \Pr{\sum_{i=t+1}^{\ell} (|\mathsf{f}_i|, |\mathsf{F_i}|) = (\ell-1-t, n - \#H_k) }.
	\]
	 It holds uniformly for $(\mathbf{H},u) \in \cE_{k_n, n}$ and  $\ell = \mu n + x_\ell \sqrt{n} \in I_n$ that 
	 \[
	 \ell^{-1/2}\left | (\ell-1-t, n  - \#H_k) - (\ell -t)\Ex{(\xi, \zeta)} \right | \sim (0, -x_\ell (1 + \Ex{\zeta})^{3/2})
	 \]
	 as $n$ becomes large. Hence, as $\ell -t \sim \ell$, we may apply the multivariate central local limit theorem~\ref{le:llt2dim} to obtain
	 \[
	 \Pr{\sum_{i=t+1}^{\ell} (|\mathsf{f}_i|, |\mathsf{F_i}|) = (\ell-1-t, n - \#H_k) } \sim \frac{|\det \mathbf{B}| }{n\sqrt{2 \pi \det \mathbf{\Sigma}}} \exp(- \frac{x_\ell^2}{2 \sigma^2}) .
	 \]
	 Likewise, the probability $\Pr{\sum_{i=1}^{\ell} (|\mathsf{f}_i|, |\mathsf{F_i}|) = (\ell-1, n - \ell) }$ has precisely the same asymptotic order. It follows from Equation~\eqref{eq:anotherstep} that
	 \[
	 \Pr{ \mH_{[k]}^n = \mathbf{H}, |\cT_n^f|\in I_n} \sim p( \mathbf{H}) \pi_{G_1} \cdots \pi_{G_t} \sum_{\ell \in I_n} \frac{\ell}{n} \Pr{|\cT_n^f| = \ell}.
	 \]
	 Since $\ell/n \sim (1 + \Ex{\zeta})^{-1}$ and $\Pr{|\cT_n^f| \notin I_n} \le \epsilon_1$ it follows that uniformly for $(\mathbf{H},u) \in \cE_{k_n, n}$
	 \begin{align}
	 \label{eq:fin}
	 \left | \frac{ \Pr{ \mH_{[k]}^n = \mathbf{H}, |\cT_n^f|\in I_n}}{ p(\mathbf{H})(1 + \Ex{\zeta})^{-1} \pi_{G_1} \cdots \pi_{G_t}}  -1 \right | \le  \epsilon_1
	 \end{align}
	 for $n$ large enough. Setting $\epsilon_1 = \epsilon/2$, Inequality~\eqref{eq:fin} and Lemma~\ref{le:slow2} imply that the bound in \eqref{eq:showme3} is smaller than $\epsilon$ for large enough $n$. This verifies $\eqref{eq:showme2}$ and hence the proof is complete.
\end{proof}

\subsection{Proofs of the scaling limits and diameter tail bounds in Section~\ref{sec:partA}}
\label{sec:propartA}

\begin{proof}[Proof of Lemma~\ref{le:blobs}]
		Claim {\em (1)}:
		Let $x \ge 0$ be arbitrary and let $\cE$ denote the event that there is a vertex $v \in \cT^f$ with $|f(v)| + |F(v)|\ge x$. It follows from Claim {\em (1)} that
		\[
		\Pr{|\cT| = n} = [z^n]\tilde{\cA}^\omega_\cR(\rho z) / \tilde{\cA}^\omega_\cR(\rho) = \Theta(n^{-3/2})
		\]
		and hence
		\[
		\Pr{\cE \mid |\cT|=n} = O(n^{3/2}) \Pr{\cE, |\cT| = n}
		.\]
		By Lemma~\ref{le:ucoup}, the probability $\Pr{\cE, |\cT| = n}$ is given by
		\begin{align*}
		\Pr{ \exists \ell \le n: \max(\xi_1 + \zeta_1, \ldots, \xi_\ell + \zeta_\ell)\ge x, \sum_{i=1}^\ell \xi_i = \ell -1, \sum_{i=1}^\ell (1+\zeta_i) = n, \forall m < \ell: \sum_{i=1}^m \xi_i \ge m},
		\end{align*}
		with $(\xi_i, \zeta_i)_{i \in \ndN}$ denoting a list of independent copies of $(\xi, \zeta)$. We are not interested in precise asymptotics here and hence bound this very roughly by
		\[
		\Pr{\cE, |\cT| = n} \le \Pr{\max(\xi_1 + \zeta_1, \ldots, \xi_n + \zeta_n) \ge x} \le n \Pr{\xi + \zeta \ge x}.
		\]
		As $(\xi, \zeta)$ has finite exponential moments, it follows that
		\[
		\Pr{\cE, |\cT| = n} \le C n^{5/2} \exp(-cn)
		\]
		for some constants $C,c>0$ that do not depend on $n$.
	
	Claim {\em (2)}: We may form a random metric space $\mY$ by constructing a metric $d_\mY$ on the vertex set of $\Gamma \cS(\rho)$ by patching together independent copies of the metrics $\delta_R$ just as in the construction of the metric space $\mY_n$. Hence $\mY_n$ is distributed like the space $\mY$ conditioned on having size $n$. For any vertex $v$ of the fixpoint tree $\cT^f$ let $D_v$ denote the $d_{\mY}$-diameter of the subspace $\{v\} \cup F(v) \subset \mY$. Given $h \ge 0$ let $\cE'$ denote the event that $D_v \ge h$ for at least one vertex $v \in \cT^f$.

Using Claim~{\em (1)} it follows that 
\[
\tag{$*$}
\Pr{\cE' \mid |\mY|= n} = O(n^{3/2}) \Pr{\cE', |\mY| = n} = O(n^{5/2}) \Pr{D_o \ge h}
\]
with $o$ denoting the root of the fixpoint tree $\cT^f$.  

By assumption, for any vertex $u \in F(o)$, the distance $d_{\mY}(o,u)$ is bounded by the sum of $\sum_{e} d^+_{\cT}(e)$ many independent copies of a real-valued random variable $\chi \ge 0$ having finite exponential moments, with the sum index $e$ ranging over all ancestors of the vertex $u$ in the tree $\cT$. Clearly we have that
\[
\sum_{e} d^+_{\cT}(e) \le |F(o)|
\]
for all $u \in F(o)$. Since
$
D_o \le 2 \sup_{u \in F(o)} d_{\mY}(o,u),
$
it follows that 
\[
\Pr{D_o \ge h} \le \sum_{k=0}^\infty \Pr{|F(o)| = k} k \Pr{\chi_1 + \ldots + \chi_k \ge h/2}
\]
with $(\chi_i)_{i \in \ndN}$ a family independent copies of $\chi$. Moreover,
\[
\Pr{|F(o)| = k} = \Pr{\zeta = k} = O(\gamma_1^k)
\]
for some constant $0<\gamma_1<1$. By the deviation inequality given in Lemma~\ref{le:deviation} it follows that there are constants $a,b>0$ such that
\[
\gamma_1^k \Pr{\chi_1 + \ldots + \chi_k \ge h/2} \le 2 e^{-ak-bh}
\]
for all $k$ and $h$. Hence
\[
\Pr{D_o \ge h} = O(\gamma_2^h)
\]
for some constant $0<\gamma_2<1$. Hence  Equality $(*)$ implies that
\[
\Pr{\cE' \mid |\mY| = n} = O(n^{5/2}) \gamma_2^h
\]
and we are done.

Claim {\em (3)}: Set $t := \sqrt{(1 + \Ex{\zeta})} \sigma/2$ with $\sigma^2 = \Va{\xi}$. Let $g: \ndK^\bullet \to \ndR$ denote a Lipschitz-continuous, bounded function defined on the space of isometry classes of pointed compact metric spaces. Note that $\cT^f_n$ conditioned on having size $\ell$ is distributed like $\cT^f$ conditioned on having size $\ell$. Hence it is distributed like a $\xi$-Galton--Watson tree conditioned on having $\ell$ vertices, which we denote by $\cT'_\ell$. It follows from claim $ii)$ that 
\[\Ex{g(t \cT^f_n/\sqrt{n})} = o(1) + \sum _\ell \Ex{g(t \cT'_\ell / \sqrt{n})} \Pr{|\cT^f| = \ell}\]
with the sum index $\ell$ ranging over all integers $\ell \equiv 1 \mod \spa(\mathbf{w})$ contained in the interval $(1 \pm n^{-s})\frac{n}{1 + \Ex{\zeta}}$. Since $g$ is Lipschitz-continuous, we have that \[|\Ex{g(t \cT'_\ell / \sqrt{n})} - \Ex{g(\sigma \cT'_\ell/ (2 \sqrt{\ell}))}| \le a_{n,\ell} \Ex{\Di(\cT'_\ell)/\sqrt{\ell}}\] for some constants $a_{n, \ell}$ with $\sup_\ell(a_{n,\ell}) \to 0$ as $n \equiv 1 \mod \spa(\mathbf{w})$ tends to infinity. Moreover, the average rescaled diameter $\Ex{\Di(\cT'_\ell)/\sqrt{\ell}}$ converges as $\ell$ becomes large to a multiple of the expected diameter of the CRT $\CRT$. In particular, it is a bounded sequence. Since $\Ex{g(\frac{\sigma \cT'_\ell}{2 \sqrt{\ell}})}$ converges to $\Ex{g(\CRT)}$ as $\ell$ becomes large, it follows that $\Ex{g(\frac{t \cT^f_n}{ \sqrt{n}})}$ converges to $\Ex{g(\CRT)}$ as $n$ becomes large. This concludes the proof.
\end{proof}

\begin{proof}[Proof of Theorem~\ref{te:main2}]
By Lemma~\ref{le:blobs}  it follows that with high probability all vertices $v \in \cT^f_n$ have the property that the $d_{\mY_n}$-diameter of the subspace $\{v\} \cup F(v)$ is at most $O(\log(n))$. This implies that the Gromov--Hausdorff distance between the metric spaces $(\mY_n, d_{\mY_n} / \sqrt{n})$ and  $(\cT^f_n, d_{\mY_n} / \sqrt{n})$ converges in probability to zero. Moreover, by Lemma~\ref{le:blobs} we know that $(\cT_n^f,c d_{\cT_n^f}/\sqrt{n})$ with $c = \sqrt{(1 + \Ex{\zeta})\Va{\xi}}/2$ converges weakly to the CRT $\CRT$. It remains to show that there is a constant $c'$ such that the Gromov--Hausdorff distance between $(\cT^f_n, d_{\mY_n} / \sqrt{n})$ and $(\cT^f_n, c' d_{\cT^f_n} / \sqrt{n})$ converges in probability to zero.

We define the random number $\eta$ as follows. Choose a random $\cR'$-symmetry $(\mR, \sigma)$ from $\bigcup_{k \ge 0} \Sym(\cR')[k]$ with probability proportional to its weight \[\frac{\kappa(\mR)}{|\mR|!} \tilde{\cA}^\omega_{\cR}(\rho)^{\sigma_1} \tilde{\cA}^\omega_{\cR}(\rho^2)^{\sigma_2} \cdots\] and let $\eta$ denote the $\delta_\mR$-distance of the two distinct $*$-labels. Note that by our assumptions on the cycle index sum $Z_{\cR^\kappa}$ we have that $|\mR|$ has finite exponential moments. Moreover, the diameter of the metric $\delta_\mR$ is bounded by $|\mR|$ many independent copies of a real-valued random variable $\chi \ge 0$ with finite exponential moments. Hence $\eta$ has finite exponential moments. We are going to show that the Gromov--Hausdorff distance of $(\cT^f_n, d_{\mY_n} / \sqrt{n})$ and $(\cT^f_n, \Ex{\eta} d_{\cT^f_n} / \sqrt{n})$ converges in probability to zero. By the discussion in the preceding paragraph this implies that \[\frac{\sqrt{(1 + \Ex{\zeta})\Va{\xi}}}{2 \Ex{\eta} \sqrt{n}}\mY_n \convdis \CRT\] and we are done.

Let $s>1$ and $t>0$ be arbitrary constants and set $s_n = \log(n)^s$ and $t_n = n^t$. Let $\epsilon >0$ be given and let $\cE_1$ denote the event that there exists a fixpoint $v \in \cT_n^f$ and an ancestor $u$ of $v$ with the property that \[d_{\cT_n^f}(u,v) \ge s_n \quad \text{and} \quad d_{\mY_n}(u,v) \notin (1 \pm \epsilon) \Ex{\eta} d_{\mT_n}(u,v).\] Likewise, let $\cE_2$ denote the event that there exists a vertex $v$ and an ancestor $u$ of $v$ with \[d_{\cT_n}(u,v) \le s_n \quad \text{and} \quad d_{\mY_n}(u, v) \ge t_n.\] We are going to show that with high probability none of the events $\cE_1$ and $\cE_2$ takes place

This suffices to show the claim: Take $s=2$ and $t=1/4$ and suppose that the complementary events $\cE_1^{c}$ and $\cE_2^{c}$ hold. Given vertices $a \ne b$ in  the tree $\cT_n^f$ let $x$ denote their lowest common ancestor. If $x \in \{a,b\}$ then we have \[d_{\mY_n}(a,b) = d_{\mY_n}(a,x) + d_{\mY_n}(b,x).\] If $x \ne a,b$, then let $a'$ denote the offspring of $x$ that lies on the $\cT_n^f$-path joining $a$ and $x$ and likewise $b'$ the offspring of $x$ lying on the path joining $x$ and $b$. Hence we have that \[d_{\mY_n}(a,b) = d_{\mY_n}(a,x) + d_{\mY_n}(b,x) + R \quad \text{with} \quad R = d_{\mY_n}(a,a') - d_{\mY_n}(a',x) - d_{\mY_n}(b',x).\] By property $\cE_2^c$ and the triangle inequality it follows that $|R| = -R \le 2 n^{1/4}$. Thus, regardless whether $x \in \{a,b\}$, it holds that
\[
d_{\mY_n}(a,b) = d_{\mY_n}(a,x) + d_{\mY_n}(b,x) + O(n^{1/4}).
\]
Moreover, if $d_{\cT_n^f}(a,x) \ge \log(n)^2$, then it follows by property $\cE_1^c$ that \[d_{\mY_n}(a,x) \in (1 \pm \epsilon)\Ex{\eta} d_{\cT_n^f}(a,x).\] Otherwise, if $d_{\cT_n^f}(a,x) < \log(n)^2$ then it follows by property $\cE_2^c$ that $d_{\mY_n}(a,x) \le n^{1/4}$ and thus \[|d_{\mY_n}(a,x) - \Ex{\eta} d_{\cT_n^f}(a,x)| \le Cn^{1/4}\] for a fixed constant $C$ that does not depend on $n$ or the points $a$ and $x$. It follows that 
\[
	|d_{\mY_n}(a,b)/\sqrt{n} - \Ex{\eta}d_{\cT_n^f}(a,b)/\sqrt{n}| \le \epsilon \Di(\cT_n^f)/\sqrt{n} + o(1),
\]
with $\Di(\cT_n^f)$ denoting the diameter. 
Thus  \[d_{\text{GH}}(\mY_n, \Ex{\eta}\cT_n^f) \le \epsilon \Di(\cT_n^f)/\sqrt{n} + o(1)\] holds with high probability. Since we may choose $\epsilon$ arbitrarily small, and $\Di(\cT_n^f)/\sqrt{n}$ converges in distribution (to a multiple of the diameter of the CRT),  it follows that $d_{\text{GH}}(\mY_n, \Ex{\eta}\cT_n^f) \to 0$ in probability and we are done.

For each finite subset $U \in \ndN$ and each $\cR$-structure $R \in \cR[U]$ let $(\delta_R^i)_{i \in \ndN_0}$ be a family of independent copies of the metric $\delta_R$. Given a $\cA_\cR$-symmetry $S = ((T,\alpha), \sigma)$ with label set $[k]$ for some $k \ge 0$ we may form the family $(\delta^S(v))_{v \in T}$ of random metrics by traversing bijectively the vertices of $T$ in ascending order $1,2, \ldots k$ and assigning to each vertex $v$ the "leftmost" unused copy from the list $(\delta_{\alpha(v)}^1, \delta_{\alpha(v)}^2, \ldots)$. The metrics can be patched together to a metric $d^{S}$ on the vertex set $[k]$ of the tree $T$ just as described in Section~\ref{sec:partA}.

We may assume that all random variables considered so far are defined on the same probability space and that the metric $d_{\mY_n}$ of $\mY_n$ coincides with the metric $d^{\mZ_n}$ with $\mZ_n$ denoting the sampler $\Gamma Z_{\cA_\cR^\omega}(\rho)$ conditioned on having size $n$. Given the family $(\delta^i_R)_{R,i}$ let $\cH \subset \bigcup_{k=0}^\infty \Sym(\cA_\cR)[k]$ denote the finite set symmetries of size $n$ such that the event $\cE_1$ takes place if and only if $\mZ_n \in \cH$. By the definition of the event $\cE_1$ for any symmetry $S = ((T, \alpha), \sigma) \in \cH$ we may choose a fixpoint $v_S$ of $\sigma$ having the property that there exists an ancestor $u$ in the tree $T$ with \[d_T(u,v_S) \ge s_n\quad \text{and} \quad d^S(u, v_S) \notin (1 \pm \epsilon)\Ex{\eta}d_T(u,v).\] Let $\ell_S$ denote the height $h_T(v_S)$. Note that since $v_S$ is a fixpoint, the tupel $(T, \alpha, v_S, \sigma)$ is a $\cA_\cR^{(\ell_S)}$-symmetry. By Lemma~\ref{le:asymptotic} the probability for the sampler $\Gamma Z_{\cA_\cR^\omega}(\rho)$ to have size $n$ is $\Theta(n^{-3/2})$ and we have that $\rho \widetilde{\cR'\circ \cA_\cR^\omega}(\rho) = 1$. Hence Equation~(\ref{eq:holygrail}) implies that the conditional distribution of the event $\cE_1$ given $(\delta^i_R)_{R,i}$ equals
\[
\sum_{S \in \cH} \Pr{\mZ_n = S \mid (\delta^i_R)_{R,i}}  = \Theta(n^{3/2}) \sum_{S \in \cH} \Pr{\Gamma Z_{\cA_\cR^{(\ell_S)}}(\rho) = (S,v_S)\mid (\delta^i_R)_{R,i}}
.\] Let $v_0, \ldots, v_{\ell}$ denote the spine of $\Gamma Z_{\cA_\cR^{(\ell)}}(\rho)$, that is, $v_{\ell}$ is the outer root, $v_0$ is the inner root, and $(v_0, \ldots, v_{\ell})$ is the directed path connecting the roots. It follows that the probability for the event $\cE_1$ is bounded by
\[
\tag{$*$} \Theta(n^{3/2}) \sum_{\ell=1}^n \sum_{k=0}^{\ell - s_n} \Pr{d^{\left(\Gamma Z_{\cA_\cR^{(\ell)}}(\rho)\right)}(v_k, v_{\ell}) \notin (1 \pm \epsilon) \Ex{\eta} (\ell-k)}
\]
But the $d^{\left(\Gamma Z_{\cA_\cR^{(\ell)}}(\rho)\right)}$-distance between spine vertices $v_i$ and $v_j$ is distributed like the sum $\eta_1 + \ldots + \eta_{|i-j|}$ of independent copies $(\eta_i)_i$ of $\eta$. We know that $\eta$ has finite exponential moments and hence by the deviation inequality in Lemma~\ref{le:deviation} the bound $(*)$ converges to zero as $n \equiv 1 \mod \spa(\mathbf{w})$ tends to infinity. Thus with high probability $\cE_1$ does not hold. By the same arguments we may bound the probability for the event $\cE_2$ by \[\Theta(n^{3/2}) \sum_{\ell = 1}^{n} \sum_{k=1}^{\min(s_n,\ell)} \Pr{\eta_1 + \ldots + \eta_{k} \ge t_n}\]
which also converges to zero. This concludes the proof.
\end{proof}

\begin{proof}[Proof of Theorem~\ref{te:tail2}]
It suffices to show that there are constants $C,c,N>0$ such that for all $n \ge N$ and $h \ge \sqrt{n}$ we have that \[\Pr{\He(\mX_n) \ge h} \le C (\exp(-c h^2/n) + \exp(-c h)).\]

For any fixpoint $v \in \cT_n^f$ set $\ell(v) = \sum_u d^+_{\cT_n}(u)$ with the sum index $u$ ranging over all ancestors of the vertex $v$ in the plane tree $\cT^f_n$. Note that we are summing up the outdegrees in the tree $\cT_n$ and not in the tree $\cT_n^f$.
Moreover, for any vertex $y \in \cT_n$ let $v_y$ denote its closest fixpoint, that is, $v_y = y$ if $y$ is a fixpoint and otherwise $v_y$ is the unique vertex with $y \in F_n(v_y)$. If $y$ has height $\hei_{\mY_n}(y) \ge 2h$ then $\hei_{\mY_n}(v_y) \ge h$ or $d_{\mY_n}(u, v_y) \ge h$. Thus either $\He_{\mY_n}(\cT_n^f) \ge h$ or there exists a fixpoint $v \in \cT_n^f$ such that the $d_{\mY_n}$-diameter $D_v$ of the subspace $\{v\} \cup F(v)$ is greater than or equal to $h$.

Let $s>r>0$ be constants. Given $h \ge \sqrt{n}$ let $\cE^{2h}$ denote the event that $\He(\mX_n) \ge h$. It follows that $\cE^{2h} \subset \cE_0^h \cup \cE_1^h \cup \cE_2^h \cup \cE_3^h$ with the events $\cE_i^h$ given as follows. $\cE_0^h$ is the event that there exists a fixpoint $v \in \cT_n^f$ with $D_v \ge h$. $\cE_1^h$ is the event that $\He(\cT_n^f) \ge rh$. $\cE_2^h$ is the event that $\He(\cT_n^f) \le rh$ and $\ell(v) \ge sh$ for some fixpoint $v\in \cT_n^f$. $\cE_3^h$ is the event that $\ell(v) \le sh$ for all fixpoints $v\in \cT_n^f$ and $\He_{\mY_n}(\cT_n^f) \ge h$.

We are going to show that if we choose $r$ and $s$ sufficiently small, then each of these events is sufficiently unlikely. By Lemma~\ref{le:blobs} we have that \[\Pr{\cE_0^h} = O(n^{5/2}) \gamma^h\] for some $0< \gamma < 1$. Hence there are constants $C_0, c_0 >0$ such that 
$\Pr{\cE_0^h} \le C_0 \exp(-c_0 h^2/n)$ if $h \le n$ and $\Pr{\cE_0^h} \le C_0 \exp(-c_0 h)$ if $h \ge n$.

In order to bound the probability for the event $\cE_1^h$ note that the tree $\cT_n^f$ conditioned on having size $\ell$ is distributed like $\cT^f$ conditioned on having size $\ell$. That is, it is identically distributed to a $\xi$-Galton--Watson tree conditioned on having size $\ell$ which we denote by $\cT'_\ell$. Hence \[\Pr{\cE_1^h} \le \sum_{\ell=1}^n \Pr{|\cT_n^f| = \ell} \Pr{\He(\cT'_\ell) \ge rh}.\] By Inequality~(\ref{eq:gwttail}) there exist constants $C_1, c_1>0$ that do not depend on $n$ or $h$ such that for all $1 \le \ell \le n$ we have the tail bound \[\Pr{\He(\cT'_\ell) \ge rh} \le C_1 \exp(-c_1 r^2 h^2 / \ell) \le C_1 \exp(- c_1 r^2h^2 /n).\] In particular, it holds that $\Pr{\cE_1^h} \le C_1 \exp(-c_1 r^2 h^2 / n)$ for all $n$ and $h$.

We proceed to bound the probability for the event $\cE_2^h$. Let $\mZ_n$ denote the sampler $\Gamma Z_{\cA_\cR^\omega}(\rho)$ conditioned on having size $n$ and let $\cH \subset \bigcup_{k=0}^\infty \Sym(\cA_\cR)[k]$ denote the set of $\cA_\cR$-symmetries $S=((T, \alpha), \sigma)$ having the property that  there exists a fixpoint vertex $v_S$ in $T$ with the property that $\ell_S := \hei_T(v_S) \le rh$ and $\sum_u d_T^+(u) \ge sh$ with the sum-index $u$ ranging over all ancestors of the vertex $v$ in the tree $T$. By Equation~\ref{eq:holygrail} we may bound the probability for the event $\cE_2^h$ by
\[
\Pr{\cE_2^h} = \Pr{\mZ_n \in \cH} = O(n^{3/2})\sum_{S \in \cH} \Pr{\Gamma Z_{\cA_\cR^{(\ell_S)}}(\rho) = (S, v_S)}.
\]
Let $\eta$ denote the outdegree of the root in the sampler $\Gamma Z_{\cA_\cR^{(1)}}(\rho)$. By the assumptions on the cycle index sum $Z_{\cA_\cR^\omega}$ it follows that $\eta$ has finite exponential moments. Note that the outdegrees along the spine of $\Gamma Z_{\cA_\cR^{(\ell)}}$ are distributed like independent copies of $\eta$. It follows that 
\[
\Pr{\cE_2^h} = O(n^{3/2}) \sum_{\ell=1}^{\min(n, rh)}\Pr{\eta_1 + \ldots + \eta_\ell \ge sh} = O(n^{5/2}) \Pr{\eta_1 + \ldots + \eta_{\lfloor rh \rfloor} \ge sh}.
\]
By the deviation inequality in Lemma~\ref{le:deviation} it follows that there are constants $c,\lambda>0$ such that the above quantity is bounded by a constant multiple of $\exp(5/2 \log(n) + crh - \lambda sh)$. We assumed that $h \ge \sqrt{n}$, hence if we choose $r$ sufficiently small depending only on $s$, $c$ and $\lambda$, it follows that there are constants $C_2, c_2 > 0$ such that $\Pr{\cE_2^h} \le C_2 \exp(-c_2 h)$.

It remains to treat the event $\cE_3^h$. By assumption, for any fixpoint $v \in \cT_n^f$ we have that the height $\hei_{\mY_n}(v)$ is bounded by $\ell(v)$ many independent copies of a random variable $\chi$ having finite exponential moments. Thus \[\Pr{\cE_3^h} \le n \Pr{\chi_1 + \ldots + \chi_{\lfloor sh \rfloor} \ge h}\] with $(\chi_i)_{i \in \ndN}$  a family of independent copies of $\chi$. By the deviation inequality in Lemma~\ref{le:deviation} there are constants $c, \lambda>0$ such that this quantity is bounded by a constant multiple of
$\exp(\log(n)  + \lfloor sh \rfloor c - \lambda h)$. We assumed that $h\ge \sqrt{n}$, hence we may bound this by $\exp(h(\log(n)/\sqrt{n} + sc - \lambda))$. If $s$ is sufficiently small, then it follows that there are constants $C_3, c_3>0$ such that $\Pr{\cE_3^h} \le C_3 \exp(-c_3 h)$ for all $n$ and $h \ge \sqrt{n}$.

Thus there exist constants $C,c>0$ with \[\Pr{\cE^{2h}} \le \sum_i \Pr{\cE_i^h} \le C (\exp(-c h^2/n) + \exp(-ch))\] for all $n$ and $h \ge \sqrt{n}$. This concludes the proof.
\end{proof}

\subsection{Proofs concerning the applications}
\label{sec:proapp}

\subsubsection{Random weighted graphs}

As argued in Section~\ref{sec:unlablocal}, 
Theorems~\ref{te:localunl}, \ref{te:localunl2} and \ref{te:scaling} are special cases of the more general results we established in Sections~\ref{sec:locunl} and \ref{sec:partA}. 

\subsubsection{Random front-rooted $k$-dimensional trees}
We start with global geometric properties, as some intermediate results in there will also be useful in the study of the local properties.
\begin{proof}[Proof of Theorem~\ref{te:scalktree}]
By  Theorem~\ref{te:gibbs} it follows that the largest $\cK^\circ$-component of the random front-rooted $k$-tree $\mK_n$ has size $n + O_p(1)$. Hence Lemma~\ref{le:calkcirc} readily implies that $\mK_n$ converges toward the CRT after rescaling by the same factor as for $\mK_n^\circ$.
\end{proof}

\begin{proof}[Proof of Lemma~\ref{le:calkcirc}]
The random front-rooted $k$-tree $\mK^\circ_n$ corresponds to the random enriched tree $\tilde{\mA}_n^\cR$ for $\cR = \Seq_{\{k\}} \circ\Set$. Hence our framework applies.

We first show a tail-bound for the diameter. By the bijection discussed in Section~\ref{sec:ktree}, we know that the distance between any vertex in $(\cT_n, \beta_n)$ and its offspring is always $1$, as the two vertices are also joined by an edge in the corresponding $k$-tree.  Let $\sqrt{n} \le x \le n$ be given. If $\He(\mK_n^\circ) \ge x$, then it follows that $\He(\cT_n \ge x)$. (Here we define the height with respect to the vertex that corresponds to the root of $\cT_n$.) Hence $\He(\cT_n^f) \ge x/2$  or $|F(v)| \ge x/2$ for some $v \in \cT_n^f$. Let us denote these events by $\cE_1$ and $\cE_2$.  Lemma~\ref{le:blobs}  states  that there are constants $C_1,c_1>0$ such that uniformly for all $n$ and non-negative $x$
\begin{align}
\label{eq:blo}
\Pr{\max_{v \in \cT^f_n}(|f_n(v)| + |F_n(v)|) \ge x} \le C_1 n^{5/2} \exp(-c_1 x).
\end{align}
As we assumed that $\sqrt{n} \le x \le n$, it follows that
\[
	\Pr{\cE_2} \le  C_1 n^{5/2} \exp(-c_1 x/2) \le C_2 \exp(-c_2 x^2/n)
\]
with the constants $C_2, c_2>0$ not depending on $n$ or $x$. As for the event $\cE_1$,  Lemma~\ref{eq:gwttail} implies that there are constants $C_3, c_3>0$ that do not depend on $n$ or $x$ such that
\begin{align*}
	\Pr{\cE_1} &= \sum_{\ell=1}^n \Pr{|\cT_n^f|= \ell} \Pr{\He(\cT_n^f) \ge x/2 \mid |\cT_n^f|=\ell} \\
		&\le C_3 \sum_{\ell=1}^n \Pr{|\cT_n^f|=\ell} \exp(-c_3 x^2/\ell) \\
		&\le C_3 \exp(-c_3 x^2/\ell).
\end{align*}
Thus, there are constants $C_4, c_4>0$ such that 
\[
	\Pr{\He(\mK_n^\circ) \ge x}  \le \Pr{\cE_1} + \Pr{\cE_2} \le C_4 \exp(-c_4 x^2/n)
\]
for all $n$ and all $\sqrt{n} \le x \le n$. It is clear that, by possibly adjusting the constants involved, such an inequality also holds for all $x \ge 0$. This verifies the exponential tail-bound for the diameter of the $k$-tree $\mK_n^\circ$.

It remains to establish the scaling limit. Inequality~\eqref{eq:blo}  implies that with high probability all vertices $v \in \cT^f_n$ have the property that the number of vertices in the forest $F(v)$ is at most $O(\log(n))$. 
This implies that the (pointed) Gromov--Hausdorff distance between the $k$-tree $\mK_n^\circ$ and the subspace corresponding to the vertices of the fixpoint tree $\cT_n^f$  is with high probability at most $O(\log n)$. Consequently, it suffices to show that there is a constant $a_k >0$ such that this subspace rescaled by $a_k / \sqrt{n}$ converges toward the continuum random tree.

Let
\begin{align}
\label{eq:bk}
b_k = \left( k \sum_{i=1}^k \frac{1}{i} \right)^{-1}.
\end{align}
We are going to show that there are exponents $t>0$ and $1/2 < s < 1$  such that with high probability
\begin{align}
	\label{eq:showmee}
	|d_{\mK_n^\circ}(u,v) - b_k d_{\cT_n^f}(u,v)| \le d_{\cT_n^f}(u,v)^s + \log^t n
\end{align}
for all $u,v \in \cT_n^f$. This suffices to complete the proof. Indeed, it follows that with high probability
\begin{align}
\label{eq:tmpname}
d_{\textsc{GH}}( (\cT_n^f, d_{\mK_n^\circ}), (\cT_n^f, b_k d_{\cT_n^f})) \le \Di(\cT_n^f)^s + \log^t n
\end{align}
By Lemma~\ref{le:blobs} we know that
\begin{align}
\label{eq:aa}
\left(\cT_n^f, \frac{\sqrt{(1 + \Ex{\zeta})\Va{\xi}}}{2\sqrt{n}}  d_{\cT^f_n} \right) \convdis (\CRT, d_{\CRT}).
\end{align} in the (pointed) Gromov--Hausdorff sense.  In particular, $\Di(\cT_n^f) = O_p(\sqrt{n})$, and hence it follows from Equation~\eqref{eq:tmpname} that
\begin{align}
\label{eq:bb}
d_{\textsc{GH}}( (\cT_n^f, n^{-1/2} d_{\mK_n^\circ}), (\cT_n^f, b_k n^{-1/2} d_{\cT_n^f})) \convp 0.
\end{align}
Equations~\eqref{eq:aa} and \eqref{eq:bb} then readily imply that the subspace of the $k$-tree $\mK_n^\circ$, that corresponds to $\cT_n^f$, converges toward the CRT after rescaling the metric by $c_k / \sqrt{n}$ with
\[
	c_k = \frac{\sqrt{(1 + \Ex{\zeta})\Va{\xi}}}{2 b_k }.
\]
It follows that
\[
	(\mK_n^\circ, c_k n^{-1/2} d_{\mK_n^\circ}) \convdis (\CRT, d_{\CRT}).
\]
Hence  Inequality~\eqref{eq:showmee} is sufficient to complete the proof. 

Let $u,v \in \cT_n^f$ be arbitrary vertices and let $x \in \cT_n^f$ denote their youngest common ancestor. Let $o$ denote the root of $\cT_n^f$. Then any shortest path in $\mK_n^\circ$ from $o$ to $u$, or $o$ to $v$, or $u$ to $v$ contains at least one vertex with $d_{\mK_n^\circ}$-distance at most $1$ from $x$. Thus the expression
\[
	| d_{\mK_n^\circ}(u,v) - (d_{\mK_n^\circ}(o,u) + d_{\mK_n^\circ}(o,v) - 2 d_{\mK_n^\circ}(o,x))|
\]
is bounded by a fixed constant that does not depend on $u$, $v$ or $x$. Thus, in order to show Inequality~\eqref{eq:showmee}, it suffices to show that for a sufficiently small but fixed constant $c>0$ and  it holds with high probability that
\begin{align}
\label{eq:showm}
|d_{\mK_n^\circ}(o,v) - b_k d_{\cT_n^f}(o,v)| \le c(d_{\cT_n^f}(o,v)^s + \log^t n)
\end{align}
for all $v \in \cT_n^f$. 

To this end, let $\mZ_n$ denote the sampler $\Gamma Z_{\cA_\cR^\omega}(\rho)$ conditioned on having size $n$. That is, $\mZ_n$ is the symmetry corresponding to the $\Sym(\cR)$-enriched tree $(\cT_n, \beta_n)$.  Consider the set $\cH \subset \bigcup_{k=0}^\infty \Sym(\cA_\cR)[k]$ of $\cA_\cR$-symmetries $S=((T, \alpha), \sigma)$ having the property that the there exists a fixpoint vertex $v_S$ in $T$ with the property that $\ell_S := \hei_T(v_S) \ge \log^t(n)$ but the corresponding distance $\hei_*(v_S)$ in the $k$-tree corresponding to $(T, \alpha)$ satisfies
\[
	|\hei_*(v_S) - b_k \ell_S| > c\ell_S^s.
\]
By Lemma~\ref{le:asymptotic} the probability for the sampler $\Gamma Z_{\cA_\cR^\omega}(\rho)$ to have size $n$ is $\Theta(n^{-3/2})$.  It follows from Equation~\ref{eq:holygrail} that 
\begin{align}
\label{eq:mybound}
\Pr{\mZ_n \in \cH} = O(n^{3/2})\sum_{S \in \cH} \Pr{\Gamma Z_{\cA_\cR^{(\ell_S)}}(\rho) = (S, v_S)} \le O(n^{3/2}) \sum_{\log^t n \le \ell \le n} p_\ell,
\end{align}
with $p_\ell$ denoting the probability that the $k$-tree distance $d_\ell$ between the root vertex $v_0$ and the tip $v_\ell$ of the spine $v_0, \ldots, v_\ell$ in $\Gamma Z_{\cA_\cR^{(\ell_S)}}(\rho)$ satisfies
\[
	|d_\ell - b_k \ell| > c \ell^s.
\]

We are going to bound the $p_\ell$ to show that the bound in~\eqref{eq:mybound} converges to zero. 
In order to simplify the calculations, let $u$ denote any fixed vertex of the $*$-place-holder root-front of the $k$-tree corresponding to $\Gamma Z_{\cA_\cR^{(\ell_S)}}(\rho)$.  Let $d_\ell'$ denote the $k$-tree  distance from $u$ to $v_\ell$. Thus 
\[
	|d_\ell - d_\ell'| \le 1
\]
for all $\ell$ and it suffices to study the deviation of $d_\ell'$ from $b_k \ell$.  We are going to exploit properties of the bijection in Section~\ref{sec:ktree} to  take a Markov chain approach.  Consider the set $M_0$ consisting of all $k$ unlabelled vertices. The vertex $v_1$ is incident to $v_0$ and to a $k-1$-element subset $S_0 \subset M_0$. By the construction in Lemma~\ref{le:sampler2} each $(k-1)$-element subset of $M_0$ is equally likely. The distance of $v_1$ to $u$ is given by
\[
	d_1' = 1 + \min_{v \in S_0}d(u,v)
\]
with $d(\cdot, \cdot)$ denoting the $k$-tree distance. This follows from the fact that $M_0 \cup \{v_0\}$ is a $(k+1)$-clique. Setting $M_1 =  \{v_0\} \cup S_0$, the distance of $v_2$ to $u$ is again given by
\[
	d_2' = 1 + \min_{v \in S_1} d(u,v)
\]
with $S_1 \subset M_1$ denoting the $(k-1)$-element subset of $M_1$ that is incident with $v_2$. Here each $(k-1)$-element subset of $M_1$ is (conditionally) equally likely. We may continue this construction, yielding  sequences $S_0, \ldots, S_{\ell-1}$ and $M_0, \ldots, M_{\ell}$, such that for all $0 \le i \le \ell-1$ it holds that
\begin{align}
	\label{eq:dayum}
	d_{i+1}' = 1 + \min_{v \in S_{i}} d(u,v) \qquad \text{ and } \qquad M_{i+1} = \{v_{i}\} \cup S_{i},
\end{align}
and such that, conditioned on $M_{i}$, the subset $S_{i}$ gets drawn uniformly at random among the $(k-1)$-element subsets of $M_{i}$.  Note that for all $v \in M_{i}$ it holds that $d(u,v) = d_i'$ or $d(u,v) = d_i'-1$.  For all $0 \le i \le \ell$ we let $1 \le X_i \le k$ denote the number of vertices in $M_{i}$ with $d(u,v) = d_i'-1$. Equation~\eqref{eq:dayum} implies that for all $0 \le i \le \ell-1$
\begin{align*}
	d_{i+1}' = \begin{cases} d_i', &X_{i+1}<k \\ d_i' +1, &X_{i+1}=k \end{cases}.
\end{align*}
As $d_0'=1$ and $X_0 = 1$, it holds that
\begin{align}
	\label{eq:yuup}
	d_\ell' = 1 + \sum_{i=1}^\ell \one_{X_i = k}.
\end{align}
 Recall that given $M_i$, the set $S_i$ gets drawn uniformly at random from the $(k-1)$-element subsets of $M_i$. Thus, $(X_i)_i$ is Markov chain with the transition probabilities $p_{ij} = \Pr{X_{n+1} = j \mid X_n = i}$ given by the matrix
\[ \mathbf{P} = (p_{ij})_{i,j} =  \left(  \begin{array}{cccccc}
\frac{k-1}{k} &  &  & &   & \frac{1}{k} \\
\frac{2}{k}  & \frac{k-2}{k} & &   &  &  \\
 & \frac{3}{k} & \frac{k-3}{k}  &   &  &  \\ 
 &  & \ddots & \ddots &  &  \\
& &   & \frac{k-1}{k} & \frac{1}{k} &   \\
 &  &  &  & 1 & 
\end{array} 
\right) \in \ndR^{k \times k}.\]
Here we use the convention, that empty spaces in a matrix denote zero entries. 
The stationary distribution is given by
\[
\pi = (\pi_i)_{1 \le i \le k}  =  \frac{1}{\sum_{i=1}^k \frac{1}{i}} \left(1, \frac{1}{2}, \ldots, \frac{1}{k} \right).
\]
This chain is clearly irreducible. However,  unless $k=2$, it is not reversible. As the multiplicative symmetrization $\mathbf{P}^\intercal \mathbf{P}$ is irreducible, we may apply Lemma~\ref{le:markov} to obtain that there are constants $a,b>0$ such that for all sufficiently small $\epsilon>0$ and all $\ell \ge 1$ it holds that
\[
	\Pr{ |d_\ell' -1 - \pi_k \ell| \ge \epsilon \ell} \le a \exp(-b \epsilon^2 \ell).
\]
As $\pi_k = b_k$, follows that for any $1/2<s<1$ we may choose $t>0$ large enough such that the bound in~\eqref{eq:mybound} tends to zero as $n$ becomes large. 

Thus, it holds with high probability  that all vertices $v \in \cT_n^f$ with $d_{\cT_n^f}(o,v) \ge \log^t n$ satisfy
\begin{align}
\label{eq:refme}
|d_{\mK_n^\circ}(o,v) - b_k d_{\cT_n^f}(o,v)| \le c d_{\cT_n^f}(o,v)^s.
\end{align}
This readily verifies Equation~\eqref{eq:showm} and hence completes the proof.
\end{proof}

\begin{proof}[Proof of Theorem~\ref{te:lokt}]
	Let $k_n = o(\sqrt{n})$ be a given sequence.  
	The random front-rooted unlabelled $k$-tree $\mK_n$ may be viewed as a Gibbs partition. Theorem~\ref{te:gibbs} ensures  that $\mK_n$ exhibits a giant component, and that the small fragments converge in total variation toward a Boltzmann limit. Thus, it suffices to show that
	\begin{align}
		\label{eq:toshoow}
		d_{\textsc{TV}}(V_{k_n}(\mK^\circ_n), V_{k_n}(\hat{\mK}^\circ)) \to 0
	\end{align}
	as $n$ becomes large. Set $h_n = 2 b_k k_n + n^{1/4} = o(\sqrt{n})$ with $b_k$ defined in Equation~\eqref{eq:bk}. Theorem~\ref{te:localunlabelled} ensures that
	\[
	d_{\textsc{TV}}((\cT_n, \beta_n)^{<h_n>}, (\cT^{(\infty)}, \beta^{(\infty)})^{<h_n>}) \to 0.
	\]
	as $n$ becomes large. By Equation~\eqref{eq:refme} we know that with high probability the trimmed tree~$(\cT_n, \beta_n)^{<h_n>}$ already contains all information required to determine the $k_n$-neighbourhood $V_{k_n}(\mK^\circ_n)$. This verifies Equation~\eqref{eq:toshoow} and hence completes the proof.
\end{proof}

\begin{proof}[Proof of Theorem~\ref{te:lok2t}]
	Let $k_n = o(\sqrt{n})$ be a given sequence. Theorem~\ref{te:gibbs} ensures  that $\mK_n$ exhibits a giant component, which is distributed like $\mK_(r_n)^\circ$ for some random size $r_n = n + O_p(1)$. A uniformly at random selected vertex of $\mK_n$ lies with high probability in this giant component and outside of its root-front. Conditioned on this event, the random vertex is uniformly distributed among the non-root vertices of the large component. Note that every path from a vertex of the giant component to a vertex of a smaller component must pass through the root-front. Thus, it suffices to show that if $v^*$ is a random vertex of $(\cT_n, \beta_n)$, then
	\[
		d_{\textsc{TV}}(V_{k_n}(\mK^\circ_n, v^*), V_{k_n}(\hat{\mK}^\circ)) \to 0,
	\]
	and with high probability $V_{k_n}(\mK^\circ_n, v^*)$ contains no vertex of the root-front of $\mK_n^\circ$. Set $h_n = 2 b_k k_n + n^{1/4} = o(\sqrt{n})$ with $b_k$ defined in Equation~\eqref{eq:bk}. By  Theorem~\ref{te:localhaupt} it follows that
	\begin{align*}
		d_{\textsc{TV}}( (\mH^n_{[h_n]}, v^*), (\hat{\mH}_{[h_n]}, u^*) ) \to 0.
	\end{align*}
	and that with high probability the vertex $v^*$ has height $\he_{\cT_n}(v^*) > h_n$. By Equation~\eqref{eq:refme} it follows that with high probability it holds that $\he_{\mK_n^\circ}(v^*) > k_n$ and that $(\mH^n_{[k_n]}, v^*)$ already contains all information necessary to determine $V_{k_n}(\mK_n^\circ, v^*)$. This completes the proof.
\end{proof}

\subsubsection{Simply generated P\'olya trees}

We argued in Section~\ref{sec:polya} how Theorems~\ref{te:locpol}, \ref{te:bspol} and \ref{te:scapol} follow from the results on random $\cR$-enriched trees of Sections~\ref{sec:locunl} and \ref{sec:partA}.

\section*{Acknowledgement}
I thank Jetlir Duraj, Markus Heydenreich, Grégory Miermont, and Vitali Wachtel for related discussions.
\bibliographystyle{abbrv}
\bibliography{entrees}

\end{document}